\definecolor{refkey}{gray}{.75}
\definecolor{labelkey}{gray}{.5}
\colorlet{DarkGreen}{green!70!black}
\numberwithin{equation}{section}
\renewcommand{\restriction}{\mathord{\upharpoonright}}
\renewcommand{\epsilon}{\varepsilon}
\newcommand{\given}{\;\big|\;}
\newcommand{\one}{\mathbf{1}}
 \definecolor{refkey}{gray}{.5}
 \definecolor{labelkey}{gray}{.5}
\definecolor{light}{gray}{.9}
\newtheorem{maintheorem}{Theorem}
\newtheorem{theorem}{Theorem}[section]
\newtheorem*{theorem*}{Theorem}
\newtheorem{lemma}[theorem]{Lemma}
\newtheorem{claim}[theorem]{Claim}
\newtheorem{proposition}[theorem]{Proposition}
\newtheorem{corollary}[theorem]{Corollary}
\theoremstyle{definition}{

\newtheorem{definition}[theorem]{Definition}
\newtheorem*{definition*}{Definition}

\newtheorem{remark}[theorem]{Remark}
}
\renewcommand{\P}{\mathbb P}
\newcommand{\Z}{\mathbb Z}
\newcommand{\cA}{\ensuremath{\mathcal A}}
\newcommand{\cB}{\ensuremath{\mathcal B}}
\newcommand{\cC}{\ensuremath{\mathcal C}}
\newcommand{\cE}{\ensuremath{\mathcal E}}
\newcommand{\cH}{\ensuremath{\mathcal H}}
\newcommand{\llb }{\llbracket}
\newcommand{\rrb }{\rrbracket}
\newcommand{\anncircuit}{\mathcal{C}_o}
\renewcommand{\epsilon}{\varepsilon}
\newcommand{\tmix}{t_{\textsc{mix}}}
\newcommand{\gap}{\text{\tt{gap}}}
\newcommand{\tv}{{\textsc{tv}}}
\newcommand{\north}{{\textsc{n}}}
\newcommand{\south}{{\textsc{s}}}
\newcommand{\east}{{\textsc{e}}}
\newcommand{\west}{{\textsc{w}}}
\newcommand{\sd}{{\operatorname{sd}}}
\newcommand{\potts}{{\textsc{p}}}
\newcommand{\rc}{{\textsc{rc}}}
\newcommand{\bP}{{\mathbf{P}}}
\newcommand{\bE}{{\mathbf{E}}}
\newcommand{\superimpose}[2]{%
  {\ooalign{$#1\@firstoftwo#2$\cr\hfil$#1\@secondoftwo#2$\hfil\cr}}}
\newcommand{\nconn}{\mathpalette\superimpose{{\longleftrightarrow}{\!\!\!\!\not}}}
\begin{document}

\title{Mixing times of critical 2D Potts models}

\author{Reza Gheissari}
\address{R.\ Gheissari\hfill\break
Courant Institute\\ New York University\\
251 Mercer Street\\ New York, NY 10012, USA.}
\email{reza@cims.nyu.edu}

\author{Eyal Lubetzky}
\address{E.\ Lubetzky\hfill\break
Courant Institute\\ New York University\\
251 Mercer Street\\ New York, NY 10012, USA.}
\email{eyal@courant.nyu.edu}

\begin{abstract}
We study dynamical aspects of the $q$-state Potts model on an $n\times n$ box at its critical $\beta_c(q)$. Heat-bath Glauber dynamics and cluster dynamics such as Swendsen--Wang (that circumvent low-temperature bottlenecks) are all expected to undergo ``critical slowdowns'' in the presence of periodic boundary conditions: the inverse spectral gap, which in the subcritical regime is $O(1)$, should at criticality be polynomial in $n$ for $1< q \leq 4$, and exponential in $n$ for $q>4$ in accordance with the predicted discontinuous phase transition.
This was confirmed for $q=2$ (the Ising model) by the second author and Sly, and for sufficiently large $q$ by Borgs~\emph{et~al.}

Here we show that the following holds for the critical Potts model on the torus:
 for $q=3$, the inverse gap of Glauber dynamics is $n^{O(1)}$; for $q=4$, it is at most $n^{O(\log n)}$; and for every $q>4$ in the phase-coexistence regime, the inverse gaps of both Glauber dynamics and Swendsen--Wang dynamics are exponential in $n$.

For free or monochromatic boundary conditions and large $q$, we  show that the dynamics at criticality is faster than on the torus
(unlike the Ising model where free/periodic boundary conditions induce similar dynamical behavior at all temperatures): the inverse gap of Swendsen--Wang dynamics is $\exp(n^{o(1)})$.
\end{abstract}
{\mbox{}
\vspace{-1.25cm}
\maketitle
}
\vspace{-0.9cm}

\section{Introduction}\label{sec:intro}

The $q$-state Potts model on a graph $G$ at inverse-temperature $\beta>0$ is the distribution $\mu_{G,\beta,q}$ over colorings of the vertices of $G$ with $q$ colors, in which the probability of a configuration $\sigma$ is proportional to $\exp[\beta \cH(\sigma)]$, with $\cH(\sigma)$ counting the number of pairs of adjacent vertices that have the same color (see~\S\ref{subsec:prelim-Potts}).
 Generalizing the Ising model (the case $q=2$), it is one of the most studied models in Mathematical Physics (cf.~\cite{Wu82}), with particular interest in its phase transition on $\Z^d$ ($d\geq 2$) at the critical $\beta=\beta_c$.

The random cluster (FK) model on a graph $G$ with parameters $0<p<1$ and $q>0$ is the distribution $\pi_{G,p,q}$ over
sets of edges of $G$, where the probability of a configuration~$\omega$ with $m$ edges and $k$ connected components is proportional to $[p/(1-p)]^m q^k$ (see~\S\ref{subsec:prelim-rc}). It generalizes percolation ($q=1$) and electrical networks/uniform-spanning-trees ($q\downarrow 0$), and corresponds at integer $q\geq 2$ to the Potts model via the Edwards--Sokal coupling; e.g., one may produce $\sigma\sim\mu_{G,\beta,q}$ by first sampling $\omega\sim\pi_{G,p,q}$ for $p=1-e^{-\beta}$, then assigning an i.i.d.\ color to the vertices of each connected vertex set of $\omega$. As such, extensively studied in its own right, the random cluster representation has been an important tool in the analysis of Ising and Potts models (see~\cite{Gr04} for further details).

On $\mathbb Z^2$ with $q\geq 1$, significant progress has been made in the study of these models and their rich behavior at the phase transition point $p_c=\frac{\sqrt{q}}{1+\sqrt{q}}$ (and $\beta_c=\log(1+\sqrt{q})$).
It is widely believed (see~\cite[Conj.~6.32 and~(6.33)]{Gr04}) that the phase transition would be continuous (second-order) if $1\leq q \leq 4$ and discontinuous (first-order) for $q>4$: the latter has been proved~\cite{KS82,LMMRS91,LMR86} for $q>24.78$ (see also~\cite[Thm.~6.35]{Gr04}) and supported by exact calculations~\cite{Baxter82} for all $q>4$; the former was very recently proved~\cite{DST15} through an analysis of crossing probabilities in rectangles under various boundary conditions.
Here we build on this recent work to study the \emph{dynamical} behavior of the critical planar Potts and FK models in the three regimes: $1< q<4$, the extremal $q=4$, and $q>4$.

Heat-bath Glauber dynamics is a local Markov chain, introduced in~\cite{Gl63}, that models the evolution of a spin system as well as provides a natural way of sampling from it.
For the Potts model, the dynamics updates each vertex via an i.i.d.\ rate-1 Poisson process, where its new value is sampled according to $\mu_{G,\beta,q}$ conditioned on the values of all other vertices (this dynamics for the FK model is similarly defined via single-bond updates).

Swendsen--Wang dynamics is a Markov chain on Potts configurations, introduced in~\cite{SW87}, aimed at overcoming bottlenecks in the energy landscape (thus providing a potentially faster sampler compared to Glauber dynamics) via global cluster flips:
the dynamics moves from a Potts configuration $\sigma$ to a compatible FK configuration $\omega$ via the Edwards--Sokal coupling, then to a new Potts configuration $\sigma'$ compatible with $\omega$.
Chayes--Machta dynamics~\cite{ChMa97} is a closely related Markov chain on FK configurations, analogous to Swendsen--Wang for integer $q$, yet defined for any real $q\geq1$ (see~\S\ref{subsec-prelim-dyn}).

The spectral gap of a discrete-time Markov chain, denoted $\gap$, is $1-\lambda$ where $\lambda$ is the largest nontrivial eigenvalue of the transition kernel, and for a continuous-time chain it is the gap in the spectrum of its generator.
It serves as an important gauge for the rate of convergence of the chain to equilibrium, as it governs its $L^2$-mixing time.
For the above mentioned dynamics on the Potts/FK models, the inverse spectral gap is expected to feature a well-documented phenomenon known as \emph{critical slowdown}~\cite{HoHa77,LaFo93};
in what follows we restrict our attention to $\Z^2$, though an analogous picture is expected in higher dimensions as well as on other geometries (see, e.g.,~\cite{LScritical} for further details).
Glauber dynamics for the Potts model on an $n\times n$ torus should have $\gap^{-1}$ transition from $O(1)$ at high temperature ($\beta<\beta_c$) to $\exp( c n)$ at low temperatures ($\beta>\beta_c$) through either a critical power-law  when $1 < q \leq 4$ or an order of $\exp(c n)$ when $q>4$ (in accordance with the first-order phase transition believed to occur at $q>4$).
Swendsen--Wang/Chayes--Machta dynamics should, by design, have $\gap^{-1}=O(1)$ both at high and low temperatures, yet should  also exhibit a critical slowdown at $\beta=\beta_c$.

While this picture for the Potts model has been essentially verified for Glauber dynamics for all $\beta<\beta_c$
and Swendsen--Wang for all $\beta\neq\beta_c$ (see~\S\ref{sec:related-work}), the case $\beta=\beta_c$ has largely evaded rigorous analysis, with two exceptions: for $q=2$, a polynomial upper bound on $\gap^{-1}$ of Glauber dynamics for the Ising model was given in~\cite{LScritical}; and for sufficiently large $q$, Borgs \emph{et al.}~\cite{BCFKTVV99} showed in 1999 that the Swendsen--Wang dynamics has $\gap^{-1} = \exp[n^{1-o(1)}]$ (thereafter improved to $\log \gap^{-1}\asymp  n$ in~\cite{BCT12}).

 Crucial to the analysis of the dynamics for $q=2$ were \emph{Russo--Seymour--Welsh} (RSW) estimates for the corresponding FK model---which state that on $n\times m$ rectangles with uniformly bounded aspect ratios and free boundary conditions, crossing probabilities are uniformly bounded away from $0$---obtained by~\cite{DHN11} using the discrete holomorphic observable framework of Smirnov~\cite{Sm10}.
The framework of~\cite{Sm10} is further applicable to the critical Potts model for $q=3$ (where the model is expected to have a conformally invariant scaling limit), and the above RSW-type estimates for the FK-Ising model have been recently extended by Duminil-Copin, Sidoravicius and Tassion~\cite{DST15} to this case; this allows one to similarly extend the dynamical analysis of~\cite{LScritical} to $q=3$. However, at $q=4$, these RSW estimates are no longer expected to hold, and instead crossing probabilities are believed to be highly sensitive to boundary conditions, thus resulting in a quasi-polynomial (rather than a polynomial) upper bound on mixing.

The following theorems demonstrate the change in the critical slowdown of the Potts and random cluster models on $(\Z/n\Z)^2$ between these different regimes of $q$.

\begin{figure}
\vspace{-0.3cm}
\centering
\includegraphics[width=0.3\textwidth]{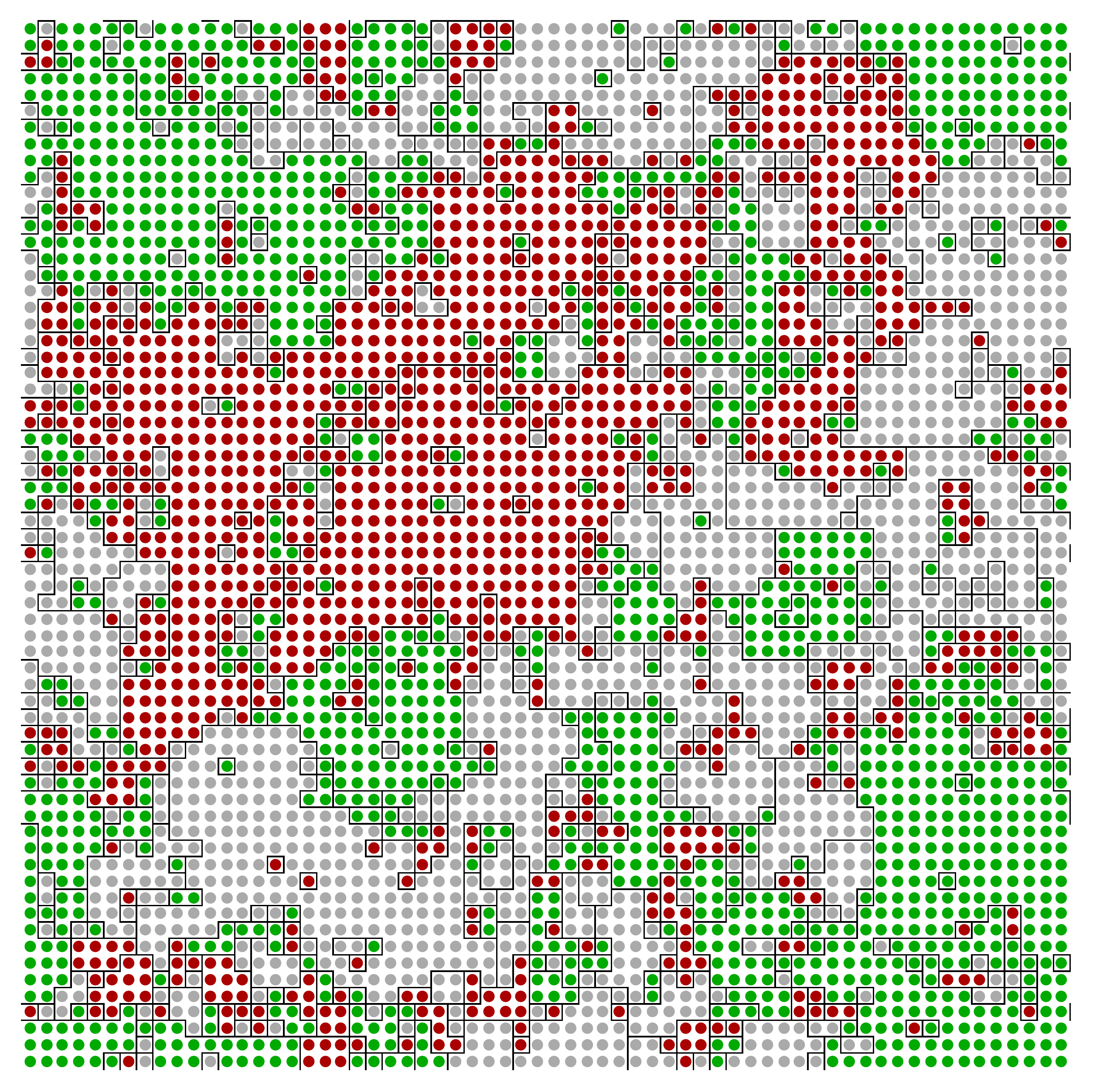}
\hspace{-0.1cm}\includegraphics[width=0.3\textwidth]{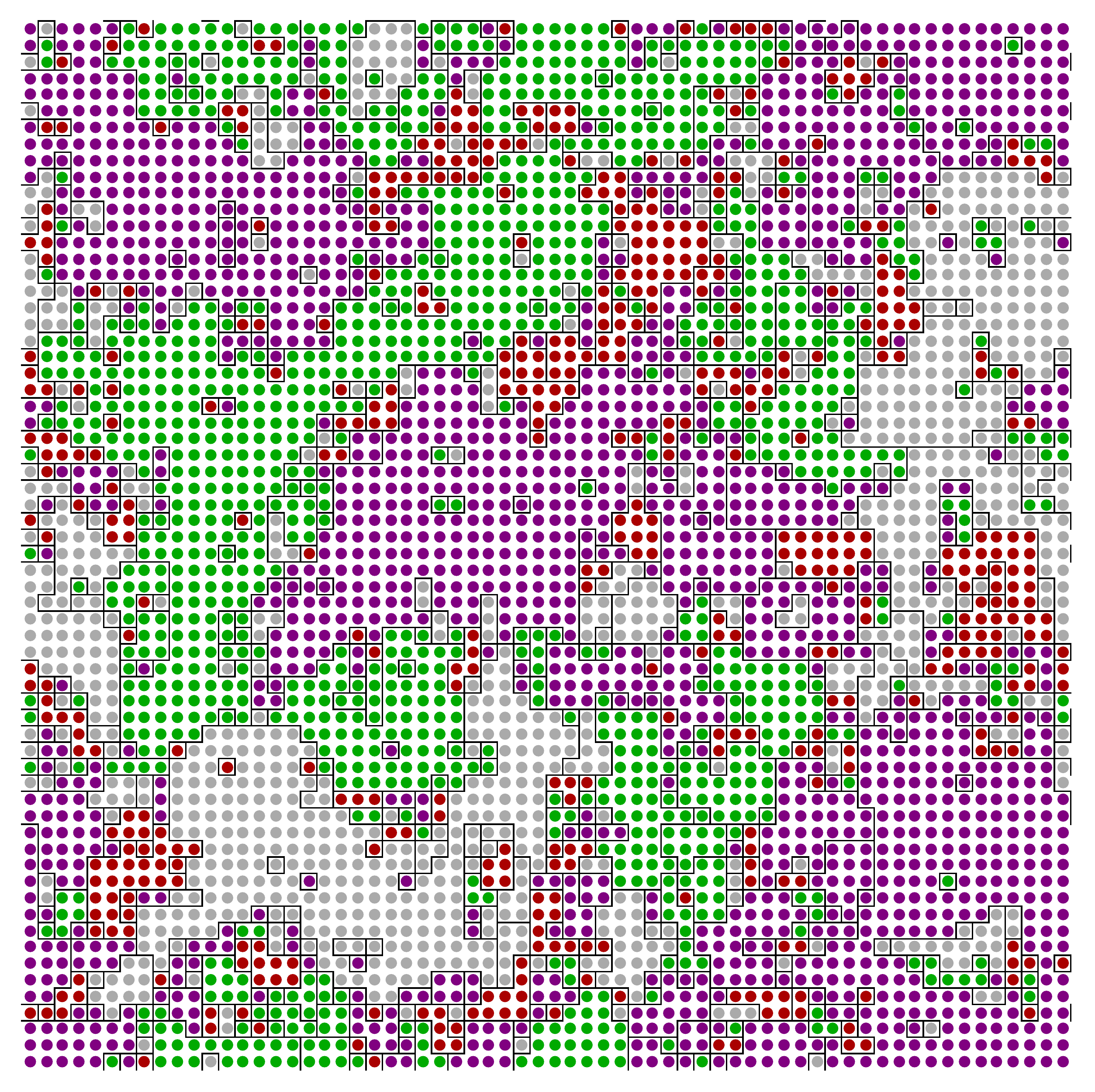}
\hspace{-0.1cm}\includegraphics[width=0.3\textwidth]{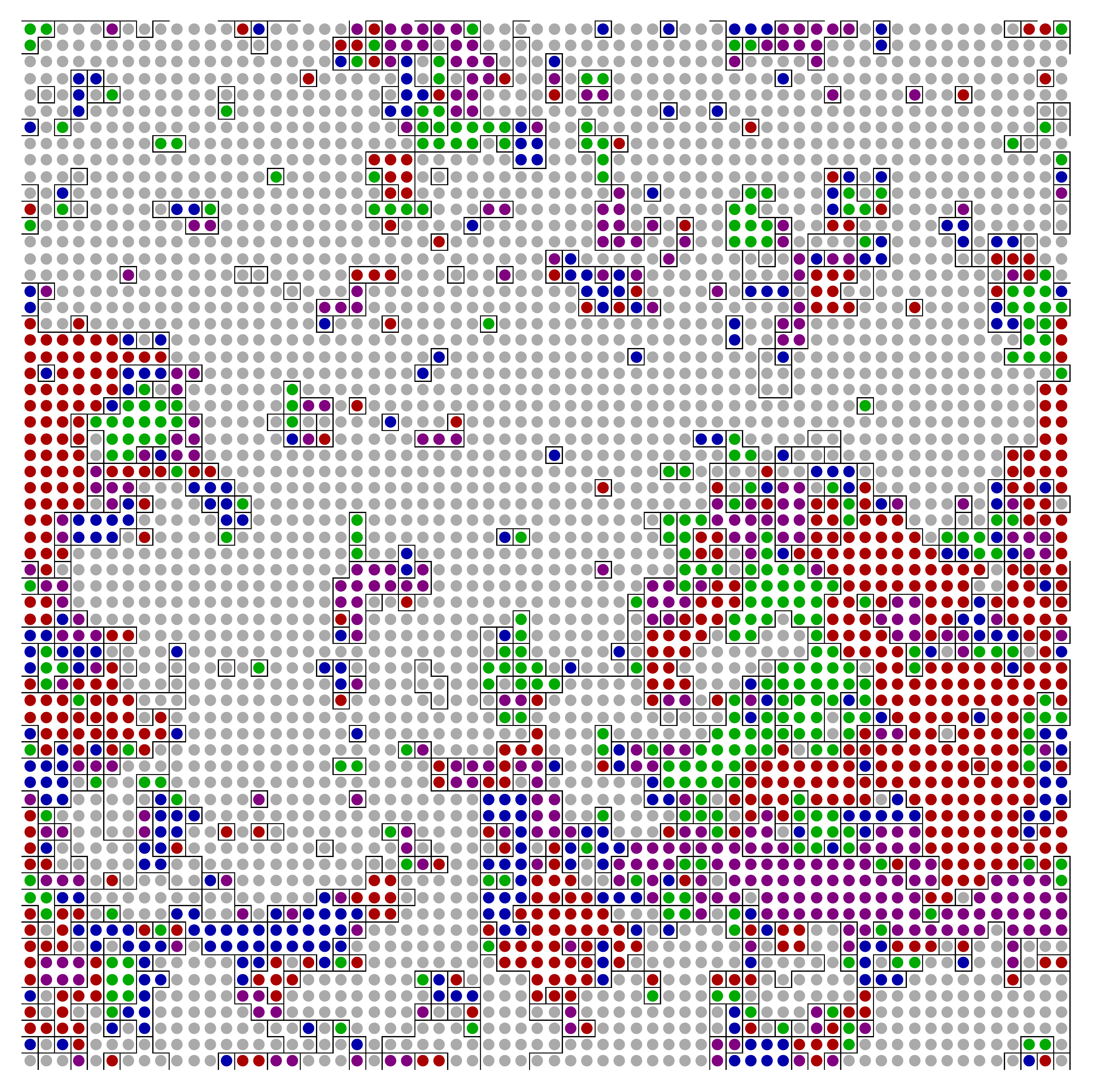}
\vspace{-0.35cm}
 \caption{Critical Potts configurations at $q=3,4,5$ on the torus $(\mathbb Z/n\mathbb Z)^2$ at $n=64$ with FK boundaries between different clusters.}
  \label{fig:potts_fk_q345}
\vspace{-0.2cm}
\end{figure}

\begin{maintheorem}\label{mainthm-q<=4}
There exist absolute $c_1,c_2>0$ so that Swendsen--Wang dynamics and Glauber dynamics on $(\mathbb Z/n\mathbb Z)^2$ satisfy the following:  for the $3$-state critical Potts model,
\begin{align}\label{eq:q=3}
\gap^{-1} \lesssim n^{c_1}\,,
\end{align}
whereas for the $4$-state critical Potts model,
\begin{align} \label{eq:q=4}
\gap^{-1} \lesssim n^{c_2 \log n}\,.
\end{align}
Eqs.~\eqref{eq:q=3} and~\eqref{eq:q=4} hold also for Glauber dynamics and Chayes--Machta dynamics for the critical FK model at $q=3$ and $q=4$, respectively.
\end{maintheorem}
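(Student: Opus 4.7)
The plan is to adapt the scheme of \cite{LScritical} for the critical Ising model to the settings $q=3$ and $q=4$, converting the crossing estimates of \cite{DST15} for the critical FK model into upper bounds on the coalescence time of the FK Glauber grand coupling, and from there to the Potts Glauber, Swendsen--Wang, and Chayes--Machta dynamics.

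\textbf{Step 1: Reduction to FK Glauber.} By the known comparison inequalities among Potts Glauber, Swendsen--Wang, Chayes--Machta, and single-bond FK Glauber dynamics---which at fixed $q$ differ by at most polynomial factors in the volume---it suffices to bound $\gap^{-1}$ for single-bond FK Glauber on $(\Z/n\Z)^2$.

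\textbf{Step 2: Monotonicity and grand coupling.} Single-bond FK Glauber is monotone in its initial condition, so $\gap^{-1}$ is controlled by the expected time at which the grand coupling started from the all-wired and all-free configurations coalesces. I will estimate the probability that at time $t$ the two extremal chains still disagree at a prescribed vertex $x$, and then union bound over $x \in (\Z/n\Z)^2$.

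\textbf{Step 3: Local coupling via discrepancy circuits.} Whenever the discrepancy set $\omega_t^{+} \setminus \omega_t^{-}$ contains a dual circuit surrounding a vertex $x$, the two chains can be coupled inside that circuit by a single-block resampling using the FK domain Markov property and stochastic domination. It thus suffices to show that such a discrepancy circuit encloses $x$ after polynomially (resp.\ quasi-polynomially) many steps.

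\textbf{Step 4: $q=3$, polynomial bound.} At $q=3$ the RSW estimates of \cite{DST15} provide crossing probabilities for bounded-aspect-ratio rectangles that are bounded below by a universal constant, uniformly in boundary conditions. Following \cite{LScritical}, I would combine these with the FKG inequality to build a dual discrepancy-circuit at any prescribed dyadic scale $\ell \le n$ with probability bounded below independently of $\ell$. A block-dynamics comparison and a recursion over dyadic scales then yield $\gap^{-1} \le n^{O(1)}$.

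\textbf{Step 5: $q=4$, quasi-polynomial bound.} At $q=4$ the RSW bounds of \cite{DST15} yield crossing probabilities that are not uniform in the boundary conditions; instead one only gets a lower bound polynomial in $1/\ell$ for the probability of producing a dual discrepancy-circuit at scale $\ell$. Running the same multiscale recursion but absorbing a polynomial factor at each of the $O(\log n)$ dyadic scales yields $\gap^{-1} \le n^{O(\log n)}$.

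The main obstacle is Step~5: at $q=4$, one must carefully exploit the weaker, boundary-sensitive crossing estimates of \cite{DST15} and tune the multiscale recursion so that the per-scale losses compound into precisely $n^{O(\log n)}$ and not worse. Step~4 is also delicate, since the circuits have to be built from \emph{discrepancy} edges common to the two grand-coupled chains (rather than edges open under a single boundary condition), so monotonicity and control of the block boundary conditions must be propagated through the recursion.
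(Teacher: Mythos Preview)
Your overall strategy---block dynamics with a multiscale recursion driven by RSW crossing estimates, following~\cite{LScritical}---is the right one, but two points depart from the paper in ways that create real gaps.

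\textbf{Reducing to FK Glauber is backwards.} You propose to handle FK Glauber directly and then transfer to the other chains. The paper does the opposite: it proves the bound for \emph{Potts} Glauber on a box with arbitrary Potts boundary conditions (Theorem~\ref{thm:general-RSW-mixing} and Corollary~\ref{cor:poly/quasipoly}), extends to the torus, and only then invokes Ullrich's comparison to reach FK Glauber, Swendsen--Wang, and Chayes--Machta. The reason is that the block-dynamics recursion manufactures boundary conditions on the sub-blocks, and for the FK model these boundary conditions are partitions that may contain long-range wirings (cf.\ Figure~\ref{fig:long-range-bc}). Such wirings can ``bridge'' a dual crossing and prevent the grand-coupling step from actually coupling the two block samples past it. The paper explicitly flags this: the argument ``fails to produce a polynomial upper bound for the critical FK model'' when run directly on FK (see the remark preceding Theorem~\ref{thm:general-RSW-mixing}); the companion paper~\cite{GL16b} is devoted to this issue. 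By contrast, Potts boundary conditions are just colorings; the paper passes to FK \emph{inside} the block via Edwards--Sokal together with the decreasing event $\mathcal E_\zeta$, and this event is precisely what guarantees that no new boundary connections can be created across the dual crossing, so the coupling succeeds. Your Step~3 (``discrepancy circuits'' and ``single-block resampling'') does not address this obstruction.

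\textbf{The $q=4$ crossing input is not in~\cite{DST15}.} For Step~5 you assume a polynomial-in-$1/\ell$ lower bound on crossing probabilities uniform over boundary conditions. What~\cite{DST15} supplies at $q=4$ is only the bulk estimate of Theorem~\ref{rmk:crossing-no-bc} (crossings of a sub-rectangle bounded away from $\partial\Lambda$). The paper has to \emph{prove} the full-rectangle estimate (Theorem~\ref{thm:crossing-any-bc}) by a separate stitching argument: force $O(\log n)$ edges open at cost $n^{-O(1)}$, then chain together $O(\log n)$ crossings of dyadically growing rectangles---each bounded via Theorem~\ref{rmk:crossing-no-bc} inside a $\tfrac32$-dilation---and close up with an annulus circuit from Theorem~\ref{rmk:circuit-in-annulus}. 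This produces $a_n \gtrsim n^{-\gamma}$, which fed into the recursion $\gap^{-1}\le (c\,a_n)^{-2\log_{3/2} n}$ gives exactly $n^{O(\log n)}$. Your proposal treats this input as given, but it is one of the main new ingredients for $q=4$.
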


\begin{maintheorem}\label{mainthm-q>4}
Let $q>4$ be such that the critical FK model on $\Z^2$ has two distinct Gibbs measures $ \pi^1_{\mathbb Z^2,q}\neq\pi^0_{\mathbb Z^2,q}$.
There exists $c=c(q)>0$ such that Swendsen--Wang dynamics and Glauber dynamics for the critical $q$-state Potts model on $(\mathbb Z/n\mathbb Z)^2$
satisfy
\begin{align}\label{eq:q>4}\gap^{-1}\gtrsim \exp(c n)\,.
\end{align}
The same holds for Glauber and Chayes--Machta dynamics for the critical FK model.
\end{maintheorem}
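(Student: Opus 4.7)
The plan is to exhibit an exponentially small Cheeger constant for each of the four chains on the torus and then invoke the standard conductance bound $\gap\le \pi(\partial A)/\min(\pi(A),\pi(A^c))$. I will first carry out the argument for the critical FK model under single-bond Glauber dynamics and then transfer the bottleneck to Chayes--Machta, Potts Glauber, and Swendsen--Wang.

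\emph{Turning two Gibbs measures into surface tension.} The first task is to convert the hypothesis $\pi^1_{\Z^2,q}\neq \pi^0_{\Z^2,q}$ into a quantitative interface estimate at $p_c$. By planar duality at the self-dual point $p_c=\sqrt q/(1+\sqrt q)$, non-uniqueness at $p_c$ is equivalent to the primal wired measure and the dual free measure simultaneously percolating. Standard planar-FK tools (FKG, duality, and a Peierls-type contour argument) then upgrade this qualitative input to a strictly positive \emph{surface tension} $\tau=\tau(q)>0$: on a box with mixed free/wired boundary, an FK interface of length $\ell$ between ordered and disordered regions has Gibbs weight at most $\exp(-\tau\ell+o(\ell))$.

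\emph{Bottleneck on the torus.} Let $A$ be the set of FK configurations on $(\Z/n\Z)^2$ whose largest cluster contains a non-contractible loop of the torus, and $A^c$ its complement. The two-phase structure together with self-duality of the torus at $p_c$ yields $\pi^{\mathrm{per}}(A),\,\pi^{\mathrm{per}}(A^c)\ge c_0(q)>0$: ordered-like samples carry a wrapping giant cluster with positive probability, whereas disordered-like samples do not. On the other hand, any $\omega\in\partial A$ must carry a non-contractible dual contour of length $\Omega(n)$ separating the ordered from the disordered region. Summing the surface-tension estimate over the $\exp(O(n))$ possible locations of such a contour yields $\pi^{\mathrm{per}}(\partial A)\le \exp(-cn)$, and Cheeger gives $\gap^{-1}\gtrsim \exp(cn)$ for single-bond FK Glauber. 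The Chayes--Machta chain is handled identically, since a Chayes--Machta step is itself a reversible FK move that cannot bypass this interface bottleneck.

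\emph{Transfer to Potts, and main obstacle.} For Potts Glauber, one pushes the FK bottleneck to the Potts level through the Edwards--Sokal coupling: the bottleneck set becomes those Potts configurations whose induced FK sample has a wrapping cluster, so a single-site Potts update still needs a macroscopic interface to leave the set. For Swendsen--Wang, the cluster-decomposition step is an FK sample consistent with the current coloring, so crossing from the ordered to the disordered phase in one step requires the FK layer to lie in the FK bottleneck already controlled above, again giving probability $\exp(-cn)$. The main obstacle is the surface-tension input: obtaining $\tau(q)>0$ at criticality \emph{solely} from the hypothesis $\pi^1\neq\pi^0$, rather than from an explicit large-$q$ expansion as in earlier work of Borgs et al., requires planar duality together with a careful contour analysis on the torus, properly accounting for non-contractible loops and the absence of a preferred boundary phase in the periodic setup.
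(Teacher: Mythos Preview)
Your overall strategy---a Cheeger bottleneck for FK Glauber on the torus, then transfer to the other chains---matches the paper's, but two of your three steps have genuine gaps.

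First, the ``surface tension'' input is both unjustified and unnecessary. You assert that $\pi^1\neq\pi^0$ at $p_c$ yields $\tau(q)>0$ via ``standard planar-FK tools''; this is not standard and, at criticality, is exactly the kind of statement that historically required cluster expansion at large $q$. The paper avoids this entirely: by Theorem~\ref{thm:DC-S-T-main} (from~\cite{DST15}), non-uniqueness at $p_c$ is \emph{equivalent} to exponential decay of connectivities under $\pi^0_{\Z^2}$, hence by duality to $\pi^1_{\Z^2}\big((0,0)\stackrel{*}{\longleftrightarrow}\partial\llb -n,n\rrb^2\big)\le e^{-cn}$. This one-arm decay, not a surface-tension bound on interfaces, is the only equilibrium input used.

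Second, your bottleneck $A=\{\text{a wrapping primal loop exists}\}$ and the claim $\pi^{\mathrm{per}}(\partial A)\le e^{-cn}$ skip the real difficulty. To bound the probability that an edge $e$ is pivotal, one must show a macroscopic dual arm from $e$ has exponentially small probability \emph{conditionally on being in the bottleneck set}. Conditioning on a single wrapping loop (your $A$) injects negative information everywhere and does not give you a region on which the conditional law dominates $\pi^1_{\Z^2}$, so the one-arm decay cannot be applied. The paper's fix is to take $S=\bigcap_{i=1}^3 S_v^i\cap S_h^i$, three vertical and three horizontal wrapping crossings in disjoint strips; after a suitable translation on the torus, one reveals the \emph{outermost} open circuit $C$ (by exposing dual components of the boundary) without revealing anything inside $C$. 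On the interior of $C$, $S$ is an increasing event, so the conditional law dominates $\pi^1_{\Z^2}$, and pivotality of $e$ forces a dual arm of length $\ge n/6$ inside $C$, which has probability $\le e^{-cn}$. The paper's ``Proof idea'' paragraph explicitly explains why one or two pairs of crossings do not suffice for this revealing scheme. Finally, the transfer to Swendsen--Wang, Chayes--Machta and Potts Glauber is done in one line via Ullrich's comparison inequalities (Theorem~\ref{thm:Ullrich-comparison}), not by separate Edwards--Sokal bottleneck arguments.
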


\begin{remark}
Since the initial posting of this paper, Duminil-Copin~\emph{et al.}~\cite{DGHMT16} proved the discontinuity of the FK phase transition for all $q>4$ on $\mathbb Z^2$; thus, the bound~\eqref{eq:q>4} from Theorem~\ref{mainthm-q>4} holds for the critical Potts and FK models on $(\mathbb Z/n\mathbb Z)^2$ for all $q>4$. 
\end{remark}

\begin{figure}
\vspace{-0.4cm}
\centering
  \begin{tikzpicture}
    \node (fig1) at (0,0) {
	\includegraphics[width=0.45\textwidth]{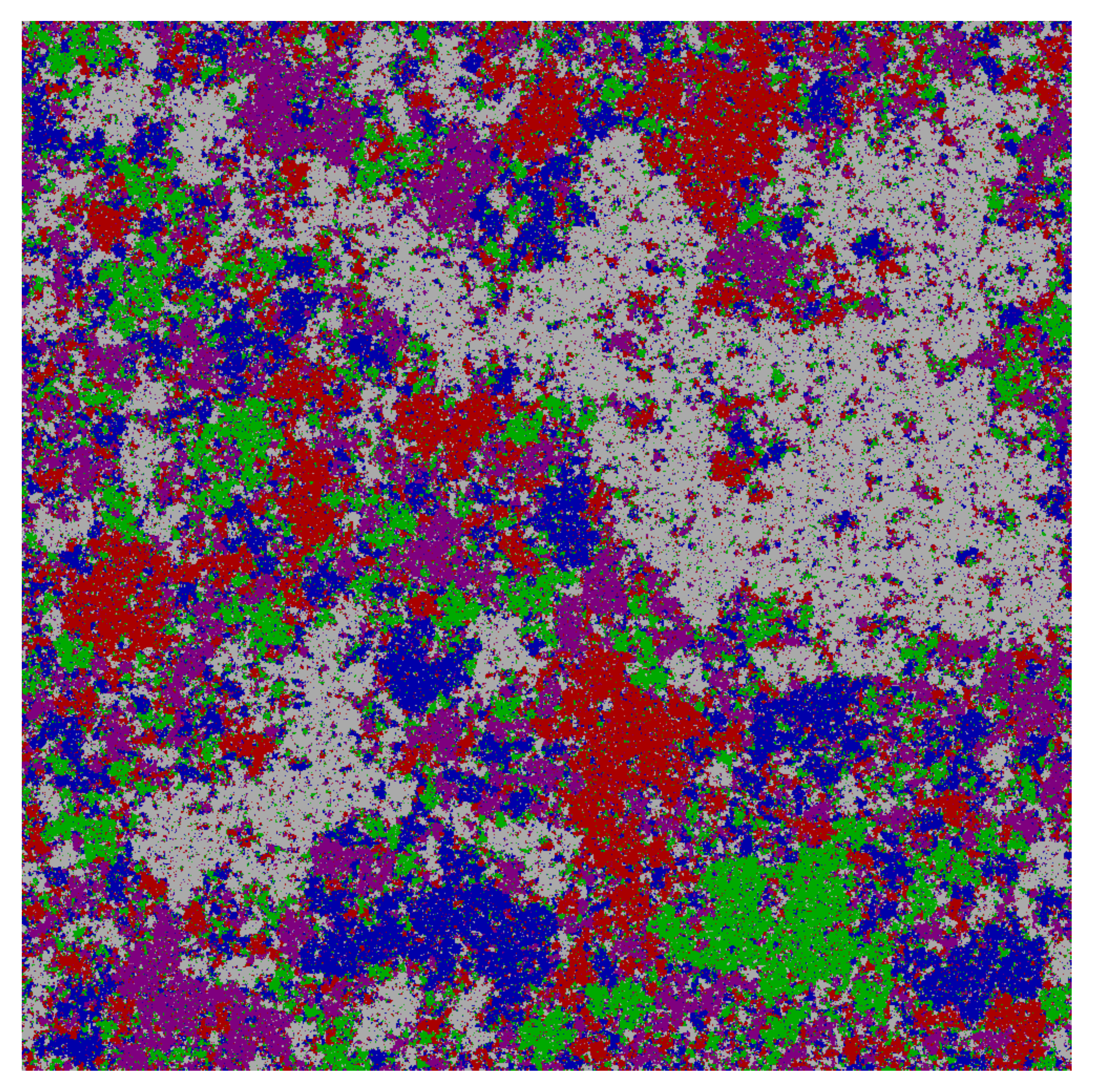}};
    \node (fig2) at (6.8cm,0) {
	\includegraphics[width=0.45\textwidth]{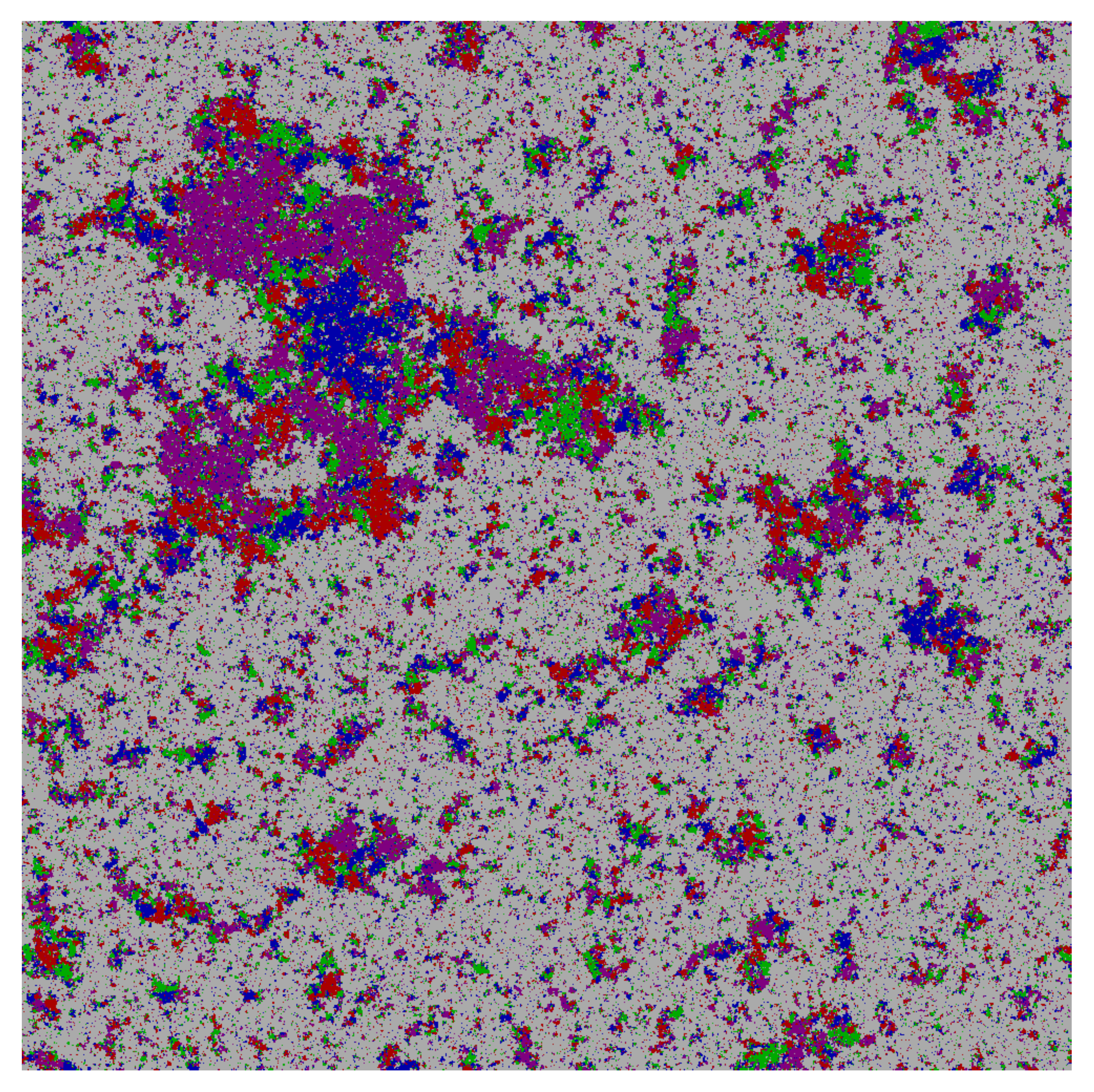}};
    \node (plot1) at (2.8cm,-5.6cm)
    {\includegraphics[width=0.5\textwidth]{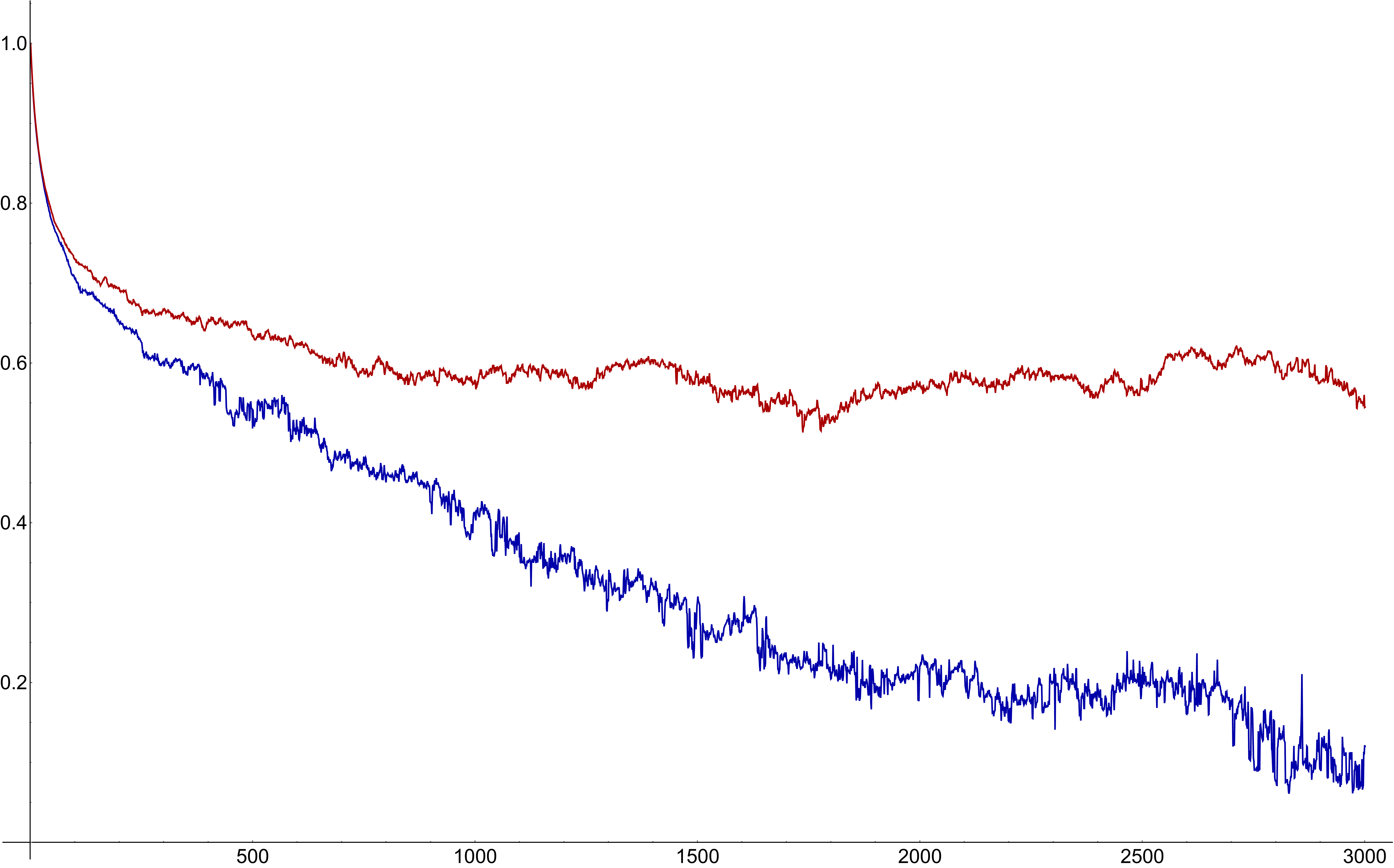}};
    \begin{scope}[shift={(plot1.south west)},x={(plot1.south east)},y={(plot1.north west)}]
      \node[font=\tiny,color=red] at (1.05,0.57) {periodic};
      \node[font=\tiny,color=blue] at (1.03,0.12) {free};
      \end{scope}
  \end{tikzpicture}
\vspace{-0.32cm}
 \caption{Swendsen--Wang for the critical 5-state Potts model on a $1000\times1000$ square (3000 iterations from a monochromatic initial state) under free (left) vs.\ periodic (right) boundary conditions; plot shows largest component (fractional) size in the intermediate FK configuration.}
  \label{fig:potts_sw}
\vspace{-0.3cm}
\end{figure}

Furthermore, the Glauber dynamics upper bounds  in Theorem~\ref{mainthm-q<=4} also hold for boxes with \emph{arbitrary} (as opposed to periodic) Potts boundary conditions (see Corollary~\ref{cor:poly/quasipoly}). In a companion paper \cite{GL16b}, for a wider class of boundary conditions, a matching upper bound to~\eqref{eq:q=4} is established for Glauber dynamics for the FK model at every $q\in (1,4]$. 
On the other hand, for $q>4$, one does not expect Swendsen--Wang and Glauber dynamics to be slow under \emph{every} boundary condition; e.g., monochromatic boundary conditions should destabilize all Gibbs states but one, inducing faster mixing, as in the case of the low temperature Ising model with plus boundary conditions (cf.~\cite[\S6]{Martinelli97}).

If we naively followed the intuition from the low temperature Ising model, free boundary conditions (where plus and minus phases are both metastable so $\gap^{-1}\gtrsim \exp(c n)$), might be expected to induce the same (slow) critical mixing behavior as in the torus. However, this is not case (see Fig.~\ref{fig:potts_sw}), as the following theorem demonstrates.

\begin{maintheorem}\label{mainthm:large-q-upper}
Fix $q$ large enough. The following holds for Swendsen--Wang dynamics for the critical $q$-state Potts model on an $n\times n$ box with free boundary conditions:
\begin{align}
\gap^{-1}\lesssim \exp\big(n^{o(1)}\big)\,. \label{eq:large-q-upper-1}
\end{align}
The same holds for Glauber and Chayes--Machta dynamics for the critical FK model.%,
\end{maintheorem}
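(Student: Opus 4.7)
The plan is to reduce the Swendsen--Wang mixing time to that of the single-bond / Chayes--Machta FK dynamics, and then exploit the fact that under free boundary conditions at $p_c$ for $q$ large, Pirogov--Sinai theory selects the disordered Gibbs state $\pi^0$, in which connectivities decay exponentially. First, I would invoke the standard spectral comparison (Ullrich-type inequalities) between Swendsen--Wang and the single-bond Glauber / Chayes--Machta FK dynamics, so that the two gaps agree up to $\mathrm{poly}(n)$ factors. The problem then reduces to upper-bounding $\gap^{-1}$ of the FK single-bond / Chayes--Machta dynamics at $(p_c,q)$ under free boundary conditions.

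Next, I would establish the key spatial input: for $q$ large and free boundary conditions, the finite-volume critical FK measure $\pi_n^0$ enjoys exponentially decaying connectivities at some scale $\xi = \xi(q) = O(1)$, and the probability that a connected interface of diameter $\ell$ appears near a given vertex is at most $\exp(-c\ell)$ for some $c = c(q) > 0$. This follows from the contour / cluster-expansion analysis of~\cite{LMMRS91,BCFKTVV99}, which shows that at $\beta_c$ under free boundary only the disordered phase is stable.

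With this in hand, I would decompose the $n\times n$ box into disjoint blocks of side $L = L(n) = n^{o(1)}$ and run a block dynamics in which each block is refreshed according to the conditional measure given its exterior. Standard block-dynamics comparison inequalities (Martinelli--Olivieri, Cesi, adapted to the FK setting) bound the single-bond gap by $\mathrm{poly}(n)$ times the block gap, and the block gap is at worst $\exp(O(L^2))$ (since the state space of a single block has $2^{O(L^2)}$ FK configurations), provided that with overwhelming probability each block sees a ``mostly free'' outer configuration --- which is precisely the output of the previous step. Choosing $L = (\log n)^2$, say, then yields $\gap^{-1} \lesssim \mathrm{poly}(n) \cdot \exp\bigl(O((\log n)^4)\bigr) = \exp(n^{o(1)})$.

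The hard part will be ensuring that the spatial-mixing estimate truly enables the block argument: at a first-order transition, boundary conditions can drastically tip the local phase, so I must rule out that large ordered droplets in the complement of a block induce effectively wired boundary and destabilize the local analysis. The contour bound from the second step controls the probability of any droplet of diameter $\ell$ by $\exp(-c\ell)$, and a union bound over the $O(n^2)$ potential droplet locations shows that every block sees only small ($\ll L$) droplets on its boundary with probability $1 - n^2\exp(-\Omega(L))$, which is $1-o(1)$ as soon as $L \gg \log n$. Handling these rare bad events --- and passing from ``conditional on no large droplet'' to an unconditional spectral gap estimate --- is the delicate step that distinguishes free boundary conditions from the periodic case, in which both ordered and disordered phases are metastable and no such contour suppression is available.
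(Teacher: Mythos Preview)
Your reduction to FK Glauber dynamics via Ullrich's comparison is correct and is exactly how the paper begins. The equilibrium input you cite---exponential decay of connectivities under $\pi^0_{\mathbb Z^2}$ at $p_c$ for large $q$---is also correct and is used in the paper. However, the block-dynamics step has a genuine gap that your final paragraph identifies but does not close.

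The block-dynamics comparison $\gap_\Lambda \geq \chi^{-1}\gap_{\mathcal B}\,\inf_{i,\varphi}\gap_{B_i}^\varphi$ requires you to bound the gap $\gap_{\mathcal B}$ of the block chain itself. Your argument for this is the equilibrium statement that, under $\pi^0_\Lambda$, every block boundary sees only small droplets with high probability. But this says nothing about the chain started from the all-wired configuration: when you update a block $B$ whose complement is fully wired, the induced boundary condition on $\partial B$ is wired, and since we are at $p_c$ with a discontinuous transition, the conditional measure on $B$ is then close to the \emph{ordered} phase, not the disordered one. Each block update therefore leaves the configuration essentially wired, and the all-wired state is metastable for the block dynamics. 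Your contour bound and union bound control $\pi^0_\Lambda(\text{large droplet})$, but the block-dynamics gap is governed by how long it takes the chain to reach such typical configurations from the worst initial state---and that is exactly what remains unproved. In short, exponential decay in one phase does not give weak spatial mixing uniformly in boundary conditions, and without the latter the Martinelli--Olivieri/Cesi machinery does not apply.

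The paper's proof addresses precisely this obstruction, and not by block dynamics. It follows the Martinelli--Toninelli censoring strategy: starting from the wired configuration, it uses the Peres--Winkler censoring inequality to restrict updates to carefully chosen sub-rectangles, and shows via cluster-expansion interface estimates (surface tension and ``cigar-shaped'' fluctuation bounds, Propositions~\ref{prop:fk-415}--\ref{prop:D-estimate}) that the order--disorder interface stays within $n^{1/2+\epsilon}$ of the wired side. This is packaged into a recursion (Proposition~\ref{prop:recursion}) on $n\times n^{1/2+\epsilon}$ rectangles with ``free on three sides, wired on one'' boundary, which is then bootstrapped to the full $n\times n$ box. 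A second, FK-specific difficulty you do not mention is that ``free'' boundary conditions sampled from $\pi^0_{\mathbb Z^2}$ still carry long-range boundary connections that can bridge over an interface and prevent coupling (see Figure~\ref{fig:long-range-bc}); the paper handles this by modifying the boundary on segments of length $n^{3\epsilon}$ (Definition~\ref{def:modification}, Lemma~\ref{lem:bc-perturbation}) at a cost of $\exp(n^{3\epsilon})$ in the mixing time, which is absorbed into the final $\exp(n^{o(1)})$ bound.
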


The estimate~\eqref{eq:large-q-upper-1} holds also for monochromatic Potts boundary conditions (which correspond to wired FK boundary conditions), since it is a consequence of the analogous bound for the FK Glauber dynamics, where free boundary conditions at $p_c(q)$ are self-dual to wired boundary conditions. In fact, we establish~\eqref{eq:large-q-upper-1} for all FK boundary conditions sampled from the free or wired Gibbs measures (see Proposition~\ref{prop:all-free}), as well as ones that are free on three sides and wired on the fourth (Corollary~\ref{cor:3-sides}).

\begin{remark}\label{rem-tmix-metropolis}
By the well-known relation between $\gap$ and the total variation mixing time $\tmix$ (see~\S\ref{sec:mcmt}), the bounds in Theorems~\ref{mainthm-q<=4}--\ref{mainthm:large-q-upper} all hold with $\gap^{-1}$ replaced by $\tmix$.  Similarly, standard comparison estimates (see~\cite[Lemma~13.22]{LPW09}) extend our bounds for heat-bath Glauber dynamics to Metropolis (and other flavors of Glauber dynamics).
\end{remark}

\begin{figure}%[htp!]
  \begin{tikzpicture}
    \node (plot1) at (7,0) {};

    \node (plot2) at (0,0) {};

    \begin{scope}[shift={(plot1.south west)},x={(plot1.south east)},y={(plot1.north west)}, font=\small]

     \draw[color=DarkGreen, thick] (0.,5.3) arc (130:230:2) ;
     \draw[color=DarkGreen, thick] (0.,8.3) arc (130:230:2) ;
     \draw[color=DarkGreen, thick] (0.,11.3) arc (130:230:2) ;
     \draw[color=orange, thick] (20.05,1.95) arc (-50:50:2) ;
     \draw[color=orange, thick] (20.05,5.05) arc (-50:50:2) ;
     \draw[color=orange, thick] (20.05,8.15) arc (-50:50:2) ;
     \draw[color=purple, thick] (0,13.04) arc (148:212:11.6) ;

     \draw[draw=black,dashed] (0,7) rectangle (20,14);
     \draw[color=black,thick] (0,0) rectangle (20,14);

     \node (fig1) at (10.02,8.5) {
	\includegraphics[width=159.5pt]{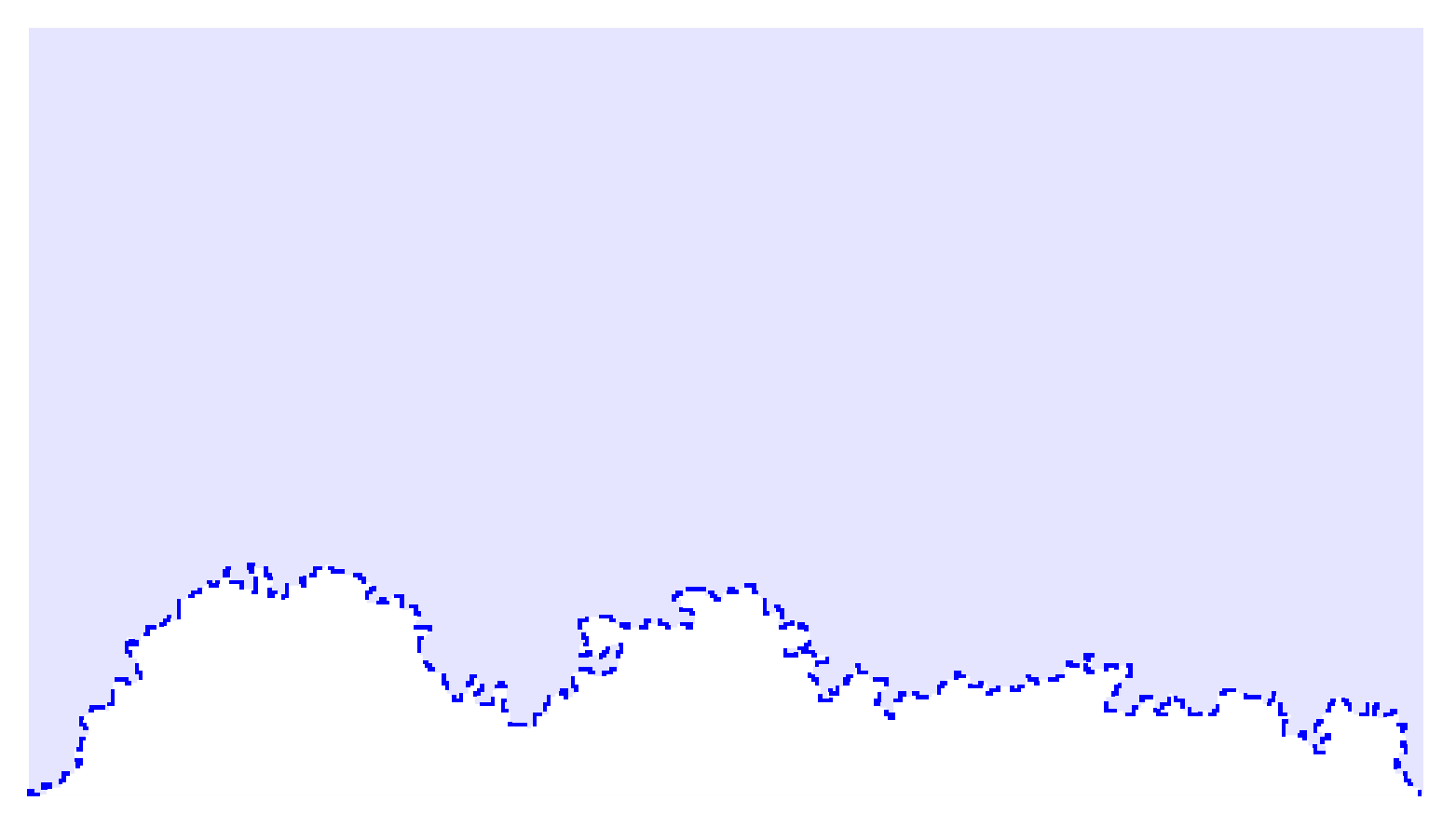}};

     \draw[color=purple,fill=purple] (0,.8) circle (0.05) node[above right] {};
     \draw[color=DarkGreen,fill=DarkGreen]  (0,2.26) circle (0.05) node[above right] {};
     \draw[color=DarkGreen,fill=DarkGreen]  (0,5.263) circle (0.05) node[above right] {};
     \draw[color=DarkGreen,fill=DarkGreen]  (0,8.263) circle (0.05) node[above right] {};
     \draw[color=DarkGreen,fill=DarkGreen]  (0,11.28) circle (0.05) node[above right] {};
     \draw[color=orange,fill=orange]  (20,1.94) circle (0.05) node[above right] {};
     \draw[color=orange,fill=orange]  (20,5.02) circle (0.05) node[above right] {};
     \draw[color=orange,fill=orange]  (20,8.13) circle (0.05) node[above right] {};
     \draw[color=orange,fill=orange]  (20,11.25) circle (0.05) node[above right] {};
    \draw[color=purple,fill=purple]  (0,12.98) circle (0.05) node[above right] {};

    \end{scope}

    \begin{scope}[shift={(plot2.south west)},x={(plot2.south east)},y={(plot2.north west)}, font=\small]

     \draw[color=purple, thick] (0.,5.3) arc (130:230:2) ;
     \draw[color=purple, thick] (0.,8.3) arc (130:230:2) ;
     \draw[color=purple, thick] (0.,11.3) arc (130:230:2) ;
     \draw[color=purple, thick] (20.05,1.95) arc (-50:50:2) ;
     \draw[color=purple, thick] (20.05,5.05) arc (-50:50:2) ;
     \draw[color=purple, thick] (20.05,8.15) arc (-50:50:2) ;
     \draw[color=purple, thick] (0,13.04) arc (148:212:11.6) ;

     \draw[draw=black,dashed] (0,7) rectangle (20,14);
     \draw[color=black,thick] (0,0) rectangle (20,14);

     \node (fig1) at (10.02,8.5) {
	\includegraphics[width=159.5pt]{fig_interface}};

     \node (fig2) at (10.01,1.25) {
	\includegraphics[width=159.5pt]{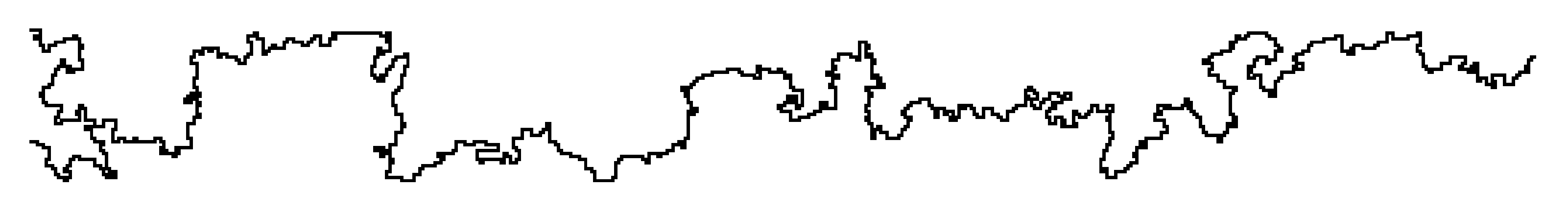}};

     \draw[color=purple,fill=purple] (0,.8) circle (0.05) node[above right] {};
     \draw[color=purple,fill=purple]  (0,2.26) circle (0.05) node[above right] {};
     \draw[color=purple,fill=purple]  (0,5.263) circle (0.05) node[above right] {};
     \draw[color=purple,fill=purple]  (0,8.263) circle (0.05) node[above right] {};
     \draw[color=purple,fill=purple]  (0,11.28) circle (0.05) node[above right] {};
     \draw[color=purple,fill=purple]  (20,1.94) circle (0.05) node[above right] {};
     \draw[color=purple,fill=purple]  (20,5.02) circle (0.05) node[above right] {};
     \draw[color=purple,fill=purple]  (20,8.13) circle (0.05) node[above right] {};
     \draw[color=purple,fill=purple]  (20,11.25) circle (0.05) node[above right] {};
    \draw[color=purple,fill=purple]  (0,12.98) circle (0.05) node[above right] {};
    \draw[color=black!50,decoration={bumps, segment length=0.185in, amplitude=0.02in}, decorate] (20,-0.04) -- (0, -0.04);

    \end{scope}

  \end{tikzpicture}
  \caption{Long-range connections in the boundary conditions stand in the way of coupling FK configurations beyond a boundary interface.}
  \label{fig:long-range-bc}
  \vspace{-0.1cm}
\end{figure}

Theorems~\ref{mainthm-q>4} and~\ref{mainthm:large-q-upper} show the similarities between the dynamical behavior of the Potts model at its critical point $\beta_c$ in the presence of a discontinuous phase transition, and the 2D Ising model in the low temperature regime $\beta>\beta_c$. The proof of Theorem~\ref{mainthm-q>4} is based on identifying a bottleneck involving the geometry of the torus, between the ordered and disordered phases in the critical FK model using only the multiplicity of Gibbs measures; this is akin to the energy barrier between the plus and minus phases in the low temperature Ising model. 

Moreover, through this similarity, our analysis of the Potts model at $\beta_c$ extends to its entire low temperature regime $\beta>\beta_c$,
where the slow mixing behavior of Glauber dynamics was shown for $q$ large enough in~\cite{BCFKTVV99,BCT12}. An adaptation of the proof of Theorem~\ref{mainthm-q>4} establishes this result for all $q>1$.
\begin{maintheorem}\label{mainthm-low-temp}
Consider the Potts model on $(\mathbb Z/n\mathbb Z)^2$ for any $q>1$ and $\beta>\beta_c(q)$. There exists $c=c(\beta,q)>0$ such that the Glauber dynamics has $\gap^{-1}\gtrsim \exp(cn)$.
\end{maintheorem}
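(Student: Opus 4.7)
The plan is to adapt the conductance-based bottleneck argument underlying Theorem~\ref{mainthm-q>4}. In that theorem the bottleneck separated the ordered and disordered FK phases at $\beta_c$ for $q>4$; here, at $\beta>\beta_c(q)$ for general $q>1$, the relevant bottleneck is instead between the $q$ symmetry-related ordered Gibbs phases of the Potts model. Their coexistence is guaranteed by the positivity of the spontaneous magnetization $m(\beta,q)>0$, while the complementary input, exponential decay of dual FK connections, follows from the sharpness of the FK phase transition at $\beta>\beta_c$ (so that the dual FK model at $p^*<p_c$ is subcritical).

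First I would fix $\epsilon \in (0, m(\beta,q)/q)$ and define, for each color $i\in\{1,\ldots,q\}$,
\[
A_i = \bigl\{\sigma : |\sigma^{-1}(i)| > |\sigma^{-1}(j)| + \epsilon n^2 \text{ for every } j \neq i\bigr\}.
\]
By the color-permutation symmetry of the Potts Gibbs measure on the torus, $\mu(A_i) = \mu(A_1)$ for every $i$; and comparing the density of color $i$ against the extremal infinite-volume Gibbs measures $\mu^i_{\mathbb{Z}^2}$ on a translate of a mesoscopic box shows $\mu(\bigcup_i A_i) = 1 - o(1)$, and therefore $\mu(A_1) \geq 1/q - o(1)$.

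Second, applying the Cheeger bound to $S = A_1$ reduces the problem to proving $\mu(\partial S) \leq \exp(-cn)$, where $\partial S \subseteq S$ collects configurations whose single-vertex heat-bath update can exit $S$ with positive probability. Every $\sigma \in \partial S$ satisfies the near-equality condition $|\sigma^{-1}(1)| - |\sigma^{-1}(j)| \leq \epsilon n^2 + 1$ for some $j \neq 1$, so it suffices to show that the set of all near-equality configurations has $\mu$-measure at most $\exp(-cn)$. The key geometric step is that under any such near-equality condition, the Edwards--Sokal coupled FK configuration must contain a dual circuit wrapping the torus that separates regions dominated by colors $1$ and $j$; using exponential decay of connections in the subcritical dual FK model together with a union bound over the location of this wrapping circuit then yields the required bound.

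The main obstacle will be the topological step: passing from a quantitative near-equality of color counts to the existence of a macroscopic dual wrapping circuit. I expect this to require showing that in any typical Potts configuration at $\beta > \beta_c$, the vertices of each color are concentrated in a single giant cluster (with only a vanishing fraction in small clusters); near-equality then forces two giant clusters of different colors to coexist on the torus, which by planarity creates the required dual separating circuit. This control on the cluster geometry uses the low-temperature FK percolation estimates available for all $q > 1$, and is the place where the argument departs from the critical-temperature analysis of Theorem~\ref{mainthm-q>4}, where the ordered/disordered bottleneck was built directly from the multiplicity of FK Gibbs states rather than from a color-symmetry-breaking statement for Potts.
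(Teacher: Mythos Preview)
Your approach is the classical magnetization-bottleneck strategy, but the step you flag as ``the main obstacle'' is a genuine gap that your sketch does not close. The claim ``under any such near-equality condition, the Edwards--Sokal coupled FK configuration must contain a dual circuit wrapping the torus'' is not a deterministic implication: a configuration with $|\sigma^{-1}(1)| - |\sigma^{-1}(j)| \approx \epsilon n^2$ need not have any macroscopic dual structure. What you actually need is the probabilistic statement that $\mu^p_\Lambda\big(|\sigma^{-1}(1)|-|\sigma^{-1}(j)|\in[\epsilon n^2,\epsilon n^2+2]\big)\le e^{-cn}$, i.e.\ a large-deviation estimate for the Potts color densities on the torus. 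Your proposed route (two coexisting giant monochromatic clusters forcing a separating dual circuit) requires, at minimum, ruling out FK configurations with an atypically small giant cluster or with several mesoscopic clusters---this is itself a surface-tension/Wulff-type input, not a consequence of exponential decay of dual connectivities alone. The preliminary claim $\mu(\bigcup_i A_i)=1-o(1)$ is in the same spirit and is also not immediate from the inputs you list.

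The paper avoids this entirely by choosing a \emph{geometric} bottleneck rather than a magnetization one. It takes $S=\bigcap_{i=1,2,3} S^i_{v,q}\cap S^i_{h,q}$, where $S^i_{v,q}$ (resp.\ $S^i_{h,q}$) is the event of a color-$q$ vertical (resp.\ horizontal) loop in the $i$-th third of the torus---this is the Potts analogue of the FK bottleneck in Theorem~\ref{mainthm-q>4}. For a vertex $v$ to be pivotal to, say, $S^1_{v,q}$, one reveals the outermost color-$q$ circuit $C^q$ (guaranteed by $S$) and passes to the FK representation inside it; since all spins on $C^q$ equal $q$, the induced FK boundary is fully wired and $\mathcal E$ is trivially satisfied. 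Pivotality of $v$ then forces a dual FK path of length $\ge n/6$ inside $C^q$, which under wired boundary at $p>p_c$ has probability $\le e^{-cn}$ by sharpness of the FK transition. Color symmetry gives $\mu^p_\Lambda(S)\le 1/2$, and the Cheeger bound concludes. The point is that this reduces \emph{directly} to a single correlation-decay estimate, with no need for large deviations of global observables.
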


The proof of Theorem~\ref{mainthm:large-q-upper} follows the approach used in~\cite{MaTo10} to establish sub-exponential upper bounds on $\tmix$ for Swendsen--Wang in the presence of all-plus boundary conditions, and involves adaptations of cluster-expansion techniques and the Wulff construction framework of~\cite{DKS} to the FK model. 
The absence of monotonicity in the Potts model frequently leads us to work directly with the FK representation. However, unlike the Ising model---where central to the upper bounds on mixing in many related works is the coupling of configurations beyond an interface between clusters (e.g., the interface between the plus and minus phases, used to establish the inductive step in the multi-scale argument of~\cite{Martinelli94})---the boundary conditions of the FK model may feature long-range connections between vertices. Using these as a ``bridge'' over the interface (see Figure~\ref{fig:long-range-bc}), different FK configurations below the interface may induce different distributions above it, thus preventing the coupling. Working around obstacles of this type comprises a significant part of the proof of Theorem~\ref{mainthm:large-q-upper}.

\subsection{Related work}\label{sec:related-work}

The critical slowdown picture of Glauber dynamics for the 2D Ising model
is by now fairly well understood.
For $\beta<\beta_c$, the dynamics on an $n\times n$ torus has $\gap^{-1}=O(1)$ via the work of Martinelli and Olivieri~\cite{MaOl94a,MaOl94b} and Martinelli, Olivieri and Schonmann~\cite{MOS94}, showing that, in this regime, 
there is a uniform bound on the inverse gap (in fact under arbitrary boundary conditions; see~\cite[\S3.7]{Martinelli97}). That this dynamics has $\gap^{-1}\gtrsim \exp(c_\beta n)$ at any $\beta>\beta_c$ for some $c_\beta>0$ was shown by Chayes, Chayes and Schonmann~\cite{CCS87}, and thereafter with the sharp $c_\beta$ 
by Cesi \emph{et al.}~\cite{CGMS96}. Finally, a polynomial upper bound on $\gap^{-1}$ at $\beta=\beta_c$ was given in the aforementioned paper~\cite{LScritical}; establishing the correct \emph{dynamical critical exponent} (believed to be universal and approximately $2.17$; cf.~\cite{LScritical} and its references) remains a challenging open problem.

As for Swendsen--Wang, comparison estimates due to Ullrich~\cite{Ul13,Ul14} imply that its inverse gap is at most that of Glauber dynamics on any graph and at any temperature (see Theorem~\ref{thm:Ullrich-comparison}); thus for $q=2$ on $\mathbb Z^2$ it also has $\gap^{-1}=O(1)$ for all $\beta<\beta_c$ and for all $\beta>\beta_c$ thanks to duality, and similarly at $\beta=\beta_c$ it has $\gap^{-1}=n^{O(1)}$.

For all other $q> 1$, Glauber dynamics for the Potts model on $(\Z/n\Z)^2$ is again known to have $\gap^{-1}=O(1)$ for all $\beta<\beta_c$ by combining the following results: Alexander~\cite{Al98} related exponential decay of connection probabilities in the FK model on $\Z^2$ to an analogous spatial mixing property in the Potts model on a finite box; Beffara and Duminil-Copin~\cite{BeDu12} proved the exponential decay of correlations in the FK model for all $\beta<\beta_c$; and the works of Martinelli \emph{et al.}~\cite{MaOl94a,MaOl94b,MOS94} translate the aforementioned spatial mixing property to 
 an $O(1)$ bound on the inverse gap. In contrast, Potts Glauber dynamics on $(\mathbb Z/n\mathbb Z)^2$ is always expected to be exponentially slow for $\beta>\beta_c$: as mentioned before, this is known for $q=2$, and was proved for  large enough $q$ in~\cite{BCFKTVV99,BCT12}.

Using the above mentioned estimates for high temperatures, comparison estimates, and duality,
Swendsen--Wang dynamics for the Potts model for any $q> 1$ also has $\gap^{-1}=O(1)$ for all $\beta\neq \beta_c$. Blanca and Sinclair~\cite{BlSi15} recently showed that for any $q>1$
both  Chayes--Machta dynamics and (heat-bath) Glauber dynamics for the FK model have $\tmix = O(\log n)$ for all $p\neq p_c$ (enjoying duality, the latter mixes rapidly at $p>p_c$ unlike for the Potts model). That $\tmix$ should at the critical $p=p_c$ be polynomial in $n$ for $1 < q\leq 4$ and exponential in it for every $q>4$ was left in~\cite{BlSi15} as an open question.
(See also Li and Sokal~\cite{LiSo91}; there, a polynomial \emph{lower bound} on the mixing of Swendsen--Wang and Glauber dynamics was given in terms of the \emph{specific heat}---a physical quantity which itself is not rigorously known. In~\S\ref{sec:mixing-q-leq-4} (Theorem~\ref{thm:poly-lower-bound}) we give a rigorous polynomial lower bound for $\gap^{-1}$ of the Potts Glauber dynamics.)

In the only two cases so far where the dynamical critical behavior on $(\Z/n\Z)^2$ has been addressed---the case $q=2$ in~\cite{LScritical} and the case of integer $q$ large enough in~\cite{BCFKTVV99,BCT12}---through the comparison inequalities of Ullrich, the results apply to all Markov chains discussed above (each has $\tmix \lesssim n^c$ at $q=2$ and $\tmix\gtrsim\exp(-cn)$ at $q$ large enough).
Note that the results of~\cite{BCFKTVV99,BCT12} are applicable to every dimension $d\geq 2$, while requiring that $q$ be sufficiently large as a function of $d$.

The dynamics for critical 2D Potts/FK models under \emph{free} boundary conditions takes after Glauber dynamics for the low temperature Ising model under plus boundary conditions. Improving on the original work of Martinelli~\cite{Martinelli94},
a delicate multi-scale analysis due to Martinelli and Toninelli~\cite{MaTo10}, based on censoring inequalities (see~\S\ref{subsec-prelim-dyn}), yielded an %improved 
upper bound of $\exp(n^{o(1)})$ for the Ising model with plus boundary conditions. This was followed by an $n^{O(\log n)}$ bound in~\cite{LMST12} via this approach, extended to all $\beta>\beta_c$. Our proof of~\eqref{eq:large-q-upper-1} is based on this method.

Finally, detailed results are known on the dynamical behavior of Potts/FK models on the \emph{complete graph} (mean-field); see, e.g.,~\cite{BlSi14,GoJe99,CDL12,GSV15} and the references therein.

\section{Preliminaries}\label{sec:preliminaries}

In what follows we review the model definitions and properties, as well as the tools that will be used in our analysis. For a more detailed survey of the random cluster model, see~\cite{Gr04}. For more details on Markov chain mixing times and Glauber dynamics see~\cite{LPW09} and~\cite{Martinelli97}, respectively.
Throughout this paper, we use the notation $ f \lesssim g$ for two sequences $f(n),g(n)$ to denote $f= O(g)$, and let $f \asymp g$ denote $f \lesssim g \lesssim f$.

\subsection{Potts model}\label{subsec:prelim-Potts}

 The (ferromagnetic) $q$-state Potts model on a graph $G=(V,E)$ is the probability distribution over
 configurations $\sigma\in\Omega_{\textsc p}=[q]^{V}$ (viewed as assignments of colors out of $[q]=\{1,...,q\}$ to the vertices of $G$)
in which the probability of $\sigma$ w.r.t.\ the inverse-temperature $\beta>0$ and the boundary conditions $\zeta$ (an assignment of colors in $[q]$ to the vertices of some subgraph $H\subset G$) is given by
\[\mu^\zeta_{G,\beta,q}(\sigma)=\frac 1{\mathcal Z_{\textsc p}} \boldsymbol 1\{\sigma \restriction_H = \zeta\} \exp\Big(\beta \sum_{u\sim v}\one\{ \sigma(u)=\sigma(v)\}\Big)\,,
\]
where the sum is over unordered pairs of adjacent vertices $\{u,v\}$ in $V(G)$, and the normalizing constant $\mathcal{Z}_{\textsc p}$ is the partition function.

Throughout the paper, we consider graphs that are rectangular subsets of $\mathbb Z^2$ with nearest neighbor edges and vertex set
\[\Lambda_{n,n'}:=\llb  0,n\rrb  \times \llb  0,n'\rrb =\{k\in \mathbb Z:0\leq k\leq n\}\times\{k\in \mathbb Z:0\leq  k\le n'\}\,,
\]
where $n'=\lfloor \alpha n \rfloor$ for some fixed \emph{aspect ratio} $0<\alpha \le 1$, and the notation
$\llb a,b\rrb$ stands for $\{k\in\mathbb Z:a\leq k\leq b\}$.
 We use the abbreviated form $\Lambda$ when $n$ and $\alpha$ are made clear from the context.
For general subsets $S\subset \mathbb Z^2$, the boundary $\partial S$ will be the set of vertices in $S$ with a neighbor in $\mathbb Z^2-S$ and its edge set will be all edges in $\mathbb Z^2$ between vertices in $\partial S$; we set the interior $S^0=S-\partial S$. When considering rectangles $\Lambda$, denote the southern (bottom) boundary of $\Lambda$
by $\partial_\south \Lambda:=\llb 0,n\rrb \times \{0\}$, define $\partial_\north$, $\partial_\west$ and $\partial_\east$ analogously, and let multiple subscripts denote their union, i.e., $\partial_{\east,\west} \Lambda=\partial_\east \Lambda\cup \partial_\west \Lambda$.

\subsection{Random cluster (FK) models}\label{subsec:prelim-rc}

For a graph $G=(V,E)$, a random cluster (FK) configuration $\omega \in \Omega_{\rc}=\{0,1\}^{E}$ assigns binary values to the edges of $G$, either \emph{open} ($1$) or \emph{closed} ($0$). (In the context of boundary conditions, these are often referred to instead as wired and free, respectively). A \emph{cluster} is a maximal connected subset of vertices that are connected by open bonds, where singletons count as individual clusters. 

For a subset $H\subset V(G)$, we define FK boundary conditions $\xi$ as follows: first augment the graph to $G'$ by adding edges between any two vertices in $H$ not already connected by an edge; then if the boundary subgraph of $G'$ has vertex set $H$ and edge set $E(H)$ consisting of all edges between vertices in $H$, $\xi$ is an FK configuration in $\{0,1\}^{E(H)}$. A boundary condition $\xi$ can be identified with a partition of $H$ given by the clusters of $\xi$.

The FK model is the probability distribution over FK configurations on the remaining edge set $E(G)-E(H)$, where the probability of $\omega$ under the boundary conditions $\xi$ and parameters $p\in [ 0,1] $, $q>0$ is

\[\pi^\xi_{G,p,q}(\omega)=\frac 1{\mathcal Z_{\rc}} p^{o(\omega)}(1-p)^{c(\omega)}q^{k(\omega)}\,,
\]
where $o(\omega)$, $c(\omega)$, and $k(\omega)$ are the number of open bonds, closed bonds and clusters in $\omega$, respectively, with the number of clusters being computed using connections from $\xi$ as well as $\omega$. The partition function $\mathcal Z_{\rc}$ is again the proper normalizing constant.

Infinite volume Gibbs measures may be found by taking limits of increasing rectangles $\Lambda_n$ under a specified sequence $\xi=\xi(n)$ of boundary conditions on $\partial \Lambda_n$, where the important cases of all-wired and all-free boundary conditions are denoted by $1$ and $0$ respectively; let $\pi^\xi_{\mathbb Z^2}$ denote the weak limit (if it exists) of $\pi^\xi_{\Lambda_n}$ as $n\to\infty$.

\subsubsection*{Edwards--Sokal Coupling}

The Edwards--Sokal coupling~\cite{EdSo88} provides a way to move back and forth between the Potts model and the random cluster model on a given graph $G$ for $q\in \{2,3,\ldots\}$. The joint probability assigned by this coupling to $(\sigma,\omega)$, where $\sigma\in\Omega_\potts$ is a $q$-state Potts configuration at inverse-temperature $\beta>0$ and $\omega\in\Omega_\rc$ is an FK configuration with parameters $(p=1-e^{-\beta},q)$, is proportional to
\[\prod_{xy\in E(G)}\Big[(1-p)\one\{\omega(xy)=0\} + p\one\{\omega(xy)=1,\sigma(x)=\sigma(y)\}\Big]\,.\]
It follows that, starting from a Potts configuration $\sigma\sim \mu_{G,\beta,q}$, one can sample an FK configuration $\omega\sim\pi_{G,p,q}$ by letting $\omega(e)=1$ ($e\in\omega$) with probability $p=1-e^{-\beta}$ if the endpoints $x,y$ of the edge $e$ have $\sigma(x)=\sigma(y)$, and $\omega(e)=0$ ($e\notin \omega$) otherwise. Conversely, from $\omega\sim\pi_{G,p,q}$, one obtains $\sigma\sim \mu_{G,\beta,q}$ by assigning an i.i.d.\ color in $[q]$ to each cluster of $\omega$ (i.e., $\sigma(x)$ assumes that color for every vertex $x$ of that cluster).

In the presence of boundary conditions $\zeta$ for the Potts model, it is possible to sample $\sigma\sim \mu^\zeta_{G,\beta,q}$ using the random cluster model as follows. Associate to $\zeta$ the FK boundary conditions $\xi$ that wire two boundary sites $x,y$ to each other if and only if $\zeta(x)=\zeta(y)$. Further denote by $\mathcal E_\zeta$ the random cluster event that no two boundary sites $x,y$ with $\zeta(x)\neq \zeta(y)$ are connected via $\omega$ in $G$.
Then one can sample a configuration of $\mu^\zeta_{\Lambda,\beta,q}$ by first sampling $\omega\sim\pi^\xi_{\Lambda,p,q}(\cdot \mid \mathcal E_\zeta)$ for $p=1-e^{-\beta}$, then coloring the boundary clusters as specified by $\zeta$, and coloring every other cluster by an i.i.d.\ color uniformly over $[q]$.
For further details, see~\cite{LScritical}, where $\mathcal E_\zeta$ was introduced in the context of the Ising model.

\subsubsection*{Planar duality} On $\mathbb Z^2$, a configuration $\omega$ is uniquely identified with a configuration $\omega^\ast$ on the dual graph $\mathbb Z^2+(\frac12,\frac12)$ as follows: for every primal edge $e$ and its dual edge $e^\ast$ (intersecting at their center points), $\omega^\ast(e^\ast)=1$ if and only if $\omega(e)=0$.

For every $q\geq1$, the involution $p\mapsto p^\ast$ given by $pp^*=q(1-p)(1-p^*)$,
whose fixed point is the self-dual point $p_{\sd}=\frac {\sqrt q}{1+\sqrt q}$,
 satisfies
\[\pi^\xi_{\mathbb Z^2,p,q}\overset {d}=\pi^{\xi^\ast}_{(\mathbb Z^2)^\ast,p^\ast,q}\,,
\]
where the boundary conditions $\xi^\ast$ are determined on a case by case basis, but it is important to note that free and wired boundary conditions are dual to one another.
It is known~\cite{BeDu12} that on $\mathbb Z^2$, for all $q\ge 1$ one has $p_c(q)=p_{\sd}(q)$.
Throughout the paper, unless otherwise specified, let $p=p_{c}(q)$ and $\beta=\beta_c(q)$ (so $p_c=1-e^{-\beta_c}$), omitting these from the notations, as well as $q$ wherever it is clear from the context.

For two vertices $x,y\in V$, denote by $x\longleftrightarrow y$ the event that $x$ and $y$ belong to the same cluster of $\omega$.
In the context of a subgraph $S\subset G$, write $x\stackrel{S}\longleftrightarrow y$ to denote that $x$ and $y$ belong to the same cluster of $\omega\restriction_{E(S-\partial S)}$.
Refer to \[\cC_v(R):=\bigcup\Big\{x\stackrel{R}\longleftrightarrow y : x\in\partial_\south R\,,\,y\in\partial_\north R\Big\}\]
as a \emph{vertical crossing} of a rectangle $R$, and denote the analogously defined \emph{horizontal crossing} of the rectangle $R$ by
$\cC_h(R)$.

Consider a subset of $\mathbb Z^2$ of the form $A=R_2-R_1$ where $R_1,R_2$ are rectangular subsets of $\mathbb Z^2$ with $R_1\subsetneq R_2$. Call such domains annuli, and define open circuits as paths of nontrivial homology in $A-\partial A$ connecting a vertex $x$ to itself. Denote the existence of an open circuit in the annulus $A$ by $\anncircuit(A)$.

Finally, we add the $\ast$-symbol to the above crossing events to refer to the analogous dual-crossings (occurring in the configuration $\omega^\ast$ and the appropriate dual subgraphs).

\subsubsection*{FKG inequality, monotonicity and the Domain Markov property}

An event in the FK model is increasing if it is closed under addition of (open) edges, and decreasing if it is closed under removal of edges. For $q\geq 1$, the model enjoys the FKG inequality~\cite{FKG71}:
\[\pi^\eta_G(A\cap B)\geq \pi^\eta_G(A)\pi^\eta_G(B)\qquad\mbox{for every increasing events $A,B$}\,.
\]
Consequently, the model for $q\ge 1$ is \emph{monotone in boundary conditions}: for every boundary conditions $\eta\geq \xi$ (w.r.t.\ the partial ordering of configurations), $\pi^\eta_G\succeq \pi^\xi_G$, that is, $\pi^\eta_G(A)\ge \pi^\xi_G (A)$ holds for every increasing event $A$.

The \emph{Domain Markov} property of the FK model states that, on any graph $G$ with boundary conditions $\xi$, for every subgraph $G'\subset G$ with boundary conditions $\eta$ that are compatible with $\xi$,
\[\pi^\xi_G\left (\omega\restriction_{G'}\in\cdot \mid \omega\restriction _{G-G'}=\eta\right)=\pi^\eta_{G'}\,.
\]

\subsubsection*{FK phase transition and Russo--Seymour--Welsh (RSW) estimates}

The FK model at fixed $q\geq 1$ undergoes a phase transition at $p_c(q)=\sup\{p: \theta(p,q)=0\}$, where $\theta(p,q)$ is the probability that the origin lies in an infinite cluster under $\pi_{\Z^2,p,q}$.
Our proofs hinge on recent results of~\cite{DST15} on this phase transition, summarized as follows.
\begin{theorem}[{\cite[Theorem~3]{DST15}}]\label{thm:DC-S-T-main}
Let $q\geq 1$; the following statements for the critical FK model on $\Z^2$ are equivalent:
\begin{enumerate}
\item Discontinuous phase transition: $\pi^0_{\mathbb Z^2} \neq\pi^1_{\mathbb Z^2}$.
\item Exponential decay of correlations under $\pi^0$: there exists some $c>0$ such that
\begin{align} \label{eq:exp-decay-1}
\pi^0_{\mathbb Z^2}\left((0,0)\longleftrightarrow \partial \llb -n,n\rrb^2\right)\leq e^{-cn}\,.
\end{align}
\end{enumerate}
\end{theorem}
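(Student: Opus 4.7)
The claim is an equivalence, so I would handle the two directions separately, starting with the easier one, $(2)\Rightarrow(1)$. Exponential decay of the two-point function under $\pi^0_{\mathbb Z^2}$ immediately gives $\theta^0(p_c) = 0$. I argue by contradiction: suppose $\pi^0 = \pi^1 =: \pi$. Since $p_c = p_{\mathrm{sd}}$ and free boundary is dual to wired boundary, the law of $\omega^\ast$ under $\pi^0$ is $\pi^1$, so under the equality assumption $\omega$ and $\omega^\ast$ have the same distribution; hence dual two-point connections under $\pi$ also decay exponentially. Using translation invariance of $\pi$ and a union bound over the $(n+1)^2$ pairs of vertices on opposite sides of $\Lambda_{n,n}$, both $\pi(\cC_h(\Lambda_{n,n}))$ and $\pi(\cC_v^\ast(\Lambda_{n,n}))$ are at most $(n+1)^2 e^{-cn}$, which tends to zero. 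However, planar duality forces exactly one of these two events to occur in every configuration, so their probabilities sum to $1$ --- a contradiction, yielding $\pi^0 \neq \pi^1$.

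For $(1)\Rightarrow(2)$ I would analyze the crossing probabilities $c_n := \pi^0_{\Lambda_{n,n}}(\cC_h(\Lambda_{n,n}))$ in two stages. \emph{Stage A.} Show that $\pi^0 \neq \pi^1$ forces $c_n \to 0$. If instead $\liminf c_n > 0$, the FKG inequality, finite-energy, and the Domain Markov property combine in the usual RSW-style gluing to yield uniform lower bounds on crossings of rectangles of all bounded aspect ratios, and hence on annular circuit events $\anncircuit(A)$ at every scale. An a.s.\ infinite family of nested open circuits under $\pi^0$ then couples the two extremal infinite-volume measures inside a common innermost circuit, yielding $\pi^0 = \pi^1$ and contradicting (1). \emph{Stage B.} Upgrade $c_n \to 0$ to exponential decay. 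A horizontal crossing of $\Lambda_{kn,kn}$ forces disjoint crossings of many $n \times n$ sub-boxes; since the BK inequality fails for $q>1$, I would use an edge-revealing exploration together with the Domain Markov property to derive an inequality of the form $c_{kn} \leq P(k)\, c_n^{\alpha(k)}$ with $\alpha(k) \to \infty$. Once $c_{n_0}$ is below a threshold, iteration yields $c_n \leq e^{-\gamma n}$, and the event $(0,0)\longleftrightarrow \partial \Lambda_{n,n}$ is dominated by a circuit-crossing event whose exponential decay follows from that of $c_n$.

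The main obstacle will be Stage A: leveraging an RSW-type bound all the way to coincidence of the two extremal Gibbs measures. Producing a.s.\ infinitely many nested open circuits from $\liminf c_n > 0$ is standard, but using them to couple $\pi^0$ and $\pi^1$ requires care with the long-range connections intrinsic to FK boundary conditions (as highlighted in Figure~\ref{fig:long-range-bc}). A secondary difficulty lies in Stage B: the absence of the BK inequality for $q>1$ rules out the cleanest decoupling of disjoint crossings and forces a more delicate exploration-based renormalization than in the Bernoulli percolation setting.
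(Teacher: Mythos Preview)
The paper does not prove this statement: it is quoted in the Preliminaries section as \cite[Theorem~3]{DST15} and used as a black box, so there is no ``paper's own proof'' to compare against. Your task here was only to state the result, not to reprove it.

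That said, your sketch is a reasonable outline of how the result is actually obtained in \cite{DST15}. The direction $(2)\Rightarrow(1)$ is essentially as you wrote it (the complementarity of $\cC_h$ and $\cC_v^\ast$ in a box, plus self-duality at $p_c$, gives the contradiction). For $(1)\Rightarrow(2)$, your two-stage plan matches the architecture of \cite{DST15}: Stage~A is indeed the crux, and the main theorem of that paper is precisely a list of equivalent characterizations of continuity, one of which is a uniform RSW bound and another of which is $\pi^0=\pi^1$. You are right that the circuit-coupling step is clean for FK (an open circuit screens the interior from the boundary regardless of long-range boundary connections, so your worry about Figure~\ref{fig:long-range-bc} is misplaced here); the genuine difficulty in Stage~A is not the coupling but the RSW gluing itself, which for $q>1$ cannot rely on independence and is the technical heart of \cite{DST15}. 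Your Stage~B renormalization is also in the right spirit, though in \cite{DST15} the sharp-threshold/renormalization step is carried out via a different route than a BK-substitute exploration.
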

Discontinuity of the phase transition, conjectured for all $q>4$, was first proved by Koteck{\'y} and Shlosman~\cite{KS82} for sufficiently large $q$; the proof in~\cite{LMR86} applies whenever
$q^{1/4} > (\kappa + \sqrt{\kappa^2 - 4})/2$, where $\kappa$ is the connective constant of $\Z^2$. Plugging in the rigorous bound $\kappa < 2.6792$ due to~\cite{PoAn00} affirms the phase coexistence for all $q > 24.78$.
For $1< q\leq 4$, the continuity of the phase transition was established in~\cite{DST15} via the following RSW estimates\footnote{The proofs in~\cite{DST15} of Theorems~\ref{rmk:crossing-no-bc} and~\ref{rmk:circuit-in-annulus} were for the special case of $\epsilon=\epsilon'$ but readily extend to the more general setting presented here.} (note the difference between $1<q<4$ and the extremal case $q=4$, where full RSW-type bounds are believed to fail).

\begin{theorem}[{\cite[Theorem~7]{DST15}}]
\label{thm:crossing-any-bc-q<4}
Consider the critical FK model for $1\le q < 4$ on $\Lambda=\Lambda_{n,n'}$  with $n'=\lfloor \alpha n\rfloor $ for fixed $0<\alpha\le 1$ and arbitrary boundary conditions $\xi$. Then there exists some $p_0=p_0(q,\alpha)>0$ such that
\[\pi _{\Lambda} ^\xi (\cC_v(\Lambda))>p_0\,.
\]
\end{theorem}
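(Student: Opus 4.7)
Since $\cC_v(\Lambda)$ is an increasing event and the FK model with $q \ge 1$ is monotone in boundary conditions, the infimum $\inf_\xi \pi_\Lambda^\xi(\cC_v(\Lambda))$ is attained at the free boundary condition $\xi \equiv 0$. It therefore suffices to prove that $\pi_\Lambda^0(\cC_v(\Lambda)) > p_0(q,\alpha)$ for some $p_0 > 0$; the ``arbitrary $\xi$'' formulation in the statement is then immediate.

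For $q = 1$ this reduces to the classical Russo--Seymour--Welsh theorem for Bernoulli bond percolation, where the FKG inequality, reflection symmetry, and self-duality at $p_c(1) = 1/2$ combine to give uniform crossing estimates via the standard gluing of two overlapping square crossings. For $1 < q < 4$, my plan is to follow Smirnov's parafermionic framework as extended in~\cite{DST15}: introduce the parafermionic observable $F$ on a discretized rectangle equipped with Dobrushin-type (mixed free/wired) boundary conditions at $p_c = p_{\sd}$. At criticality $F$ satisfies a discrete Cauchy--Riemann-type relation at primal vertices, and a contour-summation argument along the boundary of the rectangle produces identities that link probabilities of specific crossing events; these can be converted into a lower bound on the crossing probability of a square under Dobrushin boundary conditions.

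Given such an initial square-crossing estimate, the next step is a classical RSW iterative scheme adapted to the FK model: combine horizontal crossings of overlapping squares via FKG and a vertical crossing of the overlap to produce horizontal crossings of $2n \times n$ rectangles, and iterate to obtain $kn \times n$ crossings for any fixed $k$. Combining with a rotated version and the reduction of the first paragraph yields vertical crossings of rectangles with aspect ratio $\alpha$ under free boundary conditions. To pass from the initial Dobrushin estimate to a free-boundary estimate, I would exploit self-duality at $p_c$, under which free and wired are swapped, turning a primal crossing lower bound under wired into a dual crossing lower bound under free, and use the fact that on a square, either a primal vertical crossing or a dual horizontal crossing must occur.

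The main obstacle, and the heart of~\cite{DST15}, is extracting the initial crossing estimate from $F$. The observable carries fractional spin $\sigma = \sigma(q) \in (0,1)$ and discrete holomorphicity holds only at vertices, yielding inequalities rather than the exact discrete-harmonic identities available for $q = 2$. To prevent the estimates from degenerating after iterated applications of FKG with unfavorable boundary conditions, one needs a delicate comparison of two-sided crossing bounds across distinct boundary configurations, a step that relies crucially on $q < 4$: the argument degrades at the extremal case $q = 4$, consistent with the expected failure of uniform RSW there and with the weaker $n^{O(\log n)}$ bound later proved for $q = 4$ in Theorem~\ref{mainthm-q<=4}.
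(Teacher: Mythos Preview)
This statement is not proved in the paper: it is quoted as a preliminary from \cite{DST15} (labeled there as Theorem~7) and used as a black box. There is therefore no ``paper's own proof'' to compare against; your task was really to sketch the argument of \cite{DST15}, not of the present paper.

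As a sketch of the \cite{DST15} argument, your outline is in the right spirit but glosses over the genuinely hard part. The reduction to free boundary conditions via monotonicity is correct and immediate. However, the crux of \cite{DST15} is precisely that the parafermionic observable for $1<q<4$ only yields \emph{half} of the discrete Cauchy--Riemann equations, so one does not get harmonicity and cannot read off crossing probabilities directly as in the $q=2$ case. What \cite{DST15} actually does is integrate the observable around well-chosen discrete contours to obtain inequalities (not identities) relating boundary connection probabilities under specific Dobrushin-type conditions; these inequalities are then fed into a bootstrapping scheme that upgrades ``crossing a slightly longer rectangle is not much harder than crossing a square'' into uniform RSW bounds. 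Your description of ``contour-summation producing identities'' and then ``classical RSW iterative scheme'' understates both the loss (inequalities, not identities) and the novelty of the bootstrap needed to close the argument. The step you flag as ``the main obstacle'' is indeed where all the work lies, and your paragraph there is more of an acknowledgment than a plan; a referee would not regard this as a proof proposal so much as a pointer to \cite{DST15}.
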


\begin{theorem}[{\cite[Theorem~3]{DST15}}]\label{rmk:crossing-no-bc}
Let $q=4$ and consider the critical FK model on $\Lambda=\Lambda_{n,n'}$ with $n'=\lfloor \alpha n\rfloor$ for fixed $0<\alpha\le1$. Then for every $\epsilon,\epsilon'>0$ there exists some $p_0=p_0(\alpha,\epsilon,\epsilon')>0$ such that, for every boundary condition $\xi$,
\[\pi_{\Lambda}^\xi (\cC_v(\llb \epsilon n, (1-\epsilon)n\rrb \times \llb \epsilon'n',(1-\epsilon' )n'\rrb )>p_0\,.
\]
\end{theorem}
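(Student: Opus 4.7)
My plan is to combine the $\epsilon=\epsilon'$ RSW-type estimate from \cite{DST15} (taken as a black box) with a standard FKG gluing argument to reach the asymmetric setting stated here.

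First, I would take as input the $\epsilon=\epsilon'$ version of the theorem proved in \cite{DST15}: for every $\delta>0$ and every boundary condition $\eta$,
\[
\pi^\eta_\Lambda\bigl(\cC_v(\llb \delta n, (1-\delta) n\rrb \times \llb \delta n', (1-\delta)n'\rrb)\bigr) \ge p_0(\delta,\alpha) > 0.
\]
The proof of this input in \cite{DST15} relies on the parafermionic/discrete-holomorphic observable adapted to the critical $q=4$ FK model, together with a delicate analysis of crossing probabilities under various boundary conditions; the restriction to an inset rectangle (rather than all of $\Lambda$) is essential because full RSW bounds are believed to fail at $q=4$.

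Next, to extend to arbitrary $\epsilon,\epsilon'>0$, let $R:=\llb \epsilon n, (1-\epsilon)n\rrb \times \llb \epsilon' n', (1-\epsilon')n'\rrb$ denote the target inner rectangle and set $\delta:=\tfrac12\min(\epsilon,\epsilon')$. I would cover $R$ by a bounded number $k=k(\epsilon,\epsilon',\alpha)$ of overlapping sub-rectangles $R_1,\ldots,R_k$, stacked from south to north, each of aspect ratio bounded away from $0$ and $\infty$ uniformly in $n$, and positioned so that each $R_i$ as well as each overlap $R_i\cap R_{i+1}$ has its boundary at distance at least $\delta n$ from $\partial\Lambda$. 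Applying the $\epsilon=\epsilon'$ case to each $R_i$ (viewed as the inset of a slightly enlarged sub-rectangle of $\Lambda$) provides a uniform lower bound on the probability of a vertical crossing of $R_i$; the same argument applied horizontally yields a uniform lower bound on a horizontal crossing of each $R_i\cap R_{i+1}$.

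Finally, since all these crossing events are increasing in $\omega$, the FKG inequality gives a joint lower bound on the probability that all of them occur simultaneously, depending only on $\epsilon,\epsilon',\alpha$ and $k$. On this joint event, the vertical crossings of the $R_i$'s are linked through the horizontal crossings of the overlaps $R_i\cap R_{i+1}$, producing a full vertical crossing of $R$, as desired. The substantive difficulty is the base estimate imported from \cite{DST15}; the extension itself is a routine RSW gluing, with the only care needed being the combinatorial arrangement of the $R_i$'s to simultaneously ensure the $\delta$-inset property and the connectivity required to concatenate the crossings.
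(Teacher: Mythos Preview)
Your proposal is correct and aligns with the paper's treatment. The paper does not give its own proof of this statement: it is quoted from \cite{DST15}, with a footnote remarking that the proof there, written for the special case $\epsilon=\epsilon'$, ``readily extends'' to general $\epsilon,\epsilon'$. Your FKG gluing argument is precisely one valid way to carry out that routine extension, so you and the paper are in agreement.

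Two small remarks on the execution. First, the case $\epsilon\le\epsilon'$ is actually immediate with no gluing: the symmetric inset $\llb \epsilon n,(1-\epsilon)n\rrb\times\llb \epsilon n',(1-\epsilon)n'\rrb$ has the same horizontal extent as the target $R$ and is taller, so any vertical crossing of it restricts to a vertical crossing of $R$. Only the case $\epsilon>\epsilon'$ genuinely requires gluing. Second, when you invoke the $\epsilon=\epsilon'$ case on each sub-rectangle $\tilde R_i\subset\Lambda$, you are implicitly using the Domain Markov property (conditioning on $\omega$ outside $\tilde R_i$ gives some boundary condition $\eta$ on $\tilde R_i$, and the cited bound is uniform in $\eta$); and since the theorem as stated is for aspect ratio $\alpha\le 1$ (vertical crossings in the short direction), you should take the $\tilde R_i$ to be near-squares so that both the vertical crossings of the $R_i$ and the horizontal crossings of the overlaps are short-direction crossings of symmetric insets. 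With those choices the stitching goes through exactly as you describe.
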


\begin{theorem}[{\cite[Proposition~2]{DST15}}]\label{rmk:circuit-in-annulus}
Fix $\epsilon,\epsilon'>0$ and $0<\alpha\leq 1$, and consider the critical FK model at $1\leq q\leq 4$
on the annulus $A=\Lambda_{n,n'}-\llb \epsilon n,(1-\epsilon)n\rrb \times \llb \epsilon'n',(1-\epsilon')n'\rrb $ for $n'=\lfloor \alpha n\rfloor$. There exists $p_0=p_0(q,\alpha,\epsilon,\epsilon')$ so that, for every boundary condition $\xi$,
\[\pi_{A}^\xi\left(\anncircuit(A)\right)>p_0\,.\]
\end{theorem}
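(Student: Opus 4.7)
The plan is to construct the circuit by combining hard-direction open crossings of four overlapping strips that together cover the annulus around the inner hole $I:=\llb \epsilon n,(1-\epsilon)n\rrb \times \llb \epsilon' n',(1-\epsilon')n'\rrb$. Concretely, assume without loss of generality that $\epsilon,\epsilon'<\tfrac12$ and consider the four strips
\[
R_T = \llb 0,n\rrb \times \llb (1-\epsilon')n',n'\rrb,\quad R_B = \llb 0,n\rrb \times \llb 0,\epsilon' n'\rrb,
\]
\[
R_L = \llb 0,\epsilon n\rrb \times \llb 0,n'\rrb,\quad R_R = \llb (1-\epsilon)n,n\rrb \times \llb 0,n'\rrb,
\]
all contained in $A$. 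Consecutive strips overlap exactly in the four corner rectangles $Q_{TL}=R_T\cap R_L$, $Q_{TR}=R_T\cap R_R$, $Q_{BL}=R_B\cap R_L$, $Q_{BR}=R_B\cap R_R$, while the opposite pairs $R_T\cap R_B$ and $R_L\cap R_R$ are empty.

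First, I would combine Theorems~\ref{thm:crossing-any-bc-q<4} and~\ref{rmk:crossing-no-bc} with the standard RSW bootstrap (deriving hard-direction crossings of arbitrary fixed-aspect-ratio rectangles from easy-direction crossings) to obtain $p_1=p_1(q,\alpha,\epsilon,\epsilon')>0$ such that, uniformly in boundary conditions $\xi$ on $\partial A$, each of $\cC_h(R_T)$, $\cC_h(R_B)$, $\cC_v(R_L)$, $\cC_v(R_R)$ has $\pi_A^\xi$-probability at least $p_1$. The key reduction is the Domain Markov property: conditioning on $\omega\restriction_{A\setminus R_T}$ yields an FK measure on $R_T$ with some induced boundary condition, to which the crossing bounds apply directly; integrating over the conditioning gives the uniform lower bound, and analogously for the other three strips.

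Second, since these crossing events are increasing, the FKG inequality yields
\[
\pi_A^\xi\big(\cC_h(R_T)\cap \cC_h(R_B)\cap \cC_v(R_L)\cap \cC_v(R_R)\big)\geq p_1^4.
\]
On this joint event, the four open crossings combine into an open circuit around $I$, which I would verify corner by corner via planar intersection. For instance, any horizontal crossing of $R_T$ must cross the vertical line $x=(1-\epsilon)n$ and so contains an open sub-path realizing a horizontal crossing of $Q_{TR}$ from its left edge to its right edge; similarly any vertical crossing of $R_R$ contains an open vertical crossing of $Q_{TR}$. Since two such perpendicular open paths in the same $\mathbb Z^2$-rectangle must share a vertex (paths with only horizontal and vertical edges cannot cross without meeting), the crossings of $R_T$ and $R_R$ are connected inside $Q_{TR}$. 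Applying the same argument in each of the three other corners produces a connected open subgraph winding once around $I$, which therefore contains an open circuit, and hence $\pi_A^\xi(\anncircuit(A))\geq p_1^4$.

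The step I expect to be most delicate is the hard-direction bootstrap in the first paragraph, particularly at $q=4$: Theorem~\ref{rmk:crossing-no-bc} provides crossings only of rectangles strictly interior to the boundary, so the strips must be slightly shrunk (and the connecting corner argument adjusted to use these smaller interior sub-strips) in order to preserve uniformity in the annulus boundary conditions without appealing to the full RSW estimates available only for $1\leq q<4$.
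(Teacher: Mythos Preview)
The paper does not give its own proof of this statement: Theorem~\ref{rmk:circuit-in-annulus} is quoted from \cite{DST15} (Proposition~2 there), with only the footnote remark that the case $\epsilon\neq\epsilon'$ follows by the same argument. So there is no in-paper proof to compare against.

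That said, your proposal is the standard way one derives circuit-in-annulus bounds from RSW-type crossing estimates, and it is essentially how the result is obtained in \cite{DST15}: cover the annulus by four overlapping strips, use Domain Markov to reduce to a crossing estimate in each strip under arbitrary boundary conditions, combine via FKG, and conclude by planar intersection in the corner rectangles. Two points deserve comment. First, Theorem~\ref{thm:crossing-any-bc-q<4} as stated in this paper is only the easy-direction bound ($\alpha\le 1$, vertical crossing), so for $1\le q<4$ you are implicitly appealing to the full strength of \cite[Theorem~7]{DST15} (crossings of arbitrary fixed aspect ratio under arbitrary boundary conditions), not just the version quoted here; this is fine, but should be said explicitly rather than folded into ``standard RSW bootstrap''. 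Second, your diagnosis of the $q=4$ case is exactly right: Theorem~\ref{rmk:crossing-no-bc} yields crossings only of sub-rectangles bounded away from the boundary, so the strips must be shrunk to lie in the interior of the annulus (away from both the inner and outer boundaries), and the corner overlaps must be arranged at that interior scale. Since the circuit event $\anncircuit(A)$ is defined in the paper to live in $A-\partial A$, working with interior strips is in any case what the statement requires. With these adjustments your sketch is correct and matches the approach of \cite{DST15}.
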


A consequence of the above RSW-type bounds is polynomial decay of correlations for the critical FK model at $1\leq q\leq 4$ (see, e.g., the proof of~\cite[Lemma~1]{DST15}).

\begin{theorem}[decay of correlations]\label{thm:polynomial-decay}
For $1\leq q\leq 4$, there exist $c_1,c_2>0$ such that
\[{n^{-c_1}}\lesssim\pi_{\mathbb Z^2}\left((0,0)\longleftrightarrow\partial \llb -n,n\rrb^2 \right)\lesssim{n^{-c_2}}\,.
\]
\end{theorem}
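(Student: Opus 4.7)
The plan is to deduce both bounds from Theorems~\ref{thm:crossing-any-bc-q<4}--\ref{rmk:circuit-in-annulus} via a standard dyadic-scale argument, using self-duality at $p_c$ (so that the RSW estimates apply equally to dual circuits), the FKG inequality, and the Domain Markov property.

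\textbf{Upper bound.} For $k=0,1,\ldots,\lfloor \log_2 n\rfloor -1$, I would consider the dyadic annuli $A_k := \llb -2^{k+1}, 2^{k+1}\rrb^2 \setminus \llb -2^k, 2^k\rrb^2$ centered at the origin. If some $A_k$ contains an open dual circuit surrounding the origin, then that circuit separates $(0,0)$ from $\partial \llb -n,n\rrb^2$ in $\omega$ by planar duality, blocking any primal connection, so
\[
\pi_{\mathbb Z^2}\bigl((0,0)\longleftrightarrow \partial \llb -n,n\rrb^2\bigr) \leq \pi_{\mathbb Z^2}\Bigl(\bigcap_{k}\bigl\{\text{no open dual circuit in }A_k\bigr\}\Bigr).
\]
By self-duality at $p_c$, the dual configuration on $(\mathbb Z^2)^\ast$ is a critical FK configuration with dual boundary conditions, so the dual form of Theorem~\ref{rmk:circuit-in-annulus} yields a uniform $p_0>0$ such that, under any boundary conditions on $A_k$, an open dual circuit exists in $A_k$ with probability at least $p_0$. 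Since successive annuli are disjoint, conditioning iteratively on the configuration outside $A_k$ via the Domain Markov property gives $\pi(\text{no open dual circuit in }A_k \mid \text{outside }A_k) \leq 1-p_0$ almost surely, and iterating over $k$ produces $(1-p_0)^{\lfloor\log_2 n\rfloor} \lesssim n^{-c_2}$ with $c_2 = \log_2(1/(1-p_0))$.

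\textbf{Lower bound.} I would construct a single open primal cluster containing $(0,0)$ and touching $\partial \llb -n,n\rrb^2$ at total cost $p_0^{O(\log n)} = n^{-c_1}$. In each $A_k$, Theorem~\ref{rmk:circuit-in-annulus} provides an open primal circuit $C_k$ surrounding the origin with probability at least $p_0$ uniformly in boundary conditions. To chain $C_k$ and $C_{k+1}$ into a common open cluster, I would additionally require an open radial crossing of the intermediate annulus from its inner to its outer boundary, built from a bounded number of rectangle crossings of aspect ratio $O(1)$ via Theorem~\ref{thm:crossing-any-bc-q<4} (or Theorem~\ref{rmk:crossing-no-bc} for $q=4$) and combined by FKG. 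A small initial crossing at scale $1$ attaches $(0,0)$ to $C_0$. On the intersection of these $O(\log n)$ increasing events (each of probability at least some constant, uniformly in boundary conditions), $(0,0)$ lies in a single open cluster reaching $\partial \llb -n,n\rrb^2$, and FKG gives
\[
\pi_{\mathbb Z^2}\bigl((0,0)\longleftrightarrow \partial \llb -n,n\rrb^2\bigr) \geq p_0^{O(\log n)} \gtrsim n^{-c_1}.
\]

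\textbf{Main obstacle.} The only genuinely delicate point is the lower bound at the extremal case $q=4$: Theorem~\ref{rmk:crossing-no-bc} only furnishes crossings of the middle $(1-\epsilon)$-fraction of each rectangle, so the overlapping rectangles used to produce the radial crossings between $C_k$ and $C_{k+1}$ must be chosen with enough overlap that every invoked crossing lies within the RSW-accessible interior of its rectangle. This is a minor geometric bookkeeping matter; once it is handled, all remaining steps reduce to direct applications of RSW together with FKG and the Domain Markov property.
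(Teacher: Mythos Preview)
The paper does not supply its own proof of this theorem; it simply records it as a consequence of the RSW estimates and refers the reader to the proof of~\cite[Lemma~1]{DST15}. Your argument---dual dyadic circuits plus Domain Markov for the upper bound, primal circuits linked by radial crossings plus FKG for the lower bound---is precisely the standard deduction of polynomial one-arm decay from RSW, and it is correct. The only remark is that at $q=4$ the cleanest way to handle the ``middle-of-the-box'' restriction in Theorem~\ref{rmk:crossing-no-bc} is not to juggle overlaps but simply to enlarge: since the events are increasing and $\pi_{\mathbb Z^2}\succeq \pi^0_\Lambda$ for any finite $\Lambda$, apply Theorem~\ref{rmk:crossing-no-bc} to a dilate of each rectangle so that the desired crossing sits in its interior $(1-\epsilon)$-portion.
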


\subsection{Markov chain mixing times}\label{subsec-prelim-mc-mixing}\label{sec:mcmt}
Consider a Markov chain $X_t$ with finite state space $\Omega$, transition kernel $P$ and stationary distribution $\pi$. In the continuous-time setting, instead of $P^t$ consider the heat kernel given by
\[H_t(x,y)=\P_x(X_t=y)=e^{t\mathcal L}(x,y)\,,
\]
where $\mathcal L =\lim_{t\downarrow 0}\tfrac 1t (H_t-I)$ is the infinitesimal generator.

\subsubsection*{Spectral gap}
The mixing time of the Markov chain is intimately related to the gap in its spectrum: in discrete-time, $\gap:=1-\lambda_2$ where $\lambda_2$ is the second largest eigenvalue of  $P$, and in continuous-time it is the gap in the spectrum of the generator $\mathcal L$. An important variational characterization of the spectral gap is given by the Dirichlet form:
\begin{equation}\label{eq:dirichlet-form}\gap=\inf_{f\in L^2(\pi)} \frac {\mathcal E(f,f)}{\mbox{Var}_\pi f}\,,\quad\mbox{ where }\quad \mathcal E(f,f)=\frac 12\sum _{x,y\in \Omega} \pi(x)P(x,y)(f(y)-f(x))^2\,.
\end{equation}

\subsubsection*{Mixing times}
Denote the (worst-case) total variation distance between $X_t$ and $\pi$ by
\[d_\tv(t)=\max_{x\in\Omega} \|P^t(x,\cdot)-\pi\|_{\tv}\,,\]
where the total variation distance between two probability measures $\nu,\pi$ on $\Omega$ is
\[\|\pi-\nu\|_{\tv}=\sup_{A\subset \Omega} [\pi(A)-\nu(A)] = \tfrac 12 \|\pi-\nu\|_{L^1}\,.
\]
Further define the coupling distance
\[\bar d_\tv(t)=\max_{x,y\in\Omega}\|P^t(x,\cdot)-P^t(y,\cdot)\|_{\tv}\,,
\]
noting that $\bar d_\tv$ is submultiplicative and
$d_\tv(t)\leq \bar d_\tv(t)\leq 2d_\tv(t)
$.
The  total variation \emph{mixing time} of the Markov chain w.r.t.\ the precision parameter $0<\delta<1$ is
\[\tmix(\delta)=\inf_t \{t: \max_{x\in \Omega} \|P^t(x,\cdot)-\pi\|_{\tv}<\delta\}\,.
\]
For any choice of $\delta<\frac12$, the quantity $\tmix(\delta)$ enjoys submultiplicativity thanks to the aforementioned connection with $\bar{d}_\tv$; we write $\tmix$, omitting the precision parameter $\delta$, to refer to the standard choice of $\delta=1/(2e)$.

The total variation mixing time is bounded from below and from above via the gap: one has $\tmix \geq \gap^{-1} - 1$, and if $\gap_\star$ is the \emph{absolute} spectral gap\footnote{For a discrete-time chain, $\gap= \min_i(1-|\lambda_i|)$ where the $\lambda_i$'s are the nontrivial eigenvalues of the transition kernel; in our applications, $\gap_\star=\gap$.} of the chain then $\tmix \leq \log(2e/\pi_{\min}) \gap_\star^{-1}$, where $\pi_{\min} = \min_x \pi(x)$ (see, e.g.,~\cite[\S12.2]{LPW09}).
For the FK and Potts models on a box with $O(n^2)$ and fixed $0<p<1$ and $q\geq 1$, there exists some $c>0$ such that $\pi_{\min}\gtrsim e^{-cn^2}$, thus $\tmix$ are $\gap^{-1}$ are equivalent up to $n^{O(1)}$-factors.

\subsection{Dynamics for spin systems}\label{subsec-prelim-dyn}
\subsubsection*{Heat-bath Glauber dynamics}

Continuous-time heat-bath Glauber dynamics for the Potts model on $\Lambda$ is the following reversible Markov chain w.r.t.\ $\mu_\Lambda$. Assign i.i.d.\ rate-1 Poisson clocks to all interior vertices of $\Lambda$. When the clock at a site $x$ rings, the chain resamples $\sigma(x)$ according to $\mu_\Lambda$ conditioned on the colors of all the sites other than $x$ to agree with their current values in the configuration $\sigma$: the probability that the new color to be assigned to $x$ will be $k\in[q]$ is proportional to $\exp(\beta \sum_{y\sim x}\one\{\sigma(y)=k\})$.

The heat-bath Glauber dynamics for the FK model on $\Lambda$ is the following reversible Markov chain w.r.t.\ $\pi_\Lambda$. Each interior edge of $\Lambda$ is assigned an i.i.d.\ rate-1 Poisson clock; when the clock at an edge $e=xy$ rings, the chain resamples $\omega(e)$ according to $\mathrm{Bernoulli}(p)$ if $x\longleftrightarrow y$ in $\Lambda-\{e\}$ and according to $\mathrm{Bernoulli}(\frac p{p+q(1-p)})$ otherwise.
The random mapping representation of this dynamics views the updates as a sequence $(J_i,U_i,T_i)_{i\geq 1}$, in which $T_1<T_2<\ldots$ are the update times, the $J_i$'s are i.i.d.\ uniform edges (the updated locations), and the $U_i$'s are i.i.d.\ uniform on $[0,1]$: at time $T_i$, writing $J_i=xy$, the dynamics replaces the value of $\omega(J_i)$ by $\one\{U_i\leq p\}$ if $x\longleftrightarrow y$ in $\Lambda-\{J_i\}$ and by $\one\{U_i\leq \frac{p}{p+q(1-p)}\}$ otherwise.

\subsubsection*{Monotonicity and censoring inequalities} \label{sub:grand-coupling}
The heat-bath Glauber dynamics for the FK model at $q\geq 1$ is \emph{monotone}: for every two FK configurations $\omega_1\ge \omega_2$ and every $t\ge 0$,
\[H_t(\omega_1,\cdot)\succeq H_t(\omega_2,\cdot)\,.
\]
The \emph{grand coupling} for Glauber dynamics is a coupling of the chains from all  initial configurations on $\Lambda$: one appeals to the random mapping representation of Glauber dynamics described above, using the same update sequence $(J_i,U_i,T_i)_{i\geq 1}$ for each one of these chains. For $q\geq 1$, the monotonicity of the dynamics guarantees that this coupling preserves the partial ordering of the configurations at all times $t\geq 0$.

In particular, under the grand coupling, the value of an edge $e$ in Glauber dynamics at time $t$ from an arbitrary initial state $\omega_0$, is sandwiched between the corresponding values from the free and wired initial states; thus, by a union bound over all edges, \[d_\tv(t) \leq |E(\Lambda)|\,\|H_t(1,\cdot)-H_t(0,\cdot)\|_{\tv}
\]
(see this well-known inequality, e.g., in~\cite[Eq.~(2.10)]{MaTo10}),
and consequently,
\begin{align}\label{eq:init-config-comparison}
\bar {d}_\tv(t)\leq 2 |E(\Lambda)|\, \|H_t(1,\cdot)-H_t(0,\cdot)\|_{\tv}\,.
\end{align}

The Peres--Winkler censoring inequalities~\cite{PW13} for monotone spin systems allow one to ``guide'' the dynamics to equilibrium by restricting the updates to prescribed parts of the underlying graph, thus supporting an appropriate multi-scale analysis, the key being that censoring all other updates can only slow down mixing
(this next flavor of the inequality follows from the same proof of~\cite[Theorem~1.1]{PW13}; see~\cite[Theorem~2.5]{MaTo10}).
\begin{theorem}[\cite{PW13}]\label{thm:censoring}
Let $\mu_T$ be the law of continuous-time Glauber dynamics at time $T$ of a monotone spin system on $\Lambda$ with stationary distribution $\pi$, whose initial distribution $\mu_0$ is such that $\mu/\pi$ is increasing. Set $0=t_0 < t_1 <\ldots < t_k = T$ for some $k$, let $(\Lambda_i)_{i=1}^k$ be subsets of the sites $\Lambda$, and let $\tilde\mu_T$ be the law at time $T$ of the censored dynamics, started at $\mu_0$, where only updates within $\Lambda_i$ are kept in the time interval $[t_{i-1},t_i)$. Then $\|\mu_T-\pi\|_\tv \leq \|\tilde\mu_T-\pi\|_\tv$ and $\mu_T \preceq \tilde\mu_T$; moreover, $\mu_T/\pi$ and $\tilde\mu_T/\pi$ are both increasing.
\end{theorem}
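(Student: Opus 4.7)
The plan is to prove the censoring inequality by induction on individual heat-bath updates, maintaining throughout two invariants: that both likelihood ratios $\mu_t/\pi$ and $\tilde\mu_t/\pi$ remain monotone increasing on the configuration lattice, and that $\mu_t\preceq\tilde\mu_t$ in stochastic domination. The continuous-time statement reduces to a discrete-time one by conditioning on the Poisson clocks; so one may assume a deterministic sequence of single-site heat-bath updates $P_{v_1},P_{v_2},\ldots$ is applied to $\mu_0$, while the censored chain applies $P_{v_j}$ only when $v_j$ lies in the currently allowed set $\Lambda_i$ and the identity otherwise.

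The heart of the argument is a single-update preservation lemma: if $\nu/\pi$ is an increasing function on configurations and $P_v$ is the heat-bath update at $v$, then $(\nu P_v)/\pi$ is also increasing. A direct calculation using reversibility of $P_v$ with respect to $\pi$ yields
\[
\frac{(\nu P_v)(\sigma)}{\pi(\sigma)} \;=\; \frac{\sum_{a}\nu(\sigma^{v\to a})}{\sum_{a}\pi(\sigma^{v\to a})},
\]
where $\sigma^{v\to a}$ denotes $\sigma$ with its value at $v$ replaced by $a$. This quantity depends only on $\sigma(V\setminus\{v\})$ and is a $\pi(\,\cdot(v)\mid\sigma(V\setminus\{v\}))$-weighted average of $\nu/\pi$ along the fiber above $\sigma(V\setminus\{v\})$; monotonicity in $\sigma(V\setminus\{v\})$ then follows from the assumed monotonicity of $\nu/\pi$ combined with the stochastic monotonicity of the conditional law at $v$ as $\sigma(V\setminus\{v\})$ increases, which is exactly what the ``monotone spin system'' hypothesis supplies. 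A parallel argument gives the second lemma $\nu P_v\preceq \nu$: for any increasing test function $f$, grouping pairs of configurations differing only at $v$ rewrites $\int f\,d(\nu P_v)-\int f\,d\nu$ as the negative of an FKG-type $\pi$-covariance between the increasing functions $\nu/\pi$ and $f$, hence non-positive.

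With these two facts, the induction step is immediate. If both chains apply $P_{v_j}$, then monotonicity of the Glauber dynamics under the grand coupling preserves $\mu_{t_{j-1}}\preceq\tilde\mu_{t_{j-1}}$, while the preservation lemma keeps both ratios increasing. If only the uncensored chain updates (when $v_j\notin\Lambda_i$), then $\tilde\mu_{t_j}/\pi=\tilde\mu_{t_{j-1}}/\pi$ is trivially still increasing, and
\[
\mu_{t_j} \;=\; \mu_{t_{j-1}}P_{v_j} \;\preceq\; \mu_{t_{j-1}} \;\preceq\; \tilde\mu_{t_{j-1}} \;=\; \tilde\mu_{t_j},
\]
where the first inequality is the lemma $\nu P_v\preceq\nu$ applied to $\nu=\mu_{t_{j-1}}$ (which has increasing ratio by induction) and the second is the induction hypothesis.

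To pass from stochastic domination to total variation, set $U=\{\sigma:\mu_T(\sigma)\geq\pi(\sigma)\}$. Since $\mu_T/\pi$ is increasing, $U$ is an up-set, and by definition $\|\mu_T-\pi\|_\tv=\mu_T(U)-\pi(U)$. Applying $\mu_T\preceq\tilde\mu_T$ to the up-set $U$ gives $\mu_T(U)\leq\tilde\mu_T(U)$, whence $\|\mu_T-\pi\|_\tv\leq\tilde\mu_T(U)-\pi(U)\leq\|\tilde\mu_T-\pi\|_\tv$, as claimed. The main obstacle is the single-update preservation lemma itself: the proof that a single heat-bath update preserves monotonicity of $\nu/\pi$ is a short but delicate lattice/FKG computation that uses both the monotonicity of $\pi$ (in the sense that the conditional at $v$ is stochastically monotone in the exterior configuration) and the assumed monotonicity of $\nu/\pi$; once it is in hand, the rest of the argument is essentially bookkeeping.
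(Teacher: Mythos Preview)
The paper does not prove this theorem; it is quoted as a known result from Peres--Winkler~\cite{PW13} (in the flavor recorded in~\cite[Theorem~2.5]{MaTo10}), so there is no in-paper proof to compare against. Your sketch is a correct outline of the standard Peres--Winkler argument: reduce to a deterministic sequence of single-site heat-bath updates by conditioning on the Poisson clocks, then induct using the two single-update lemmas (preservation of monotone likelihood ratio, and $\nu P_v\preceq\nu$ when $\nu/\pi$ is increasing), and finally convert $\mu_T\preceq\tilde\mu_T$ to the total-variation inequality via the up-set $\{\mu_T\geq\pi\}$. The only places worth tightening in a full write-up are the FKG/Chebyshev covariance step (you need that the single-site spin space is totally ordered, which is part of the monotone-system hypothesis) and the observation that averaging over clock sequences preserves both stochastic domination and monotonicity of the ratio, so the discrete-time conclusions pass to continuous time.
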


\subsubsection*{Cluster dynamics}

 Swendsen--Wang dynamics for the $q$-state Potts model on $G=(V,E)$ at inverse-temperature $\beta$ is the following discrete-time reversible Markov chain. From a spin configuration $\sigma\in\Omega_{\potts}$ on $G$, generate a new state $\sigma'\in\Omega_{\potts}$ as follows.
\begin{enumerate}
\item Introduce auxiliary random cluster edge variables and set $e=xy\in E$ to be open with probability $0$ if $\sigma_x\neq \sigma_y$ and probability $1-e^{-\beta}$ if $\sigma_x = \sigma_y$.
\item For every connected vertex set of the resulting edge configuration,
reassign the cluster, collectively, an i.i.d.\ color in $[q]$, to obtain the new configuration $\sigma'$.
\end{enumerate}
 Chayes--Machta dynamics for the FK model on $G=(V,E)$ with parameters $(p,q)$, for $q \geq 1$ and $0<p<1$,
is the following analogous discrete-time reversible Markov chain:
 From an FK configuration $\omega\in\Omega_{\rc}$ on $G$,  generate a new state $\omega'\in\Omega_{\rc}$ as follows.
\begin{enumerate}
\item Assign each cluster $C$ of $\omega$ an auxiliary i.i.d.\ variable $X_c\sim\mathrm{Bernoulli}(1/q)$.
\item Resample every $e=xy$ such that $x$ and $y$ belong to clusters with $X_c=1$
via i.i.d.\ random variables $X_e\sim\mathrm{Bernoulli}(p)$, to obtain the new configuration $\omega'$.
 \end{enumerate}

In the presence of boundary conditions, Step (2) of the Swendsen--Wang dynamics does not reassign the color of any cluster that is incident to a vertex whose color is dictated by the boundary conditions, and analogously, Step (2) of the Chayes--Machta dynamics does not resample an edge whose value is dictated by the boundary conditions.

Variants of Chayes--Machta dynamics with $1\le k\le \lfloor q\rfloor$ ``active colors" have also been studied, with numerical evidence for  $k=\lfloor q \rfloor$ being the most efficient choice; see~\cite{GOPS11}.

\subsubsection*{Spectral gap comparisons}

The following comparison inequalities between the above Markov chains are due to Ullrich (see~\cite[Thm.~1]{Ul13},~\cite[Thm~4.8 and Lem.~2.7]{Ul14}).

\begin{theorem}[\cite{Ul13,Ul14}]\label{thm:Ullrich-comparison}
Let $q \geq 2$ be integer.
Let $\gap_{\textsc p}$ and $\gap_{\rc}$ be the spectral gaps of Glauber dynamics for the Potts and FK models, respectively,  on a graph $G=(V,E)$ with maximum degree $\Delta$ and no boundary conditions, and let $\gap_{\textsc {sw}}$
be the
spectral gap of Swendsen--Wang.
Then we have 
\begin{align}
\gap_{\textsc p} &\leq 2 q^2(qe^{2\beta})^{4\Delta} \gap_{\textsc {sw}}\,,\label{eq-ullrich1}\\
(1-p+p/q)\gap_{\rc} &\leq\gap_{\textsc {sw}}\leq 8 \gap_{\rc}\, |E| \log |E| \,.\label{eq-ullrich2}
\end{align}
\end{theorem}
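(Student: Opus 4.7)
The strategy, following Ullrich, is to compare Dirichlet forms via the Edwards--Sokal coupling, passing through an auxiliary FK-valued chain that shares the spectrum of Swendsen--Wang. Define $\hat P_{\textsc{sw}}$ on $\Omega_\rc$ by: from $\omega$, assign each cluster of $\omega$ an i.i.d.\ uniform color in $[q]$ to obtain a compatible Potts configuration $\sigma$, and then resample a fresh FK configuration $\omega'$ from $\sigma$ via Edwards--Sokal (each monochromatic edge independently open with probability $p$). Both the classical SW chain on $\Omega_\textsc{p}$ and $\hat P_{\textsc{sw}}$ on $\Omega_\rc$ arise as marginalizations of the same two-block Gibbs sampler on the joint Edwards--Sokal measure; since the nonzero spectra of $T_1 T_2$ and $T_2 T_1$ always coincide for any pair of Markov kernels, $\gap(\hat P_{\textsc{sw}}) = \gap_{\textsc{sw}}$.

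With this identity the two inequalities in~\eqref{eq-ullrich2} reduce to Dirichlet comparisons between $\hat P_{\textsc{sw}}$ and FK heat-bath Glauber. For the lower bound $(1-p+p/q)\gap_\rc\leq\gap_{\textsc{sw}}$, I would fix a test function $f$ on $\Omega_\rc$ and verify edge-by-edge that conditioning one step of $\hat P_{\textsc{sw}}$ to agree with $\omega$ off a single edge $e$ reproduces the FK heat-bath update at $e$, with a uniform lower bound on the conditioning probability that yields the factor $1-p+p/q$; summing over edges against~\eqref{eq:dirichlet-form} delivers the inequality. For the reverse bound $\gap_{\textsc{sw}}\leq 8\gap_\rc|E|\log|E|$, I would go in the opposite direction via a canonical-flow argument: one step of $\hat P_{\textsc{sw}}$ is represented as a random composition of $T=\Theta(|E|\log|E|)$ FK-Glauber edge updates that, by a coupon-collector bound, visit every edge at least once with overwhelming probability. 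Congestion over such flows loses precisely the claimed $|E|\log|E|$ factor.

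The comparison~\eqref{eq-ullrich1} between Potts Glauber and SW is the main obstacle. Here I would work directly on $\Omega_\textsc{p}$ and compare Dirichlet forms: given a heat-bath Potts move $\sigma\to\sigma'$ that changes only the color at a vertex $v$, a single SW transition realizes it on the event that the intermediate Edwards--Sokal bond configuration closes every edge incident to $v$---so that $\{v\}$ becomes an isolated FK cluster, freely recolored to $\sigma'(v)$---while the clusters meeting $N(v)$ are recolored to their current values. The closing event has probability at least $(1-p)^\Delta = e^{-\beta\Delta}$, and a local accounting of the $O(\Delta)$ relevant cluster recolorings contributes an additional $q^{-O(\Delta)}$; after a symmetric Dirichlet comparison (with the prefactor $2q^2$ absorbing the asymmetry between the forward and backward heat-bath rates) this produces $\gap_\textsc{p}\leq 2q^2(qe^{2\beta})^{4\Delta}\gap_{\textsc{sw}}$. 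The subtle point, and the source of the exponent $4\Delta$ rather than $\Delta$, is to localize the coupling event so that it depends only on bonds and colors in a neighborhood of $v$ and not on the global cluster structure; otherwise the simulation probability would decay in $|V|$ rather than in $\Delta$, and the comparison would collapse.
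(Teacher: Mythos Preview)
The paper does not prove this theorem: it is quoted verbatim from Ullrich's papers~\cite{Ul13,Ul14} and used as a black box (see the citation in the theorem header and the surrounding text in \S\ref{subsec-prelim-dyn}). There is therefore no ``paper's own proof'' to compare against.

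On the substance of your sketch: the outline for~\eqref{eq-ullrich2} is essentially Ullrich's. The identification of $\gap_{\textsc{sw}}$ with the gap of the FK-valued chain $\hat P_{\textsc{sw}}$ via the shared nonzero spectrum of $T_1T_2$ and $T_2T_1$ is exactly right, the lower bound comes from conditioning a $\hat P_{\textsc{sw}}$ step to touch a single edge, and the $|E|\log|E|$ upper bound is indeed obtained by a flow/coupon-collector comparison.

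Your plan for~\eqref{eq-ullrich1}, however, has a genuine gap that you yourself flag but do not close. If you simulate a single-site Potts update at $v$ by one SW step via the event ``$v$ is isolated in the intermediate FK configuration and every other cluster retains its color,'' the second requirement costs $q^{-(\#\text{clusters}-1)}$, which is typically $q^{-\Theta(|V|)}$, not $q^{-O(\Delta)}$. Saying that one must ``localize the coupling event'' names the obstacle without resolving it: there is no obvious way, within a single SW step, to avoid paying for recoloring all global clusters. Ullrich's argument does \emph{not} proceed by this naive simulation. Instead he compares Dirichlet forms (not transition probabilities) through an intermediate ``single-bond'' SW chain and exploits the tensor-product structure of the Edwards--Sokal coupling so that the comparison constants depend only on the local environment of the updated vertex; the exponent $4\Delta$ arises from bounding ratios of conditional measures on the $O(\Delta)$ edges incident to $v$, not from any cluster-retention event. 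As written, your plan for~\eqref{eq-ullrich1} would yield a bound exponential in $|V|$ rather than in $\Delta$.
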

The proof of~\eqref{eq-ullrich2} further extends to all real $q> 1$, whence
\begin{align}\gap_{\rc} &\lesssim\gap_{\textsc {cm}}\lesssim \gap_{\rc}\, |E|\log |E| \label{eq-ullrich2-realq}\,,
\end{align}
as was observed (and further generalized) by  Blanca and Sinclair~\cite[\S5]{BlSi14}, where $\gap_{\textsc{cm}}$ is the spectral gap of Chayes--Machta dynamics.
In particular, because the estimates of Theorem~\ref{thm:Ullrich-comparison} hold for general graphs $G$ and are formulated in the absence of  boundary conditions, they hold immediately for $\Lambda$ with periodic or free boundary conditions.

\begin{remark} \label{rem:Ullrich-boundary-conditions}
In the presence of Potts (FK) boundary conditions, one can define a new graph $G'$ where all boundary vertices of the same color (in the same FK boundary cluster) are identified. However, the constant in~\eqref{eq-ullrich1} is exponential in the maximum degree; this can be improved to exponential in the maximum degree of all but one vertex (see~\cite[Theorem~1']{Ul13}). Thus, Eq.~\eqref{eq-ullrich1} holds also in the presence of boundary conditions through which \emph{at most one} given subset of vertices are wired, or, by spin flip symmetry, assigned a fixed color in $[q]$.  The estimates of~\eqref{eq-ullrich2} and~\eqref{eq-ullrich2-realq} are uniform in $\Delta$ and therefore hold in the presence of general FK boundary conditions.
\end{remark}

\subsubsection*{Block dynamics}\label{subsub:block-dynamics}

A key ingredient in the proof of~\cite{LScritical}, as well as our proof of Theorem~\ref{mainthm-q<=4}, is the block dynamics technique due to Martinelli (see~\cite[\S3]{Martinelli97}) for bounding the spectral gap of the Glauber dynamics. Suppose $B_1,...,B_k$ are such that $B_1 -\partial B_1,...,B_k-\partial B_k$ covers $\Lambda$. Then the block dynamics is the corresponding Glauber dynamics that updates one block (instead of one site) at a time: each block is assigned a rate-1 Poisson clock; when the clock at $B_i$ rings, resample the configuration on $B_i-\partial B_i$ according to $\mu_{B_i}^\sigma$ where the boundary conditions $\sigma$ are given by the chain restricted to $\Lambda-(B_i-\partial B_i)$.

\begin{theorem}[{\cite[Proposition~3.4]{Martinelli97}}]\label{thm:block-dynamics}
Consider a continuous-time single-site Markov chain for the Potts model on $\Lambda$ with boundary condition $\zeta$, which is reversible w.r.t.\ the Gibbs distribution $\mu_\Lambda^\zeta$.
Let $\gap_\Lambda^\zeta$ and $\gap_{\cB}^{\zeta}$ respectively be the spectral gaps of the single-site dynamics on $\Lambda$ and block dynamics corresponding to $B_1,\ldots,B_k$ such that $B_1^o,...,B_k^o$ cover $\Lambda$. Then letting $\chi = \sup_{x\in\Lambda} \#\{i : B_i \ni x\}$, we obtain
\begin{equation*}
  \gap_\Lambda^\zeta \geq \chi  ^{-1} \gap_\mathcal{B}^\zeta \inf_{i,\varphi} \gap_{B_i}^\varphi \,.
\end{equation*}
\end{theorem}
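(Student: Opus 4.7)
The plan is to prove this bound on the spectral gap via the standard Dirichlet-form route. I will use the variational characterization of $\gap$ in \eqref{eq:dirichlet-form}, so that it suffices to show that for every $f\in L^2(\mu_\Lambda^\zeta)$,
\begin{equation*}
\operatorname{Var}_{\mu_\Lambda^\zeta}(f) \;\le\; \chi\,(\gap_{\cB}^\zeta)^{-1}\bigl(\inf_{i,\varphi}\gap_{B_i}^\varphi\bigr)^{-1}\,\mathcal{E}_\Lambda^\zeta(f,f),
\end{equation*}
where $\mathcal{E}_\Lambda^\zeta$ is the Dirichlet form of the single-site chain. The right inequality then follows by dividing through by $\operatorname{Var}(f)$ and taking the infimum.

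First, I would observe that the block chain with blocks $B_1,\dots,B_k$ is itself reversible with respect to $\mu_\Lambda^\zeta$, and that its Dirichlet form admits the clean decomposition
\begin{equation*}
\mathcal{E}_{\cB}^\zeta(f,f) \;=\; \sum_{i=1}^k \mathbb{E}_{\mu_\Lambda^\zeta}\!\bigl[\operatorname{Var}_{\mu_{B_i}^{\sigma}}(f)\bigr],
\end{equation*}
since when the $i$-th block's clock rings the chain resamples on $B_i^o$ from $\mu_{B_i}^\sigma$. Applying the Poincar\'e inequality for the block chain therefore gives
\begin{equation*}
\operatorname{Var}_{\mu_\Lambda^\zeta}(f) \;\le\; (\gap_{\cB}^\zeta)^{-1}\sum_{i=1}^k \mathbb{E}_{\mu_\Lambda^\zeta}\!\bigl[\operatorname{Var}_{\mu_{B_i}^\sigma}(f)\bigr].
\end{equation*}

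Second, I would handle each inner term by the Poincar\'e inequality for the single-site chain on $B_i$ with boundary condition $\varphi=\sigma\restriction_{\Lambda\setminus B_i^o}$: for each realization of $\sigma$ outside $B_i^o$,
\begin{equation*}
\operatorname{Var}_{\mu_{B_i}^{\sigma}}(f) \;\le\; (\gap_{B_i}^{\sigma})^{-1}\,\mathcal{E}_{B_i}^{\sigma}(f,f) \;\le\; \bigl(\inf_{i,\varphi}\gap_{B_i}^\varphi\bigr)^{-1}\mathcal{E}_{B_i}^{\sigma}(f,f).
\end{equation*}
Taking expectation over $\sigma$ and using the tower property, the averaged local Dirichlet form on $B_i$ is exactly $\sum_{x\in B_i^o}\mathbb{E}_{\mu_\Lambda^\zeta}[\operatorname{Var}_x(f)]$, where $\operatorname{Var}_x$ denotes the conditional variance with respect to $\mu_\Lambda^\zeta(\,\cdot\mid \sigma\restriction_{\Lambda\setminus\{x\}})$ — this equality uses that conditioning on the configuration outside $B_i$ and then on the configuration on $B_i\setminus\{x\}$ is the same as conditioning on $\Lambda\setminus\{x\}$.

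Third, I would sum over $i$ and interchange sums:
\begin{equation*}
\sum_{i=1}^k \mathbb{E}_{\mu_\Lambda^\zeta}\!\bigl[\mathcal{E}_{B_i}^\sigma(f,f)\bigr] \;=\; \sum_{x\in\Lambda^o} \#\{i:x\in B_i^o\}\cdot \mathbb{E}_{\mu_\Lambda^\zeta}[\operatorname{Var}_x(f)] \;\le\; \chi\,\mathcal{E}_\Lambda^\zeta(f,f),
\end{equation*}
since the sets $B_i^o$ cover $\Lambda$ with overlap at most $\chi$. Chaining the three displayed inequalities produces the desired bound on $\operatorname{Var}_{\mu_\Lambda^\zeta}(f)$, and hence on $\gap_\Lambda^\zeta$. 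The only genuinely subtle step is the tower-property identification in the second paragraph, which relies on the Gibbs-spec\-i\-fi\-ca\-tion property that the single-site heat-bath rates depend only on the neighboring spins; everything else is routine bookkeeping with conditional variances.
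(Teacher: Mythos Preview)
Your argument is correct and is precisely the standard Dirichlet-form proof of this inequality. Note, however, that the paper does not give its own proof of this statement: it is quoted as a preliminary result from \cite[Proposition~3.4]{Martinelli97}, so there is no in-paper proof to compare against. Your derivation is essentially the one in Martinelli's lecture notes.
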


\subsubsection*{Canonical paths} \label{subsub:canonical-paths}
The following well-known geometric approach (see~\cite{DiSa93,DiSt91,JeSi89,Sinclair92} as well as~\cite[Corollary 13.24]{LPW09}) serves as an effective method for obtaining an upper bound on the inverse gap of a Markov chain, and will be used in our proof of Theorem~\ref{mainthm:large-q-upper}.

\begin{theorem}\label{thm:canonical-paths}
Let $P$ be the transition kernel of a discrete-time Markov chain with stationary distribution $\pi$, and  write $Q(x,y)=\pi(x)P(x,y)$ for every $x,y\in\Omega$. For
each $(a,b)\in\Omega^2$, assign a path
$\gamma(a,b)=(x_0=a,\dots,x_n=b)$ such that
$P(x_i,x_{i+1})> 0$ for all $i$, and write $|\gamma(a,b)|=n$, identifying $\gamma(a,b)$ with $\{(x_{i-1},x_{i}):i=1,\ldots,n\}$. Then,
\begin{align} \label{eq:canonical-paths}
  \gap^{-1}\le \max_{\substack{x,y\in\Omega\\ Q(x,y)>0}}
\frac1{Q(x,y)}\sum_{\substack{a,b\in\Omega \\ (x,y)\in\gamma(a,b)}}
|\gamma(a,b)|\pi(a)\pi(b).
\end{align}
\end{theorem}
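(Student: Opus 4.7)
The plan is to derive the bound from the variational (Dirichlet form) characterization of the spectral gap stated in~\eqref{eq:dirichlet-form}, namely
\[
\gap = \inf_{f\in L^2(\pi)} \frac{\mathcal{E}(f,f)}{\var_\pi f}\,,
\]
by showing that for every $f$, $\var_\pi f \le \rho\, \mathcal{E}(f,f)$, where $\rho$ denotes the right-hand side of~\eqref{eq:canonical-paths}. Using the identity $\var_\pi f = \tfrac12 \sum_{a,b\in\Omega} \pi(a)\pi(b)(f(b)-f(a))^2$ and the equality $\mathcal{E}(f,f)=\tfrac12\sum_{x,y} Q(x,y)(f(y)-f(x))^2$, the whole task reduces to bounding $(f(b)-f(a))^2$ in terms of the edge-increments along the assigned path $\gamma(a,b)$.

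First, I would decompose the increment along the path and apply Cauchy--Schwarz: writing $\gamma(a,b)=(x_0,\dots,x_n)$ with $n=|\gamma(a,b)|$,
\[
(f(b)-f(a))^2 \;=\; \Bigl(\sum_{i=1}^{n} \bigl(f(x_i)-f(x_{i-1})\bigr)\Bigr)^2 \;\le\; |\gamma(a,b)| \sum_{(x,y)\in\gamma(a,b)} (f(y)-f(x))^2\,.
\]
Plugging this into the expression for $\var_\pi f$ yields
\[
\var_\pi f \;\le\; \tfrac12 \sum_{a,b\in\Omega} \pi(a)\pi(b)\,|\gamma(a,b)| \sum_{(x,y)\in\gamma(a,b)} (f(y)-f(x))^2\,.
\]

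Next I would swap the order of summation, moving the sum over directed edges $(x,y)$ appearing on some path to the outside; note that by construction $P(x,y)>0$ (hence $Q(x,y)>0$) for every such edge, so dividing and multiplying by $Q(x,y)$ is legitimate:
\[
\var_\pi f \;\le\; \tfrac12 \sum_{\substack{x,y\in\Omega\\ Q(x,y)>0}} Q(x,y)\,(f(y)-f(x))^2 \,\cdot\, \frac{1}{Q(x,y)}\!\!\sum_{\substack{a,b\in\Omega\\ (x,y)\in\gamma(a,b)}}\!\! |\gamma(a,b)|\,\pi(a)\pi(b)\,.
\]
Bounding the inner edge-factor by $\rho$ uniformly and recognizing the remaining sum as $\mathcal{E}(f,f)$ gives $\var_\pi f \le \rho\,\mathcal{E}(f,f)$, which via~\eqref{eq:dirichlet-form} yields $\gap^{-1}\le \rho$, as desired.

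There is essentially no obstacle here: both the Cauchy--Schwarz step and the exchange-of-summation step are standard. The only point to be careful about is that the one-step increments are taken along the edges actually traversed by the paths (so that $Q(x,y)>0$ and the division is valid), and that the factor $|\gamma(a,b)|$ produced by Cauchy--Schwarz is correctly carried through the swap of summation into the final congestion quantity on the right-hand side of~\eqref{eq:canonical-paths}.
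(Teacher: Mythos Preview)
Your argument is correct and is precisely the standard canonical-paths proof via the Dirichlet form, Cauchy--Schwarz along paths, and interchange of summation. Note that the paper does not actually supply its own proof of this theorem: it is stated in the preliminaries with references to \cite{DiSa93,DiSt91,JeSi89,Sinclair92} and \cite[Corollary~13.24]{LPW09}, and your write-up is exactly the argument one finds there.
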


A very standard application of Theorem~\ref{thm:canonical-paths} (see e.g.,~\cite{Martinelli94} in the setting of the Ising model) proves upper bounds on mixing times of spin systems in terms of the cut-width of the underlying graph. We omit the proof and note that it follows for the Potts and FK models by making the natural modifications and observing that in the FK setting, the probability of any single edge-flip is at least some $c(p,q)>0$. 

\begin{lemma}\label{lem:gap-shorter-side} Consider the Glauber dynamics for the $q$-state Potts model at inverse temperature $\beta$ on a rectangle $Q=\llb 0,n\rrb \times \llb 0,\ell\rrb$ for $0\leq \ell \leq n$, with arbitrary boundary conditions. There exists a constant $c(\beta,q)>0$ such that
\[\gap _{Q}^{-1}\lesssim e^{c\ell}\,,
\]
and an analogous bound holds for the heat-bath dynamics on the FK model.
\end{lemma}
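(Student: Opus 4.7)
The plan is to apply the canonical paths inequality (Theorem~\ref{thm:canonical-paths}) with a vertex ordering that realizes the cut-width of $Q$ as $O(\ell)$. Enumerate the vertices $v_1,\ldots,v_N$ of $Q$ in column-major order, where $N=(n+1)(\ell+1)$, so that for every $0\le i\le N$ the cut separating $L_i:=\{v_1,\ldots,v_i\}$ from $R_i:=V(Q)\setminus L_i$ contains at most $\ell+2$ edges of $Q$. For each ordered pair $(a,b)$ of Potts configurations on $Q$ compatible with the prescribed boundary condition, define the canonical path $\gamma(a,b)=(x_0,\ldots,x_N)$ by declaring $x_i$ to agree with $b$ on $L_i$ and with $a$ on $R_i$; consecutive configurations $x_{i-1}$ and $x_i$ differ only at $v_i$, so each transition is a legal single-site heat-bath update with probability $P(x_{i-1},x_i)\ge c_0(\beta,q)/N$.

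The heart of the argument is a congestion bound. Fix a transition edge $(x,y)$ differing only at some $v_i$; the pairs $(a,b)$ for which $(x,y)\in\gamma(a,b)$ are parametrized by arbitrary choices of $a\restriction_{L_{i-1}}$ and $b\restriction_{R_i}$, the remaining coordinates being pinned by $x$ and $y$. Introduce the swap configuration $y'$ given by $y'\restriction_{L_{i-1}}=a\restriction_{L_{i-1}}$, $y'(v_i)=x(v_i)$, and $y'\restriction_{R_i}=b\restriction_{R_i}$. Decomposing the Potts Hamiltonian into edges internal to $L_{i-1}$, internal to $R_i$, incident to $v_i$ or crossing the $L_i/R_i$ cut (of which there are $O(\ell)$), and interactions with $\partial Q$, one finds that all contributions except those from the third group cancel pairwise in $\cH(a)+\cH(b)-\cH(x)-\cH(y')$: the internal $L_{i-1}$ and $R_i$ terms cancel by the left/right swap structure, while the boundary terms cancel because $a,b,x,y'$ all coincide on $\partial Q$. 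This yields
\[
  \bigl|\cH(a)+\cH(b)-\cH(x)-\cH(y')\bigr|\le C_1\ell\,,
\]
hence $\pi(a)\pi(b)\le e^{c_1\beta\ell}\pi(x)\pi(y')$. Summing over $(a,b)$ is equivalent to summing $\pi(y')$ over configurations with $y'(v_i)=x(v_i)$ and is therefore bounded by $1$, giving
\[
  \sum_{(a,b):\,(x,y)\in\gamma(a,b)}\pi(a)\pi(b)\le e^{c_1\beta\ell}\pi(x)\,.
\]

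Inserting this congestion bound together with $|\gamma(a,b)|=N$ and $\pi(x)P(x,y)\ge c_0\pi(x)/N$ into Theorem~\ref{thm:canonical-paths} yields $\gap_Q^{-1}\le (N^2/c_0)\,e^{c_1\beta\ell}\lesssim e^{c\ell}$ for an appropriate $c=c(\beta,q)$, with the polynomial-in-$N$ prefactor absorbed in the usual manner of cut-width bounds. The same argument applies to the heat-bath FK dynamics using an edge ordering compatible with the column-major vertex ordering: the weight $p^{o(\omega)}(1-p)^{c(\omega)}q^{k(\omega)}$ decomposes across the cut up to an $e^{O(\ell)}$ error, since the swap preserves edge counts exactly while the cluster counts of the four configurations---computed using both the edge values and the boundary partition---can differ from one another only through edges crossing the cut, of which there are $O(\ell)$; moreover every single-edge heat-bath FK update has probability at least $c_0(p,q)/|E(Q)|$. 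The main obstacle is verifying the energy/weight comparison uniformly in the boundary condition, especially in the FK case where long-range boundary connections could a priori couple the two sides of the cut; the key observation is that such connections can only influence the interior cluster structure through the same $O(\ell)$ cut edges, so the swap estimate is unaffected.
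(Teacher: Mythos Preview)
Your Potts argument is correct and is precisely the standard cut-width canonical-paths computation the paper alludes to (and omits, citing~\cite{Martinelli94}): the swap estimate $|\cH(a)+\cH(b)-\cH(x)-\cH(y')|=O(\ell)$ holds under arbitrary Potts boundary conditions because all four configurations coincide on $\partial Q$, so every edge contribution away from the $O(\ell)$-sized cut cancels.

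Your FK argument, however, has a genuine gap. The assertion that long-range boundary connections ``can only influence the interior cluster structure through the same $O(\ell)$ cut edges'' is false. Take $Q=\llb 0,n\rrb\times\llb 0,\ell\rrb$ with a column cut near $n/2$; place $K\asymp n$ disjoint adjacent boundary pairs $u_i,v_i$ on $\partial_\north Q$ to the left of the cut and $w_i,z_i$ on $\partial_\south Q$ to its right, and let $\xi$ wire $u_i\sim z_i$ and $v_i\sim w_i$ as \emph{separate} boundary components for each $i$. With $a$ having only the edges $u_iv_i,w_iz_i$ open and $b$ all closed (so $x=(b_L,a_R)$, $y'=(a_L,b_R)$), every gadget forms a single cluster $\{u_i,v_i,w_i,z_i\}$ in each of $a,x,y'$ (one open edge plus the two crossing boundary wirings suffice), whereas in $b$ it splits into $\{u_i,z_i\}$ and $\{v_i,w_i\}$. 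Hence $k_\xi(a)+k_\xi(b)-k_\xi(x)-k_\xi(y')=K=\Theta(n)$, the ratio $\pi(a)\pi(b)/(\pi(x)\pi(y'))=q^{\Theta(n)}$, and the swap injection only yields $\gap^{-1}\lesssim e^{cn}$. The paper likewise merely asserts the FK analog; for free or wired boundary conditions (and the local modifications actually arising in \S\ref{sec:upper-swendsen-wang}) no such crossing boundary components exist and your argument goes through, but for \emph{arbitrary} $\xi$ the swap construction as written does not deliver $e^{c\ell}$.
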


\section{Mixing at a continuous phase transition}\label{sec:mixing-q-leq-4}

This section contains the proof of Theorem~\ref{mainthm-q<=4} (as well as its analogs for boxes with non-periodic boundary conditions); recall from~Theorem~\ref{thm:Ullrich-comparison} that it suffices to prove the desired bounds for Glauber dynamics for the Potts model in order to obtain them for FK Glauber as well as Swendsen--Wang and Chayes--Machta dynamics. Consider $\Lambda=\Lambda_{n,n'}=\llb 0,n\rrb\times \llb0,n'\rrb$ for $n'=\lfloor \alpha n\rfloor$, where  $\alpha\in[\bar \alpha,1]$ for some fixed $0<\bar \alpha \leq \frac 12$.

\subsection{Mixing under arbitrary boundary conditions}\label{sub:Potts}

We first establish analogues of Eqs.~\eqref{eq:q=3}--\eqref{eq:q=4} for Glauber dynamics for the Potts model with arbitrary boundary conditions, modulo an equilibrium estimate on crossing probabilities at $q=4$ which we establish in~\S\ref{sub:q=4}. Whenever we refer to arbitrary or fixed boundary conditions we mean ones that include an assignment of a color, or \emph{free} to each of the vertices of $\partial \Lambda$ (in contrast to periodic). The following is a general form of the approach of~\cite{LScritical} to proving upper bounds on mixing times in the presence of RSW bounds; we stress that, while this proof does extend from the Ising model to the Potts model, in fact it fails to produce a polynomial upper bound for the critical FK model at noninteger $1<q<4$, despite the availability of the necessary (uniform) RSW estimates (cf.~\cite{GL16b}).

\begin{theorem}\label{thm:general-RSW-mixing} Suppose $q\geq 1$ and there exists a nonincreasing sequence $(a_n)$ such that
\begin{align}\label{eq:a-n}
\inf_\xi \pi^\xi_{\Lambda_{n/3,n}}\left (C_v(\Lambda_{n/3,n})\right )\geq a_n\,.
\end{align}
Then there exists some absolute constant $c>0$ such that
Glauber dynamics for the Potts model on $\Lambda=\Lambda_{n,n'}$ with arbitrary boundary conditions, $\zeta$, satisfies
\[\gap^{-1}\leq (c\, a_n)^{-2\log_{3/2} n}\,.
\]
\end{theorem}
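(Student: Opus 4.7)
The plan is to implement the block-dynamics-plus-recursion strategy of \cite{LScritical}, adapted to the Potts setting. Define
\[
T(n) := \sup_{\alpha\in[\bar\alpha,1]}\sup_\zeta (\gap^{\zeta}_{\Lambda_{n,\lfloor \alpha n\rfloor}})^{-1},
\]
where the inner supremum ranges over arbitrary (color-or-free) Potts boundary conditions. I would prove the one-step recursion $T(n) \leq C\, a_n^{-2}\, T(\tfrac{2n}{3})$ and iterate it $\log_{3/2} n$ times, reaching constant-sized boxes on which the inverse gap is $O(1)$. Since $a_n$ is nonincreasing, each step contributes at most the factor $C a_n^{-2}$, giving $T(n) \leq (C' a_n^{-2})^{\log_{3/2} n} = (c a_n)^{-2\log_{3/2} n}$ for an absolute constant $c$.

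For the one-step bound, I would cover $\Lambda_{n,n'}$ by two overlapping rectangular blocks $B_1,B_2$ of dimensions $\approx \frac{2n}{3}\times n'$ sharing a central strip $M \cong \Lambda_{n/3, n'}$ of width $n/3$. Applying Theorem~\ref{thm:block-dynamics} with covering multiplicity $\chi=2$ gives
\[
(\gap^{\zeta}_\Lambda)^{-1}\;\lesssim\; (\gap^\zeta_\cB)^{-1}\cdot \sup_{i,\varphi}(\gap^\varphi_{B_i})^{-1} \;\lesssim\; (\gap^\zeta_\cB)^{-1}\cdot T(\tfrac{2n}{3}),
\]
where the last step uses that each $B_i$ is again a rectangle of aspect ratio in $[\bar\alpha,1]$ (possibly after transposition). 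What remains is to show $(\gap^\zeta_\cB)^{-1}\lesssim a_n^{-2}$ uniformly in $\zeta$.

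To bound the two-block chain I would couple two of its trajectories from arbitrary Potts configurations $\sigma,\sigma'$ compatible with $\zeta$, performing one $B_1$-update followed by one $B_2$-update. The $B_1$-update is realized jointly with an Edwards--Sokal FK sample on the edges of $B_1$. Hypothesis~\eqref{eq:a-n}, applied to the sub-rectangle $M$ with the arbitrary FK boundary condition inherited from $\sigma$ (respectively $\sigma'$) outside $B_1$, supplies with probability at least $a_n$ a suitable FK crossing of $M$ which decouples the cluster structure in $B_1\setminus M$ from the configuration outside $B_1$; on this event the two Potts colorings can be coupled to agree on $B_1\setminus M$. A symmetric step on $B_2$ couples them on $B_2\setminus M$, and together $B_1\setminus M$ and $B_2\setminus M$ cover $\Lambda$. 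Hence one block-sweep couples the two chains with probability $\gtrsim a_n^2$, and standard coupling-to-gap estimates (e.g.\ via submultiplicativity of $\bar d_\tv$) give $(\gap^\zeta_\cB)^{-1}\lesssim a_n^{-2}$.

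The principal obstacle is to realize this coupling \emph{uniformly} in $\zeta$ and in the Potts boundary conditions that $\sigma,\sigma'$ induce on $\partial B_i$. Unlike the Ising case the Potts model lacks a two-valued monotone structure, so there is no grand coupling to lean on; instead one must work through the Edwards--Sokal representation with completely general FK boundary conditions, which is precisely what the infimum over $\xi$ in~\eqref{eq:a-n} is designed to absorb. Translating a primal crossing estimate in the FK model into a genuine Potts-level decoupling statement across $M$ (keeping track of boundary-cluster identifications that may bridge the strip and thereby obstruct decoupling) is the delicate point; once handled, the recursion runs mechanically and the aspect-ratio constraint is preserved by alternating horizontal and vertical block decompositions.
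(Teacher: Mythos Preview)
Your approach is essentially the paper's: block dynamics with two overlapping $\tfrac{2n}{3}\times n'$ sub-rectangles, an Edwards--Sokal coupling exploiting an FK crossing of the overlap, then recursion. However, your coupling step has a genuine gap: $B_1\setminus M$ and $B_2\setminus M$ do \emph{not} cover $\Lambda$---their union is $\Lambda\setminus M$, so your two-update scheme leaves the overlap strip $M$ uncoupled. The paper's resolution is that the second crossing is unnecessary. Once the $B_1$-update (together with a \emph{dual} vertical crossing of $M$, which the hypothesis supplies with probability $\geq a_n$ via self-duality and the infimum over $\xi$) couples the two chains on $\Lambda\setminus B_2^o\supset \partial_\west B_2$, the subsequent $B_2$-update sees \emph{identical} Potts boundary data for both chains, and the identity coupling succeeds on all of $B_2$. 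Hence a single crossing suffices, giving $(\gap_\cB^\zeta)^{-1}\lesssim a_n^{-1}$ rather than $a_n^{-2}$; correspondingly the recursion must be run $2\log_{3/2}n$ times (each step shrinks the \emph{area}, not necessarily the longer side, by $2/3$), and the product is $(c\,a_n)^{-2\log_{3/2}n}$.

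On the delicate point you correctly flag: the paper realizes the coupling via the FK grand coupling on $B_1$ over all boundary conditions, dominated by the wired one, and reveals the rightmost dual crossing of $M$ under this dominating measure (so every chain inherits it). The Potts boundary $\zeta$ is encoded as $\pi^{\xi}(\cdot\mid\mathcal E_\zeta)$, where $\mathcal E_\zeta$ is the decreasing event that no two differently-colored boundary sites are FK-connected; under $\mathcal E_{\eta,\zeta}$ no new boundary connections can be created by the interior of $B_1$, so west of the exposed dual crossing all chains see the same FK boundary and can be coupled there, after which the Potts colorings agree as well.
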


Combining the RSW bound of Theorem~\ref{thm:crossing-any-bc-q<4} with Theorem~\ref {thm:general-RSW-mixing} establishes the  analog of Eq.~\eqref{eq:q=3} for a rectangle with arbitrary (non-periodic) boundary conditions. At $q=4$, we will later prove a polynomially decaying bound on crossing probabilities uniform in boundary conditions (see Theorem~\ref{thm:crossing-any-bc}), through which Theorem~\ref{thm:general-RSW-mixing} will yield the matching quasi-polynomial upper bound on mixing.

\begin{corollary}\label{cor:poly/quasipoly}
There exist absolute constants $c_1,c_2>0$ such that Glauber dynamics for the critical $3$-color Potts model on $\Lambda$ with arbitrary fixed boundary satisfies
\[
\gap^{-1} \lesssim n^{c_1}\,,
\]
whereas for the $4$-state critical Potts model on $\Lambda$ with arbitrary boundary conditions,
\[
\gap^{-1} \lesssim n^{c_2 \log n}\,.
\]
\end{corollary}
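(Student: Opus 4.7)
The plan is to derive both bounds as immediate consequences of Theorem~\ref{thm:general-RSW-mixing}, reducing the task to producing, for each $q\in\{3,4\}$, a suitable uniform lower bound on the vertical-crossing probability $\inf_\xi \pi^\xi_{\Lambda_{n/3,n}}(\cC_v(\Lambda_{n/3,n}))$ over all boundary conditions $\xi$. Once a sequence $(a_n)$ satisfying \eqref{eq:a-n} is in hand, Theorem~\ref{thm:general-RSW-mixing} yields $\gap^{-1}\leq (ca_n)^{-2\log_{3/2}n}$, so the decay rate of the crossing bound controls whether the resulting dynamical estimate is polynomial or quasi-polynomial in $n$.

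For $q=3$ the input is essentially off the shelf. Theorem~\ref{thm:crossing-any-bc-q<4}, applied with aspect ratio $1/3$, furnishes an absolute constant $p_0=p_0(3,1/3)>0$ such that every boundary condition $\xi$ satisfies $\pi^\xi_{\Lambda_{n/3,n}}(\cC_v(\Lambda_{n/3,n}))\geq p_0$. Taking $a_n\equiv p_0$ in Theorem~\ref{thm:general-RSW-mixing} gives $\gap^{-1}\leq (cp_0)^{-2\log_{3/2}n}\lesssim n^{c_1}$ for a suitable absolute $c_1>0$. Translating Potts boundary conditions into FK boundary conditions via the Edwards--Sokal coupling (cf.~\S\ref{subsec:prelim-Potts}) extends this to arbitrary fixed Potts boundary conditions, yielding the first bound.

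For $q=4$ the situation is more delicate, because the uniform RSW-type estimate of Theorem~\ref{rmk:crossing-no-bc} is only available for crossings of \emph{interior} sub-rectangles held away from $\partial\Lambda_{n/3,n}$ by $\epsilon n$; at the extremal value $q=4$, full-rectangle crossings are expected to be highly boundary-sensitive, and a uniform constant lower bound on $a_n$ should actually fail. The idea is to settle for a polynomially decaying bound $a_n\gtrsim n^{-c}$ valid uniformly in $\xi$, which is the content of Theorem~\ref{thm:crossing-any-bc} to be established in~\S\ref{sub:q=4}. Inserting $a_n = n^{-c}$ into Theorem~\ref{thm:general-RSW-mixing} then produces $\gap^{-1}\leq (cn^{-c})^{-2\log_{3/2}n}\leq n^{c_2\log n}$ for a suitable $c_2>0$, which is the second claim.

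The main obstacle is therefore exactly the $q=4$ crossing bound itself: one must upgrade the interior-crossing estimate of Theorem~\ref{rmk:crossing-no-bc} into a full vertical crossing of $\Lambda_{n/3,n}$ at only polynomial cost in $n$, uniformly over all (possibly highly unfavorable) boundary conditions. A natural route is to combine, via the FKG inequality, an interior crossing of a rectangle of the form $\llb\epsilon n,(1-\epsilon)n/3\rrb\times\llb\epsilon n,(1-\epsilon)n\rrb$---whose probability is bounded below by a positive constant by Theorem~\ref{rmk:crossing-no-bc}---with two vertical connections through the $\epsilon n$-thin peripheral slabs near $\partial_\south\Lambda_{n/3,n}$ and $\partial_\north\Lambda_{n/3,n}$; the polynomial one-arm/decay-of-correlations estimate of Theorem~\ref{thm:polynomial-decay} can then be used to control the cost of each such connection by $n^{-O(1)}$, producing a polynomial lower bound on $a_n$ that is uniform in $\xi$.
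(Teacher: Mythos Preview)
Your reduction of the corollary to Theorem~\ref{thm:general-RSW-mixing} together with a crossing input $a_n$ is exactly the paper's approach: for $q=3$ one feeds in the uniform constant from Theorem~\ref{thm:crossing-any-bc-q<4}, and for $q=4$ one feeds in the polynomial bound from Theorem~\ref{thm:crossing-any-bc}, obtaining $n^{O(1)}$ and $n^{O(\log n)}$ respectively. Up through the point where you invoke Theorem~\ref{thm:crossing-any-bc} as a black box, your argument and the paper's coincide.

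Where your proposal diverges is the final paragraph, your sketch of how one would actually prove Theorem~\ref{thm:crossing-any-bc}. The route you suggest---combine an interior crossing from Theorem~\ref{rmk:crossing-no-bc} with two short connections through the $\epsilon n$-thick boundary slabs, bounding the latter via Theorem~\ref{thm:polynomial-decay}---has a real gap. The lower bound in Theorem~\ref{thm:polynomial-decay} is stated for the infinite-volume measure $\pi_{\mathbb Z^2}$; by monotonicity $\pi^0_\Lambda \preceq \pi_{\mathbb Z^2}$, so a bulk one-arm lower bound does \emph{not} transfer to a lower bound on a boundary-to-bulk connection under free (worst-case) boundary conditions. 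What you need there is a half-plane/boundary one-arm lower bound at $q=4$, which is not provided by Theorem~\ref{thm:polynomial-decay} and is in fact precisely the delicate point (boundary sensitivity at $q=4$) you correctly flag earlier.

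The paper's proof of Theorem~\ref{thm:crossing-any-bc} avoids any boundary one-arm input by a multi-scale stitching argument: first force $O(\log n)$ consecutive edges on $\partial\Lambda$ to be open at cost $(1+\sqrt q)^{-O(\log n)}=n^{-O(1)}$, then stitch together $O(\log n)$ crossings of rectangles $R_k$ at dyadically increasing scales, each sitting inside its own $\tfrac32$-dilation $\tilde R_k\subset\Lambda$ so that Theorem~\ref{rmk:crossing-no-bc} applies and gives a uniform constant lower bound at every scale. The product over $O(\log n)$ scales is again $n^{-O(1)}$, and two such constructions (from $\partial_\west$ and $\partial_\east$) are glued by an annulus circuit via Theorem~\ref{rmk:circuit-in-annulus}. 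This is what replaces your proposed use of Theorem~\ref{thm:polynomial-decay}.
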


From this corollary, Eqs.~\eqref{eq:q=3}--\eqref{eq:q=4} of Theorem~\ref{mainthm-q<=4} follow by moving from the box to the torus exactly as done in~\cite[Theorem~4.4]{LScritical} (see also~\S\ref{sub:torus}). For $q=\{2,3,4\}$, by Theorem~\ref{thm:Ullrich-comparison}, these imply the analogous upper bounds on the inverse gap of the Swendsen--Wang dynamics, as well as Glauber dynamics for the FK model.

\begin{proof}[\textbf{\emph{Proof of Theorem~\ref{thm:general-RSW-mixing}}}]
We use the block dynamics technique of Theorem~\ref{thm:block-dynamics} used in~\cite{LScritical}. Define two sub-blocks of $\Lambda$, as follows:
\[B_\west:=\llb 0,\tfrac {2n}3\rrb \times\llb 0,n'\rrb\,, \qquad B_\east:=\llb \tfrac n3,n\rrb \times \llb 0,n'\rrb\,.
\]
Then let  $\mathcal B$ denote the block dynamics on $\Lambda$ with sub-blocks $B_\west,B_\east$ as defined in~\S\ref{subsub:block-dynamics}.
We bound $\gap_\mathcal B ^\zeta$ and $\gap_{B_i} ^\varphi$ of Theorem~\ref{thm:block-dynamics} uniformly in $\zeta, \varphi$.

\begin{lemma}
For any two initial configurations $\sigma,\sigma'$ on $\Lambda$ with corresponding block dynamics chains $X_t$ and $Y_t$, there exists an absolute constant $c>0$ such that, if $(a_n)$ is a sequence satisfying~\eqref{eq:a-n}, there is a grand coupling, such that
$\mathbb P(X_1\neq Y_1)\leq 1-c\, a_n
$.
Moreover, there exists some $c'>0$ such that
$\gap_{\mathcal B}^\zeta\geq c'\, a_n
$ uniformly in $\zeta$.
\end{lemma}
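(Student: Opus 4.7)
The overall strategy is, for any two initial configurations $\sigma,\sigma'$, to construct a grand coupling of the block dynamics chains $X_t$ and $Y_t$ that forces $X_1=Y_1$ with probability at least $c\,a_n$ for some absolute constant $c>0$; the spectral gap bound then falls out of standard coupling-to-gap estimates. Two ingredients are used: (i) standard Poisson-clock calculations show that, with probability bounded below by an absolute constant, in the interval $[0,1]$ the clock of $B_\east$ rings first and is then followed by the clock of $B_\west$ (both in $[0,1]$); and (ii) the Edwards--Sokal coupling, by which each block update is performed by first sampling an FK configuration on the block, using the FK boundary conditions induced by the current Potts state on $\partial B_i$, and then assigning each cluster a uniform color in $[q]$, with clusters meeting $\partial B_i$ inheriting the prescribed boundary color.

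Condition on the ordering in (i) and first couple the two $B_\east$ updates. Since $X$ and $Y$ share the global boundary condition $\zeta$ on $\partial\Lambda$, their Potts configurations on $\partial B_\east$ agree on the east, north, and south portions, and may differ only on the internal vertical line $\{n/3\}\times\llb 0,n'\rrb$; hence the induced FK boundary conditions for $B_\east$ agree on three sides and may differ only on the west. The overlap strip $\llb n/3,2n/3\rrb\times\llb 0,n'\rrb$ has bounded aspect ratio (since $\alpha\in[\bar\alpha,1]$), so by monotonicity in boundary conditions and the hypothesis~\eqref{eq:a-n}, under each of the two FK boundary conditions the probability of a vertical FK crossing of this strip is at least $a_n$. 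Using the FK grand coupling together with the domain Markov property, one then constructs a joint coupling of the two FK samples such that, on an event of probability at least $a_n$, there is a common vertical crossing $\gamma$ of the overlap strip and the two FK configurations agree east of $\gamma$: conditionally on $\gamma$ (with its edges wired), the FK distribution east of $\gamma$ depends only on the east, north, and south boundary data, which are shared. Performing the i.i.d.\ coloring step with shared randomness then yields identical Potts configurations east of $\gamma$, and in particular on $\{2n/3\}\times\llb 0,n'\rrb$.

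The subsequent $B_\west$ update now sees boundary conditions identical for both chains: the east boundary $\{2n/3\}\times\llb 0,n'\rrb$ was just equalized, while the remaining three sides come from the common $\zeta$. A trivial coupling of the $B_\west$ update therefore yields $X_1=Y_1$. Combining the above, $\mathbb P(X_1\neq Y_1)\leq 1-c\,a_n$. Submultiplicativity of $\bar d_\tv$ then gives $\bar d_\tv(t)\leq(1-c\,a_n)^{\lfloor t\rfloor}$, whence the continuous-time relation $\gap_{\mathcal B}^\zeta\gtrsim -\log\bar d_\tv(1)\gtrsim a_n$ holds uniformly in $\zeta$, establishing the second assertion of the lemma.

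The main obstacle is the construction, in the second paragraph, of the common crossing $\gamma$ and the coupling of the FK configurations east of it. The difficulty is that FK boundary conditions are nonlocal (they encode wirings among boundary vertices), so some care is required to rule out the possibility that the differing west wirings influence the east side of $B_\east$. The resolution combines the FK grand coupling (to ensure a common candidate $\gamma$ at a scale where both measures stochastically dominate a crossing with probability $\gtrsim a_n$) with the FK domain Markov property applied to the region east of $\gamma$ once $\gamma$ is wired, at which point the east distributions collapse to a common one determined only by the shared east/north/south data.
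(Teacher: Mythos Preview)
Your overall strategy (block dynamics, Edwards--Sokal, FK monotonicity, coupling beyond a crossing in the overlap) matches the paper's, but the coupling of the first block update has two genuine gaps.

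First, the Edwards--Sokal correspondence with a Potts boundary $(\eta,\zeta)$ is not simply the FK measure with the induced wiring $(\psi,\xi)$: one must additionally condition on the compatibility event $\mathcal E_{\eta,\zeta}$ that no two differently-colored boundary vertices are $\omega$-connected. You never invoke $\mathcal E$, yet without it your claim that ``conditionally on $\gamma$ (with its edges wired), the FK distribution east of $\gamma$ depends only on the east, north, and south boundary data'' is false. The FK boundary induced by a Potts coloring wires every west-side vertex of color $k$ to every north/south/east vertex of color $k$; these cross-wirings differ between the two chains, so the east-region law does depend on the west data. It is precisely $\mathcal E_{\eta,\zeta}$ that neutralizes this, since under it the interior can never connect boundary vertices that were not already wired.

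Second, even granting $\mathcal E$, your construction of a \emph{common} primal crossing $\gamma$ via the grand coupling does not go through. The two block-update laws $\pi^{\psi,\xi}_{B_\east}(\,\cdot\mid\mathcal E_{\eta,\zeta})$ and $\pi^{\psi',\xi}_{B_\east}(\,\cdot\mid\mathcal E_{\eta',\zeta})$ are in general incomparable, and because $\mathcal E$ is decreasing there is no natural common minorant that both dominate; so a primal path open in one sample need not be open in the other under the grand coupling. The paper resolves this by using a \emph{dual} crossing explored from the bad side: both conditioned laws are dominated by the single measure $\pi^{1,\xi}$ (wired on the bad side), one reveals the open component of the bad boundary in this top chain, and if it leaves a dual vertical crossing $\gamma^*$ in the overlap then the primal edges of $\gamma^*$ are closed in the top chain and hence, by monotonicity, closed in both actual chains. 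This yields a common free segment; combined with $\mathcal E$, the effective boundary on the good side of $\gamma^*$ is the same for both chains and the grand coupling matches them there. The hypothesis~\eqref{eq:a-n} is applied (via self-duality) to $\pi^{1,\xi}(\mathcal C^*_v(\text{overlap}))\ge a_n$, not to a primal crossing.
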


\begin{proof}
We construct explicitly a grand coupling that allows us to couple the two configurations with the above probability. First recall that the Potts boundary condition $\zeta$ on $\Lambda$ corresponds to an FK boundary condition $\xi$ where two boundary vertices are in the same cluster if and only if they have the same color, along with the decreasing event $\mathcal E_\zeta=\{\omega:\forall x,y\in \partial \Lambda, \zeta(x)\neq \zeta(y) \implies x\stackrel{\Lambda}\nconn y\}$.
Via the Edwards--Sokal coupling, we move from the Potts model with boundary $\zeta$ to the corresponding FK model with boundary $\xi$ conditional on the event $\mathcal E_\zeta$.

Suppose the clock at block $B_\west$ rings first. The two initial configurations $\sigma,\sigma'$ induce two Potts boundaries $\eta,\eta'$ corresponding to FK boundaries $\psi,\psi'$ on $\partial_\east B_\west$
along with the events $\mathcal E_{\eta,\zeta}$ and $\mathcal E_{\eta',\zeta}$; $(\eta,\zeta)$ is the boundary condition on $B_1$ with $\eta$ on $\partial_\east B_1$ and $\zeta\restriction_{\partial B_\west}$ on the rest of $\partial{B_\west}$. Here and throughout the rest of the paper, when discussing boundary conditions, we use the restriction to a line to denote the boundary condition induced on that line by the configuration we have revealed.

We seek to couple the two initial configurations on all of $\Lambda$ by first coupling them on $\Lambda-B_\east^o$. For each initial configuration, the block dynamics samples a Potts configuration on $B_\west$ by sampling an FK configuration from $\pi^{\psi,\xi}_{B_\west}(\cdot \mid \mathcal E_{\eta,\zeta})$, $\pi^{\psi',\xi}_{B_\west}(\cdot \mid \mathcal E_{\eta',\zeta})$.

Via the grand coupling defined in~\S\ref{sub:grand-coupling} of all boundary conditions on $\partial_\east B_\west$, we reveal the open component of $\partial_\east B_\west$ in order to condition on the right-most dual vertical crossing of $\Lambda-B_\east^o$. Note that all FK measures we consider are stochastically dominated by $\pi^{1,\xi}_{B_\west}$ (by monotonicity in boundary conditions and since $\mathcal E_{\eta,\zeta}$ is a decreasing event). Then if a sample from $\pi^{1,\xi}_{B_\west}$ has a dual vertical crossing in $B_\west\cap B_\east$, under the grand coupling, so will all the samples of $\pi^{\psi,\xi}_{B_\west}(\cdot \mid \mathcal E_{\eta,\zeta})$.

Under the event $\mathcal E_{\eta,\zeta}$, by construction it is impossible to add boundary connections by modifying the interior of $B_\west$ (either such connections would be between monochromatic sites in which case they are already in the same cluster, or otherwise such connections are impossible under $\mathcal E_{\eta,\zeta}$). Thus, if there is such a dual vertical crossing under $\pi^{1,\xi}_{B_\west}$, the event $\mathcal E_{\eta,\zeta}$ ensures that to the west of that crossing, all realizations of $\pi_{B_\west}^{\psi,\xi}$ see the same boundary conditions. By the domain Markov property, the grand coupling then couples all such realizations west of the right-most dual-crossing of $\pi^{1,\xi}_{B_\west}$ and therefore on all of $\Lambda-B_\east^o$ (for the explicit revealing procedure, see~\cite[\S3.2]{LScritical}).
We then use the same randomness to color coupled clusters the same way, and couple all corresponding Potts configurations on $\Lambda-B_\east^o$. The colorings of the boundary clusters are predetermined, but because the $\partial_\east B_\west$ boundary clusters cannot extend past the dual vertical crossing, the two Potts configurations can be coupled west of the dual vertical crossing.

Suppose the clock at block $B_\east$ rings next. If we have successfully coupled $X_t$ and $Y_t$ in $\Lambda-B_\east^o$, then the identity coupling couples the configurations on all of $\Lambda$.
But note that by the assumption of Theorem~\ref{thm:general-RSW-mixing}, and the fact that $n'\le n$,
\[\pi^1_{B_\west} \left(\cC_v^\ast (\llb \tfrac n3,\tfrac {2n}3\rrb \times \llb 0,n'\rrb )\right)\geq a_n\,.
\]

Moreover, by time $t=1$, there is a probability $c>0$ that the dynamics rang the clock of $B_\west$ and then the clock of $B_\east$ in which case we have coupled the two configurations with probability $a_n$ at time $t=1$.
By the submultiplicativity of $\bar d(t)$, for all $t>0$,
\[
\P(X_t\neq Y_t)\leq (1-c\, a_n)^{t}\,,
\]
which implies that there exists a new constant $c>0$ such that $\tmix\leq c / a_n$ and
\[(\log {\tfrac 1 {2\epsilon}}){(\gap ^\zeta_\mathcal B)}^{-1}\leq \tmix(\epsilon)\leq c /a_n\,.
\]
In particular there exists $c'>0$ such that
${(\gap ^\zeta_\mathcal B)}^{-1}\leq c' /a_n$. \qedhere
\end{proof}

By Theorem~\ref{thm:block-dynamics} there exists $c>0$ such that we get the following relation between the gap of Glauber dynamics on $\Lambda$ and Glauber dynamics on the blocks $B_i$, $i\in\{\east,\west\}$:
\begin{align*}
(\gap_{{\Lambda}}^{\zeta})^{-1}\leq &( c\, a_n)^{-1}\max_{i}\max_{\sigma}(\gap_{B_{i}}^{\sigma})^{-1}.
\end{align*}
However, each $B_{i}$ is a rectangle $\Lambda_{2n/3,n'}$ with arbitrary boundary conditions and one can check by hand that for $\alpha\in[\bar \alpha,1]$, it also has, up to rotation, aspect ratio $\alpha_i\in[ \bar \alpha,1 ] $. It follows that~$\max_{i}\max_{\sigma}(\gap_{B_{i}}^{\sigma})^{-1}$
satisfies the same relation as $(\gap_{\Lambda}^{\zeta})^{-1}$.
Recursing $2\log_{3/2} n$ times yields the desired bound on $(\gap_{\Lambda}^\zeta)^{-1}$ for any $\zeta$.
\end{proof}

\subsection{Crossing probabilities at \texorpdfstring{$q=4$}{q=4}}\label{sub:q=4}

Recall that, for $1\leq q<4$, the probability of a horizontal crossing of a rectangle with arbitrary boundary conditions is uniformly bounded away from 0 (Theorem~\ref{thm:crossing-any-bc-q<4}), whereas at $q=4$, under \emph{free} boundary conditions, it is expected that the probability of such a crossing of $\Lambda$ in fact decays to $0$ as $n\to\infty$. We lower bound this crossing probability under general boundary conditions.

\begin{theorem}\label{thm:crossing-any-bc}
Let $q=4$ and consider the critical FK model on $\Lambda=\Lambda_{n,n'}$, where $n'=\lfloor \alpha n \rfloor $ for a fixed aspect ratio $0<\alpha \le 1$, with arbitrary boundary conditions $\xi$.
Then there exist some $c(\alpha),\gamma(\alpha)>0$ (independent of $\xi$) such that
\[\pi_{\Lambda}^\xi\left(\cC_h(\Lambda)\right)>cn^{-\gamma}\,.
\]
\end{theorem}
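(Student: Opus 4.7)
The plan is to construct a horizontal crossing of $\Lambda$ as the intersection of $O(\log n)$ increasing events, each having probability at least some $p_0>0$ uniformly in $\xi$; by the FKG inequality for $q\geq 1$, this yields the desired polynomial bound $p_0^{O(\log n)} = n^{-\gamma}$. The two ingredients are (i) an upgrade of Theorem~\ref{rmk:crossing-no-bc} from easy-direction ($\cC_v$) to hard-direction ($\cC_h$) interior crossings, and (ii) a dyadic cascade extending this interior crossing to the true sides $\partial_\west\Lambda$ and $\partial_\east\Lambda$.

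For (i), I would show that for some fixed $\delta\in(0,1/4)$ and some $p_0 = p_0(\alpha,\delta)>0$, uniformly in $\xi$,
\[
\pi^\xi_\Lambda\bigl(\cC_h(\llb \delta n, (1-\delta)n \rrb \times \llb \delta n', (1-\delta) n' \rrb)\bigr) \geq p_0\,.
\]
This is the standard RSW-style upgrade: tile the interior rectangle by a constant number of overlapping bounded-aspect-ratio sub-rectangles, apply Theorem~\ref{rmk:crossing-no-bc} in each to produce an easy-direction interior crossing, and glue successive pieces in their overlaps through easy-direction interior crossings of the overlap strips (again from Theorem~\ref{rmk:crossing-no-bc}), combining everything through FKG.

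For (ii), fix $p_\west = (0, \lfloor n'/2\rfloor)\in\partial_\west\Lambda$ and, for $k = 0, 1, \ldots, K$ with $K = \lceil\log_2(\delta n)\rceil$, define
\[
B_k = \bigl(\llb 0, 2^k\rrb \times \llb \lfloor n'/2\rfloor - 2^k, \lfloor n'/2\rfloor + 2^k\rrb\bigr) \cap \Lambda\,,
\]
so that $B_K$ already reaches the horizontal slab carrying the interior hard-direction crossing from (i). In each half-annulus $B_{k+1}\setminus B_k$, I would decompose into a constant number of bounded-aspect-ratio rectangular pieces, apply Theorem~\ref{rmk:crossing-no-bc} piece by piece, and FKG-glue (exactly as in (i)) to produce an open ``half-arm'' connecting $\partial B_k \setminus \partial_\west\Lambda$ to $\partial B_{k+1}\setminus \partial_\west\Lambda$ with probability at least $p_0 > 0$ uniformly in $\xi$. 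Intersecting these $K$ increasing events via FKG forces the cluster at $p_\west$ to reach $\partial B_K$ and hence the region traversed by the hard-direction interior crossing. Running the symmetric cascade at a bulk point $p_\east\in\partial_\east\Lambda$ and combining all three components through one final FKG application produces the desired bound $\pi^\xi_\Lambda(\cC_h(\Lambda)) \geq c n^{-\gamma}$.

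The main obstacle is the $q=4$ restriction that Theorem~\ref{rmk:crossing-no-bc} supplies only \emph{interior}, rather than full, crossings of each rectangular piece, so concatenating interior crossings in adjacent pieces leaves a gap at each interface. The remedy is to overlap successive pieces generously and bridge each overlap by an additional easy-direction interior crossing, each again supplied uniformly in $\xi$ by Theorem~\ref{rmk:crossing-no-bc}; crucially, no explicit conditioning on the exterior configuration is required, since Theorem~\ref{rmk:crossing-no-bc} is itself uniform in boundary conditions, and we simply invoke it on each piece with whatever boundary condition $\Lambda$ induces. This inability to achieve a full (rather than interior) crossing at a single scale is precisely what forces the dyadic cascade --- and hence the polynomial bound $n^{-\gamma}$ in place of the $\Theta(1)$ of Theorem~\ref{thm:crossing-any-bc-q<4} at $1\leq q<4$.
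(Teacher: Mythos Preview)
Your overall architecture---an $O(\log n)$-step FKG construction powered by Theorem~\ref{rmk:crossing-no-bc}---is the right one, and step (i) is fine. The gap is in step (ii).

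A ``half-arm'' connecting $\partial B_k\setminus\partial_\west\Lambda$ to $\partial B_{k+1}\setminus\partial_\west\Lambda$ is a radial crossing of the half-annulus; two such arms at consecutive scales land on $\partial B_{k+1}$ at unrelated places and need not meet, so ``intersecting these $K$ increasing events forces the cluster at $p_\west$ to reach $\partial B_K$'' is simply false as stated. The usual remedy---replacing radial arms by separating half-circuits in each annulus so that successive ones are forced to cross---fails here, because a half-circuit must have both endpoints on $\partial_\west\Lambda$, and Theorem~\ref{rmk:crossing-no-bc} only produces crossings strictly \emph{interior} to whichever rectangle you apply it to, never touching the wall $\partial_\west\Lambda$. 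For the same reason, nothing in your events actually touches $p_\west$, so the cascade is never anchored at the boundary at all.

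The paper resolves exactly this point by a device you omit. It first reduces to $\xi=0$ by monotonicity, then \emph{forces open} a horizontal seed $R_0=\llb 0,2\log n\rrb\times\{n'/2\}$ at cost $(1+\sqrt q)^{-2\log n}=n^{-2\beta_c}$, using only the uniform edge lower bound $\pi(e\in\omega\mid\omega')\geq 1/(1+\sqrt q)$. Once the seed has length $\asymp\log n$, every cascade rectangle $R_k$ lives at scale $\gtrsim\log n$, so its $\tfrac32$-dilation $\tilde R_k$ is still contained in $\Lambda$ and Theorem~\ref{rmk:crossing-no-bc} applied to $\tilde R_k$ yields a \emph{full} crossing of $R_k$ (not just an interior one). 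The $R_k$ are arranged so that a vertical crossing of $R_{2k-1}$ and a horizontal crossing of $R_{2k}$ overlap in a full sub-rectangle and hence must meet topologically---precisely the connectivity your half-arms lack. The left and right macroscopic arms are then joined not by your step (i) but by a single open-circuit event from Theorem~\ref{rmk:circuit-in-annulus}. The forced $O(\log n)$-edge seed is what allows the cascade to start away from the wall and is the real source of one factor of $n^{-\gamma}$; a cascade-to-scale-$1$ scheme cannot be completed with interior crossings alone.
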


\begin{proof}
By monotonicity in boundary conditions, it suffices to prove the above for free boundary conditions (the case $\xi=0$).
Fix $\delta>0$ and let $R=\llb 0,n\rrb \times\llb (\tfrac 12 -\delta) n' ,(\tfrac 12+\delta)  n'\rrb$. We will show the stronger result that there exist some $\gamma(\alpha),\gamma'(\alpha)>0$ such that,
\begin{equation}\label{eq:crossing-any-bc}
n^{-\gamma} \lesssim \pi_{\Lambda}^0 \left((0, n'/2)\stackrel{R}\longleftrightarrow(n,n'/2 )\right) \lesssim n^{-\gamma'}\,.
\end{equation}

The upper bound in~\eqref{eq:crossing-any-bc} is a consequence of the polynomial decay of correlations in Theorem~\ref{thm:polynomial-decay}, and it remains to establish the lower bound. 
Observe that for every $e$,
\[\inf_{\omega'\in \{0,1\}^{E(\Lambda)-\{e\}}}\pi^{\omega'}_{\Lambda}(e\in\omega)= \frac{p_c}{p_c+q(1-p_c)} = \frac{1}{1+\sqrt{q}} \,;
\]
thus, we can force all the edges of $R_0=\llb 0, 2\log n\rrb \times \{\tfrac {n'}2\}$ to be open with probability at least
$(1+\sqrt{q})^{-2\log n} = n^{-2\beta_c}$,
where we recall that $\beta_c = \log(1+\sqrt{q})$.

We boost this to a horizontal crossing of length $\delta  n/2$ from the boundary by stitching together horizontal and vertical crossings and applying the FKG inequality (see Fig.~\ref{fig:stitching-anybc}). 
\begin{figure}%[htp!]
  \hspace{-0.15in}
  \begin{tikzpicture}
    \node (plot) at (0,0)
    {\hspace{-0.14in}\includegraphics[width=0.82\textwidth]{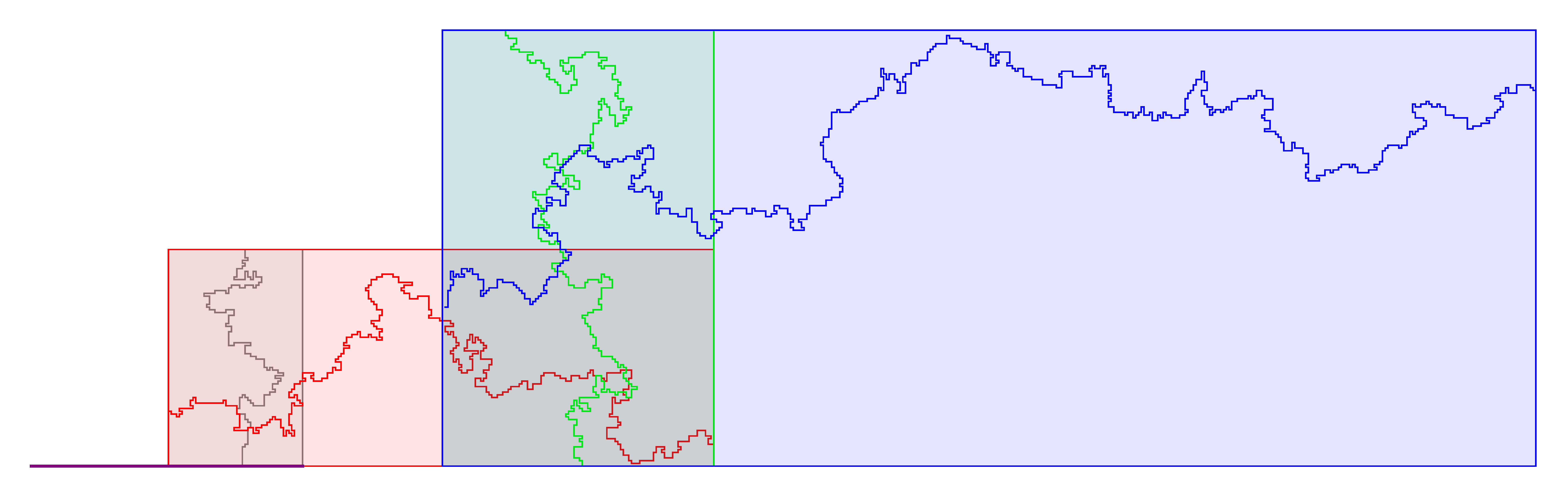}};

    \begin{scope}[shift={(plot.south west)},x={(plot.south
        east)},y={(plot.north west)}, font=\small]

      \draw[color=black] (1,-0.177) -- (0,-0.177) -- (0,1);
      \draw[color=black] (0.01,0.5) -- (-0.01,0.5);
      \draw[color=black] (0.01,0.91) -- (-0.01,0.91);

      \draw[color=black] (0.09,-0.202) -- (0.09,-0.152);
      \draw[color=black] (0.174,-0.202) -- (0.174,-0.152);
      \draw[color=black] (0.267,-0.202) -- (0.267,-0.152);
      \draw[color=black] (0.439,-0.202) -- (0.439,-0.152);
      \draw[color=black] (0.969,-0.202) -- (0.969,-0.152);

      \node[font=\tiny] at (-0.03,0.1) {$\frac12n'$};
      \node[font=\tiny] at (-0.085,0.5) {$\frac12n' + 4\alpha \log n$};
      \node[font=\tiny] at (-0.085,0.91) {$\frac12n' + 8\alpha \log n$};

      \node[font=\tiny] at (0.088,-0.23) {$\log n$};
      \node[font=\tiny] at (0.176,-0.23) {$2\log n$};
      \node[font=\tiny] at (0.265,-0.23) {$3\log n$};
      \node[font=\tiny] at (0.441,-0.23) {$5\log n$};
      \node[font=\tiny] at (0.97,-0.23) {$11\log n$};

      \node[color=purple] at (0.088,0) {$R_0$};
      \node[color=gray] at (0.135,0.58) {$R_1$};
      \node[color=red] at (0.28,0) {$R_2$};
      \node[color=DarkGreen] at (0.35,0.98) {$R_3$};
      \node[color=blue] at (0.68,0) {$R_4$};

    \end{scope}
  \end{tikzpicture}
  \caption{The first four steps of the stitching technique of Theorem~\ref{thm:crossing-any-bc}. Repeating $\log \epsilon n$ times creates a macroscopic horizontal open crossing.}
  \label{fig:stitching-anybc}
\end{figure}
Fix $\epsilon>0$ sufficiently small (e.g., a choice of $\epsilon =\delta/10$ would suffice),
and consider
\begin{align*}
R_{2k-1}=\llb (2^k-1)\log n,(3 \cdot 2^{k-1}-1)\log n\rrb \times \llb \tfrac {n'}2,\tfrac {n'}2+2^{k+1} \alpha \log n\rrb \,,\\
R_{2k}=\llb (2^k-1)\log n,(3\cdot 2^{k}-1)\log n\rrb \times\llb \tfrac {n'}2,\tfrac {n'}2+2^{k+1} \alpha \log n\rrb
\end{align*}
for $k=1,\ldots, K$, where $K = \lfloor  \log_2 \big(\tfrac{\epsilon n}{\log n}\big) \rfloor$.

Moreover, take $\tilde R_{2k-1}$ and $\tilde R_{2k}$ to be the concentric $\frac32$-dilations of $R_{2k-1}$ and $R_{2k}$, respectively. By construction, each $R_{2K-1}$ and $ R_{2K} $ has width at most $2\epsilon n$ and height at most $2\epsilon n'$, hence their respective dilations $\tilde R_{2K-1}$ and $\tilde R_{2K}$ are both contained in $R$.

As a consequence of $\tilde R_i \subset \Lambda$ the free boundary conditions on $\tilde R_i$ are dominated by the measure over boundary conditions induced by $\pi^0_\Lambda$. Thus, there exists some $p_1(\alpha),p_2(\alpha)>0$ given by Theorem~\ref{rmk:crossing-no-bc} such that
\[\pi ^0_{\Lambda}(\cC_v(R_{2k-1}))\geq \pi^0_{\tilde R_1}(\cC_v(R_{2k-1}))\ge p_1\,,\qquad \mbox{and likewise} \qquad \pi_{\Lambda}^0(\cC_h(R_{2k})) \geq p_2\,.
\]
(Notice the aspect ratios of $\tilde R_{2k-1}$ are the same for all $k$, and similarly for $\tilde R_{2k}$.) Further, for every $k$, these events are increasing; thus by the FKG inequality,
\[\pi^0_{\Lambda}\left(\cC_v(R_{2k-1})\cap \cC_h(R_{2k})\right)\geq p_1 p_2\,.
\]
At the final scale $K$, the width of $R_{2K}$ is $(2-o(1))\epsilon n$ and its height is $(2-o(1))\epsilon n'$, so
\[\Bigg(\{\omega\restriction_{R_0}=1\}\cap \bigcap _{k=1}^{K} (\cC_v(R_{2k-1})\cap \cC_h(R_{2k})) \Bigg)\subset(0,\tfrac {n'}2)\stackrel{R}\longleftrightarrow \{2\epsilon n\}\times\llb \tfrac12 {n'},(\tfrac12 + 2\epsilon) n'\rrb
\]
for any sufficiently large $n$.
By repeated application of the FKG inequality,
\begin{equation}
\label{eq:left-epsilon-crossing}\pi_{\Lambda} ^0 \left((0,\tfrac {n'}2)\stackrel{R}\longleftrightarrow \{2\epsilon n\}\times\llb \tfrac12 {n'},(\tfrac12 + 2\epsilon) n'\rrb \right)\geq n^{-2\beta_c}(p_1p_2)^{K} = n^{-\gamma}
\end{equation}
for some $\gamma>0$.
By symmetry, the exact same argument shows that
\begin{equation}\label{eq:right-epsilon-crossing} \pi_{\Lambda} ^0 \left((n,\tfrac {n'}2)\stackrel{R}\longleftrightarrow \{(1-2\epsilon) n\}\times\llb \tfrac12 {n'},(\tfrac12 + 2\epsilon) n'\rrb \right)
\geq n^{-\gamma}\,.
\end{equation}
In order to complete the desired horizontal crossing, we require an open path connecting the left and right crossings, via an open circuit in the annulus $A_1$ given by
 \[ A_1=\llb 0, n\rrb \times \llb (\tfrac 12-\delta )n',(\tfrac12+\delta )n'\rrb -\llb\epsilon n,(1-\epsilon)n\rrb \times \llb (\tfrac12 - 3\epsilon) n',(\tfrac12 +3\epsilon)n'\rrb\,.
 \]
By Theorem~\ref{rmk:circuit-in-annulus}, there is an absolute constant $p_3(\alpha)>0$ such that $\pi_{A_1}^0(\anncircuit(A_1))>p_3$. Since  the induced boundary conditions on $\partial A_1$ by $\pi_\Lambda^0$ stochastically dominate free boundary conditions on $\partial A_1$, it follows that $\pi_{\Lambda}^0\left(\anncircuit(A_1)\right)>p_3$. 
Finally, the event $\anncircuit(A_1)$ is increasing, and its intersection with the two horizontal crossing events from~\eqref{eq:left-epsilon-crossing} and~\eqref{eq:right-epsilon-crossing}  is a subset of the event $\{(0,\tfrac {n'}2)\stackrel{R}\longleftrightarrow(n,\tfrac {n'}2)\}$. Thus, by FKG, the latter has probability at least $p_3 n^{-2\gamma}$,
establishing~\eqref{eq:crossing-any-bc}, as desired.
\end{proof}

\subsection{Periodic boundary conditions}\label{sub:torus} We now complete the proof of Theorem~\ref{mainthm-q<=4}.

\begin{proof} [\textbf{\emph{Proof of Theorem~\ref{mainthm-q<=4}}}]
The proof to go from arbitrary boundary conditions to the torus is the same as the proof of Theorem~4.4 of~\cite{LScritical} which used block dynamics twice to first reduce mixing on the torus $(\mathbb Z/n\mathbb Z)^2$ to a cylinder $(\mathbb Z/n\mathbb Z)\times \llb 0,n'\rrb$, and then that cylinder to a rectangle with fixed boundary conditions, on which Corollary~\ref{cor:poly/quasipoly} gives the desired polynomial (quasi-polynomial) mixing time bound. We only observe that the proof goes through after replacing the RSW bounds there by the estimate in Theorem~\ref{thm:crossing-any-bc}, and conditioning on the event $\mathcal E_{\zeta}$ as before.
\end{proof}

\subsection{Polynomial lower bounds}
In order to provide as complete a picture as possible, we also extend the polynomial lower bound of~\cite{LScritical} to the Glauber dynamics for the $q=3,4$ Potts models, showing that indeed they undergo a critical slowdown. We do not have access to precise arm exponents as exist for $q=2$, but we adapt a standard argument for obtaining the Bernoulli percolation two-arm exponent, to lower bound the Potts one-arm exponent and prove a polynomial lower bound on $\gap^{-1}$.

\begin{lemma}\label{lem:two-point-corr}
Fix an $\epsilon>0$ and consider the critical FK model for $q\in (1,4]$ on $R=\llb -n,n \rrb^2$; there exists $c(q)>0$ such that 
\begin{align}\label{eq:one-arm}
\pi_{R}^0 \big(0 \longleftrightarrow \partial (\llb- \tfrac n2,\tfrac n2\rrb^2)\big) \geq cn^{-\frac 12}\,,
\end{align}
and thus, there exists $c'(\epsilon,q)>0$ such that for every $x,y\in \llb -(1-\epsilon)n,(1-\epsilon)n\rrb^2$, 
\begin{align}\label{eq:two-point-corr}
\pi_R^0 \big(x\longleftrightarrow y\big) \geq c'\|x-y\|^{-1}\,.
\end{align}
\end{lemma}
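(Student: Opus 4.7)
The plan is to first establish the one-arm lower bound~\eqref{eq:one-arm} via a polychromatic two-arm estimate at the origin, and then deduce the two-point lower bound~\eqref{eq:two-point-corr} from~\eqref{eq:one-arm} by a standard FKG/RSW gluing argument. Throughout I abbreviate $B_r:=\llb -r,r\rrb^2$ and $B_r(v):=v+B_r$.

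For~\eqref{eq:one-arm}, the dual arm event $\{0\stackrel{\ast}{\longleftrightarrow}\partial B_{n/2}\}$ is decreasing while $\{0\leftrightarrow \partial B_{n/2}\}$ is increasing, so FKG gives
\[
\pi_R^0\big(0\leftrightarrow \partial B_{n/2}\big)\cdot \pi_R^0\big(0\stackrel{\ast}{\longleftrightarrow}\partial B_{n/2}\big) \ge \pi_R^0\big(\{0\leftrightarrow \partial B_{n/2}\}\cap\{0\stackrel{\ast}{\longleftrightarrow}\partial B_{n/2}\}\big).
\]
At $p_c(q)=p_{\sd}(q)$, planar self-duality combined with monotonicity and the circuit estimate of Theorem~\ref{rmk:circuit-in-annulus} shows that the primal and dual one-arm probabilities on the left are comparable up to a $q$-dependent constant (the free-to-wired boundary mismatch under duality is absorbed by an RSW circuit). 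Hence~\eqref{eq:one-arm} reduces to proving a polychromatic two-arm lower bound at the origin of order $c/n$.

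This two-arm estimate is obtained by adapting the standard Bernoulli percolation lowest-crossing argument. By the RSW bounds (Theorem~\ref{thm:crossing-any-bc-q<4} for $1<q<4$ and Theorem~\ref{rmk:crossing-no-bc} for $q=4$), a horizontal open crossing of $R$ exists with probability $\ge c>0$; on that event one defines its unique lowest horizontal crossing and lets $v^\ast$ denote its intersection with $\{0\}\times\Z$. Since $\sum_{v\in \{0\}\times\llb -n,n\rrb}\pi_R^0(v^\ast=v)\ge c$, pigeonhole provides some $v_0$ with $\pi_R^0(v^\ast=v_0)\ge c/n$. On $\{v^\ast=v_0\}$ the vertex $v_0$ carries both a primal arm along the crossing itself and a dual arm descending to the bottom boundary (by the minimality of $v^\ast$), each of length $\Theta(n)$. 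The estimate at $v_0$ is then transferred to the origin by stitching a primal RSW crossing in a thin vertical strip joining $0$ and $v_0$ onto the primal arm, and its dual counterpart in a disjoint vertical strip onto the dual arm, each of probability bounded below by Theorems~\ref{thm:crossing-any-bc-q<4}--\ref{rmk:crossing-no-bc} together with the Domain Markov property.

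For~\eqref{eq:two-point-corr}, fix $x,y\in \llb -(1-\epsilon)n,(1-\epsilon)n\rrb^2$ and set $m=\|x-y\|$. By~\eqref{eq:one-arm} applied after translation (valid because $x,y$ lie at distance $\ge \epsilon n$ from $\partial R$) together with monotonicity to absorb the induced boundary conditions on $B_{m/2}(x)\subset R$, one has
\[
\pi_R^0\big(x\leftrightarrow \partial B_{m/2}(x)\big),\ \pi_R^0\big(y\leftrightarrow \partial B_{m/2}(y)\big)\ge c\,m^{-1/2}.
\]
Combine these with a constant number of auxiliary increasing events---an open annular circuit around $x$ in $B_{m/2}(x)\setminus B_{m/4}(x)$, an analogous circuit around $y$, and an open horizontal crossing of a ``bridge'' rectangle joining the two annuli---each of constant probability by Theorems~\ref{thm:crossing-any-bc-q<4}--\ref{rmk:circuit-in-annulus}. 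On the intersection of these events, Jordan-curve considerations force the arms from $x$ and $y$ to meet the respective annular circuits, which are in turn joined via the bridge crossing, so $x\leftrightarrow y$; FKG then yields $\pi_R^0(x\leftrightarrow y)\ge c'\,m^{-1}$, as required.

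The main obstacle is the transfer step in the proof of~\eqref{eq:one-arm}: simultaneously stitching the primal and dual arms at the pigeonhole-selected $v_0$ into corresponding arms at the origin. Because the primal arm is increasing and the dual arm is decreasing, they are negatively correlated under FKG, so the gluing cannot be performed by a single FKG application; instead the primal and dual extensions must be placed in disjoint strips so that they depend on disjoint edge sets, with the Domain Markov property and RSW controlling the conditioning in each strip. A related subtlety is the free-to-wired boundary mismatch arising in the self-duality step, which is handled analogously by absorbing the discrepancy into a constant via the annular-circuit estimate of Theorem~\ref{rmk:circuit-in-annulus}.
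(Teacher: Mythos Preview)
Your overall strategy---lower bound a polychromatic two-arm event, split it via FKG into the product of a primal and a dual one-arm, and use self-duality plus an RSW circuit to equate the two one-arm probabilities---is exactly the route the paper takes, and your argument for~\eqref{eq:two-point-corr} from~\eqref{eq:one-arm} is correct and standard. The difficulty is entirely in the order of operations within the proof of~\eqref{eq:one-arm}, and there your transfer step has a genuine gap.

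You obtain the two-arm lower bound at a pigeonhole point $v_0$ and then attempt to transfer the \emph{two-arm} event to the origin by simultaneously stitching a primal and a dual extension in disjoint strips. Even with Domain Markov, this does not go through: the conditioning on the lowest-crossing event $\{v^\ast=v_0\}$ is neither increasing nor decreasing, and once you further condition on the primal stitch (increasing) you have no monotone control over the dual stitch (decreasing), regardless of whether the strips are edge-disjoint---disjoint edge sets are not independent in FK. Your own ``main obstacle'' paragraph correctly identifies this as the crux, but the proposed resolution is not actually an argument. A secondary issue: for $q=4$, a horizontal crossing of $R$ itself under free boundary conditions is not known to have uniformly positive probability (indeed it is expected to fail); you must work with bulk crossings as in Theorem~\ref{rmk:crossing-no-bc}.

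The paper sidesteps both problems by reversing the order: it works in a \emph{half-plane} setup, restricts the pigeonhole segment to a short interval $L$ near the origin, and applies FKG and duality \emph{at the pigeonhole point $j$ itself} to get $\pi^0(j\leftrightarrow\partial R_j)\gtrsim n^{-1/2}$ directly. Since $j\in L$ is within $n/8$ of the origin, a plain translation by $-j$ keeps the relevant boxes nested, and no stitching of mixed-monotonicity events is ever needed. The half-plane setup also guarantees that both arms in the two-arm event point into the same half-space, so the initial ``some point on $L$ has two arms'' event can be forced by a primal and a dual vertical crossing in disjoint bulk rectangles, each of uniformly positive probability by Theorems~\ref{thm:crossing-any-bc-q<4}--\ref{rmk:crossing-no-bc}. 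You can repair your proof with minimal changes by performing the FKG/duality split at $v_0$ before attempting any transfer, and by constraining $v_0$ to lie near the origin via a half-plane construction.
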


\begin{proof}
Let $L= \llb -\frac n8,\frac n8 \rrb\times \{0\}$, for every $x\in L$, set $R_x = x+\llb -\frac n2, \frac n2\rrb^2$, and let $R_x^+$ be the top half of $R_x$. Also let $B=\llb -\frac{3n}4,\frac{3n}4\rrb^2$ and define the event 
\begin{align*}
\Gamma_x= \{x\longleftrightarrow \partial_\north R_x\mbox{ in $R_x^+$}\}\cap \{x+(\tfrac 12,0) \stackrel{*}\longleftrightarrow \partial_\north R_x \mbox{ in $R_x^+$}\}\,.
\end{align*}
Consider the event $\Gamma$ that there exists a site $x\in L$ such that $\Gamma_x$ holds. We begin by proving that $\pi_{B}^1(\Gamma)\geq c$ for some $c>0$ independent of $n$. By using Theorem~\ref{thm:crossing-any-bc-q<4}--\ref{rmk:crossing-no-bc} and stochastic domination twice, we see that there exists $c(q)>0$ such that
\begin{align*}
\pi_B^1(\cC_v(\llb -\tfrac n8,-\tfrac n{10}\rrb\times \llb 0,\tfrac n2\rrb) \cap \cC_v^\ast (\llb \tfrac n{10},\tfrac n8\rrb \times \llb 0,\tfrac n2\rrb ))\geq c\,.
\end{align*}
But one can observe that the above event implies that the right-most point on $L$ that is part of the cluster of the vertical open crossing in $R_0^+$ satisfies $\Gamma_x$, so $\pi_B^1(\Gamma) \geq c$. At the same time, we have by a union bound that 
\begin{align*}
\pi_B^1(\Gamma) \leq \sum_{x\in L} \pi_B^1 (\Gamma_x) & \leq (n/4) \max_{x\in L} \pi_B^1(\Gamma_x) \qquad \mbox{so that} \\
\max_{x\in L} \pi_B^1 (\Gamma_x) & \geq 4c  n^{-1}\,.
\end{align*}
The maximum on the left-hand side is attained by some deterministic $x\in L$ which we set to be $j$, for which we have, by the FKG inequality and self-duality, that 
\begin{align}\label{eq:two-arm-fkg}
\pi_B^1(\Gamma_j) &  \leq \pi_B^1\Big(j\longleftrightarrow \partial_\north R_j\mbox{ in $R_j^+$}\Big) \pi_B^1\Big(j+(\tfrac 12,0)\stackrel{\ast}\longleftrightarrow \partial_\north R_j \mbox{ in $R_j^+$}\Big) \nonumber \\ 
& = \pi_B^1\Big(j\stackrel{R_j^+}\longleftrightarrow \partial_\north R_j\Big)\pi_B^0(j\stackrel{R_j^+}\longleftrightarrow \partial_\north R_j)\,.
\end{align}
By the RSW estimate, Theorem~\ref{rmk:circuit-in-annulus}, we see that $\pi_{B}^0(\anncircuit(B-\bigcup_{j\in L} R_j))\geq \epsilon$ for some $\epsilon(q)>0$, and therefore by monotonicity in boundary conditions,
\begin{align*}
\pi_B^0 \Big(j\stackrel{R_j^+}\longleftrightarrow\partial_\north R_j\Big) \geq \epsilon \pi_B^1\Big(j\stackrel{R_j^+}\longleftrightarrow \partial_\north R_j\Big)\,.
\end{align*}
Plugging this in to~\eqref{eq:two-arm-fkg} implies that $\pi_B^0(j\longleftrightarrow \partial R_j) \geq 2\sqrt{\frac{c\epsilon}n}$. In order to complete the proof of~\eqref{eq:one-arm}, we translate by $-j$ to see that $\pi_{B-j}^0(0\longleftrightarrow \partial \llb -\frac n2,\frac n2\rrb^2) \geq c'n^{-\frac 12}$ for some $c'(q)>0$. Since $j\in L$, $B-j\subset R$ and by monotonicity, we deduce~\eqref{eq:one-arm}.

Going from~\eqref{eq:one-arm} to~\eqref{eq:two-point-corr} is a standard exercise in using RSW estimates (Theorems~\ref{thm:crossing-any-bc-q<4}--\ref{rmk:crossing-no-bc}) and the stitching arguments used in the proof of Theorem~\ref{thm:crossing-any-bc}; since both $x,y$ are macroscopically far from $\partial R$, we can use~\eqref{eq:one-arm} to connect each of them to some distance $O(\|x-y\|)$ away, and stitch open crossings to connect these two together via the FKG inequality and Theorems~\ref{thm:crossing-any-bc-q<4}--\ref{rmk:crossing-no-bc}, yielding the desired. 
\end{proof}

\begin{theorem}\label{thm:poly-lower-bound}
Let $q=3,4$ and consider the critical Potts model on $\Lambda=\llb 0,n\rrb ^2$ with boundary conditions $\eta$. The continuous-time Glauber dynamics has $\gap^{-1} \gtrsim cn$
for some $c(q)>0$, and the same holds on $(\mathbb Z/n\mathbb Z)^2$.
\end{theorem}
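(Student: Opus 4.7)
The plan is to apply the variational characterization of the spectral gap in~\eqref{eq:dirichlet-form} to a test function whose variance is polynomially large thanks to the two-point correlation bound of Lemma~\ref{lem:two-point-corr}. Let $B\subset \Lambda$ be a concentric subbox of side length $n/2$ (on the torus, take any fixed translate of such a box) and set
\[
f(\sigma)\;=\;\sum_{x\in B}\one\{\sigma(x)=1\}\,.
\]
A single heat-bath update at $x$ changes $f$ by at most $1$ and only sites $x\in B$ contribute, so using~\eqref{eq:dirichlet-form},
\[
\mathcal{E}(f,f)\;\leq\;\tfrac12\sum_{x\in B}\mathbb E_\mu\bigl[(f(\sigma)-f(\sigma^x))^2\bigr]\;\leq\;\tfrac12|B|\;\lesssim\;n^2\,,
\]
where $\sigma^x$ denotes the configuration after the heat-bath update at $x$.

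For the lower bound on $\var(f)$, the Edwards--Sokal coupling gives, for $x,y$ lying in interior FK clusters (i.e., clusters not touching the boundary wiring prescribed by $\eta$), the identity
\[
\cov\bigl(\one\{\sigma(x)=1\},\,\one\{\sigma(y)=1\}\bigr)\;=\;\tfrac{q-1}{q^2}\,\mathbb P\bigl(x\longleftrightarrow y\mbox{ and neither reaches }\partial\Lambda\bigr)\,.
\]
To ensure that (with positive probability) both $x$ and $y$ lie in free interior clusters and to make the resulting FK measure dominate the free one of Lemma~\ref{lem:two-point-corr}, condition on the event $\Gamma$ that a \emph{dual} open circuit exists in the annulus $\Lambda\setminus B$ surrounding $B$. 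By Theorem~\ref{rmk:circuit-in-annulus} applied to the dual model we have $\pi^{\xi}_{\Lambda}(\Gamma)\geq p_0>0$ uniformly in the FK boundary $\xi$ associated to $\eta$; moreover, $\Gamma$ and the Potts-compatibility event $\mathcal E_\eta$ are both decreasing in the primal configuration, so FKG yields $\pi^{\xi}_\Lambda(\Gamma\mid \mathcal E_\eta)\geq p_0$ as well. Conditioning further on the outermost such dual circuit $\gamma$, the Domain Markov property shows that inside $\gamma$ the FK measure is exactly the free-boundary measure on the enclosed domain, and the clusters inside are colored i.i.d.\ uniform in $[q]$ (being disconnected from $\partial\Lambda$). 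Thus, for any $x,y$ in a bulk subbox of $B$ of side $n/8$, Lemma~\ref{lem:two-point-corr} applied inside $\gamma$ yields
\[
\cov\bigl(\one\{\sigma(x)=1\},\one\{\sigma(y)=1\}\mid \Gamma\bigr)\;\geq\;\tfrac{q-1}{q^2}\,p_0\;\mathbb E_\gamma\bigl[\pi^0_{\mathrm{in}(\gamma)}(x\longleftrightarrow y)\mid \Gamma\bigr]\;\gtrsim\;\|x-y\|^{-1}\,.
\]
Since the conditional means $\mathbb E[\one\{\sigma(x)=1\}\mid \Gamma,\gamma]=1/q$ are constant in $\gamma$, the law of total variance gives $\var(f)\geq \pi(\Gamma)\var(f\mid\Gamma)$, and summing the covariances over $x,y$ in the bulk subbox of $B$ produces
\[
\var(f)\;\gtrsim\;\sum_{x,y}\|x-y\|^{-1}\;\asymp\;n^3\,.
\]

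Combining the two bounds via~\eqref{eq:dirichlet-form} yields $\gap\leq \mathcal{E}(f,f)/\var(f)\lesssim n^{-1}$, i.e.\ $\gap^{-1}\gtrsim n$. The argument on the torus is identical; the conditioning step on $\Gamma$ and on $\mathcal E_\eta$ is unnecessary since there is no boundary, and the analogue of Lemma~\ref{lem:two-point-corr} on $(\mathbb Z/n\mathbb Z)^2$ follows from the same RSW-based stitching used in its proof.

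The step I expect to carry the weight of the argument is transferring the free-boundary FK connection bound of Lemma~\ref{lem:two-point-corr} to the Potts measure with \emph{arbitrary} boundary condition $\eta$: the conditioning $\mathcal E_\eta$ is a decreasing event and rules out naive monotonicity, so one must insert the dual circuit from RSW, use FKG for the two decreasing events $\Gamma$ and $\mathcal E_\eta$, and then exploit the Domain Markov property inside the circuit to re-expose a free FK region to which Lemma~\ref{lem:two-point-corr} applies. All other steps (Dirichlet bound, law of total variance, summation of $\|x-y\|^{-1}$) are routine.
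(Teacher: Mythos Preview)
Your argument is essentially the same as the paper's: both use the Dirichlet form with the magnetization-type test function $f(\sigma)=\sum_{x}\one\{\sigma(x)=\text{color}\}$ over a bulk subbox, bound $\mathcal E(f,f)\lesssim n^2$ trivially, and lower bound $\var(f)$ by conditioning on a dual circuit in an annulus (via Theorem~\ref{rmk:circuit-in-annulus} and FKG with the decreasing event $\mathcal E_\eta$) so as to expose a free-boundary FK region inside, then invoke Lemma~\ref{lem:two-point-corr} and sum $\|x-y\|^{-1}$ to get $\var(f)\gtrsim n^3$. The only cosmetic difference is that the paper conditions on all of $\omega\restriction_{\Lambda-\Lambda_1}$ together with $\{\partial\Lambda_1\nconn\partial\Lambda\}$ rather than on the outermost dual circuit, and handles the torus by first revealing $\sigma\restriction_{\partial\Lambda}$ to reduce to the box case; both routes are equivalent.
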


\begin{proof}
Now that we have a bound on connection probabilities macroscopically away from boundaries, we modify the lower bound of~\cite{LScritical} to our setting.
Fix any boundary condition $\eta$ (if we are considering the torus, reveal $\sigma \restriction_{\partial \Lambda}$ and fix that to be your boundary condition $\eta$). Let $\Lambda_1=\llb \frac n4,\frac{3n}4\rrb^2$ and $\Lambda_2 =\llb \frac n3,\frac {2n}3\rrb^2$. Recall the variational form of the spectral gap, Eq.~\eqref{eq:dirichlet-form}, and consider the test function $f(\sigma) = \sum_{x\in  \Lambda_2} \boldsymbol 1\{\sigma(x)=q\}$. 
Since $f$ is $1$-Lipschitz, $\cE(f,f)$ is easily seen to be $O(n^2)$. We now lower bound $\mbox{Var}_\mu(f)$. To do so, we move to the FK representation of the Potts model on $\Lambda$ via the Edwards--Sokal coupling using the event $\mathcal E_\eta$. For the remainder of this proof only, let $\mathbb E$ and $\mbox{Var}$ be with respect to the joint distribution over FK and Potts configurations given by the Edwards--Sokal coupling. By Theorem~\ref{rmk:circuit-in-annulus}, we see by the FKG inequality, 
\begin{align*}
\pi_{\Lambda}^\xi (\anncircuit^\ast(\Lambda-\Lambda_1) \mid \mathcal E_\eta) \geq \pi_\Lambda^1(\anncircuit^\ast(\Lambda-\Lambda_1))\geq c_1
\end{align*}
for some $c_1(q)>0$. 
By the law of total variance and the above, we see that 
\begin{align*}
\mbox{Var}_\mu(f) \geq \mathbb E \big[ \mbox{Var}(f\mid \omega \restriction_{\Lambda-\Lambda_1}) \boldsymbol1\{ \anncircuit^\ast(\Lambda-\Lambda_1\}\big]\geq
c_1 \mbox{Var}(f\mid \omega \restriction_{\Lambda-\Lambda_1}, \partial \Lambda_1 \nconn \partial \Lambda)\,.
\end{align*}
But given that $\partial \Lambda_1 \nconn \partial \Lambda$, the probability of $\sigma(x)=q$ for $x\in \Lambda_2$ is $1/q$; in particular, by the Edwards--Sokal coupling and FKG, we can expand the above as 
\begin{align*}
\mbox{Var}(f\mid \omega\restriction_{\Lambda-\Lambda_1}, \partial \Lambda_1 \nconn \partial \Lambda) & \geq q^{-1}\sum_{x,y\in \Lambda_2} \pi_\Lambda^\xi(x\longleftrightarrow y\mid \omega\restriction_{\Lambda-\Lambda_1}, \partial \Lambda_1 \nconn \partial \Lambda) \\
& \geq q^{-1} \sum_{x,y\in \Lambda_2} \pi_{\Lambda_1}^0 (x\longleftrightarrow y) \geq c_2 n^{3}
\end{align*}
for some $c_2(q)>0$, where the last inequality follows from Eq.~\eqref{eq:two-point-corr} of Lemma~\ref{lem:two-point-corr}.
\end{proof}

\section{Slow mixing at a discontinuous phase transition}\label{sec:mixing-q>4}

At a discontinuous phase transition, the dynamical behavior of the Potts model is expected to exhibit an exponential critical slowdown on the torus but otherwise depend on the choice of boundary conditions. We demonstrate this in the following sections.

\subsection{Exponential lower bound on the torus}\label{sub:q>4}
We will establish Theorem~\ref{mainthm-q>4}, showing that $\gap^{-1}\gtrsim \exp(c n)$ in the presence of multiple Gibbs measures at $p_c$ if the boundary conditions are periodic.
Consider $\Lambda=\Lambda_{n,n'}$ with $n'=\lfloor \alpha n\rfloor$ for a fixed aspect ratio $0< \alpha \le1$ and periodic boundary conditions. It suffices to prove the result in the case of Glauber dynamics for the FK model, which, thanks to Theorem~\ref{thm:Ullrich-comparison} (in particular, Eqs.~\eqref{eq-ullrich1}--\eqref{eq-ullrich2} and~\eqref{eq-ullrich2-realq}), implies all other cases of the theorem. Moreover, it suffices to prove the result in the discrete time setting, since any $O(n^2)$ cost of moving to continuous time can be absorbed in the exponential lower bound.

\subsection*{Proof idea}
For $q>4$, to obtain an exponential lower bound when there is a discontinuous phase transition, we establish a bottleneck in the state space consisting of vertical and horizontal crossings, each forming a loop around the torus. The basic idea is that a combination of a horizontal loop and a vertical loop in the torus can be translated to form a macroscopic wired circuit. Escaping such configurations via a pivotal edge would require a macroscopic dual-crossing inside the circuit, an event with an exponentially small probability (see Theorem~\ref{thm:DC-S-T-main}). Unfortunately, conditioning on the locations of these two loops includes negative information about the interior of the circuit and prevents us from appealing to the decay of correlations estimates. If we instead considered \emph{two} pairs of horizontal and vertical loops: one could expose the required wired circuit with no information on its interior. However, a subtler problem then arises, where after exposing one pair of loops (say the vertical ones), revealing the second (horizontal) pair might leave the potentially pivotal edge outside of the formed wired circuit, preventing us from estimating the probability of it being pivotal.
It turns out that using \emph{three} pairs of horizontal and vertical loops supports a suitable way of exposing a wired circuit
such that the potential edge is pivotal only if it supports a macroscopic dual-crossing within that circuit, thus leading to the desired lower bound.

\begin{proof}[\textbf{\emph{Proof of Theorem~\ref{mainthm-q>4}}}]
A standard technique for proving lower bounds on mixing times is constructing a set $S\subset \Omega$ that is a bottleneck for the Markov chain dynamics.
For a chain with transition kernel $P(x,y)$ and stationary distribution $\pi$, let the edge measure between $A,B\subset\Omega$ be
\[Q(A,B)=\sum _{\omega\in A} \pi(\omega)\sum_{\omega'\in B}P(\omega,\omega')\,,
\]
(see, e.g.,~\cite[Chapter 7]{LPW09}). Then the conductance/Cheeger constant of $\Omega$ is
\begin{align}\label{eq:Cheeger-constant}
\Phi=\max_{S\subset \Omega}\frac {Q(S,S^c)}{\pi(S)\pi(S^c)}\,.
\end{align}
and the following relation between $\Phi$ and the spectral gap of the chain~\cite{LPW09} holds:
\begin{align}\label{eq:Cheeger-inequality}
2\Phi\geq \gap \geq  {\Phi^2}/{2}\,.
\end{align}

By the dual version of  Theorem~\ref{thm:DC-S-T-main}, there exists some $c(q)>0$ such that
\begin{align}\label{eq:exp-decay}
\pi^1_{\mathbb Z^2}\left ((0,0)\overset{\ast}\longleftrightarrow\partial\llb -n,n\rrb^2\right )\leq e^{-cn}\,.\end{align}
Using~\eqref{eq:exp-decay}, we will establish a bottleneck set $S$ for the random cluster model on $\Lambda$ with periodic boundary conditions. Define the bottleneck event
\[S=\bigcap _{i=1,2,3} S^i_v\cap S^i_h
\] where the constituent events are defined as follows for $i=1,2,3$:
\begin{align*}
S_v^i&:=\left\{\omega:\exists x\mbox{ such that }(x,0)\longleftrightarrow (x,n')\mbox{ in $\llb \frac {(i-1)n}3,\frac {i n}3\rrb \times \llb 0,n'\rrb $}\right\}\,, \\
S_h^i&:=\left\{\omega:\exists y\mbox{ such that }(0,y)\longleftrightarrow (n,y)\mbox{ in $\llb 0,n\rrb \times\llb \frac {(i-1)n'}3,\frac {in'}3\rrb $}\right\}\,.
\end{align*}

The crossings in the above events are all loops on the torus of homology class $(1,0)$ and $(0,1)$. We aim to get an exponentially decaying upper bound on  $\pi^p_{\Lambda}(\partial S\mid S)$ (the superscript $p$ denotes periodic boundary conditions on $\Lambda$), where the boundary subset $\partial S=\{\omega\in S:P(\omega,S^c)>0\}$ is the event that there exists an edge $e$ that is pivotal to $S$. Specifically,
\[\partial S=\{\omega\in S\;:\;\omega-\{e\} \notin S\mbox{ for some }e\in\omega\}\,,
\]
in which configurations $\omega$ are identified with their edge-sets.
The bound $P(S,S^c)\leq 1$ implies $Q(S,S^c)/\pi^p_{\Lambda}(S)\leq \pi_{\Lambda}^p(\partial S\mid S)$. We now control $\pi_{\Lambda}^p(S)$ to express the conductance in terms of $\pi_{\Lambda}^p(\partial S\mid S)$.

Via the symmetry of periodic FK boundary conditions, RSW estimates on the torus $(\mathbb Z/n\mathbb Z)\times (\mathbb Z/{\alpha n \mathbb Z})$ were proved in~\cite{BeDu12} 
for all $q\geq 1$ at $p_c(q)$. By \cite[Theorem 5]{BeDu12}, therefore, there exists some $\rho(\alpha,q)>0$ such that $\pi_{\Lambda}^p(\cC_v^\ast(\llb \tfrac n3,\tfrac{2n}3\rrb \times \llb \tfrac {n'}3,\tfrac{2n'}3\rrb )\geq \rho$, and 
\[\frac {Q(S,S^c)}{\pi_\Lambda^p(S)\pi_\Lambda^p(S^c)}\leq \frac{Q(S,S^c)}{\pi_\Lambda^p(S)\rho} \leq \rho^{-1}\pi_{\Lambda}^p{(\partial S\mid S)}\,.
\]
Combining this with Eqs.~\eqref{eq:Cheeger-inequality} and~\eqref{eq:lower-bound}, it suffices to prove that there exist constants $c_1=c_1(\alpha,q)>0$ and $c_2=c_2(\alpha,q)>0$ such that,
\begin{align}\label{eq:lower-bound}
\pi_{\Lambda}^p(\partial S \mid S)\leq c_1 e^{-c_2 n}\,.
\end{align}
to obtain the desired bound on the inverse spectral gap. A union bound implies
\[\pi_{\Lambda}^p(\partial S\mid S)\leq \sum _e \pi^p_{\Lambda}(\omega - \{e\} \notin S \mid \omega \in S)\,,
\] where the sum is over all edges. We also union bound over whether $e$ is pivotal to $S_v^i$ or $S_h^i$ for $i=1,2,3$ (see Fig.~\ref{fig:lower-bound} for an illustration of a configuration in $S$).

\begin{figure}

  \centering
  \includegraphics[width=0.75\textwidth]{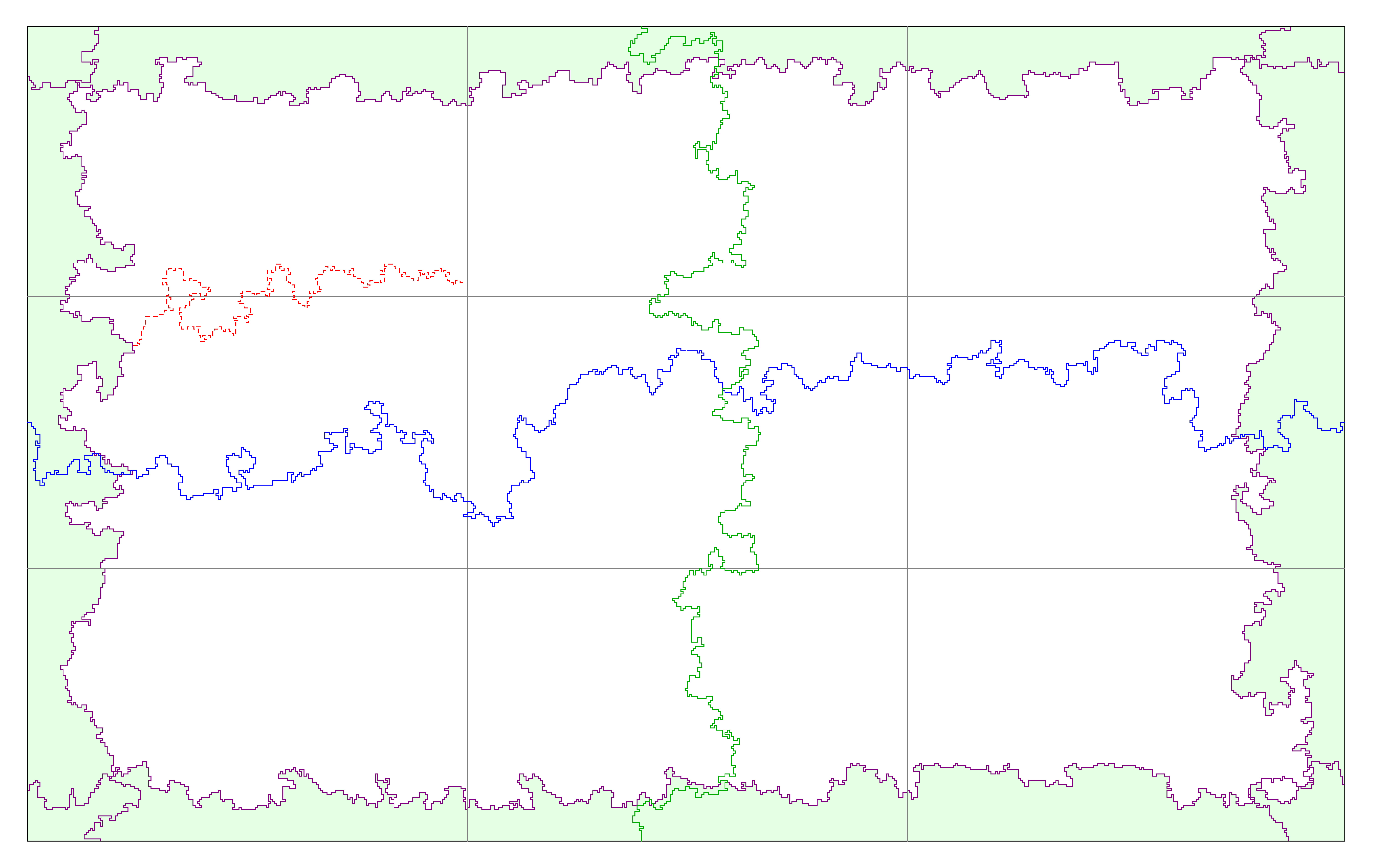}
  \vspace{-0.1cm}
  \caption{The lower bound construction: the box $\Lambda'$ is equipartitioned into three vertical and three horizontal strips. The purple crossings form an outermost open circuit $C$. A boundary configuration $\omega\in\partial S$ contains a macroscopic dual-crossing like the red path extending from the pivotal edge $e$ to the boundary of the vertical strip.}
  \label{fig:lower-bound}
\end{figure}

Without loss of generality, examine the probability that $e$ is pivotal to $S_v^1$. The other cases can be treated analogously. Fix an edge $e$ in $\llb 0,\frac {n}3\rrb \times \llb 0,n'\rrb$
and consider its horizontal coordinate
 (in Fig.~\ref{fig:lower-bound}, $e$ is the edge of intersection of the purple and red paths): the edge $e$ is either closer to the left side, or closer to the right side of the vertical strip and is either in the top, middle, or bottom third of the vertical strip (if there is ambiguity in these choices, choose arbitrarily).

Now move to a translate of $\Lambda$ on the universal cover of the torus, which we call $\Lambda'=\Lambda'_{n,n'}$. Choose a translate such that:
\begin{enumerate}
\item The horizontal third of $\Lambda$ that contained $e$, is now the middle horizontal third of $\Lambda'$.
\item If $e$ was closer to the left of its vertical strip than the right, that strip is the left vertical third of $\Lambda'$. Otherwise, that strip is the right vertical third of $\Lambda'$.
\end {enumerate}

Because we work with periodic boundary conditions, $\pi^p_{\Lambda'}=\pi^p_{\Lambda}$.
Begin by exposing all the edges of $\partial \Lambda'$ so as to fix a boundary condition and move from a torus to a rectangle with some fixed boundary condition.

Then expose the outermost circuit $C$ in $\Lambda'$ as follows. First expose the left-most vertical crossing of $\Lambda'$ by revealing the dual component of $\partial_\west \Lambda'$: by construction, its adjacent primal edges will form the left-most vertical crossing of $\Lambda'$ and we will not have revealed any edges to its right. Repeating this procedure on the $\north,\east,\south$ sides reveals the outermost circuit $C$ without exposing any of its interior edges (see Fig.~\ref{fig:lower-bound} where the shaded region consists of the edges we reveal).

If $e$ is not an open edge in one of the vertical crossings we have exposed, then certainly $e$ is not pivotal to the event $S$. So suppose $e$ is an open edge in a vertical loop and---by construction---also an open edge in $C$. Denote by $C^o$ the set of edges that have not yet been exposed, i.e., all edges interior to $C$.

It is a necessary condition for $e$ to be pivotal to $S$ that there exists a dual path from an interior dual-neighbor of $e$ to the inner (to $C$) vertical boundary of the vertical third containing $e$ in $\Lambda'$. If there is no such dual-path, then there is a different primal vertical crossing of that strip which does not contain $e$, and that crossing---because of the exposed horizontal loops---is itself a loop of homology class $(0,1)$.

But observe that the inner boundary of the vertical strip is a graph distance at least $n/6$ from $e$ because we chose $\Lambda'$ such that $e$ is farther from the inner boundary of the strip than the outer one. Also note that such a dual-crossing event is a decreasing event. By the Domain Markov property and monotonicity,
\[\pi^p_{\Lambda'}(\omega \restriction_{C^o} \in \cdot \mid S)=\pi^1_{C\cup C^o}(\cdot\mid S)\succeq \pi^1_{C\cup C^o}\succeq \pi^1_{\mathbb Z^2}(\omega\restriction_{C^o} \in \cdot)\,.
\]

By~\eqref{eq:exp-decay}, the probability of $e$ being dual-connected to the inner vertical boundary of the vertical third it is in, under $\pi^1_{\mathbb Z^2}$, is less than $e^{-cn/6}$ where $c(q)>0$ is from~\eqref{eq:exp-decay}. Thus, there exists an absolute $c(q)>0$ such that, for any fixed $e$,
\[\pi^p_{\Lambda}(\omega-\{e\}\notin S_v^1\mid \omega \in S)\leq e^{-cn}\,.
\]
If $e$ were pivotal to $S_h^i$ the analogous claim would hold with probability less than $e^{-cn'/6}$. Summing over all six crossing events and summing over all edges $e$ we conclude that
$\pi^p_{\Lambda}(\partial S\mid S)\leq 12\alpha n^2e^{-c\alpha n}$, implying Eq.~\eqref{eq:lower-bound} as desired.
\end{proof}

\subsection{Exponential lower bound at \texorpdfstring{$\beta>\beta_c(q)$}{beta>beta\_c}} Modifying the proof of Theorem~\ref{mainthm-q>4} to the setting of Potts Glauber dynamics at $\beta>\beta_c(q)$ allows us to prove slow mixing of the Potts Glauber dynamics at all low temperatures and all choices of $q>1$.  The key difference is we can no longer work directly with the FK dynamics, because it is---by duality to high temperature---fast for all $p>p_c(q)$ (see, e.g.,~\cite{Ul14,BlSi15}).

\begin{proof} [\textbf{\emph{Proof of Theorem~\ref{mainthm-low-temp}}}]
In the context of this proof, denote by $x\leftrightsquigarrow y$ the existence of a sequence of sites $\{x_i\}_{i=1}^k$ such that $x_i$ is adjacent to $x_{i+1}$ and with $x_1=x$ and $x_k = y$, with $\sigma(x_i)=\sigma(x_{i+1})$ for all $i$.
Define the bottleneck set $S=\bigcap _{i=1,2,3} S_{v,q}^i\cap S_{h,q}^i$,
\begin{align*}
S_{v,q}^i=\{\sigma:\exists x \mbox{ such that } (x,0)\leftrightsquigarrow (x,n) \mbox{ in } \llb \tfrac {(i-1)n}3,\tfrac {in}3\rrb \times \llb 0,n\rrb,\sigma(x)=q\}\,, \\
S_{h,q}^i=\{\sigma:\exists y \mbox{ such that }  (0,y)\leftrightsquigarrow (n,y) \mbox{ in } \llb 0,n\rrb \times \llb \tfrac{(i-1)n}3,\tfrac {in}3\rrb,\sigma(y)=q\}\,.
\end{align*}

We call each of these Potts connections of nontrivial homology on the torus Potts loops. We obtain an exponentially decaying upper bound on $\mu^p_\Lambda (\partial S\mid S)$ (where $p$ denotes periodic boundary) by examining the pivotality of vertices to $S$. As before, we union bound over all vertices in $\Lambda$ and the six different crossing events: fix a vertex $v$ whose pivotality to---without loss of generality---$S_{v,q}^1$ we examine. We choose a translate of $\Lambda$ on the universal cover, $\Lambda'$, according to the same rules as for the FK model, with $e$ replaced by $v$, where by periodicity of the boundary conditions, $\mu_\Lambda^p\stackrel{d}=\mu_{\Lambda'}^p$.

In order to bound the probability of $\sigma(v)$ being pivotal to $S_{v,q}^1$, we reveal the outermost Potts loops of color $q$ in $\Lambda'$ as follows. First reveal the spin values on $\partial \Lambda'$ to reduce the torus to a rectangle with fixed boundary conditions. Then reveal, starting from $\partial \Lambda'$, all spins $\star$-adjacent (either adjacent or diagonal to) to vertices whose spin value is not $q$. By construction, we will have revealed the outermost $q$-colored paths, and therefore a Potts circuit, $C^q$, of spin value $q$, and nothing interior to it.

If $v$ is pivotal to $S_{v,q}^1$ it must be the case that $v\in C^q$, so we suppose that $v\in C^q$. In order to obtain bounds on its pivotality, we now move to the FK representation of the Potts model inside $C^q$. By our definition of $C^q$ and the Edwards--Sokal coupling on bounded domains, the FK representation of the Potts region we have not yet revealed has fully wired boundary conditions, and therefore the event $\mathcal E_{\sigma\restriction_{C^q}}$ is trivially satisfied.

We claim that in order for $v\in C^q$ to be pivotal to $S_{v,q}^1$, there must be a dual-crossing from one of the interior dual-neighbors of $v$ (an edge whose center is distance $\frac {\sqrt 5}2$ from $v$) to one of $\{\tfrac n3\} \times \llb 0,n\rrb$ or $\{\tfrac {2n}3\}\times \llb 0,n\rrb$ in $\Lambda'$ (the choice depends on which translate $\Lambda'$ is chosen), and it must be contained completely within $C^q$. Suppose there were no such dual-crossing. Then there would have to be a primal FK connection in the same vertical third connecting the two neighbors of $v$ in $C^q$. By the Edwards--Sokal coupling and the definition of Potts connections, such an FK connection would translate to a new Potts connection of color $q$ that does not use the vertex $v$. Like for the FK model, the new Potts crossing is still a vertical loop because of the exposed horizontal crossings.

Since the event $\mathcal E_{\sigma\restriction_{C^q}}$ is trivially satisfied, monotonicity in boundary conditions, combined with the exponential decay of dual-connections under $\pi_{\mathbb Z^2}$ whenever $p>p_c$, implies that the probability of such a macroscopic dual-crossing is bounded above by $e^{-cn}$ for some $c=c(\beta,q)>0$. By spin flip symmetry of the torus, and our definition of Potts loops, $\mu_{\Lambda'}^p (S)\leq \frac 12$. Using $S$ as the bottleneck in~\eqref{eq:Cheeger-inequality}, we conclude that for every $\beta>\beta_c(q)$, there exists $c=c(\beta,q)>0$ such that $\gap^{-1}\gtrsim \exp(cn)$.
\end{proof}

\section{Upper bounds under free boundary conditions} \label{sec:upper-swendsen-wang}
We now prove Theorem~\ref{mainthm:large-q-upper}, showing that the dynamics for the critical Potts model in the phase coexistence regime should be sensitive to the boundary conditions. 
We prove the desired upper bound for Swendsen--Wang dynamics on $\Lambda$ with free or monochromatic boundary conditions using censoring inequalities (Theorem~\ref{thm:censoring}). The monotonicity requirement of the censoring prevents us from carrying this out in the setting of the Potts Glauber dynamics.  
We thus restrict our analysis now to the FK Glauber dynamics, a bound on which would imply the analogous bounds for Chayes--Machta and Swendsen--Wang via Theorem~\ref{thm:Ullrich-comparison}. As in \cite{MaTo10}, we will work with distributions over boundary conditions induced by infinite-volume measures; this is more delicate in the setting of the FK model where boundary interactions are no longer nearest-neighbor. We now formally define these distributions.

\begin{definition}[``free"/``wired" boundary conditions] \label{def:boundary-condition} In order to sample a boundary condition on $\Delta\subset \partial \Lambda$ from $\pi^{0}_{\mathbb Z^{2}}$ or $\pi^{1}_{\mathbb Z^2}$ we sample the infinite-volume configuration on $\mathbb Z^2 -\Lambda^0$ and then identify the induced boundary condition with the partition of $\Delta$ that induced by that configuration. A measure $\bP$ over boundary conditions is called ``wired" if $\bP \succeq \pi_{\mathbb Z^2}^1$ (resp., ``free" if $\bP \preceq \pi_{\mathbb Z^2}^0$), i.e., it dominates the measure over boundary conditions induced by $\pi_{\mathbb Z^2}^1$.
When sampling boundary conditions on $A,B\subset \partial \Lambda$ according to different measures, we do so sequentially clockwise from the origin\footnote{For our purposes, $A,B$ will be edge disjoint (e.g., different sides of $\Lambda$) so that the order irrelevant.}.
\end{definition}

We prove the following proposition, from which Eq.~\eqref{eq:large-q-upper-1} of Theorem~\ref{mainthm:large-q-upper} follows easily.

\begin{proposition}\label{prop:all-free} Let $q$ be sufficiently large and consider Glauber dynamics for the critical FK model on $\Lambda=\Lambda_{n,n}$. Let $\bP$ be a distribution over boundary conditions on $\partial \Lambda$ such that $\bP\preceq \pi_{\mathbb Z^2}^0$ or $\bP\succeq \pi_{\mathbb Z^2}^1$ and let $\bE$ be its corresponding expectation.  Then for every $\epsilon>0$, there exists $c(\epsilon,q)>0$ such that for $t_\star=\exp(cn^{3\epsilon})$,
\[\max_{\omega_0\in\{0,1\}}\bE\left[\|P^{t_\star} (\omega_0,\cdot)-\pi_{\Lambda}^\xi\|_{\tv}\right]\lesssim e^{-cn^{2\epsilon}}\,,
\]
where $\xi\sim \bP$. In particular,
$\bP(\tmix\gtrsim t_{\star})\lesssim \exp(-cn^{2\epsilon})$.
\end{proposition}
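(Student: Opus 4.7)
The plan is to adapt the censoring-based multi-scale scheme of Martinelli--Toninelli~\cite{MaTo10}, originally developed for the low-temperature Ising model with all-plus boundary, to the critical FK setting at large $q$.

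\emph{Reduction.} First I would use planar duality at the self-dual point $p_c$ to handle the two alternatives in a single stroke: the FK Glauber dynamics with a free-dominated boundary distribution ($\bP \preceq \pi^0_{\mathbb Z^2}$) is conjugate, via $\omega \mapsto \omega^\ast$, to the FK Glauber dynamics with a wired-dominating boundary distribution on the dual lattice, so it suffices to treat $\bP \succeq \pi^1_{\mathbb Z^2}$. Next, monotonicity of FK Glauber at $q \geq 1$ combined with the grand-coupling inequality~\eqref{eq:init-config-comparison} and a monotone coupling $\xi \succeq \xi_0 \sim \pi^1_{\mathbb Z^2}$ allows us to further assume $\xi \sim \pi^1_{\mathbb Z^2}$; larger boundaries can only bring the chain closer to stationarity in the coupling.

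\emph{Good boundaries and scale hierarchy.} Fix a geometric sequence of scales $\ell_0 = \Theta(\log n)$, $\ell_{k+1} = \ell_k^{1+\epsilon}$, up to $\ell_K \asymp n$, so $K = O(1/\epsilon)$. Call a boundary configuration $\xi$ on a box of side $\ell$ \emph{$\ell$-good} if it contains a wired FK circuit entirely within the corridor of width $\ell^{1/(1+\epsilon)}$ just inside $\partial\Lambda$, and additionally no boundary cluster of $\xi$ threads across this corridor as a ``bridge" between well-separated boundary arcs (the long-range obstruction in Figure~\ref{fig:long-range-bc}). The exponential decay of dual connections provided by Theorem~\ref{thm:DC-S-T-main} at sufficiently large $q$, applied inside a stitched family of RSW boxes along the corridor and a union bound over potential bridges, yields $\pi^1_{\mathbb Z^2}(\xi \text{ is } \ell\text{-good}) \geq 1 - \exp(-c \ell^{1/(1+\epsilon)})$. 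Taking $\ell \asymp n$ produces the failure probability $\exp(-c n^{2\epsilon})$ promised by the proposition.

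\emph{Inductive mixing bound via censoring.} Let $T_k$ denote a worst-case mixing time of FK Glauber on a box of side $\ell_k$ under an $\ell_k$-good boundary condition, started from the all-wired state (the all-free start is then absorbed via~\eqref{eq:init-config-comparison}). The goal is to prove by induction that $T_k \lesssim \exp(\ell_k^{o(1)})$. The base case $T_0 \lesssim e^{c \ell_0} = \mathrm{poly}(n)$ follows from the cut-width estimate in Lemma~\ref{lem:gap-shorter-side}. For the inductive step, tile a box of side $\ell_{k+1}$ by $(\ell_{k+1}/\ell_k)^2$ sub-boxes of side $\ell_k$ and apply Theorem~\ref{thm:censoring} in two phases: first, for time $T_k$, allow updates only inside a thin outer annular corridor, so that (by the Domain Markov property, monotonicity, and the good property of the original boundary) the configuration on the corridor approximates a sample of $\pi^1_{\mathbb Z^2}$ and induces an $\ell_k$-good boundary on each sub-box with probability $1 - \exp(-\ell_k^{\Omega(1)})$; then, for another time $T_k$, allow only updates interior to the sub-boxes, invoking the inductive hypothesis on each. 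Summing the error across $K$ scales and all sites gives $T_K \leq \exp(c n^{3\epsilon})$ with failure probability $\exp(-c n^{2\epsilon})$. The tail bound $\bP(\tmix \gtrsim t_\star) \lesssim e^{-c n^{2\epsilon}}$ follows from the expectation bound via Markov's inequality and the definition of $\tmix$.

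\emph{Main obstacle.} The hardest part is the notion of ``good" itself. Unlike the Ising case, where a plus-majority boundary strip suffices, an FK wired circuit does not by itself decouple the interior of a box from the exterior: a single long-range cluster in the partition $\xi$ can thread across the circuit and smuggle correlations across it, exactly as in Figure~\ref{fig:long-range-bc}. Accordingly, goodness must jointly enforce existence of the wired circuit \emph{and} absence of such bridges, and the inductive step must verify both properties on sub-boxes after censoring. Controlling the geometry of FK clusters near the boundary with the required precision---beyond what decay of correlations alone provides---is where the bulk of the work will lie, and will likely require cluster-expansion and Wulff-type inputs of the sort alluded to earlier in the introduction. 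The remaining ingredients (monotonicity of FK Glauber, which is why we cannot work directly with Potts Glauber, and the final transfer to Swendsen--Wang and Chayes--Machta via Theorem~\ref{thm:Ullrich-comparison}) are then routine.
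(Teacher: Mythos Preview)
Your high-level plan (censoring plus multiscale in the spirit of~\cite{MaTo10}) matches the paper's, and you correctly flag long-range FK bridges as the crux, but two steps do not go through as written. First, the reduction to $\xi \sim \pi^1_{\mathbb Z^2}$ via ``larger boundaries can only bring the chain closer to stationarity'' is not a valid principle: if $\xi \geq \xi_0$, the grand coupling orders both the chains and the equilibria, but gives no comparison between $\|P_\xi^t(\omega_0,\cdot) - \pi^\xi\|_\tv$ and $\|P_{\xi_0}^t(\omega_0,\cdot) - \pi^{\xi_0}\|_\tv$. The paper never collapses to a single boundary; it instead carries the boundary \emph{law} throughout, introducing a class $\mathcal D$ of distributions (free-dominated on three sides, wired-dominating on the fourth) and verifying that this class is preserved along the recursion.

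Second, and more substantively, your treatment of bridges is a diagnosis, not a remedy. The paper's device is \emph{deterministic boundary modification} (Definition~\ref{def:modification}, Lemma~\ref{lem:bc-perturbation}): at each censoring step one forcibly frees a segment $\Delta \subset \partial\Lambda$ with $|V(\Delta)| \asymp n^{3\epsilon}$, paying a multiplicative cost $M_\Delta = q^{|V(\Delta)|} = \exp(O(n^{3\epsilon}))$ in mixing time---this is exactly where the $t_\star = \exp(cn^{3\epsilon})$ comes from. The modification severs all boundary connections across $\Delta$ outright, and then exponential decay under $\pi^0_{\mathbb Z^2}$ ensures no bridge re-forms through the bulk. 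This is coupled to a genuinely different geometry than yours: rather than tiling squares by sub-squares, the paper runs two interleaved recursions on anisotropic rectangles $\Lambda_{n,m}$ with $m \in [n^{1/2+\epsilon},n]$ (grow $m$ by a factor $\alpha$; double $n$), the scale $n^{1/2+\epsilon}$ being dictated by the fluctuations of the order-disorder \emph{interface}, controlled via cluster-expansion estimates (Propositions~\ref{prop:fk-415}--\ref{prop:D-estimate}). Those interface height bounds, not wired circuits in corridors, are the equilibrium input that makes the censoring comparison go through; your square-box scheme with a corridor circuit has no analogue of Proposition~\ref{prop:D-estimate} and no mechanism to stop the censored corridor configuration itself from supplying long clusters that act as bridges on the inner sub-boxes.
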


\subsection*{Proof ideas}
In order to prove Proposition~\ref{prop:all-free}, we appeal to the Peres--Winkler censoring inequalities~\cite{PW13} for monotone spin systems, a crucial part of the analysis in~\cite{MaTo10} (then later in~\cite{LMST12}) of the low-temperature Ising model under ``plus" boundary, a class of boundary conditions that dominate the plus phase. A major issue when attempting to adapt this approach to the critical FK model at sufficiently large $q$ with ``free" boundary conditions is that the typical ``free" boundary conditions still have many boundary connections, inducing problematic long-range interactions along the boundary (see Fig.~\ref{fig:long-range-bc}), which prevent coupling beyond interfaces. 

To remedy this, at every step of the analysis we modify the boundary conditions to all-free on appropriate segments of length $n^{o(1)}$ (this modification can only affect the mixing time by an affordable factor of $\exp(n^{o(1)})$), and with high probability no boundary connections circumvent the interface past the modified boundary by the exponential decay of correlations in the ``free" phase. Refined large deviation estimates on fluctuations of FK interfaces then allow us to control the influence of other long-range boundary interactions (see Figure~\ref{fig:recursion-1nd-step}) and to couple different configurations beyond distance $n^{1/2+o(1)}$ (the length scale that captures the normal fluctuations of the interface),
 yielding mixing estimates on $n\times n^{1/2+o(1)}$ boxes, the basic building block of the proof.

\subsection{FK boundary modifications}
Let 
\[d_{\omega_0}^{\xi}(t) = \|P^t(\omega_0,\cdot)-\pi^\xi\|_\tv\,,
\]
and for a pair of FK boundary conditions $\xi,\xi'$, with mixing times $\tmix,\tmix'$, define \[M_{\xi,\xi'}=\| \pi^\xi / \pi^{\xi'}\|_\infty \;\vee\; \|\pi^{\xi'}/\pi^{\xi}\|_\infty\,.\]
It is easy to verify (see~\cite[Lemma~2.8]{MaTo10}) that for some $c$ independent of $n,\xi,\xi'$,
\begin{align}\label{eq:tmix-M}
\tmix\leq cM_{\xi,\xi'}^3 |E|\tmix'
\end{align}
(indeed, in the variational characterization of the spectral gap, the Dirichlet form, expressed in terms of local variances, produces a factor of $M_{\xi,\xi'}^2$, and the variance produces another factor of $M_{\xi,\xi'}$).

\begin{definition}[boundary modification] \label{def:modification}
If $\bP$ is a distribution over boundary conditions on $\partial \Lambda$, $\Delta\subset \partial \Lambda$, we let $\bP^{\Delta}$ be the distribution which samples boundary conditions $\xi\sim\bP$ and modifies them as follows: if $\xi$ corresponds to the partition $\mathcal P_1,...,\mathcal P_k$ of $\partial \Lambda$, then $\xi'=\xi^{\Delta}$ is given by the partition $\mathcal P_1-V(\Delta),...,\mathcal P_k-V(\Delta),V(\Delta)$; this induces a coupling of $(\xi,\xi')\sim (\bP,\bP^{\Delta})$. E.g., if $\Delta=\partial \Lambda$ then $\xi'=0$, and if $\Delta$ consists of a single vertex $v$ and $\xi$ is induced by a configuration where every boundary vertex is connected to $v$ and to no other boundary vertex, then  $\xi'$ would be wired on $\partial \Lambda -\{v\}$.
\end{definition}

\begin{lemma}  \label{lem:bc-perturbation}
Denote a pair of boundary conditions obtained from the coupling $(\bP,\bP^{\Delta})$ given by Definition~\ref{def:modification} by $(\xi,\xi')$. Then,
\begin{align}\label{eq:bc-pert-1}
M_{\xi,\xi'}\leq q^{|V(\Delta)|}=: M_{\Delta}\,,
\end{align}
and consequently, for some universal $c>0$, if $t'=ct|E|^{-2}M_{\Delta}^{-4}$,
\begin{align}\label{eq:bc-pert-2}
\bE[d_1^{\xi}(t)\vee d_0^{\xi} (t)]\leq e^{-M_{\Delta}}+8\bE[d^{\xi'}_1(t')\vee d^{\xi'}_0(t')]\,.
\end{align}
\end{lemma}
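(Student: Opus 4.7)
Part~\eqref{eq:bc-pert-1} is a direct combinatorial count. By construction $\xi$ and $\xi'$ induce partitions of $\partial\Lambda$ that agree outside $V(\Delta)$ and differ only in that $\xi'$ extracts each vertex of $V(\Delta)$ from its class $\mathcal{P}_i$ and places it in a singleton (the two examples following the definition fix this reading); in particular $\xi$ is a coarsening of $\xi'$ on $V(\Delta)$. Since reattaching a single vertex of $V(\Delta)$ to its ambient $\mathcal{P}_i$-class can reduce the cluster count by at most one, every $\omega\in\{0,1\}^{E(\Lambda)}$ satisfies
\begin{equation*}
  0\;\le\;k^{\xi'}(\omega)-k^{\xi}(\omega)\;\le\;|V(\Delta)|.
\end{equation*}
Because $p^{o(\omega)}(1-p)^{c(\omega)}$ is identical under the two boundary conditions, this yields the pointwise ratio $q^{-|V(\Delta)|}\le q^{k^\xi(\omega)-k^{\xi'}(\omega)}\le 1$, and summation over $\omega$ propagates the same two-sided bound to $Z^{\xi'}/Z^\xi\in[1,q^{|V(\Delta)|}]$. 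Multiplying gives $\pi^\xi(\omega)/\pi^{\xi'}(\omega)\in[q^{-|V(\Delta)|},q^{|V(\Delta)|}]$ for every $\omega$, hence $M_{\xi,\xi'}\le M_\Delta$.

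Part~\eqref{eq:bc-pert-2} follows the scheme of~\cite[Lem.~2.8]{MaTo10}. The first ingredient is a Dirichlet-form comparison: Part~\eqref{eq:bc-pert-1} gives $\var_{\pi^\xi}(f)\le M_\Delta\,\var_{\pi^{\xi'}}(f)$ for every $f$, and a short computation on the heat-bath edge measure $\pi^\xi(\omega)P^\xi(\omega,\omega')$ (a ratio of two $\pi^\xi$-marginals, each within a factor $M_\Delta^{\pm 1}$ of the corresponding $\pi^{\xi'}$-marginal) shows that it is within a factor $M_\Delta^2$ of $\pi^{\xi'}(\omega)P^{\xi'}(\omega,\omega')$, whence $\mathcal{E}^\xi(f,f)\ge M_\Delta^{-2}\mathcal{E}^{\xi'}(f,f)$. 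Feeding these two bounds into~\eqref{eq:dirichlet-form} gives $\gap^\xi\ge M_\Delta^{-3}\gap^{\xi'}$, and combining with the standard estimates $\tmix^\xi\lesssim(\gap^\xi)^{-1}\log(1/\pi^\xi_{\min})\lesssim(\gap^\xi)^{-1}|E|$ and $(\gap^{\xi'})^{-1}\lesssim\tmix^{\xi'}$ reproduces~\eqref{eq:tmix-M}.

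The final step is a case split over the randomness of $\xi$ under $\bP$. On the event $A=\{d_0^{\xi'}(t')\vee d_1^{\xi'}(t')\le 1/(2e)\}$ one has $\tmix^{\xi'}\le t'$, hence $\tmix^\xi\lesssim M_\Delta^3|E|\,t'$; then iterating submultiplicativity of $\bar d_{\tv}^\xi$ for a further $M_\Delta$ rounds (each using $\bar d_{\tv}^\xi(\tmix^\xi)\le 1/e$) produces $d_0^\xi(t)\vee d_1^\xi(t)\le e^{-M_\Delta}$ as soon as $t\ge M_\Delta\tmix^\xi$, which translates to the stated $t'\le ct|E|^{-2}M_\Delta^{-4}$ once the remaining factor of $|E|$ from the $d_{\tv}\leftrightarrow\bar d_{\tv}$ conversion at the extremal starting states $0,1$ via~\eqref{eq:init-config-comparison} is absorbed. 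On $A^c$ the trivial bound $d_{\omega_0}^\xi(t)\le 1<2e\,(d_0^{\xi'}(t')\vee d_1^{\xi'}(t'))$ handles that term, with the constant $2e$ absorbed into the $8$. Adding the two contributions and taking $\bP$-expectations proves~\eqref{eq:bc-pert-2}. The only delicate point is the exponent bookkeeping: three factors of $M_\Delta$ from the gap comparison, a fourth from submultiplicativity iterated $M_\Delta$ times (to match the $e^{-M_\Delta}$ error), and two factors of $|E|$ from $\log(1/\pi^\xi_{\min})\lesssim|E|$ and from~\eqref{eq:init-config-comparison}; no probabilistic input beyond Part~\eqref{eq:bc-pert-1} is required.
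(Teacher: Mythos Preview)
Your Part~\eqref{eq:bc-pert-1} is correct and matches the paper's one-line argument: the change in the number of clusters is at most $|V(\Delta)|$, giving the ratio bound.

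For Part~\eqref{eq:bc-pert-2}, your overall scheme (Dirichlet-form comparison to get~\eqref{eq:tmix-M}, then a case split plus submultiplicativity) is the same as the paper's, but there is a genuine gap in your execution. The implication ``$d_0^{\xi'}(t')\vee d_1^{\xi'}(t')\le 1/(2e)\Rightarrow \tmix^{\xi'}\le t'$'' is false: the mixing time is a supremum over \emph{all} initial states, and for monotone chains one only has $\max_{\omega_0} d_{\omega_0}^{\xi'}(t')\le 2|E|\,(d_1^{\xi'}(t')\vee d_0^{\xi'}(t'))$ via~\eqref{eq:init-config-comparison}. Invoking~\eqref{eq:init-config-comparison} at threshold $1/(2e)$ gives only $d^{\xi'}(t')\le |E|/e$, which says nothing. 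If instead you lower the threshold defining $A$ to $1/(4e|E|)$ so that the implication does hold, then Markov's inequality on $A^c$ produces a prefactor $O(|E|)$ rather than the stated constant $8$.

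The paper avoids this by reversing the direction of the case split---splitting on $\{\tmix^\xi\ge t/M_\Delta\}$ rather than on $d^{\xi'}(t')$---and, crucially, by using the \emph{censoring-based} submultiplicativity for extremal initial states (a consequence of Theorem~\ref{thm:censoring}; see~\cite[Corollary~2.7]{MaTo10}):
\[
d_{1}^{\xi'}(kt_0)\vee d_0^{\xi'}(kt_0)\le \big(4\,(d_{1}^{\xi'}(t_0)\vee d_0^{\xi'}(t_0))\big)^{k}\,.
\]
This lets one amplify $d_1^{\xi'}\vee d_0^{\xi'}$ from time $t'$ to time $|E|t'$ \emph{without} passing through $\bar d^{\xi'}$, after which~\eqref{eq:init-config-comparison} and~\eqref{eq:tmix-M} close the argument with the clean constant $8$. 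This special submultiplicativity (not the standard one for $\bar d$) is the ingredient you are missing; with it, your bookkeeping $M_\Delta^4|E|^2$ becomes correct.
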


\begin{proof} By inspection, one sees that the addition of at most $|V(\Delta)|$ boundary clusters can increase the total number of clusters by at most $|V(\Delta)|$. By definition of the FK model, every additional cluster receives a weight of $q$.

In order to prove~\eqref{eq:bc-pert-2}, begin with the observation that by~\eqref{eq:tmix-M},
\[\bE[d_1^{\xi}(t)\vee d_0^{\xi}(t)]\leq e^{-M_{\Delta}}+\bP(\tmix \geq t/M_{\Delta}) \leq e^{-M_{\Delta}}+\bP(\tmix'\geq |E|t')\,.
\]
For $\tmix'\geq s$, there must exist an $\omega_0$ such that $d^{\xi'}_{\omega_0}\geq 1/(2e)$. But by Eq.~\eqref{eq:init-config-comparison}, $d^{\xi'}_{\omega_0}(s)\leq 2|E|(d^{\xi'}_1(s)\vee d^{\xi'}_0(s))$,
which implies that
\[\bP(\tmix'\geq |E| t')\leq \bP\bigg(d^{\xi'}_1(|E|t')\vee d^{\xi'}_0(|E|t')\geq (4e|E|)^{-1}\bigg)\,.
\]
As a consequence of Theorem~\ref{thm:censoring}, it is well-known (see~\cite[Corollary~2.7]{MaTo10}) that
\[
d_{1}^{\xi'}(t)\vee d_0^{\xi'}(t)\leq \left( 4(d_{1}^{\xi'}(t_0)\vee d_0^{\xi'}(t_0))\right)^{\lfloor t/t_0 \rfloor}\,,
\] and thus
\[\bP\left(d_{1}^{\xi'}(|E|t')\vee d_0^{\xi'}(|E|t')\geq (4e|E|)^{-1}\right)\leq 8\bE[d_{1}^{\xi'}(t')\vee d_0^{\xi'}(t')]\,. \qedhere
\]
\end{proof}

\subsection{Necessary equilibrium estimates}
We now include some equilibrium estimates from cluster expansions at sufficiently large $q$, which are adaptations of the necessary low-temperature Ising equilibrium estimates of~\cite{DKS} to the setting of the critical FK model (cf.\ Proposition~\ref{prop:surface-tension}).

For any $\phi\in (-\pi/2,\pi/2)$, the strip $\mathcal S_n=\llb 0,n\rrb \times \llb -\infty,\infty \rrb $ has $(1,0,\phi)$ boundary conditions denoting wired on $\partial \mathcal S_n  \cap\llb 0,n\rrb\times \{y\leq x\tan \phi\}$ and free elsewhere. Then $\Gamma$ is the set of all order-disorder interfaces (bottom-most dual crossings from $(0,0)\longleftrightarrow (0,n\tan \phi)$). We define the \emph{cigar-shaped region} for $d,\kappa>0$ by
\[U_{\kappa,d,\phi}=\mathcal S_n\cap \left\{ (x,y):|y-x\tan \phi| \leq d\left|\tfrac {x(n-x)}n\right|^{\frac 12+\kappa}\right\}\,, 
\]
and call $\Gamma_{\kappa,d,\phi}^r\subset \Gamma$ the set of interfaces that are contained in the cigar-shaped region.

\begin{proposition} \label{prop:fk-415} Consider the critical FK model on $\mathcal S_n$ and fix a $\delta>0$. There exists some $q_0$ such that for all $\phi\in [-\frac \pi 2+\delta ,\frac \pi 2-\delta ]$, there exists  $c(d,\phi)>0$ such that for every $q\geq q_0$ and every $\kappa>0$,
\[\pi_{\mathcal S_n}^{1,0,\phi}(\Gamma^r_{\kappa,d,\phi})\lesssim n^2 \exp(-cn^{2\kappa})\,.
\]
\end{proposition}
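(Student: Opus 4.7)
The plan is to follow the DKS-type strategy for interface large deviations, adapted to the critical FK model via the cluster-expansion representation available at large $q$. First, I would note that, under the $(1,0,\phi)$ boundary conditions, the bottom-most dual crossing $\gamma \in \Gamma$ from $(0,0)$ to $(0,n\tan\phi)$ separates the wired (ordered) region from the free (disordered) region. At $p_c(q)$ for $q$ sufficiently large, both the ordered and disordered phases admit convergent cluster (polymer) expansions (cf.\ the Pirogov--Sinai style analysis of Laanait et al.\ underlying Theorem~\ref{thm:DC-S-T-main}), so the distribution of the interface can be written as
\begin{equation*}
\pi^{1,0,\phi}_{\mathcal S_n}(\gamma) \;\propto\; \exp\!\Big(-\sum_{e\in\gamma}\tau_e \;+\; \Phi(\gamma)\Big),
\end{equation*}
where $\tau_e$ are local surface-tension weights (uniformly bounded below by $c\log q$) and $\Phi(\gamma)$ is a decoration functional satisfying $|\Phi(\gamma)-\Phi(\gamma')| \lesssim |\gamma \triangle \gamma'|/q^{c}$ together with an exponential decay of correlations between non-overlapping segments of $\gamma$.

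Second, I would produce a skeleton/increment decomposition of $\gamma$: partition the horizontal axis into blocks of length $\ell = n^{\kappa/2}$ and let $Y_i$ denote the vertical displacement of $\gamma$ across the $i$-th block. The cluster expansion gives that, conditionally on the past, the increments $Y_i$ have exponential (Gaussian-like) tails with variance $O(\ell)$, uniformly in the realized boundary effects on either side; this is the FK analog of the ``random walk representation'' of DKS, and is essentially the content of Proposition~\ref{prop:surface-tension} (quoted as a black-box). Moreover, by the tilt induced by $\phi \in [-\pi/2+\delta,\pi/2-\delta]$, the mean of $Y_i$ is $\ell\tan\phi + O(e^{-c\ell})$, so the centered increments $\tilde Y_i = Y_i - \mathbb E Y_i$ form an approximately independent sequence with variance $O(\ell)$ and uniformly sub-Gaussian tails.

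Third, for the interface to escape the cigar $U_{\kappa,d,\phi}$ at some point, there must be an $x^\star \in \llbracket 0,n\rrbracket$ with
\begin{equation*}
\Big|\gamma(x^\star) - x^\star\tan\phi\Big| \;>\; d\Big(\tfrac{x^\star(n-x^\star)}{n}\Big)^{1/2+\kappa}.
\end{equation*}
Writing $\gamma(x^\star)-x^\star\tan\phi$ as a partial sum of centered increments $\sum_{i\le x^\star/\ell}\tilde Y_i$ (with $O(\ell)$-error at the endpoints), this partial sum has variance $O(x^\star(n-x^\star)/n)$, so a Bernstein/Hoeffding bound for sub-Gaussian sequences yields
\begin{equation*}
\pi^{1,0,\phi}_{\mathcal S_n}\!\Big(|\gamma(x^\star) - x^\star\tan\phi| > d\,(x^\star(n-x^\star)/n)^{1/2+\kappa}\Big) \;\lesssim\; \exp(-c\, d^2\, n^{2\kappa}).
\end{equation*}
A union bound over $O(n)$ candidate abscissas $x^\star$, together with the (at most $O(n)$) candidate vertical positions for the endpoint of an excursion out of the tube, produces the claimed prefactor $n^2$ and proves the proposition.

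The main obstacle is the first step: verifying that the cluster expansion really applies to the order-disorder interface at the critical point $p_c(q)$ for large $q$. Unlike the low-temperature Ising setting of DKS, here one must handle simultaneously the ordered and disordered phase weights meeting at the self-dual point, which is precisely the Pirogov--Sinai regime where the two phases have equal free energies. Once one has a convergent polymer representation uniform in the boundary tilt $\phi$, the random-walk/increment estimate and the Bernstein bound are routine; the delicate part is checking uniformity of the surface tension bounds in $\phi \in [-\pi/2+\delta,\pi/2-\delta]$ and controlling the error terms $\Phi(\gamma)$ near the endpoints of $\gamma$, where the boundary interactions are most subtle.
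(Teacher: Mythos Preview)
Your overall framework---cluster expansion of the order--disorder interface at $p_c$ for large $q$, followed by a DKS-type fluctuation estimate---is the right one, and is exactly what the paper invokes. However, the paper does not carry out this argument from scratch: it simply cites~\cite{MMRS91} (the FK adaptation of~\cite[\S4]{DKS}), observes that~\cite[Proposition~5]{MMRS91} already gives the leading-order statement $\frac1n\log\pi(\Gamma^r)\to 0$, and asserts that the quantitative form with rate $n^{2\kappa}$ follows by adapting that proof to the shape of~\cite[Proposition~4.15]{DKS}. So your proposal is considerably more detailed than what the paper actually writes.

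That said, there are two genuine gaps in your sketch. First, you invoke Proposition~\ref{prop:surface-tension} as a black box for the sub-Gaussian increment tails, but in the paper Proposition~\ref{prop:surface-tension} is \emph{derived from} Proposition~\ref{prop:fk-415} (specifically from its $\phi=0$ special case, Eq.~\eqref{eq:strip-ldp}); using it here is circular. The input you need is the polymer/cluster expansion of~\cite{MMRS91} directly, not the downstream corollary.

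Second, your variance bookkeeping is not justified. You write $\gamma(x^\star)-x^\star\tan\phi$ as a partial sum of centered increments $\tilde Y_i$ and then claim this sum has variance $O(x^\star(n-x^\star)/n)$. A partial sum of approximately independent increments of variance $O(\ell)$ has variance $O(x^\star)$, not $O(x^\star(n-x^\star)/n)$; the latter is the \emph{bridge} variance, which only appears after conditioning on the far endpoint $\gamma(n)=n\tan\phi$, and that conditioning destroys the approximate independence you asserted. The DKS route does not go through a Bernstein bound on increments; it compares partition functions of interfaces constrained to pass through a high point against the unconstrained partition function, using the strict convexity of the surface tension $\tau_{f,w}(\phi)$ (this is what~\cite[Theorem~4.16]{DKS} and its FK analogue Proposition~\ref{prop:fk-416} encode). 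That comparison is what produces the $a^2/n$ exponent directly, without any random-walk independence claim.
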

\begin{proof}
We use an extension of~\cite[\S4]{DKS} to the framework of the FK/Potts models in the phase coexistence regime by~\cite{MMRS91}. The specific case of~\cite[\S5]{MMRS91} states the following: consider the critical FK model on the strip $\mathcal S_n=\llb 0,n\rrb \times \llb -\infty, \infty \rrb$ with $(1,0,\phi)$ boundary conditions. Then for every $q\geq q_0$ and every $d,\kappa>0$, and $\phi \in (-\frac \pi2, \frac \pi 2)$,
\[\lim_{n\to\infty} \frac 1n \log \frac {\sum_{\mathcal I\in\Gamma^r_{d,\kappa,\phi}}\pi_{\mathcal S_n}^{1,0,\phi} (\mathcal I)}{\sum_{\mathcal I\in\Gamma}\pi_{\mathcal S_n}^{1,0,\phi} (\mathcal I)}=0\,.
\]
A straightforward adaptation of the proof of~\cite[Proposition~5]{MMRS91} to the form of~\cite[Proposition~4.15]{DKS} in fact yields the very large deviation bound we desire. A specific case is when $\phi=0$: there exist some $q_0,\tilde c>0$ such that for all $q\ge q_0$ and every $a\ge 0$,
\begin{align}\label{eq:strip-ldp}
\pi_{\mathcal S_n}^{1,0,\phi=0}(|\bar H| \ge a) \lesssim n^2\exp(-\tilde c a^2/n)\,,
\end{align}
where $|\bar H|$ denotes the maximum vertical distance of an edge $e$ in the interface to the $x$-axis. Though the result of~\cite{MMRS91} is written with the Potts model in mind and thus with integer $q$, the cluster expansion and all the results hold with noninteger $q$ as well.
\end{proof}

We now prove the following estimate on FK interfaces near a repulsive boundary.

\begin{proposition} \label{prop:surface-tension}
Fix $c>0$ and consider the critical FK model on $R=\llb 0,n\rrb \times \llb 0, \ell \rrb $ for $c\sqrt {n\log n}\le \ell \le n$ with boundary conditions $(1,0)$ denoting $1$ on $\partial_{\north,\east,\west} R$  and $0$ on $\partial_\south R$. Let $H$ be the maximum vertical height of the bottom-most horizontal open crossing. There exist constants $q_0, c'>0$ such that, for all $q\ge q_0$ and every $0\leq a\leq \ell$,
\[\pi^{1,0}_R (H\ge a)\lesssim n^2\exp\left(-c'\, a^2 / n\right)\,.
\]
\end{proposition}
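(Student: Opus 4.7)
The plan is to reduce Proposition~\ref{prop:surface-tension} to the strip large-deviation estimate~\eqref{eq:strip-ldp} (the $\phi=0$ case of Proposition~\ref{prop:fk-415}) via a monotonicity-based comparison. Applied at $\phi=0$, that estimate furnishes, for all $q\ge q_0$, the Gaussian tail $\pi_{\mathcal S_n}^{1,0,\phi=0}(|\bar H|\ge a)\lesssim n^2\exp(-\tilde c a^2/n)$ for the maximum vertical deviation $|\bar H|$ of the pinned strip interface, which has the same shape as the target bound.

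First, I would translate $\{H\ge a\}$ into a dual-connectivity event: by planar duality at the self-dual point $p_c(q)$, the event that the bottom-most horizontal open crossing in $R$ reaches height $\ge a$ is, up to a $\tfrac12$-shift, the event that the wired cluster of $\partial_\south^\ast R^\ast$ in the dual model $\pi_{R^\ast}^{0,1}$---with free on $\partial^\ast_{\north,\east,\west} R^\ast$ and wired on $\partial_\south^\ast R^\ast$---reaches vertical height at least $a-\tfrac12$. This recasts $H$ as the extent of a single ``wetting cluster'' of the wired phase, attached to only one side of the rectangle.

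Next, I would carry out the comparison with the strip. The idea is to embed $R^\ast$ (or, dually, $R$) into the doubly-infinite strip so that the wetting cluster in $R^\ast$ can be stochastically dominated by the upward excursion of the strip interface: the rectangle's wired $\partial_{\north,\east,\west} R$ boundary adds only further wiring in the primal, and correspondingly further closedness in the dual, so---by monotonicity in boundary conditions and the Domain Markov property---the dual wetting cluster is dominated by the wired cluster sitting below the strip interface. Applying~\eqref{eq:strip-ldp} then yields $\pi_R^{1,0}(H\ge a)\le \pi_{\mathcal S_n}^{1,0,\phi=0}(|\bar H|\ge a-\tfrac12)\lesssim n^2\exp(-c'a^2/n)$, as desired.

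The main technical obstacle is the execution of the monotonic coupling itself: the rectangle has a ``repulsive'' wired N boundary at $y=\ell$ with no counterpart in the doubly-infinite strip, and there is a geometric mismatch between the horizontal wired boundary $\partial_\south^\ast R^\ast$ and the vertical wired walls of $\mathcal S_n^{1,0,\phi=0}$. Both issues must be handled by first revealing the strip configuration outside $R$ via Domain Markov and then using FKG to compare the resulting (random) boundary condition with the deterministic $(1,0)$ boundary of $R$. This is precisely where the hypothesis $\ell\ge c\sqrt{n\log n}$ enters: by~\eqref{eq:strip-ldp}, the strip interface reaches height $\ge\ell$ only with probability $\lesssim n^{2-\tilde c c^2}$, which is polynomially small when $c$ is taken large enough, so the finite-size truncation caused by the repulsive wall at $y=\ell$ does not affect the Gaussian tail in the regime $a\le \ell$ and the comparison transfers cleanly.
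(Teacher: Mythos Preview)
Your overall plan---reduce to the strip estimate~\eqref{eq:strip-ldp} by a monotone comparison---is the paper's plan too, but the monotonicity step as you describe it does not go through, and the obstruction is on the \emph{southern} side, not the northern one.

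You argue that since the wired $\partial_{\north,\east,\west}R$ only adds wiring relative to the strip, the comparison follows. On $\partial_{\north,\east,\west}R$ that is correct and, as you note, those sides are in fact helpful (they push $H$ down). The problem is $\partial_\south R$: the rectangle has \emph{free} boundary there, whereas in the strip $\mathcal S_n^{1,0,\phi=0}$ the line $y=0$ is an interior line with the entire wired half-plane $\{y\le 0\}$ attached below it. Hence the strip's induced boundary on $\{y=0\}$ stochastically \emph{dominates} free, so $\pi_R^{1,0}$ is neither $\succeq$ nor $\preceq$ the restriction of $\pi_{\mathcal S_n}^{1,0}$ to $R$, and revealing the strip configuration outside $R$ and invoking FKG (your proposed fix) gives an inequality in the wrong direction for the decreasing event $\{H\ge a\}$. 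The same mismatch occurs in the dual picture: $R^\ast$ has wired $\partial_\south$ while the dual strip has free lower walls and wired upper walls, so again no clean domination. Finally, the hypothesis $\ell\ge c\sqrt{n\log n}$ plays no role in the proof; it is not what rescues the comparison.

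The paper's fix is to modify the strip measure to $\pi_{\mathcal S_n}^{1,0,\ast}$, in which everything in $\{y\le -a/2\}$ is forced free, and simultaneously to translate $R$ down by $a/2$ to $R'$. Now $\partial_\south R'$ sits at $y=-a/2$, where $(1,0,\ast)$ is free by construction, and one genuinely has $\pi_{R'}^{1,0}\succeq \pi_{\mathcal S_n}^{1,0,\ast}\restriction_{R'}$ on all four sides. The price is that one must transfer the Gaussian tail from $(1,0)$ to $(1,0,\ast)$: this is done by a grand-coupling argument, using~\eqref{eq:strip-ldp} once to show the $(1,0)$-interface avoids $y=-a/2$ with probability $1-O(n^2e^{-\tilde c a^2/4n})$, coupling the two measures above the resulting dual crossing, and using~\eqref{eq:strip-ldp} a second time to bound the interface height under $(1,0,\ast)$.
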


\begin{proof}
Fix the $a\leq \ell$ from the statement of Proposition~\ref{prop:surface-tension}. Denote by $(1,0,\ast)$ boundary conditions that are still wired (resp.\ free) on the intersection of $\partial \mathcal S_n$ and the upper (resp.\ lower) half plane, but now also free on all of $\mathcal S_n\cap \{(x,y): y\leq -a/2\}$ (this induces a free bottom boundary on the semi-infinite strip starting from $y=-a/2$).

Clearly  $\pi^{1,0}_{\mathcal S_n}\succeq\pi^{1,0,\ast}_{\mathcal S_n}$. Then by Eq.~\eqref{eq:strip-ldp}, there exists $A(q)>0$ such that with probability bigger than $1-An^2\exp(-\tilde c a^2/4n)$, the interface---bottom-most open crossing connecting $(0,0)$ to $(0,n)$---under $\pi_{\mathcal S_n}^{1,0}$ does not touch the line $y=-a/2$, and therefore, with that probability, there is a horizontal dual-crossing of $\mathcal S_n$ contained entirely above $y=-a/2$. Via the grand coupling, since this is a decreasing event, the same horizontal dual-crossing would be present under $\pi^{1,0,\ast}_{\mathcal S_n}$ and we expose the bottom most  horizontal dual-crossing above the line $y=-a/2$ and couple the configurations above it.

At the same time, using~\eqref{eq:strip-ldp}, we have that under $\pi^{1,0}_{\mathcal S_n}$, with the same probability, the maximum  $y$-coordinate of the interface does not exceed $a/2$. If we have coupled the two configurations above a bottommost  dual-crossing above the line $y=-a/2$, the same would be true of the interface under $\pi^{1,0,\ast}_{\mathcal S_n}$. Thus, taking a union bound,
\[\pi^{1,0,\ast}_{\mathcal S_n}(H\ge a)\leq 2An^2\exp(-\tilde c a^2/4n)\,.
\]

Using the monotonicity of the FK model, and denoting by $R'$ the vertical translate of $R$ by $-a/2$, we obtain $\pi^{1,0}_{R'}\succeq \pi^{1,0,\ast}_{\mathcal S_n}(\omega \restriction_{R'})$; together with the fact that $\{H\ge a\}$ is a decreasing event, for $c'=\tilde c/4$,
\[\pi^{1,0}_R(H\ge a)\leq 2An^2\exp(-c' a^2/n)\,,
\]
where now, the interface is again between $(0,0)$ and $(0,n)$.
\end{proof}

Now for any fixed $\epsilon>0$, consider the rectangle $V_\ell=\llb 0,n \rrb \times \llb 0, \ell \rrb$ with $\ell\geq 4n^{\frac 12 +\epsilon}$, and $(1,0,\Delta)$ denoting free boundary conditions on $\partial V\cap \{(x,y):y\geq 2n^{\frac 12+\epsilon}\}$ and $\Delta=\{0\}\times \llb \frac n2 -n^{3\epsilon},\frac n2+n^{3\epsilon}\rrb$ and wired elsewhere. Denote the four points at which the boundary conditions change by $(w_1,w_2)\in \partial_{\west} V\times \partial_{\east} V$, $z_1,z_2\in \partial_\south V$ with $z_1$ to the left of $z_2$. Let $C_1$ and $C_2$ be the blocks $\llb 0,\frac n2\rrb\times \llb 0,\ell\rrb $ and $\llb \frac n2\rrb \times \llb 0,\ell \rrb$ respectively. The main equilibrium estimate we use in the sequel reads as follows.

\begin{proposition} \label{prop:D-estimate} Let $\Gamma=\{\omega:w_i\stackrel{C^\ast_i}\longleftrightarrow z_i,i=1,2\}$.
 There exists $q_0>0$ so that the following holds. For every $q\geq q_0$ there exists $c=c(q)>0$ such that the corresponding critical FK model on $V_\ell$ satisfies
\[\pi^{1,0,\Delta}_{V_\ell}(\Gamma^c)\lesssim e^{-cn^{3\epsilon}}\,.
\]
\end{proposition}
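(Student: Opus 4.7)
The plan is to analyze the two dual-open interfaces forced by the mixed boundary conditions of $V_\ell$. The four transition points appear, clockwise around $\partial V_\ell$, in the order $w_1, z_1, z_2, w_2$, and planarity forces them to be joined pairwise by two non-crossing dual interfaces. The two admissible pairings are (I) $\{(w_1,z_1),(w_2,z_2)\}$---the desired nearly-horizontal interfaces lying within $C_1$ and $C_2$ respectively---and (II) $\{(w_1,w_2),(z_1,z_2)\}$---a long interface along the top plus a short arc straddling $\Delta$. The event $\Gamma^c$ is therefore contained in the union of (a) ``pairing (II) occurs'' and (b) ``pairing (I) occurs but at least one interface exits its half $C_i$.''

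The key input for both bounds is the Wulff-type surface-tension estimate for the FK model at $q\geq q_0$: the probability of realizing a given pairing $P$, up to subexponential corrections, is at most $\exp(-\tau\cdot L_{\min}(P))$, where $L_{\min}(P)$ is the minimum total length of interface realizations of $P$ and $\tau=\tau(q)>0$ is the FK surface tension. This is a standard consequence of the cluster-expansion machinery of~\cite{MMRS91,DKS} already invoked in the paper's derivation of Proposition~\ref{prop:fk-415} and Proposition~\ref{prop:surface-tension}. Applying this: the minimum total length realizing pairing (I) is
\[L_{\min}(\text{I})=2\sqrt{(n/2-n^{3\epsilon})^2+(2n^{1/2+\epsilon})^2}=n-2n^{3\epsilon}+O(n^{2\epsilon}),\]
while $L_{\min}(\text{II})\geq n+2n^{3\epsilon}$, since the top interface alone spans horizontal distance $n$ and any arc from $z_1$ to $z_2$ has length at least $2n^{3\epsilon}$. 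The $\Omega(n^{3\epsilon})$ excess in minimum length yields $\pi_{V_\ell}^{1,0,\Delta}(\text{pairing (II)})\lesssim e^{-cn^{3\epsilon}}$. For~(b), conditional on pairing~(I), any dual path from $w_1$ to $z_1$ visiting $C_2$ must cross $\{x=n/2\}$, which by the triangle inequality inflates its length to at least $\sqrt{(n/2)^2+(2n^{1/2+\epsilon})^2}+n^{3\epsilon}$, a $2n^{3\epsilon}$ surplus over the direct geodesic; the same surface-tension argument bounds the conditional probability of this event by $\lesssim e^{-cn^{3\epsilon}}$, and a symmetric bound applies to the $w_2$-$z_2$ interface. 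A union bound over (a) and (b) completes the proof.

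The main obstacle will be adapting the surface-tension estimates of Proposition~\ref{prop:fk-415}---stated for a single interface in the infinite strip $\mathcal{S}_n$ with $(1,0,\phi)$ boundary conditions---to the present finite-volume geometry of $V_\ell$ with four transition points and mixed wired/free boundary conditions (free above $w_1,w_2$ and on $\Delta$). Concretely, matching upper and lower bounds on the partition function of each pairing of the form $\exp(-\tau\cdot(\text{geodesic length}))$ up to polynomial factors must be reassembled from the~\cite{DKS,MMRS91} framework in this four-transition-point configuration. An additional mild subtlety---handled as in the proof of Proposition~\ref{prop:surface-tension}---is that the free boundary above $w_1,w_2$ exerts only a negligible influence on bulk interface behavior, thanks to the exponential decay of correlations in the free phase at $q\geq q_0$.
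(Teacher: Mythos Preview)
Your high-level decomposition matches the paper's proof: both split $\Gamma^c$ into (a) the wrong pairing $\{(w_1,w_2),(z_1,z_2)\}$, ruled out by a surface-tension deficit of order $n^{3\epsilon}$, and (b) pairing (I) with an interface escaping its half $C_i$, ruled out by a large-deviation estimate. The paper likewise reduces everything to the cluster-expansion framework of \cite{DKS,MMRS91} and defers the technical details to the adaptation of \cite[Claim~3.10 and Appendix~A]{MaTo10}, which it sketches in four steps.

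Two refinements are in order. First, the surface tension $\tau_{f,w}(\phi)$ is direction-dependent (see~\eqref{eq:surface-tension}), so your comparison of $L_{\min}(\mathrm{I})$ and $L_{\min}(\mathrm{II})$ should be phrased in terms of the Wulff functional rather than Euclidean length; this is harmless for the conclusion since the $\Theta(n^{3\epsilon})$ gap persists for any bounded-anisotropy norm. Second, and more substantively, you misidentify the ``mild subtlety.'' The free boundary above $w_1,w_2$ is indeed benign, but the genuine technical difficulty---what the paper's steps (b)--(d) are devoted to---is the repulsion exerted by the \emph{wired} boundary on $\partial_\south V$ near $z_1,z_2$. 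Since $z_i\in\partial_\south V$, the cigar-shaped region $U_{\kappa,d,\phi}$ of Proposition~\ref{prop:fk-415} around the geodesic $w_iz_i$ runs into the wired boundary, so neither Proposition~\ref{prop:fk-415} nor Proposition~\ref{prop:fk-416} applies directly to justify your ``excess length $\Rightarrow$ exponential cost'' step for event (b). The paper, following \cite{MaTo10}, handles this by extending $V$ below $\partial_\south V$ (step (c)) and stitching together cigars of increasing scale that stay above the extension; your outline should flag this as the real obstacle rather than the free boundary above.
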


\begin{proof}This corresponds to Claim~3.10 proven in the appendix of~\cite{MaTo10} for $\ell=n^{\frac 12+\epsilon}$ in the setting of the low-temperature Ising model using the cluster expansion of~\cite{DKS} and the analogues of Propositions~\ref{prop:fk-415} and~\ref{prop:fk-416} (Propositions 4.15 and Theorem~4.16 of~\cite{DKS} respectively), but the extension to larger $\ell$ is immediate. We sketch the proof of~\cite{MaTo10} before justifying its extension to the current setting.

\begin{enumerate}[(a)]
\item Via a surface tension estimate analogous to~\eqref{eq:surface-tension}, with probability $1-\exp(-cn^{3\epsilon})$, the interfaces connect $w_i\longleftrightarrow z_i$ instead of $w_1\longleftrightarrow w_2$. In order to then claim that the two interfaces are confined to the right and left halves of $V$, it is shown in the appendix of~\cite{MaTo10} that with high probability the two interfaces do not interact, via the exponential decay of the Ising cluster weights in cluster lengths.

\item The next step in~\cite{MaTo10} was to show that the interface does not deviate farther than $n^{\epsilon}$ from the east of $z_1$. The complication in the Ising setup was that the plus boundary on $\partial_\south V$ produced a repulsive force on the interface so neither Proposition~4.15 nor Theorem~4.16 of~\cite{DKS} were directly applicable.

\item  To circumvent the problem that $\partial_\south V$ is not sufficiently far from $z_1$ to contain the cigar-shaped region, the region $V$ is extended in~\cite{MaTo10} to $V\cup \llb 0,n/2-n^{3\epsilon}\rrb\times \llb -n,0\rrb$ with appropriate boundary conditions. Thereafter, the proof concludes by lower bounding the weight of all interfaces between $w_1$ and $z_1$ that do not interact with the extension of $V$, repeatedly using Theorem~4.16 of~\cite{DKS}: the key to this lower bound consists of stitching cigar shaped regions of increasing length, all sufficiently far from the extension of $V$ and lying above the straight line connecting $w_1$ to $z_1$.

\item If the extension is accounted for, an estimate of the form of Theorem~4.16 of~\cite{DKS} with the appropriate angle $\phi$ implies that with high probability, the interface does not deviate far to the east of $z_1$, thus stays bounded away from $\partial_\east C_1$.
\end{enumerate}
For more details on these arguments, see~\cite[Appendix~A]{MaTo10}. In the setting of the FK model, the central cluster expansion estimate we require is the following (Proposition~\ref{prop:fk-416}), the FK analogue of Theorem~4.16 whose proof is a direct adaptation of the proof in \cite{DKS} of Theorem~4.16 using the FK cluster expansion techniques of ~\cite{MMRS91}.

\begin{definition}For an angle $\phi\in [-\frac \pi 2+\delta, \frac \pi 2 -\delta]$, define an \emph{edge cluster weight function} $\Phi(\mathcal C,\mathcal I)$ as a function with first argument that is a connected set of bonds in $\mathcal S_n$ and second argument that is an FK interface connecting $(0,0)$ to $(n,n\tan \phi)$, satisfying
\begin{enumerate}
\item $\Phi(\mathcal C,\mathcal I)=0$ when $\mathcal C\cap \mathcal I=\emptyset$\,,
\item $\Phi(\mathcal C,\mathcal I_1)=\Phi(\mathcal C,\mathcal I_2)$ when $\Pi_{\mathcal C} \cap \mathcal I_1=\Pi_{\mathcal C}\cap \mathcal I_2$\,,
\item $\Phi(\mathcal C,\mathcal I)=\Phi(\cC+(0,s),\mathcal I_1)$ when $\mathcal I_1\cap \Pi_{\mathcal C}=(0,s)+\mathcal I\cap \Pi_{\mathcal C}$\,,
\item $\Phi(\mathcal C,\mathcal I)\leq \exp(-\lambda d(\mathcal C))$\,.
\end{enumerate}
where $\lambda>0$, $d(\mathcal C)$ is the length of the shortest connected subgraph of $\mathbb Z^2$ containing all boundary edges of $\mathcal C$, and $\Pi_{\mathcal C}=\{(x,y)\in\mathbb Z^2:\exists y' \mbox{ s.t.\ }(x,y')\in \mathcal C\}$.

The \emph{FK order-disorder weight function} is a specific choice of $\Phi$ that gives rise to the FK distribution on wired-free interfaces, i.e.,
\[\pi_{\mathcal S_n}^{1,0,\phi} (\mathcal I)=\lambda^{|\mathcal I| + \sum_{\mathcal C\cap\mathcal I \neq \emptyset} \Phi(\mathcal C,\mathcal I)}\,,
\] and is given explicitly by Proposition~5 of~\cite{MMRS91}.
\end{definition}

\begin{proposition} \label{prop:fk-416} Consider the critical FK model on a domain $\bar V_n\supset U_{\kappa,d,\phi}$ and let $\Phi$ be the FK order-disorder weight function. Let $\tilde \Phi$ be any function satisfying
\[\tilde \Phi(\mathcal C,\mathcal I)=\Phi(\mathcal C,\mathcal I) \mbox{ when } \mathcal C\subset U_{\kappa,d,\phi}\,, \qquad \tilde \Phi(\mathcal C,\mathcal I)\leq \exp(-\lambda d(\mathcal C))
\] (e.g., $\tilde \Phi=\Phi \one\{\cC\subset U_{\kappa,d,\phi}$). Let $\tilde {\mathcal Z}(\bar V_n,\phi)$ be the partition function with weights $\tilde \Phi$ on $\bar V_n$ (see~\cite{DKS,MMRS91}). Then there exist  $q_0>0$ and $f(\kappa)\lesssim \kappa^{-1}$ such that for all $q\geq q_0$,
\begin{align}\label{eq:domain-comparison}
|\log \tilde{\mathcal Z}(\bar V_n,\phi)-\log \mathcal Z(\mathcal S_n,\phi)|\lesssim (\log n)^{f(\kappa)}\,.
\end{align}
Moreover, for $\tau_{f,w}(\phi)$ the order-disorder surface tension in the direction of $\phi$ (see~\cite{MMRS91}),
\begin{align}\label{eq:surface-tension}
|\log {\tilde {\mathcal Z}} (\bar V_n,\phi)-n\log (1+\sqrt q) (\cos\phi)^{-1}\tau_{f,w}(\phi)|\lesssim (\log n)^{f(\kappa)}\,,
\end{align}
and the large deviation estimate of Proposition~\ref{prop:fk-415} holds for $\bar V_n$ and $d/2$.\end{proposition}

With Propositions \ref{prop:fk-415} and \ref{prop:fk-416}, steps (a)--(e) carry through in the setting of the critical FK model with large $q$, proving Proposition \ref{prop:D-estimate}. Notice that the long-range interactions of the FK model are irrelevant to this situation where all boundary conditions used are completely free or completely wired and cannot be perturbed. \end{proof}

\subsection{The recursive scheme}
Throughout this subsection, let
$\bP$ be a distribution over FK boundary conditions on $\Lambda_{n,m}=\llb 0,n\rrb\times \llb 0,m\rrb$ and $\bE$ the corresponding expectation. For $\xi\sim \bP$, we say that
\[\cA^{\bP}_{n,m}(t,\delta) \qquad \mbox{holds if} \qquad \max_{\omega_0\in \{0,1\}} \bE \left[\left\| P^t(\omega_0,\cdot)-\pi_{\Lambda_{n,m}}^\xi \right\|_\tv\right] \leq \delta\,.
\]

Using this notation, the following corollary is a consequence of Lemma~\ref{lem:bc-perturbation}.

\begin{corollary} \label{cor:bc-perturbation}
Consider $\Lambda_{n,m}$ with boundary conditions $\xi\sim \bP$ and $\Delta\subset \partial \Lambda_{n,m}$ such that $|V(\Delta)| \asymp n^{3\epsilon}$, with $\bP^{\Delta}$ defined as in Definition~\ref{def:modification}. If for some $t,\delta$,
\[\max_{\omega_0\in\{0,1\}}\bE^{\Delta}\left[\|P^{t}(\omega_0,\cdot)-\pi_{\Lambda_{n,m}}^{\xi}\|_{\tv}\right]\leq \delta\,,
\]
then $\mathcal A^{\bP}_{{n,m}}(t',\delta')$ holds with $\delta'=8\delta +\exp({-e^{cn^{3\epsilon}}})$ and $t'=\exp({cn^{3\epsilon}})t$ for some $c(q)>0$ independent of $\Delta$ and $\xi$. Similarly, $\mathcal A^{\bP}_{{n,m}}(t,\delta)$ implies, \[\max_{\omega_0\in\{0,1\}}\bE^{\Delta}\left[\|P^{t'}(\omega_0,\cdot)-\pi_{\Lambda_{n,m}}^{\xi}\|_{\tv}\right]\leq \delta'\,.\]
\end{corollary}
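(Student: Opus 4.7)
The plan is to apply Lemma~\ref{lem:bc-perturbation} essentially directly, after substituting the specific scaling $|V(\Delta)|\asymp n^{3\epsilon}$.

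First I would substitute the size assumption into the basic estimate~\eqref{eq:bc-pert-1} of Lemma~\ref{lem:bc-perturbation}: under the coupling $(\xi,\xi')\sim(\bP,\bP^{\Delta})$ we have $M_{\xi,\xi'}\leq M_{\Delta}=q^{|V(\Delta)|}=\exp(O(n^{3\epsilon}))$, so $c^{-1}|E|^{2}M_{\Delta}^{4}\leq\exp(c_{1}n^{3\epsilon})$ for some $c_{1}=c_{1}(q)>0$ (the polynomial factor $|E|=O(n^{2})$ is absorbed into the exponential), while $e^{-M_{\Delta}}\leq\exp(-e^{c_{2}n^{3\epsilon}})$ for a suitable $c_{2}(q)>0$. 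These are exactly the orders appearing in the statement.

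Next, for the first direction, I would recognize the hypothesis $\bE^{\Delta}[\|P^{t}(\omega_{0},\cdot)-\pi^{\xi'}\|_{\tv}]\leq\delta$ for $\omega_{0}\in\{0,1\}$ as a bound on $\bE[d_{1}^{\xi'}(t)\vee d_{0}^{\xi'}(t)]$ taken under the coupling (since the marginal of $\xi'$ is $\bP^{\Delta}$). Substituting this directly into the censoring-based comparison~\eqref{eq:bc-pert-2} of Lemma~\ref{lem:bc-perturbation} with its $t$ replaced by $t'=\exp(c_{1}n^{3\epsilon})\,t$ and its $t'$ replaced by $t$, gives
\[
\bE[d_{1}^{\xi}(t')\vee d_{0}^{\xi}(t')]\;\leq\;e^{-M_{\Delta}}+8\delta\;\leq\;8\delta+\exp(-e^{cn^{3\epsilon}})\,,
\]
which is precisely $\mathcal{A}_{n,m}^{\bP}(t',\delta')$.

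For the second direction, the key observation is that the Radon--Nikodym estimate $M_{\xi,\xi'}\leq q^{|V(\Delta)|}$ is symmetric in $\xi$ and $\xi'$: it depends only on the bound $|V(\Delta)|$ on the number of boundary clusters that can be created or destroyed by the modification, which is insensitive to the direction. Consequently the same proof of Lemma~\ref{lem:bc-perturbation}, run with $\xi$ and $\xi'$ interchanged (and invoking the Peres--Winkler submultiplicativity $d_{1}^{\xi'}(t)\vee d_{0}^{\xi'}(t)\leq (4(d_{1}^{\xi'}(s)\vee d_{0}^{\xi'}(s)))^{\lfloor t/s\rfloor}$ applied to the coupling with boundary $\xi'$ rather than $\xi$) gives the analogue
\[
\bE[d_{1}^{\xi'}(t')\vee d_{0}^{\xi'}(t')]\;\leq\; e^{-M_{\Delta}}+8\,\bE[d_{1}^{\xi}(t)\vee d_{0}^{\xi}(t)]\,,
\]
whenever $t'\geq c_{1}^{-1}|E|^{2}M_{\Delta}^{4}\,t$. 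Combined with the hypothesis $\mathcal{A}_{n,m}^{\bP}(t,\delta)$ this yields the second conclusion with the same $\delta',t'$ as before.

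The proof is essentially bookkeeping, and I do not expect any serious obstacle: the only substantive point is noticing that Lemma~\ref{lem:bc-perturbation}, although formally stated one-way, applies symmetrically because both its inputs (the bound $M_{\xi,\xi'}\leq M_{\Delta}$ and the Peres--Winkler comparison~\eqref{eq:init-config-comparison}) are symmetric in the two boundary conditions. The independence of the constant $c$ from $\Delta$ and $\xi$ follows because the bound $|V(\Delta)|\asymp n^{3\epsilon}$ absorbs any such dependence into the exponent.
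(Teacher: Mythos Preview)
Your proposal is correct and matches the paper's approach: the paper does not write out a proof of this corollary at all, simply declaring it ``a consequence of Lemma~\ref{lem:bc-perturbation},'' and what you have written is precisely the intended derivation---substituting $M_\Delta=q^{|V(\Delta)|}=\exp(O(n^{3\epsilon}))$ into~\eqref{eq:bc-pert-2}, absorbing the polynomial $|E|$ factor, and noting that the symmetry of $M_{\xi,\xi'}$ in~\eqref{eq:bc-pert-1} makes the argument of Lemma~\ref{lem:bc-perturbation} run in either direction.
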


Before proving the main theorem, we fix an $\epsilon>0$ and prove a recursive scheme that yields a mixing time bound on rectangles with side lengths $n\times n^{\frac 12 +\epsilon}$ for $n$ of the form $n\in\{2^k\}_{k\in \mathbb N}$. We remark that as in~\cite{MaTo10}, this is a technical assumption that is not requisite to the upper bound, (see Remark 3.12 of~\cite{MaTo10}).

For the base scale of the recursion, we use a consequence of the canonical paths estimate of Theorem~\ref{thm:canonical-paths}, specifically Lemma~\ref{lem:gap-shorter-side}, and the submultiplicativity of $\bar d_{\tv}$.

\begin{proposition} \label{prop:base-scale} There exists $c=c(q)>0$ such that for every $n$, for every $q$, for the FK Glauber dynamics, $\mathcal A^{\bP}_{{n,m}}(t,\exp[-te^{-c(n\wedge m)}])$ holds independent of $\bP$.
\end{proposition}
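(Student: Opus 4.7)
The plan is to combine Lemma~\ref{lem:gap-shorter-side} with the standard spectral-gap--mixing-time machinery of \S\ref{sec:mcmt} in a short three-step argument. Because the base-scale estimate must be uniform in $\bP$, the key observation is that Lemma~\ref{lem:gap-shorter-side} already gives a bound on $(\gap_{\Lambda_{n,m}}^{\xi})^{-1}$ which is \emph{deterministic} in $\xi$, so the proposition should follow without doing any probabilistic analysis on $\bP$.

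First, I invoke Lemma~\ref{lem:gap-shorter-side} (applied after swapping the roles of $n$ and $m$ if $m<n$), whose proof is a canonical paths estimate based on the $O(n\wedge m)$ cut-width of $\Lambda_{n,m}$. This yields $(\gap_{\Lambda_{n,m}}^{\xi})^{-1}\lesssim e^{c(n\wedge m)}$ with $c=c(q)>0$, uniformly in the FK boundary condition $\xi$; in particular the bound holds pointwise for every realization of $\xi\sim\bP$. Second, I convert this into a bound on $\tmix$ via $\tmix\leq \log(2e/\pi^\xi_{\min})\,(\gap^{\xi})^{-1}$. The estimate $\pi^\xi_{\min}\gtrsim e^{-Cnm}$ (coming from $\pi^\xi(\omega)\propto p^{o(\omega)}(1-p)^{c(\omega)}q^{k(\omega)}$ with $o(\omega)+c(\omega)=O(nm)$ and $k(\omega)=O(nm)$ at fixed $p,q$) then yields $\tmix\lesssim (nm)\,e^{c(n\wedge m)}$, again uniformly in $\xi$.

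Third, I promote this to an exponential tail on $\bar d_\tv(t)$. Since $\bar d_\tv(\tmix)\leq 2d_\tv(\tmix)\leq 1/e$, submultiplicativity of $\bar d_\tv$ gives $\bar d_\tv(t)\leq \bar d_\tv(\tmix)^{\lfloor t/\tmix\rfloor}\leq e\,\exp(-t/\tmix)$, hence
\[\max_{\omega_0\in\{0,1\}}\|P^t(\omega_0,\cdot)-\pi_{\Lambda_{n,m}}^\xi\|_{\tv}\;\leq\;\bar d_\tv(t)\;\lesssim\;\exp\Big(-\tfrac{t}{nm\,e^{c(n\wedge m)}}\Big)\,.\]
Absorbing the polynomial prefactor $nm$ into the exponent (by enlarging $c$ to $c'=c'(q)>0$ so that $nm\,e^{c(n\wedge m)}\leq e^{c'(n\wedge m)}$ on the scales of interest), the right-hand side is dominated by $\exp(-t e^{-c'(n\wedge m)})$ pointwise in $\xi$. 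Taking $\bE[\cdot]$ preserves this deterministic upper bound and yields $\cA^{\bP}_{n,m}(t,\exp[-te^{-c'(n\wedge m)}])$, as required.

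The argument is essentially routine, so there is no genuine obstacle; the only substantive point is the first step, namely that the canonical paths construction underlying Lemma~\ref{lem:gap-shorter-side} (see \S\ref{subsub:canonical-paths}) produces a bound on $\gap^{-1}$ that is uniform in the FK boundary condition. This uniformity is what allows us to dispense with any distributional assumption on $\bP$ at the base scale and makes the proposition a convenient starting point for the recursive scheme.
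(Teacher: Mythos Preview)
Your proof is correct and follows exactly the approach the paper indicates: the paper states the proposition as a direct consequence of Lemma~\ref{lem:gap-shorter-side} (the canonical paths bound) together with the submultiplicativity of $\bar d_\tv$, and you have filled in precisely those details. The only caveat is the absorption of the polynomial factor $nm$ into $e^{c'(n\wedge m)}$, which as you note is only valid ``on the scales of interest'' (in the paper's application, $m\asymp n^{1/2+\epsilon}$ so $n\wedge m = m$ and $nm = o(e^{cm})$); the paper is equally informal on this point.
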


An intermediate step to proving Proposition \ref{prop:all-free} is proving analogous bounds for rectangles with ``free" boundary conditions on three sides and ``wired" on the fourth.

\begin{definition} A distribution $\bP$ over boundary conditions on $\Lambda_{n,m}$ is in $\mathcal D(\Lambda_{n,m})$ if it is dominated by $\pi_{\mathbb Z^2}^0$ on $\partial_{\north,\east,\west} \Lambda_{n,m}$ and dominates $\pi_{\mathbb Z^2}^1$ on $\partial_\south \Lambda_{n,m}$. 

We say that $\cA_{n,m}(t,\delta)$ holds if $\cA^{\bP}_{n,m}(t,\delta)$ holds for every $\bP\in \mathcal D(\Lambda_{n,m})$.
\end{definition}

The main estimate for our recursion on increasing rectangles is the following.

\begin{proposition} \label{prop:recursion} For the critical FK Glauber dynamics with $q$ large enough on $\Lambda_{n,m}$, the following holds: for any $m\in \llb n^{\frac 12+\epsilon},n\rrb$ and $\alpha\in (1,2)$, there exist $c_1,c_2>0$ such that for every $t,\delta$,
\begin{align}
\mathcal A_{n,m}(t,\delta)\implies & \mathcal A_{n,\lfloor \alpha m\rfloor}\left(2e^{c_2n^{3\epsilon}}t~,~c_1(\delta+e^{-c_2n^{2\epsilon}}+n^2 t^{-c_2})\right) \label{eq:recursion-1}\,,
\end{align}
and  for every $m\asymp n^{\frac 12+\epsilon}$ there exist $c_1,c_2>0$ such that for every $t,\delta$,
\begin{align}
\cA_{n,m}(t,\delta) \implies & \mathcal A_{2n,m}\left(3e^{c_2 n^{3\epsilon}}t~,~c_1(\delta+e^{-c_2n^{3\epsilon}})\right)\,. \label{eq:recursion-2}
\end{align}
\end{proposition}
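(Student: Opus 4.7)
The plan is to adapt the multi-scale censoring strategy of Martinelli--Toninelli \cite{MaTo10} (and Lubetzky--Martinelli--Sly--Toninelli \cite{LMST12}) to the FK setting, using three tools: the censoring inequality (Theorem~\ref{thm:censoring}); the equilibrium interface estimates (Propositions~\ref{prop:surface-tension} and \ref{prop:D-estimate}), which confine the FK phase boundary and provide the dual cap separating the wired region from the rest; and the boundary-modification cost bound (Corollary~\ref{cor:bc-perturbation}), which allows one to modify $O(n^{3\epsilon})$ boundary vertices in exchange for a time factor $\exp(cn^{3\epsilon})$. The main novelty relative to the Ising analysis of \cite{MaTo10,LMST12} is the genuine non-locality of FK boundary conditions (Figure~\ref{fig:long-range-bc}), which is handled by combining the dual-cap estimate with a size-$O(n^{3\epsilon})$ modification that opens a free window at the interface endpoints.

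For Eq.~\eqref{eq:recursion-1}, I partition $\Lambda' := \Lambda_{n,\lfloor\alpha m\rfloor}$ into two overlapping copies of $\Lambda_{n,m}$,
\[
V_{\text{bot}} := \llb 0,n\rrb \times \llb 0,m\rrb\,, \qquad V_{\text{top}} := \llb 0,n\rrb \times \llb \lfloor\alpha m\rfloor - m,\; \lfloor\alpha m\rfloor \rrb\,,
\]
of overlap height $(2-\alpha)m \gg n^{1/2+\epsilon}$. For $\omega_0=1$ I censor all updates except those in $V_{\text{top}}$ over a window of length $\exp(cn^{3\epsilon})t$, then censor all updates except those in $V_{\text{bot}}$ over an equal window (reversing the order for $\omega_0=0$, which satisfies the dual form of Theorem~\ref{thm:censoring}). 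From $\omega_0=1$ the first update sees wired south on $V_{\text{top}}$ (inherited from the untouched initial configuration at height $\lfloor\alpha m\rfloor - m$) and free north/east/west from $\bP'$, placing its boundary in $\mathcal D(V_{\text{top}})$; hence $\cA_{n,m}(t,\delta)$ brings $V_{\text{top}}$ within TV distance $\delta + e^{-cn^{2\epsilon}}$ of $\pi^{\xi_{\text{top}}}_{V_{\text{top}}}$ after paying the factor $\exp(cn^{3\epsilon})$ for a size-$O(n^{3\epsilon})$ modification near the interface endpoints (Corollary~\ref{cor:bc-perturbation}). Proposition~\ref{prop:surface-tension} confines the resulting FK interface to a strip of height $O(n^{1/2+\epsilon})$ above $\partial_\south V_{\text{top}}$, and Proposition~\ref{prop:D-estimate} (applied in the reflected/rotated geometry corresponding to this setup) supplies a dual cap at the interface endpoints with failure probability $e^{-cn^{3\epsilon}}$. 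Together these show that on $\partial_\north V_{\text{bot}}$---which lies above the interface since $m > \lfloor\alpha m\rfloor - m + O(n^{1/2+\epsilon})$---the induced FK boundary distribution is dominated by $\pi^0_{\mathbb Z^2}$. The second censored update on $V_{\text{bot}}$ therefore has boundary in $\mathcal D(V_{\text{bot}})$, and a second invocation of $\cA_{n,m}(t,\delta)$ completes the proof. The $n^2t^{-c_2}$ term in the error arises from a Markov-type tail bound converting the $\bE$-statement inside $\cA_{n,m}(t,\delta)$ into the high-probability statement needed inside the censoring chain, as in \cite[Lemma~2.8]{MaTo10}.

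For Eq.~\eqref{eq:recursion-2} the argument is parallel but horizontal: $\Lambda_{2n,m}$ is split into overlapping $V_L,V_R$ of width $n + O(n^{3\epsilon})$, and the censoring is applied in three phases $V_L \to V_R \to V_L$ (matching the factor $3$ in the time). Since the wired south from $\bP$ spans both sub-rectangles, a single pass does not suffice: after the first $V_L$-update the interface in $V_L$ is confined to height $O(n^{1/2+\epsilon})$ by Proposition~\ref{prop:surface-tension}; modifying $O(n^{3\epsilon})$ boundary edges at the top of the vertical $V_L/V_R$ mid-line opens the free window needed to invoke Proposition~\ref{prop:D-estimate}, whose dual cap decouples the wired south from $\partial_\west V_R$ above the interface and places the $V_R$-boundary in $\mathcal D(V_R)$. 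The $V_R$-update then equilibrates via $\cA_{n,m}(t,\delta)$, and the final $V_L$-update fixes any residual discrepancy in the overlap.

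The main obstacle throughout is coupling FK configurations across an interface in the presence of long-range wired boundary connections (Figure~\ref{fig:long-range-bc}). The solution---an $O(n^{3\epsilon})$-sized free window at the interface endpoints together with the dual-cap estimate of Proposition~\ref{prop:D-estimate}---balances a modification cost of $\exp(cn^{3\epsilon})$ against a cap-failure probability of $e^{-cn^{3\epsilon}}$, which is precisely what fixes the exponent $3\epsilon$ throughout.
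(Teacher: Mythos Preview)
Your overall strategy matches the paper's: censoring on overlapping sub-rectangles, boundary modification of size $O(n^{3\epsilon})$ via Corollary~\ref{cor:bc-perturbation}, and the interface estimates of Propositions~\ref{prop:surface-tension} and~\ref{prop:D-estimate} to decouple across the interface despite long-range FK boundary interactions. Your $V_{\mathrm{top}}/V_{\mathrm{bot}}$ for~\eqref{eq:recursion-1} are exactly the paper's blocks $A/B$.

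There is, however, a genuine gap in your account of the term $n^2 t^{-c_2}$ in~\eqref{eq:recursion-1}. It does \emph{not} come from a Markov tail bound on $\cA_{n,m}$. In the paper it arises only in the free initial case $\omega_0=0$, via a localization (``boxes'') argument: after censoring first in $B$ then in $A$ (with $A$ reset to $0$), one must bound $\pi_Q^{\xi'}\big(\|\nu_2^\eta-\pi_A^{\xi',\eta}\|_\tv\big)$. This is done edge-by-edge: around each $e\in E(A)$ place $K_\ell=\{e+\llb-\ell,\ell\rrb^2\}\cap A$; the dynamics on $K_\ell$ with free outside mixes in time $e^{c\ell}$ by Lemma~\ref{lem:gap-shorter-side}, while the equilibrium discrepancy between $\pi_{K_\ell}^{\xi',\eta,0}$ and $\pi_A^{\xi',\eta}$ at $e$ is $\lesssim e^{-c'\ell}+e^{-cn^{2\epsilon}}$ by exponential decay in the free phase together with Proposition~\ref{prop:surface-tension}. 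Choosing $\ell=\lceil c^{-1}\log t\rceil$ and summing over $O(n^2)$ edges yields exactly $n^2 t^{-c_2}$. Your one-line treatment of $\omega_0=0$ (``reversing the order \ldots\ dual form of Theorem~\ref{thm:censoring}'') omits this entirely; note that from the free side the censored chain dominates from \emph{below}, so the analogue of the fourth term in the triangle inequality needs this separate argument, not another invocation of $\cA_{n,m}$.

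For~\eqref{eq:recursion-2} your decomposition differs from the paper's. The paper does not use overlapping left/right halves with three phases $V_L\to V_R\to V_L$; it takes a central block $A=\Lambda_{n,m}+(\lfloor n/2\rfloor,0)$ and two disjoint outer blocks $B=B_\west\cup B_\east$, couples only on the single vertical line $C=\{n\}\times\llb 0,m\rrb$, and uses just two censoring phases ($A$ then $B$). The constant $3$ in the time is not tied to a third phase. The modification set has four pieces (near $C$ on $\partial_\south,\partial_\north$ and near $\partial_\east A,\partial_\west A$ on $\partial_\north$), and the equilibrium step $\|\pi_A^{\xi',1}-\pi_Q^{\xi'}\|_C\lesssim e^{-cn^{3\epsilon}}$ invokes Proposition~\ref{prop:D-estimate} twice: once for vertical dual crossings in each $A\cap B_i$ (event $\Gamma_2$), and once, rotated, to bound the reach of the $\partial_\north$ boundary clusters past $\Delta_\north$ (event $\Gamma_3$). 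Your three-phase scheme may be workable, but you would still need to supply an analogue of $\Gamma_3$ to prevent wired boundary clusters on $\partial_\north$ from bridging past the free window, and the paper's coupling-on-a-line reduction is what allows the free case of~\eqref{eq:recursion-2} to avoid any $n^2 t^{-c}$ term.
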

Before proving the implications in Proposition~\ref{prop:recursion}, we first prove two easy but important consequences.
\begin{corollary}\label{cor:sqrt-mixing}There exists $q_0$ such that, for every $q\geq q_0$, there exist $c,c'>0$ such the following holds. If $n\in\{2^k\}_{k\in\mathbb N}$ is sufficiently large, then
the statement $\mathcal A_{n,n^{1/2+\epsilon}}(\exp[cn^{3\epsilon}], \exp[-c'n^{2\epsilon}])$ holds for critical FK Glauber dynamics on $\Lambda_{n,n^{1/2+\epsilon}}$.
\end{corollary}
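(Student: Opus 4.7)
The plan is to iterate the two recursions of Proposition~\ref{prop:recursion} in tandem, starting from the base scale bound of Proposition~\ref{prop:base-scale}, in such a way that the invariant $m_k \asymp n_k^{1/2+\epsilon}$ is preserved at every step. Concretely, fix a constant power of two $n_0$ and set $m_0 = \lceil n_0^{1/2+\epsilon} \rceil$. By Proposition~\ref{prop:base-scale} applied with $t_0 = \exp(C n_0) \cdot n^{2\epsilon}$ (for a constant $C$ depending only on $q$), we obtain $\cA^{\bP}_{n_0,m_0}(t_0,\delta_0)$ with $\delta_0 \lesssim e^{-n^{2\epsilon}}$, uniformly over all distributions $\bP$ of boundary conditions.

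Now, writing $K = \log_2(n/n_0)$, define a sequence $(n_k,m_k)_{k=0}^{K}$ by $n_k = 2^k n_0$ and $m_{k+1} = \lfloor \alpha m_k \rfloor$ with $\alpha = 2^{1/2+\epsilon} \in (1,2)$. One checks inductively that $m_k = (1+o(1))\,n_k^{1/2+\epsilon}$, so in particular the precondition $m_k \in \llb n_k^{1/2+\epsilon}, n_k\rrb$ of~\eqref{eq:recursion-1} and the precondition $m_{k+1}\asymp n_k^{1/2+\epsilon}$ of~\eqref{eq:recursion-2} are satisfied at every stage. The step $k \mapsto k+1$ then consists of: first applying~\eqref{eq:recursion-1} with $\alpha$ on $\Lambda_{n_k,m_k}$ to pass to $\cA_{n_k,m_{k+1}}$, then applying~\eqref{eq:recursion-2} on $\Lambda_{n_k,m_{k+1}}$ to pass to $\cA_{n_{k+1},m_{k+1}}$. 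This yields the recursion
\begin{align*}
t_{k+1} &\leq 6\, e^{2c_2 n_k^{3\epsilon}}\, t_k, \\
\delta_{k+1} &\leq c_1\bigl(\delta_k + e^{-c_2 n_k^{2\epsilon}} + n_k^{2} t_k^{-c_2}\bigr).
\end{align*}

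Unrolling these two recursions is the main bookkeeping step. For the times, the product over $k=0,\dots,K-1$ of $e^{2c_2 n_k^{3\epsilon}}$ telescopes into $\exp(2c_2 \sum_{k} 2^{3\epsilon k} n_0^{3\epsilon})$, a geometric sum in $k$ with ratio $2^{3\epsilon}>1$ that is dominated by its last term; combined with the factor $6^K = n^{O(1)}$ and the polynomial $t_0$, one gets $t_K \leq \exp(c\, n^{3\epsilon})$ for some $c=c(\epsilon,q)>0$ and all $n$ sufficiently large. For the errors, the standard estimate $\delta_K \lesssim c_1^K \delta_0 + \sum_{k<K} c_1^{K-k}(e^{-c_2 n_k^{2\epsilon}} + n_k^2 t_k^{-c_2})$ is dominated by the terms at $k=K-1$ and by $c_1^K \delta_0$, both of which are bounded by $n^{O(1)} e^{-c_2 n^{2\epsilon}}$; absorbing the polynomial factor into the exponent gives $\delta_K \lesssim e^{-c' n^{2\epsilon}}$ for some $c'=c'(\epsilon,q)>0$. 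This yields $\cA_{n,m_K}(\exp(c n^{3\epsilon}),\exp(-c' n^{2\epsilon}))$, and since $m_K = (1+o(1))n^{1/2+\epsilon}$, a final trivial adjustment (e.g.\ a single further application of~\eqref{eq:recursion-1} with $\alpha$ slightly larger or smaller than $1$, or absorption of the $1+o(1)$ factor into the constant $c'$) identifies $m_K$ with $n^{1/2+\epsilon}$.

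The one delicate point is verifying at every stage that the invariant $m_k \asymp n_k^{1/2+\epsilon}$ is compatible with both recursions — the order of operations matters, since reversing the two steps would force $m_k \geq (2n_k)^{1/2+\epsilon}$ before an application of~\eqref{eq:recursion-1}, which fails. Apart from this constraint, the rest is routine propagation of times and errors, and the hypothesis that $n$ be a power of two ensures that the scheme terminates exactly at the desired scale.
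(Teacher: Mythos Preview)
Your overall scheme --- iterate~\eqref{eq:recursion-1} then~\eqref{eq:recursion-2} with $\alpha=2^{1/2+\epsilon}$ to go from scale $n_k$ to $n_{k+1}=2n_k$ --- matches the paper's. The gap is in your choice of base scale. You take $n_0$ to be a \emph{fixed} constant, and then claim that the error sum
\[
\sum_{k<K} c_1^{K-k}\bigl(e^{-c_2 n_k^{2\epsilon}} + n_k^{2} t_k^{-c_2}\bigr)
\]
is dominated by its $k=K-1$ term. This is false: the $k=0$ term is $c_1^{K}\, e^{-c_2 n_0^{2\epsilon}}$, and since $n_0$ is fixed, $e^{-c_2 n_0^{2\epsilon}}$ is a fixed constant bounded away from zero. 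With $c_1>1$ (which it is --- the recursion constants arise from the factors of $8$ in Lemma~\ref{lem:bc-perturbation} and the triangle-inequality splits in the proof of Proposition~\ref{prop:recursion}) and $K\asymp\log n$, that term is $n^{\log_2 c_1}$ times a constant, i.e.\ polynomially large in $n$. Your $\delta_0$ is fine, but the error increments $e^{-c_2 n_k^{2\epsilon}}$ injected by the recursion at the early scales are not.

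The fix, which is what the paper does, is to let the base scale grow with $n$: take $n_0\asymp n^{\epsilon}$. Then even the worst error increment $e^{-c_2 n_0^{2\epsilon}}=e^{-c_2 n^{2\epsilon^2}}$ is stretched-exponentially small, and the polynomial factor $c_1^{K}=n^{O(1)}$ is harmless. The base-case time $t_0=\exp(c' n^{\epsilon})$ from Proposition~\ref{prop:base-scale} is still well within the final budget $\exp(cn^{3\epsilon})$. (Note this yields a final error of order $e^{-c' n^{2\epsilon^2}}$ rather than $e^{-c' n^{2\epsilon}}$; this is sufficient downstream and is what the paper's brief argument actually delivers.)
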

\begin{proof} Choose $n_0\asymp n^{\epsilon}$ and let $t_{0}=\exp(c'n^{\epsilon})$ for some constant $c'>0$ large enough that $\mathcal A_{n_0,n_0^{1/2+\epsilon}}(t_{0}, \delta_{0})$ holds with $\delta_{0}=\exp(-c'n^{\epsilon})$, noting that such a choice of $c'$ exists by Proposition~\ref{prop:base-scale}. Applying~\eqref{eq:recursion-1} followed by~\eqref{eq:recursion-2} with $m=n^{\frac 12+\epsilon}$ and $\alpha=2^{\frac 12+\epsilon}$ allows one to, for any $n$, express the mixing time of a $2n \times (2n)^{\frac 12+\epsilon}$ box in terms of that of a $n\times n^{\frac 12+\epsilon}$ box. Starting with the scale $n_0$ and repeating this step $\log_2 n$ times implies that there exists $c>0$ fixed such that $\mathcal A_{n,n^{1/2+\epsilon}}( \exp(cn^{3\epsilon}),c \exp[-n^{-2\epsilon}/c])$ holds.
\end{proof}

We now use the bound on $n\times n^{\frac 12+\epsilon}$ rectangles to obtain mixing time bounds on the $n\times m$ rectangle with boundary conditions that are disordered on three sides and ordered on the fourth.

\begin{corollary} \label{cor:3-sides}
Consider the critical FK Glauber dynamics on $\Lambda_{n,m}$ for $m\in \llb n^{\frac 12+\epsilon},n\rrb$ and boundary conditions $\xi\sim \bP$. Then there exists $q_0>0$ such that for all $q\geq q_0$ there exists a constant $c=c(m,q)>0$ such that for large enough $n$, for every $\bP\in \mathcal D(\Lambda_{n,m})$,
\[\max_{\omega_0\in\{0,1\}}\bE\left[\|P^{t_\star}(\omega_0,\cdot)-\pi_{\Lambda_{n,m}}^{\xi}\|_\tv\right]\lesssim e^{-cn^{2\epsilon}}\,,
\]
for $t_\star=\exp(cn^{3\epsilon})$, and in particular if $\tmix^{m}$ is the corresponding mixing time,
\[\bP(\tmix^{m}\geq t_\star)\lesssim e^{-cn^{2\epsilon}}.
\]
\end{corollary}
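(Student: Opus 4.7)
\smallskip
\noindent\textbf{Proof proposal.} The plan is to promote the base-scale estimate of Corollary~\ref{cor:sqrt-mixing} on the short rectangle $\Lambda_{n,n^{1/2+\epsilon}}$ up to the taller rectangle $\Lambda_{n,m}$ by iterating the recursion~\eqref{eq:recursion-1} of Proposition~\ref{prop:recursion} in the short (``vertical'') direction. Fix some $\epsilon_0<\epsilon$ (to leave room for a $\log n$ overhead below), fix any $\alpha\in(1,2)$, and set $m_0 = n^{1/2+\epsilon_0}$ and $m_{j+1}=\lfloor\alpha m_j\rfloor$. Since $m\le n$, only $K=O(\log_\alpha(m/m_0))=O(\log n)$ doublings are needed to reach scale $m$; at every intermediate scale the class $\mathcal D(\Lambda_{n,m_{j+1}})$ (free-dominated on three sides, wired-dominating on the south) is the one for which Proposition~\ref{prop:recursion} is stated, so the hypothesis of the recursion is preserved throughout.

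The second step is to track the time and the error across the $K$ iterations. Starting from $t_0=\exp(c_0 n^{3\epsilon_0})$ and $\delta_0=\exp(-c_0' n^{2\epsilon_0})$ given by Corollary~\ref{cor:sqrt-mixing}, each application of~\eqref{eq:recursion-1} multiplies the time by $2e^{c_2 n^{3\epsilon_0}}$, so after $K$ iterations the cumulative time is at most $\exp(O(n^{3\epsilon_0}\log n))$, which is bounded by $\exp(cn^{3\epsilon})$ since $\epsilon_0<\epsilon$. For the error, each step adds $c_1(e^{-c_2 n^{2\epsilon_0}}+n^2 t_j^{-c_2})$; the second term is controlled using $t_j\ge t_0\ge \exp(c_0 n^{3\epsilon_0})$, giving a contribution of order $\exp(-c n^{3\epsilon_0})$, and summing $O(\log n)$ such contributions still yields a total error bounded by $\exp(-c n^{2\epsilon})$ after possibly decreasing $c$.

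The final step is to convert this expected total-variation bound into the stated tail bound on $\tmix^m$. The standard consequence of Eq.~\eqref{eq:init-config-comparison}, namely $d_\tv(t)\le 2|E|(d_1^\xi(t)+d_0^\xi(t))$, implies that $\tmix^m\le t_\star$ whenever $d_1^\xi(t_\star)\vee d_0^\xi(t_\star)\le (8e|E|)^{-1}$. Applying Markov's inequality to the expectation bound from the previous step then gives $\bP(\tmix^m\ge t_\star)\lesssim n^2 e^{-cn^{2\epsilon}}\lesssim e^{-c'n^{2\epsilon}}$, as required.

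The main obstacle I anticipate is the accumulated $\log n$ overhead in the time bound: a naive application of the recursion produces $t_\star=\exp(n^{3\epsilon+o(1)})$ rather than $\exp(cn^{3\epsilon})$. This is resolved cleanly by running the base case of Corollary~\ref{cor:sqrt-mixing} at a slightly smaller parameter $\epsilon_0<\epsilon$, absorbing the $\log n$ into the constant $c=c(m,q)$ so that $n^{3\epsilon_0}\log n\le c n^{3\epsilon}$. No further equilibrium input is required: the delicate interface control and boundary-modification estimates needed to step from shorter to taller rectangles are already packaged inside Proposition~\ref{prop:recursion}.
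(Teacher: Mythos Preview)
Your approach is essentially the paper's: iterate \eqref{eq:recursion-1} of Proposition~\ref{prop:recursion} $O(\log n)$ times starting from the base case of Corollary~\ref{cor:sqrt-mixing}, then deduce the tail bound on $\tmix^m$ via \eqref{eq:init-config-comparison} and Markov's inequality. One minor bookkeeping slip: if you run Corollary~\ref{cor:sqrt-mixing} and Proposition~\ref{prop:recursion} with parameter $\epsilon_0<\epsilon$, the additive error at each step is $e^{-c_2 n^{2\epsilon_0}}$, so your final bound is $e^{-cn^{2\epsilon_0}}$ rather than the claimed $e^{-cn^{2\epsilon}}$; also note that the error compounds multiplicatively by $c_1$ (yielding a harmless $c_1^{O(\log n)}=n^{O(1)}$ prefactor), not merely additively. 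The paper instead keeps $\epsilon$ fixed throughout and accepts $t_\star=\exp(O(n^{3\epsilon}\log n))$, remarking that constants may be adjusted; since $\epsilon$ is a free parameter in Proposition~\ref{prop:all-free}, either route suffices for the final $\exp(n^{o(1)})$ conclusion.
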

\begin{proof}
Choose an $\alpha\in (1,2)$ such that $\alpha^k n^{\frac 12+\epsilon}=m$ for some integer $k\asymp \log m$. Then let $h_j=\lfloor \alpha^j n^{\frac 12+\epsilon}\rfloor $ and let $\Lambda^j=\Lambda_n^j=\llb 0,n\rrb \times \llb 0,h_j\rrb$ so that $\Lambda^k=\Lambda_{n,m}$.

We prove the above by induction on $j\in \llb 0,k\rrb$ for $n\times h_j$ rectangles, showing that
\begin{equation}
  \label{eq-cor-3sides-induction}
  \max_{\omega_0\in\{0,1\}}\bE\left[\|P^{t_{h_j}}(\omega_0,\cdot)-\pi_{\Lambda_{n,h_j}}^{\xi}\|_\tv\right]\leq  (c_1^j+2jc_1)e^{-c_2n^{2\epsilon}}\,,
\end{equation}
where $c_1,c_2$ are the constants of \eqref{eq:recursion-1} for $m=h_j$, and
\[t_{h_j}=2^j{h_j}^{c_2(1+n^{3\epsilon})}\,.\]
The base case $j=0$ is given by Corollary~\ref{cor:sqrt-mixing}, and if~\eqref{eq-cor-3sides-induction} holds for some fixed $j\in \llb 0,k-1\rrb$, then an application of \eqref{eq:recursion-1} immediately implies
it for $j+1$. The observations that $j\leq \log n$ and $h_j\leq n$ allow us to choose slightly different constants to obtain the first inequality of Corollary \ref{cor:3-sides}. The triangle inequality and Eq.~\eqref{eq:init-config-comparison} can then be used to boost the bound on $d_1(t)\vee d_0(t)$ to a bound on $\bar d(t)$, so that Markov's inequality implies the second inequality.
\end{proof}

\begin{figure}%[htp!]
  \begin{tikzpicture}
    \node (plot1) at (0,0) {};
    \node (plot2) at (7.5,0.2) {};
    \begin{scope}[shift={(plot1.south west)},x={(plot1.south east)},y={(plot1.north west)}, font=\small]

     \filldraw[draw=blue,thick,opacity=0.755,fill=blue, fill opacity=0.1] (0,0) rectangle (20,10);
     \filldraw[draw=DarkGreen,thick,opacity=0.75,fill=DarkGreen,fill opacity=0.1] (0,4) rectangle (20,14);
     \draw[color=black,thick,opacity=0.5] (-0.1,-0.1) rectangle (20.1,14.1);

      \node[color=blue,font=\Large] at (10,4.25) {$B$};
      \node[color=green!50!black,font=\Large] at (10,9.75) {$A$};
      \node[font=\Large] at (10,-2) {$Q$};

     \draw[color=black,style=dotted,<->] (0,14.5) -- (20,14.5);
     \draw[color=black,style=dotted,<->] (20.5,0) -- (20.5,14);
     \draw[color=black,style=dotted,<->] (-0.5,0) -- (-0.5,10);

      \node[color=black] at (10,15.25) {$n$};
      \node[color=black] at (22.25,7) {$\lfloor\alpha m\rfloor$};
      \node[color=black] at (-1.4,5) {$m$};
    \end{scope}

    \begin{scope}[shift={(plot2.south west)},x={(plot2.south east)},y={(plot2.north west)}, font=\small]

    \newcommand{\hsep}{10}
    \newcommand{\vsep}{8.}

    \filldraw[draw=blue,thick,opacity=0.755,fill=blue, fill opacity=0.2] (0,0+\vsep) rectangle (7,3.5+\vsep);
     \filldraw[draw=DarkGreen,thick,opacity=0.75,fill=DarkGreen,fill opacity=0.2] (0,1.4+\vsep) rectangle (7,4.85+\vsep);
     \node[color=black,font=\tiny] at (3.5,0.7+\vsep) {$1$};
     \node[color=black,font=\tiny] at (3.5,2.4+\vsep) {$1$};
     \node[color=black,font=\tiny] at (3.5,4.2+\vsep) {$1$};

     \draw[color=black,->](3.5,\vsep-0.75) -- (3.5,4.85+0.75);
     \node at (4.25,6.5) {$t$};

      \filldraw[draw=blue,thick,opacity=0.755,fill=blue, fill opacity=0.2] (0,0) rectangle (7,3.5);
     \filldraw[draw=DarkGreen,thick,opacity=0.75,fill=DarkGreen,fill opacity=0.2] (0,1.4) rectangle (7,4.85);
     \node[color=black,font=\tiny] at (3.5,0.7) {$1$};
     \draw[pattern=north east lines, pattern color=DarkGreen, opacity=0.5] (0,1.4) rectangle (7,4.85);
     \node[color=black] at (3.5,3) {$\nu_1$};

     \draw[color=black,->] (7+0.75,2.4) -- (\hsep-0.75,2.4);

      \filldraw[draw=blue,thick,opacity=0.755,fill=blue, fill opacity=0.2] (0+\hsep,0) rectangle (7+\hsep,3.5);
     \filldraw[draw=DarkGreen,thick,opacity=0.75,fill=DarkGreen,fill opacity=0.2] (0+\hsep,1.4) rectangle (7+\hsep,4.85);
     \node[color=black,font=\tiny] at (3.5+\hsep,0.7) {$1$};
     \draw[pattern=north east lines, pattern color=DarkGreen, opacity=0.5] (0+\hsep,3.5) rectangle (7+\hsep,4.85);
     \node[color=black,font=\tiny] at (3.5+\hsep,2.4) {$1$};
     \node[color=black] at (3.5+\hsep,4.15) {$\eta$};

     \draw[color=black,->] (3.5+\hsep,4.85+0.75) -- (3.5+\hsep,\vsep-0.75);
     \node at (4.25+\hsep,6.5) {$t$};

      \filldraw[draw=blue,thick,opacity=0.755,fill=blue, fill opacity=0.2] (0+\hsep,0+\vsep) rectangle (7+\hsep,3.5+\vsep);
     \filldraw[draw=DarkGreen,thick,opacity=0.75,fill=DarkGreen,fill opacity=0.2] (0+\hsep,1.4+\vsep) rectangle (7+\hsep,4.85+\vsep);
      \draw[pattern=north east lines, pattern color=DarkGreen, opacity=0.5] (0+\hsep,3.5+\vsep) rectangle (7+\hsep,4.85+\vsep);
      \draw[pattern=north west lines, pattern color=blue, opacity=0.3] (0+\hsep,0+\vsep) rectangle (7+\hsep,3.5+\vsep);
     \node[color=black] at (3.5+\hsep,1.95+\vsep) {$\nu_2^\eta$};
     \node[color=black] at (3.5+\hsep,4.15+\vsep) {$\eta$};

    \end{scope}

  \end{tikzpicture}
  \caption{Setup for the proof of Eq.~(\ref{eq:recursion-1}) starting from wired initial conditions.}
  \label{fig:setup-recursion-1}
\end{figure}
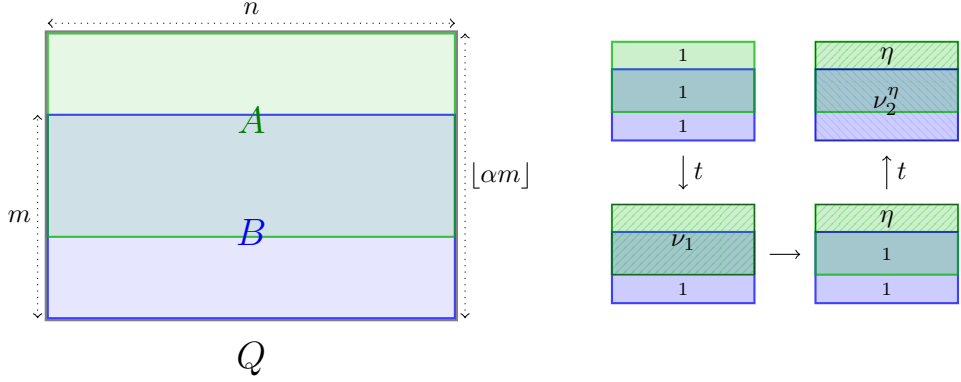

We now prove Proposition~\ref{prop:recursion} from which the above corollaries follow. The proof of Proposition~\ref{prop:all-free} then follows from Corollary~\ref{cor:3-sides} using similar techniques (see~\S\ref{sub:proof-large-q-upper-1})

\begin{proof} [\textbf{\emph{Proof of Eq.~\eqref{eq:recursion-1}}}] Fix any $\bP\in \mathcal D(\Lambda_{n,\lfloor \alpha m \rfloor})$ and observe that the proof is independent of this choice of $\bP$. Consider the quantity, $\bE [\| P^t(\omega_0,\cdot)-\pi_{\Lambda_{n,\lfloor \alpha m\rfloor}}^\xi \|_\tv]$ for $\omega_0=0,1$.

\emph{(i) Wired initial conditions.} Begin with the case when $\omega_0=1$. Let $A,B$ be two copies of $\Lambda_{n,m}$ with $A$ translated upwards by $\lfloor (\alpha-1)m\rfloor$ such that $Q:=A\cup B= \Lambda_{n,\lfloor \alpha m\rfloor}$ and $A\cap B$ is the middle rectangle in $Q_n$ of thickness $\asymp m$.

In order to compensate for the long-range interactions of the FK model, that are not present in the setting of~\cite{MaTo10}, we force a set of boundary edges to be free (in a manner similar to part 2 of the proof of Theorem~3.2 of~\cite{MaTo10}) to ``disconnect" $B$ from $A$. Consider the boundary condition $\xi'$, a modification of $\xi\sim \bP$ on $\Delta=\Delta_\south \cup \Delta_\north$ for,
\begin{align*}\Delta_\south= & \{(x,y)\in \partial_{\east,\west} A:y\leq \lfloor (\alpha-1)m \rfloor+ n^{3\epsilon}\}\,, \\
\Delta_\north= & \{(x,y)\in \partial_{\east,\west} B:y\geq \lfloor m -n^{3\epsilon} \rfloor\}\,,
\end{align*} according to Definition~\ref{def:modification}. By Corollary~\ref{cor:bc-perturbation} it suffices, up to new choice of constants $c_1,c_2$ to show that the FK Glauber dynamics under $\bP^{\Delta}$ on $A\cup B$ satisfies~\eqref{eq:recursion-1}.

Denote by $\tilde P$ the transition kernel of the censored dynamics $(\tilde X_s)_{s\geq 0}$ started from the all wired configuration, only accepting updates in block $A$ up to time $t$, resetting all edge values in $B$ to $1$ at time $t$ then only accepting updates in block $B$ from time $t$ to time $2t$ (observe that as in Lemma~3.4 of~\cite{MaTo10}, by Theorem~\ref{thm:censoring}, resetting all edge values to $1$ only slows mixing). Let $\nu_1$ denote the distribution after time $t$ on $A$ and let $\nu_2^{\eta}$ denote the distribution after time $2t$ of configurations on $B$ given that at time $t$ the configuration on $B$ was set to $1$ and the boundary condition on $B^c$ was $\eta$ (see Fig.~\ref{fig:setup-recursion-1}).

The monotonicity of the FK model along with Theorem~\ref{thm:censoring} yields,
\begin{align} \label{eq:first-wired-censor}
d_1^{\xi'}(2t)\leq \|\tilde P^{2t}(1,\cdot)- \pi_{Q}^{\xi'}\|_{\tv}\,.
\end{align}
Now we aim to show that $\bE^{\Delta}\big[\|\tilde P^{2t}(1,\cdot)- \pi_{Q}^{\xi'}\|_{\tv}\big]\leq \delta'$ where we let $\delta'$ be the second argument in the right hand side of~\eqref{eq:recursion-1}. For $R=A,B$ we denote by $\pi^{\xi',\eta}_R$ the modified stationary distribution with $\eta$ boundary conditions on $Q-R$.

To simplify the notation, throughout the rest of this section, we let $\|\mu-\nu\|_R$ denote $\|\mu\restriction_R-\nu\restriction_R\|_\tv$. Also, for any $R,\xi$ and any random variable $X$, let $\pi_R^\xi(X)$ denote the expectation of $X$ under $\pi_R^\xi$. By the Markov property and the triangle inequality,
\begin{align}
\bE^{\Delta}\left[\|\tilde P^{2t}(1,\cdot)-\pi^{\xi'}_Q\|_\tv\right]\leq ~ & \bE^{\Delta}\left[\| \nu_1-\pi^{\xi',1}_A\|_{B^c}\right]+ \bE^{\Delta}\left[\|\pi_A^{\xi',1}-\pi_Q^{\xi'}\|_{B^c}\right] \nonumber\\
 +&\bE^{\Delta}\left[\|\pi_Q^{\xi'}-\pi_Q^{\xi',0}\|_{B^c}\right]+\bE^{\Delta}\left[\pi_Q^{\xi',0}(\|\nu_2^\eta-\pi_B^{\xi',\eta}\|_\tv)\right]\,.
\label{eq:first-wired}
\end{align}

We begin by bounding the first and fourth terms which are easier, then use the equilibrium estimates of the cluster expansion to bound the third term in Lemma~\ref{lem:new-technique}, analogous to which the second term can be bounded. First observe that with probability $1-\exp(-cn^{3\epsilon})$ for some $c>0$, the boundary conditions on $A$ are sampled from a distribution in $\mathcal D(A)$. The concern is that the wired initial configuration may add connections to $\partial_{\north,\east,\west}A$ via the long-range FK interactions. Such an effect on the boundary conditions on $A$ is impossible if there are no boundary connections from $\partial_{\east,\west} A^c$ to $\partial_{\north,\east,\west} A$. Because of the modification on $\Delta_\south$ such a connection would require a connection of length at least $n^{3\epsilon}$ under $\bP$ and therefore also in the free phase, which has probability less than $\exp(-cn^{3\epsilon})$ (see Eq.~\eqref{eq:exp-decay-1}). If no such connection exists along the boundary, the boundary conditions on $\partial_{\north,\east,\west} A$ are sampled from a measure dominated by $\pi_{\mathbb Z^2}^0$ because the modification of Definition~\ref{def:modification} only removes connections. From now on, paying a cost of $\exp(-cn^{3\epsilon})$, we assume the decreasing event that this is the case.

Then by the assumption that $\mathcal A_{n,m}(t,\delta)$ holds, the first term in~\eqref{eq:first-wired} is smaller than $\delta$. The observation that $\bP^{\Delta}\preceq \bP$ on $\partial_{\east,\west} Q$, implies that for any decreasing $f$ only depending on $\partial_{\north,\east,\west}B$,
\begin{align} \label{eq:avg-avg}
\bE^{\Delta}\left[\pi_{Q}^{\xi',0}(f)\right]\geq \pi_{\mathbb Z^2}^0(\pi_{Q}^{\xi}(f))= \pi_{\mathbb Z^2}^0(f)\,,
\end{align}
so that the $\pi_Q^{\xi',0}$-averaged distribution on boundary conditions on $B$ is in $\mathcal D(B)$, the statement $\mathcal A_{n,m}(t,\delta)$ applies, and the fourth term in~\eqref{eq:first-wired} is also bounded above by $\delta$.

We now turn to the second and third terms of~\eqref{eq:first-wired}, which can be bounded similarly, and thus we only go through the details of the third term:
\begin{lemma} \label{lem:new-technique} There exists $c(q)>0$ such that
\[\bE^{\Delta}\left[\|\pi_Q^{\xi'}-\pi_Q^{\xi',0}\|_{B^c}\right]\lesssim e^{-cn^{2\epsilon}}\,.
\]
\end{lemma}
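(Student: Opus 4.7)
The plan is to exhibit a dual arc $\mathcal I$ inside $B$ that shields $B^c$ from $\partial_\south Q$---the only portion of $\partial Q$ along which $\xi'$ and $\xi',0$ disagree---and then couple the two measures above $\mathcal I$ via the Domain Markov property. Since above $\mathcal I$ the conditional laws depend only on the boundary conditions on $\partial_{\north,\east,\west}Q$, which are identical under $\xi'$ and $\xi',0$, the two restrictions to $B^c$ must coincide on this event.

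To implement this, I would first condition on the $\bP^{\Delta}$-probability-$(1-e^{-cn^{3\epsilon}})$ event that no boundary cluster of $\xi'$ bridges either of the freed arcs $\Delta_\south,\Delta_\north$; by~\eqref{eq:exp-decay-1} and the $n^{3\epsilon}$ length of these arcs the complementary event contributes only $e^{-cn^{3\epsilon}}$ to the total variation and is absorbed into the final error. On this event, the induced boundary on $\partial B$ matches the $(1,0,\Delta)$-template of Proposition~\ref{prop:D-estimate} under $\pi_Q^{\xi'}$ (and its fully-free variant under $\pi_Q^{\xi',0}$), up to stochastic domination on $\partial_{\north,\east,\west}B$. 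Proposition~\ref{prop:D-estimate} then produces, with probability at least $1-e^{-cn^{3\epsilon}}$, a pair of dual crossings emanating from the endpoints of $\Delta_\south$ in opposite vertical halves of $B$ whose union is a dual cap $\mathcal I$ over $\partial_\south Q$, confined by the cigar-shaped region to lie within $\llb 0,n\rrb\times\llb 0,(\alpha-1)m+O(n^{3\epsilon})\rrb$ and hence at distance $\Omega(m)$ below $B^c$. Since the existence of such a cap is a decreasing event and $\pi_Q^{\xi',0}\preceq \pi_Q^{\xi'}$, the same bound holds under $\pi_Q^{\xi',0}$. Revealing the outermost such cap by peeling from $\partial_\south Q$---as in the proof of Theorem~\ref{thm:general-RSW-mixing}---exposes no edges above $\mathcal I$, so by the Domain Markov property both conditional laws above $\mathcal I$ agree, and the grand coupling of~\S\ref{sub:grand-coupling} couples the two measures to agree on $B^c$.

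The principal obstacle is the long-range nature of FK boundary interactions carried by the random $\xi'$: a single open boundary cluster winding from $\partial_\south Q$ around $\mathcal I$ up to $B^c$ would destroy the Domain Markov decoupling. Breaking such bridges is precisely the purpose of the modifications $\Delta_\south,\Delta_\north$, which force any winding cluster to realize a free-phase open boundary connection of length $\geq n^{3\epsilon}$, penalized by~\eqref{eq:exp-decay-1} at rate $e^{-cn^{3\epsilon}}$. Combined with the cluster-expansion control on the geometry of $\mathcal I$ supplied by Proposition~\ref{prop:D-estimate} (via the supporting Propositions~\ref{prop:fk-415}--\ref{prop:fk-416}), which was proved precisely at the scale $\ell\geq c\sqrt{n\log n}\geq n^{1/2+\epsilon}$ relevant here, averaging over $\xi'\sim\bP^{\Delta}$ yields the claimed bound $e^{-cn^{2\epsilon}}$ (the weaker $n^{2\epsilon}$ rate absorbs the polynomial overhead of summing all the error contributions).
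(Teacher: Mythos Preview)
Your overall strategy---shield $B^c$ by a dual arc and couple via Domain Markov---matches the paper's, but the execution has two genuine gaps.

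\textbf{Wrong estimate for the cap.} Proposition~\ref{prop:D-estimate} concerns a very specific $(1,0,\Delta)$ geometry: wired along the bottom except for a single free gap $\Delta$ in its \emph{center}, wired on the lower sides, free above, with two interfaces running from the side points $w_1,w_2$ down to the gap endpoints $z_1,z_2$. That is not the picture in $Q$: here $\partial_\south Q$ is entirely ``wired'', $\partial_{\north,\east,\west}Q$ is entirely ``free'', and $\Delta_\south$ consists of two short segments on the \emph{sides}, not a gap in the bottom. There is a single horizontal interface from corner to corner, and the correct tool for bounding its height is Proposition~\ref{prop:surface-tension} (the surface-tension estimate), which gives the paper's event $\Gamma_2$ with the rate $e^{-cn^{2\epsilon}}$. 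Proposition~\ref{prop:D-estimate} does enter the proof, but only later and in a rotated geometry (see below).

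\textbf{The missing $\Gamma_3$ step.} Your Domain Markov claim---that above the cap the conditional law depends only on $\xi'\restriction_{\partial_{\north,\east,\west}Q}$---is where the argument breaks. Under the monotone coupling the revealed configuration below the cap \emph{differs} between $\pi_Q^{\xi'}$ and $\pi_Q^{\xi',0}$, and those differences can be transmitted upward not via a $\xi'$-bridge (which $\Delta_\south,\Delta_\north$ do block) but via an \emph{interior} open path: a vertex $x\in\partial^-_{\east,\west}Q$ can be open-connected through the bulk to $\partial_\north B$, and this connection need not cross any freed boundary segment. The paper isolates this as the event $\Gamma_3$ and bounds $\pi_Q^{\xi',1}(\Gamma_3^c\mid\Gamma_2)$ in Claim~\ref{lem:new-bound} by first replacing the sides below $\Delta_\north$ by fully wired, then applying Proposition~\ref{prop:D-estimate} (after a $\tfrac\pi2$-rotation, with $\Delta_\north$ playing the role of the free gap) to confine the resulting interfaces to the left and right halves. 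This second, nontrivial interface estimate is absent from your proposal, and without it the coupling above the cap cannot be justified.
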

\begin{proof}
We bound the total variation distance by proving that under the grand coupling of the two distributions on $B^c$, they agree with probability $1=e^{-cn^{2\epsilon}}$. Let $\partial^{\pm} (Q)$ denote the two connected components of $\partial Q -\Delta_\north$ above and below $\Delta_\north$ respectively. We break up the expectation into an average over $\Gamma_1$, the set of $\xi'$ in which there does not exist a pair $(x,y)\in\partial^+Q\times \partial^- Q$ such that $x\stackrel{\xi'}\longleftrightarrow y$ (i.e.\ they are in the same boundary component), and $\Gamma_1^c$. By Eq.~\eqref{eq:exp-decay-1} of Theorem~\ref{thm:DC-S-T-main} and a union bound over pairs of boundary vertices, there exists a constant $c'(q)>0$ such that
\begin{align}
\bP^{\Delta}(\Gamma_1^c)\leq 16n^2e^{-cn^{3\epsilon}}\lesssim \exp({-c'n^{3\epsilon}})\,,
\end{align}

\begin{figure}%[htp!]
  \begin{tikzpicture}
    \node (plot1) at (0,0) {};

    \begin{scope}[shift={(plot1.south west)},x={(plot1.south east)},y={(plot1.north west)}, font=\small]

      \draw[color=DarkGreen] (0,0) rectangle (30,15);
      \draw[style=thick] (7.5,0.075) rectangle (22.5, 7.575);
      \draw[color=gray!70!black,style=dashed] (0,0) -- (0,-7.5) -- (30, -7.5) -- (30,0);%rectangle (30, -0.075);

      \draw[color=DarkGreen,style=dotted,<->] (15,7.52) -- (15,14.85) ;
      \draw[color=DarkGreen,style=dotted,<->] (22.65,3.75) -- (29.85,3.75) ;
      \draw[color=DarkGreen,style=dotted,<->] (0.15,3.75) -- (7.35,3.75) ;

      \draw[color=DarkGreen,decoration={bumps, segment length=0.085in, amplitude=0.02in}, decorate, very thin] (29.96,-0.03) -- (0, -0.03);

     %\draw[draw=black,dashed] (0,7) rectangle (20,14);

      \node[color=DarkGreen] at (5.25,4.5) {$n$};
      \node[color=DarkGreen] at (24.75,4.5) {$n$};
      \node[color=DarkGreen] at (15.85,12.55) {$n$};
      \node[font=\Large] at (15,3.75) {$Q$};
      \node[font=\Large,color=DarkGreen] at (4,13.5) {$E_n(Q)$};
      \node[font=\Large,color=gray!70!black] at (27.2,-6.15) {$E'_n(Q)$};

       \node at (15.02,1.1) {
	\includegraphics[width=120pt]{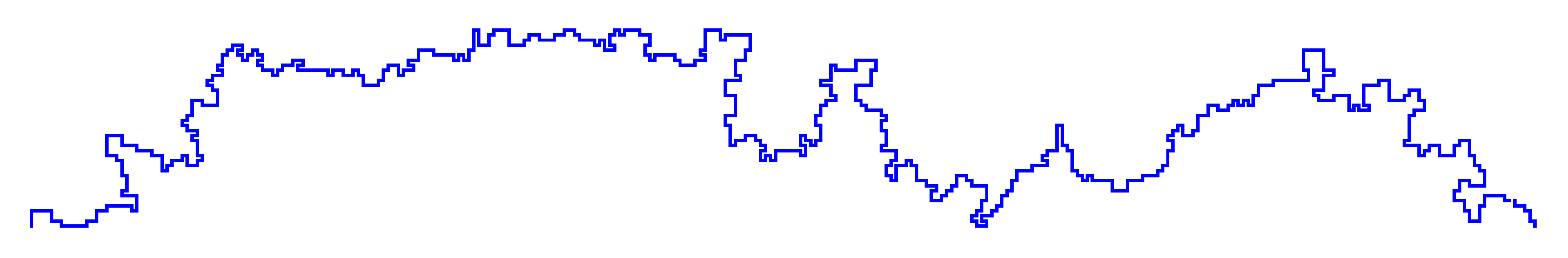}};

       \node at (15,3.5) {
	\includegraphics[width=210pt]{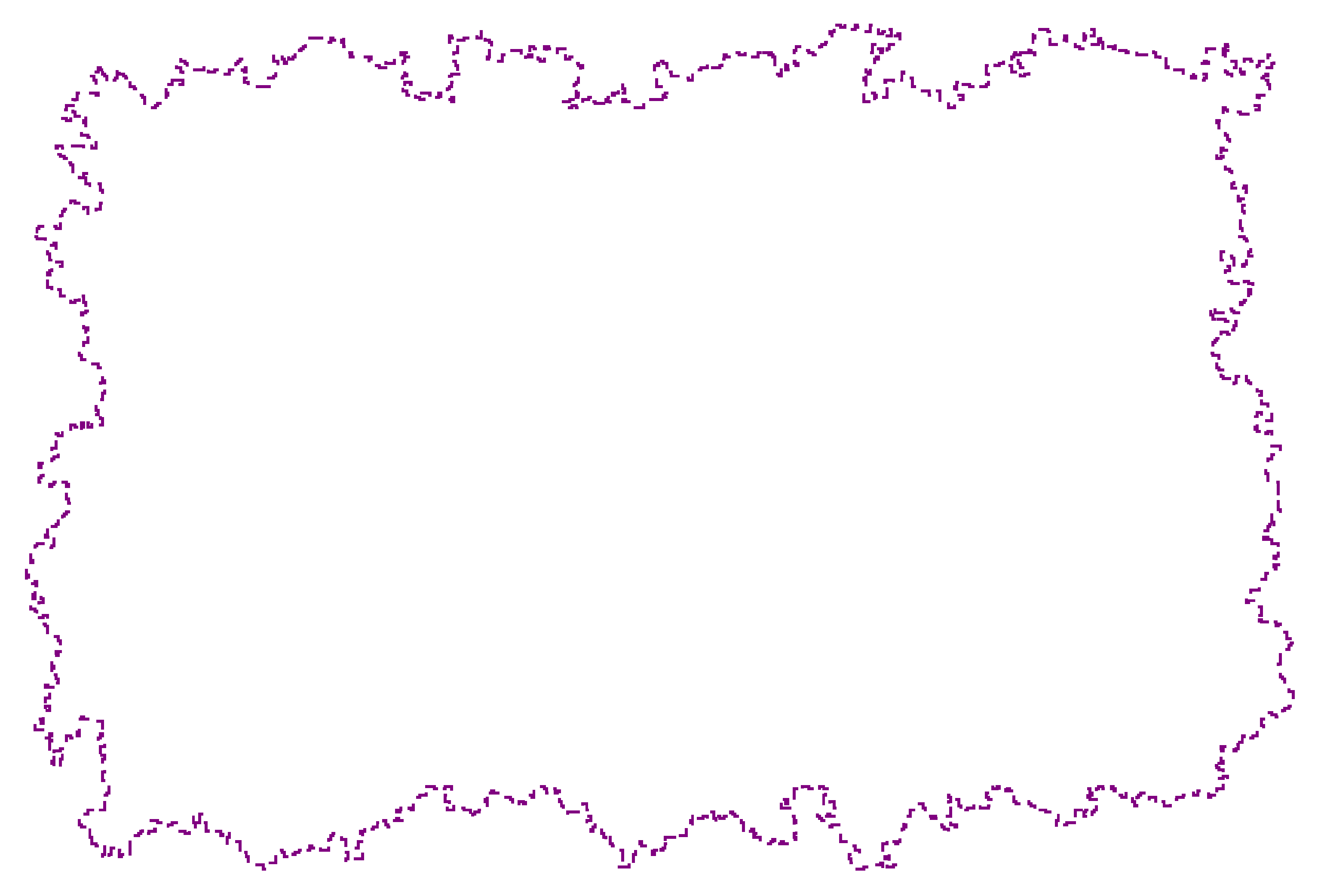}};

    \end{scope}
  \end{tikzpicture}
  \caption{Bounding $\Gamma_2$: if a dual-open circuit (purple) exists under $\pi_{\mathbb Z^2}^0$ in $E'_n(Q)-Q$ then the boundary conditions on $\partial_{\north,\east,\west} Q$ are dominated by those under free on $E'_n(Q)$. The wired boundary conditions on $\partial_\south (E_n(Q))$ then also dominate the ``wired" boundary conditions on $\partial_\south Q$ allowing us to dominate the interface (blue) in $Q$ by that in $E_n(Q)$.}
  \label{fig:gamma-2}
\end{figure}

For all such $\xi'$, we use the worst bound of $1$ on the total variation distance. Suppose now that $\Gamma_1$ holds and observe that this is a decreasing event so $\bP^{\Delta}(\cdot \mid \Gamma_1)\preceq \bP^{\Delta}$. Let $\Gamma_2$ denote the decreasing event that the interface (bottom-most horizontal dual crossing) of $Q$ is contained entirely below $\partial_\south (B^c)$. Let $\Gamma_3$ be the decreasing event that there does not exist any vertex $x\in \partial_\east^-Q$ such that $x\longleftrightarrow \partial_\north B$, and there does not exist any $y\in \partial_\west^- Q$ such that $y\longleftrightarrow \partial_\north B$.

By monotonicity and the domain Markov property, for $\xi'\in\Gamma_1$, if $\Gamma_2\cap \Gamma_3$ holds, it is the case that the boundary conditions on $\partial_{\north,\east,\west} B^c$ will not have been affected by the updates on $A\cap B$ as the interface and all its long-range interactions with $\partial Q$ would be confined to $A\cap B$. Then one could reveal all boundary components of $\partial_{\east,\south,\west}B$ so that they are all confined to $B$ and by monotonicity under the grand coupling the two distributions would be coupled on $B^c$. As a result,
\begin{align}
\bE^{\Delta}\left[\|\pi_Q^{\xi'}-\pi_Q^{\xi',0}\|_{B^c}\given \Gamma_1\right]\leq \bE^{\Delta} \left[\pi_Q^{\xi',1}(\Gamma_2^c\cup\Gamma_3^c)\given \Gamma_1\right]\,.
\end{align}
We bound the two probabilities separately and take a union bound. To bound the probability of $\Gamma_2^c$, consider the enlarged rectangle,
\begin{align}\label{eq:enlargement}
E_n(Q)=\llb -n,2n\rrb \times \llb 0,n+\alpha m\rrb \supset Q\,,
\end{align} with $(0,1)$ boundary conditions denoting wired on $\partial _{\south}E_n(Q)$ and free elsewhere. By Definition~\ref{def:boundary-condition} we sample $\partial_{\north,\east,\west} Q$ separately and then $\partial_\south Q$. First observe that by Eq.~\eqref{eq:exp-decay-1}, with $\pi_{\mathbb Z^2}^0$-probability $1-e^{-cn}$, there is a dual circuit between $Q$ and its enlargement by $n$ in all four sides $E_n '(Q)$. In that case, the boundary conditions on $Q$ are dominated by those with free on $\partial E'_n(Q)$ (see Figure~\ref{fig:gamma-2}). We can subsequently dominate the boundary conditions on $\partial _\south Q$ by making them all wired and extending them all the way across $E_n(Q)$ to obtain that there exists $c>0$ such that
\[\bE^{\Delta}\left[\pi_Q^{\xi',1}(\Gamma_2^c)\given\Gamma_1\right]\leq \pi^{0,1}_{E_n(Q)}(\Gamma_2^c)+e^{-cn}\,.
\]

\begin{figure}%[htp!]
  \hspace{-0.15in}

  \begin{tikzpicture}
    \node (plot) at (0,0)
    {\hspace{-0.14in}\vspace{-0.3in}\includegraphics[width=0.82\textwidth]{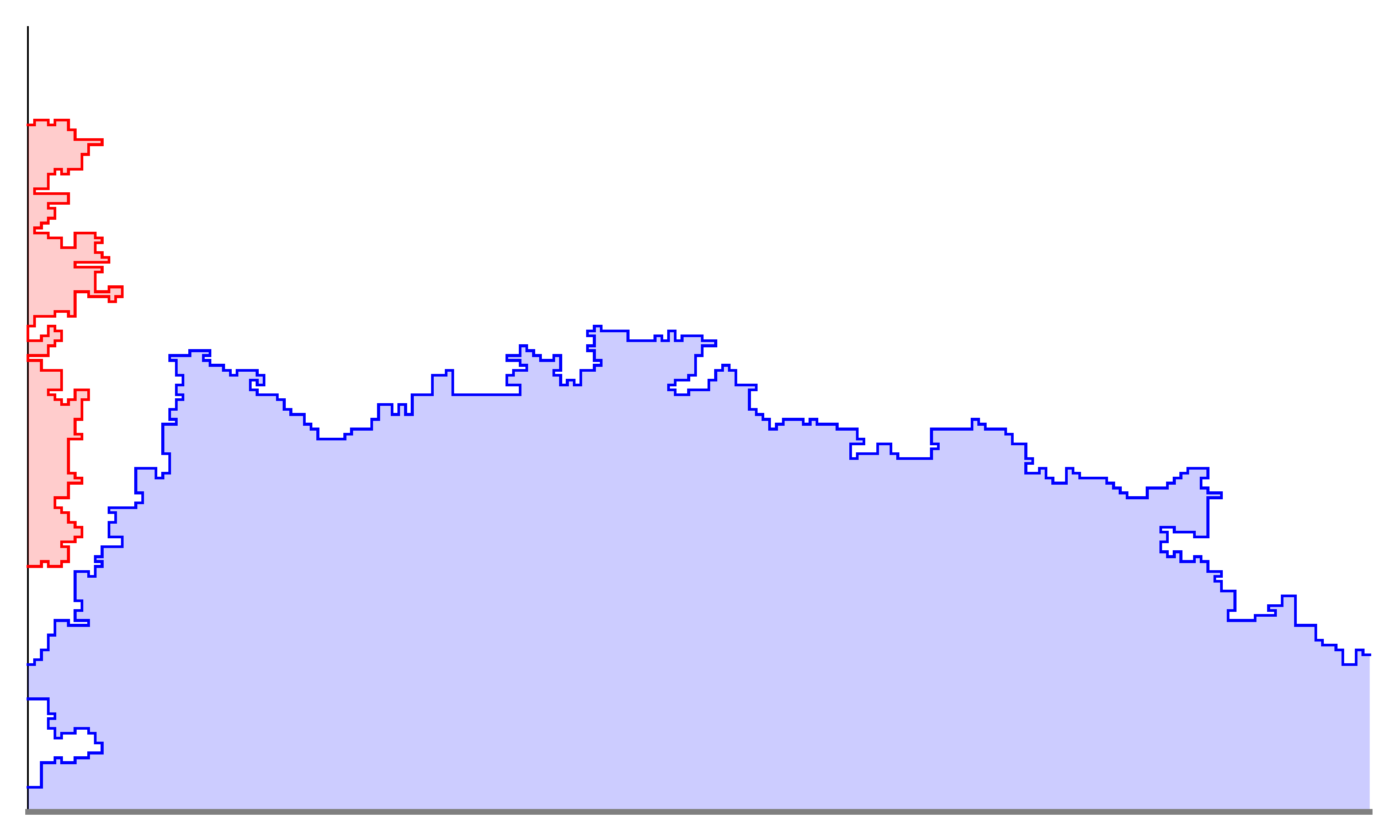}};

    \begin{scope}[shift={(plot.south west)},x={(plot.south
        east)},y={(plot.north west)}, font=\small]

      \draw[color=gray] (0.00,0.93) -- (.97,0.93);
      \draw[color=purple, thick] (0.0012,0.185) arc (160:200:.19) ;
      \draw[color=purple, thick] (0.0012,0.35) arc (160:200:.25) ;
      \draw[color=purple, thick] (0.0012,0.33) arc (160:200:0.12) ;
      \draw[color=purple, thick] (0.0012,0.25) arc (160:200:0.06) ;
      \draw[color=purple, thick] (0.0012,0.182) arc (-40:40:0.02) ;
      \draw[color=DarkGreen, thick] (0.001,0.62) arc (160:200:0.07) ;
      \draw[color=orange, thick] (0.001,0.64) arc (160:200:0.18) ;

      \draw[style=dashed] (-0.03,0.87) -- (0.00,0.87) ;
      \draw[style=dashed] (-0.03,0.93) -- (0.00,0.93) ;

     \node[color=black] at (0.5,0) {``$\boldsymbol 1$"};
      \node[color=gray] at (-.02,0.4) {``$0$"};
      \node[color=gray] at (-0.02,0.90) {$0$};
      \node[color=gray] at (-0.02,0.97) {``$0$"};
      \node[color=black] at (0.03,0.90) {$\Delta_\north$};
      \node[font=\Large] at (0.5,0.8) {$B$};

    \end{scope}
  \end{tikzpicture}
  \caption{Modification to disconnect long-range FK interactions. The event $\Gamma_2$ is the event that the blue cluster does not climb too high. Under $\Gamma_2$, there is a horizontal dual-crossing of $B$ immediately adjacent the blue interface, but the boundary of the red component may have been perturbed by connections in the blue shaded region. The event $\Gamma_3$ is the event that the red component then does not then climb above $\Delta$.}
  \label{fig:recursion-1nd-step}
\end{figure}

By Proposition~\ref{prop:surface-tension}, we deduce that $\pi_{E_n(Q)}^{0,1}(\Gamma_2^c)\lesssim \exp(-cn^{2\epsilon})$ for some $c>0$. We now bound the probability of $\Gamma_3^c$.

\begin{claim} \label{lem:new-bound} There exists  $c=c(q)>0$ such that for every $\xi'\in\Gamma_1$,
\[\pi_Q^{\xi',1}(\Gamma_3^c\mid \Gamma_2)\lesssim e^{-cn^{3\epsilon}}+e^{-cn^{\frac 12 +\epsilon}}+e^{-cn}\,.
\]
\end{claim}

\begin{proof}
Under $\Gamma_2$, by monotonicity, we can only worsen our bound on the probability of $\Gamma_3^c$ by replacing the boundary conditions on $Q$ by wired on $\partial^- Q -\partial_\south Q$, free on $\partial_\south Q$, and $\xi'$ elsewhere.

Let $\bar Q=\llb 0,n\rrb \times \llb 0, 2m \rrb\supset Q$ with boundary conditions free on $\partial_{\south,\north} \bar Q \cup \Delta_\north$, and wired elsewhere. By the exponential decay of correlations in the free phase, with $\pi_{\bar Q}$-probability $1-e^{-cm}$, the measure this induces on $Q$ dominates the boundary conditions under $\Gamma_2$ on $Q$.

Controlling the probability of $\Gamma_3^c$ can now be expressed in a manner similar to the equilibrium bound, Proposition~\ref{prop:D-estimate}.
As is standard in such problems, (see, e.g., the appendix of~\cite{MaTo10}), we can up to an error of $e^{-cn}$ separate the left and right interfaces (see Proposition~\ref{prop:fk-416} whence the probability that they interact is a large deviation of order $n$), and just consider $\bar Q$ with free boundary conditions now on all of $\partial_\east \bar Q$ also.

Then extend the northern boundary of $\bar Q$ to make $\bar Q$ symmetric about $\Delta_\north$ and call the new domain $Q'$. We can, using monotonicity, let its boundary conditions $(1,0,\Delta_\north)$ be free on $\partial Q'\cap (\{x\geq m^{\frac 12+\epsilon}\}\cup \Delta_\north)$ and wired elsewhere.

At this point, we apply Proposition~\ref{prop:D-estimate} with $\ell=n$ (up to a $\frac{\pi}2$-rotation and a rescaling of $m$ to $n/2$) to obtain the desired bound: if in the new domain, the boundary points are denoted by $w_1,w_2\in \partial_\north Q'\times \partial_\south Q'$ and $z_1,z_2\in \partial_\east Q'$, and $\{C_i\}_{i\in\north,\south}$ are the north or south halves of $Q'$ respectively, Proposition~\ref{prop:D-estimate} implies that there exists $c>0$ such that for large enough $n$,
\[\pi_{Q'}^{1,0,\Delta_\north}\bigg(\bigcap_{i=\north,\south}\{w_i\stackrel {C_i}\longleftrightarrow z_i\}\bigg)\leq e^{-cn^{3\epsilon}}\,.
\]

Putting everything together, we conclude that if $\Gamma'$ is the event that under $\pi_{Q'}^{1,0,\Delta_\north}$, the two interfaces are contained in bottom and top halves of $Q'$ respectively, then there exists $c=c(q)>0$ such that, for large enough $n$,
\[\pi_{\bar Q}^{1,0,\Delta_\north} (\Gamma'^c)\leq 2e^{-cn^{3\epsilon}}+2e^{-cm}+2e^{-cn}\,,
\]
Monotonicity and $n^{\frac 12+\epsilon}\leq m\leq n$ imply the bound on $\bE^{\Delta}[\pi_Q^{\xi',1}(\Gamma_3^c\mid \Gamma_2) \mid \Gamma_1]$.
\end{proof}

By union bounding over the errors that arise from each of the $\Gamma_i$ not occurring, and otherwise conditioning on their occurrence, we obtain \[\bE^{\Delta}\left[\|\pi_Q^{\xi'}-\pi_Q^{\xi',0}\|_{B^c}\right]\lesssim e^{-cn^{2\epsilon}}\,,
\]
as desired.
\end{proof}

The corresponding bound on the second term of~\eqref{eq:first-wired-censor} is the same up to changes of scale corresponding to working with distributions on configurations on $A$, not $Q$. Because of the modification on $\Delta_\south$, as remarked earlier, with probability $1-\exp(-cn^{3\epsilon})$, the boundary conditions on $A$ are in $\mathcal D(A)$. This event is a decreasing event so it only increases the probability of $\Gamma_i$ for $i=1,2,3$. Because $\alpha>1$, the middle rectangle is at least order $n^{\frac 12+\epsilon}$ so the bound on the interface touching $\partial_\south (B^c)$ also still holds. Combined with~\eqref{eq:first-wired-censor} and the bounds on the other terms in~\eqref{eq:first-wired}, we conclude that for some $c_1,c_2>0$ and every large enough $n$,
\[\bE^{\Delta}\left[d_1^{\xi'}(2t)\right]\leq 2\delta+2c_1 e^{-c_2n^{2\epsilon}}\,.
\]

\emph{(ii) Free initial configuration.} Consider the dynamics started from $\omega_0=0$. Let $\tilde P$ denote the transition kernel of the censored dynamics that only updates edges in $B$ until time $t$ at which point all edges in $A$ are reset to $0$ and the dynamics subsequently only updates edges in $A$ until time $2t$. Let $\nu_2^\eta$ denote the distribution obtained between times $t$ and $2t$ given boundary conditions $\eta$ on $A^c$ and initial configuration $0$ on $A$. Let
\[\Delta=\{(x,y)\in \partial_{\east,\west} (A^c): y\geq \lfloor (\alpha -1)m -n^{3\epsilon}\rfloor\}\,,
\]
so that again by Theorem \ref{thm:censoring} and Corollary~\ref{cor:bc-perturbation} it suffices to prove the desired implication under $\bE^\Delta$ for the $\tilde P$ dynamics.

The Markov property and the triangle inequality together imply,
\begin{align}
\bE^{\Delta}\left[\|\tilde P^{2t}(0,\cdot)-\pi_Q^{\xi'}\|_\tv\right]\leq ~& \bE^{\Delta}\left[\|\tilde P^{t}(0,\cdot)-\pi^{\xi',0}_B\|_{A^c}\right]+\bE^{\Delta}\left[\|\pi_B^{\xi',0}-\pi_Q^{\xi'}\|_{A^c}\right] \nonumber \\
 + &\bE^{\Delta}\left[\pi_Q^{\xi'}(\|\nu_2^{\eta}-\pi^{\xi',\eta}_A\|_\tv)\right]\,. \label{eq:first-free}
\end{align}
We can bound the first term by $\delta$ by assumption and the observation that the free initial configuration does not change the boundary conditions on $B$. The second term can be bounded using the same approach as the proof of Lemma~\ref{lem:new-technique}, where now $A^c$ is shorter than the rectangle in Lemma~\ref{lem:new-technique} but still $\gtrsim n^{\frac 12 +\epsilon}$. Via an application of Lemma~\ref{lem:new-technique}, up to an error of $\exp(-cn^{2\epsilon})$ for $c>0$ fixed, the open component of $\partial_\south B$ and its perturbations to the boundary of $A\cup B$ do not reach $\partial_\north A^c$ so the grand coupling couples the two distributions on $A^c$.

In that case, the measure on $\partial A$ is dominated by $\pi_{\mathbb Z^2}^0$ and therefore by~\eqref{eq:exp-decay-1}, up to an error of $e^{-cm}$ the entirety of $B^c$ is disconnected from $A^c$. By monotonicity and the fact that $m\geq n^{\frac 12+\epsilon}$, we obtain that there exists $c>0$ such that
\[\bE^{\Delta}\left[\|\pi_B^{\xi',0}-\pi_Q^{\xi'}\|_{A^c}\right]\lesssim e^{-cn^{\frac 12+\epsilon}}+e^{-cn^{2\epsilon}}\,.
\]

It remains to bound the third term in~\eqref{eq:first-free} following the approach of~\cite{MaTo10}.
\begin{lemma} \label{lem:boxes}
There exist constants $c,c'>0$ such that
\[\pi^{\xi'}(\|\nu_2^{\eta}-\pi^{\xi',\eta}_A\|_\tv)\lesssim e^{-cn^{2\epsilon}}+e^{-c'\ell}\,.
\]
\end{lemma}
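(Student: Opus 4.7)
The plan is to reduce the bound to an application of the inductive hypothesis $\cA_{n,m}(t,\delta)$ on the rectangle $A$, by showing that the $\pi_Q^{\xi'}$-averaged boundary condition $\eta$ on $\partial A$ lies, modulo a high-probability event and an affordable short-range modification, in the class $\cD(A)$. Split $\partial A=\partial^{\text{ext}}\cup\partial_\south A$, where $\partial^{\text{ext}}:=\partial_{\north,\east,\west}A\subset\partial Q$ and $\partial_\south A$ is the internal horizontal slice of $Q$ at height $\lfloor(\alpha-1)m\rfloor$. On $\partial^{\text{ext}}$, $\eta$ is simply the restriction of $\xi'$, which by construction of $\bP^{\Delta}$ is dominated by $\pi_{\Z^2}^0$: the ``free'' side of $\cD(A)$ is automatic. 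The whole task is to certify the ``wired'' side on $\partial_\south A$.

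To do so, work inside the strip $B\setminus A=\llb 0,n\rrb\times\llb 0,\ell\rrb$ of height $\ell:=\lfloor(\alpha-1)m\rfloor\asymp m \geq n^{1/2+\epsilon}$. Under $\pi_Q^{\xi'}$ this strip carries a wired bottom boundary (inherited from $\bP\in\cD$) and free-type sides. Adapting the comparison argument in the proof of Lemma~\ref{lem:new-technique} — extending $B\setminus A$ to a slightly larger rectangle with $(1,0)$-boundary and using the exponential decay of connection probabilities under $\pi_{\Z^2}^0$ to dominate the side conditions — one invokes Proposition~\ref{prop:surface-tension}: with $\pi_Q^{\xi'}$-probability at least $1-e^{-c\ell^2/n}\geq 1-e^{-c'\ell}$ (using $\ell\gtrsim\sqrt n$) the bottom-most horizontal open crossing of the extended rectangle lies strictly below $\partial_\south A$. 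On this event, reveal the top-most horizontal open crossing of $B\setminus A$: by the Domain Markov property and monotonicity in boundary conditions, the induced law on $\partial_\south A$ stochastically dominates $\pi_{\Z^2}^1$ restricted there. To then turn this averaged wired+free configuration into a genuine $\cD(A)$-boundary, and to block long-range FK bridges coming from $\xi'$ on $\partial_{\east,\west}Q$ past the interface into $\partial_\south A$, free-modify $\eta$ on two segments of length $n^{3\epsilon}$ at the corners between $\partial_\south A$ and $\partial_{\east,\west}A$ as in Definition~\ref{def:modification}. By Corollary~\ref{cor:bc-perturbation} this costs a factor $e^{cn^{3\epsilon}}$ in time and an additive $e^{-cn^{3\epsilon}}$ in probability, both absorbable in the target bound.

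With the modification in place, the $\pi_Q^{\xi'}$-averaged boundary on $\partial A$ lies in $\cD(A)$, so the inductive hypothesis $\cA_{n,m}(t,\delta)$ applied to the inner expectation, combined with a union bound over the bad event that the interface reaches $\partial_\south A$, yields the claimed $\pi^{\xi'}(\|\nu_2^\eta-\pi_A^{\xi',\eta}\|_\tv)\lesssim e^{-cn^{2\epsilon}}+e^{-c'\ell}$ (the $\delta$ contribution from $\cA_{n,m}(t,\delta)$ being tracked through \eqref{eq:recursion-1} rather than appearing explicitly here). The main obstacle is precisely the stochastic domination claim on $\partial_\south A$: because FK boundary conditions encode long-range cluster identifications, a cluster of $\xi'$ on $\partial_{\east,\west}Q$ can \emph{a priori} connect to $\partial_\south A$ by a long-range bridge over the wired/free interface (cf.\ Figure~\ref{fig:long-range-bc}), which would invalidate the monotone comparison with $\pi_{\Z^2}^1$. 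The resolution is that any such jump must traverse either the sub-$\sqrt n$ interface zone, controlled by Proposition~\ref{prop:surface-tension}, or the short corner segments, killed by the $n^{3\epsilon}$ free-modification together with the exponential decay of free-phase connections in Eq.~\eqref{eq:exp-decay-1}; these two mechanisms together close the argument.
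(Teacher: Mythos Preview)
Your proposal contains a fundamental error in the equilibrium picture on $\partial_\south A$. With $\bP\in\cD(Q)$ the boundary $\xi'$ is \emph{wired}-like on $\partial_\south Q$ and \emph{free}-like on $\partial_{\north,\east,\west}Q$; Proposition~\ref{prop:surface-tension} (applied to the enlargement $E_n(Q)$ with $(0,1)$ boundary) says the wired/free interface stays within height $O(n^{1/2+\epsilon})$ of the \emph{bottom} of $Q$. Since $\partial_\south A$ sits at height $\lfloor(\alpha-1)m\rfloor\gtrsim n^{1/2+\epsilon}$, it lies \emph{above} the interface, i.e.\ in the \emph{free} phase. Hence the $\pi_Q^{\xi'}$-induced boundary on $\partial_\south A$ is dominated by $\pi_{\Z^2}^0$, not the reverse; it certainly does not dominate $\pi_{\Z^2}^1$. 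Your ``reveal the top-most horizontal open crossing of $B\setminus A$'' step does not repair this: exploring from the top down and stopping at the highest open crossing reveals everything above it, so the law at $\partial_\south A$ is not a wired-below DLR sample but rather carries the negative conditioning of ``no crossing above''. The upshot is that the averaged boundary on $A$ is free-like on \emph{all four} sides, which is not in $\cD(A)$, and the inductive hypothesis $\cA_{n,m}(t,\delta)$ is simply unavailable (the all-free case is only treated later, in Proposition~\ref{prop:all-free}, using Corollary~\ref{cor:3-sides}).

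You have also misread the parameter $\ell$: in the lemma it is \emph{not} the height $\lfloor(\alpha-1)m\rfloor$ of the strip $B\setminus A$, but a free scale to be optimized (it is afterwards set to $\lceil c^{-1}\log t\rceil$, which is what produces the $n^2 t^{-c_2}$ term in~\eqref{eq:recursion-1}). The paper's proof proceeds by an entirely different, \emph{local} mechanism: bound $\|\nu_2^\eta-\pi_A^{\xi',\eta}\|_\tv$ by $\sum_{e\in E(A)}(\nu_2^\eta(e\notin\omega)-\pi_A^{\xi',\eta}(e\notin\omega))$, and for each edge $e$ compare the chain from $0$ on $A$ with the chain censored to the box $K_\ell=(e+\llb-\ell,\ell\rrb^2)\cap A$ carrying free boundary on $\partial K_\ell\cap A^o$. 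The crude bound of Lemma~\ref{lem:gap-shorter-side} mixes $K_\ell$ in time $e^{c\ell}$, giving the $e^{-t e^{-c\ell}}$ contribution; the remaining equilibrium gap $\pi_{K_\ell}^{\xi',\eta}(e\notin\omega)-\pi_A^{\xi',\eta}(e\notin\omega)$ is controlled by the probability that $e\longleftrightarrow\partial K_\ell\cap A^o$, which is $\lesssim e^{-c\ell}+e^{-cn^{2\epsilon}}$ precisely because the interface stays below $\partial_\south A$ and hence $K_\ell$ sits in the free phase where connections decay exponentially (Eq.~\eqref{eq:exp-decay-1}). No inductive hypothesis on $A$ is used at all.
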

\begin{proof}  Using the bound on total variation by the probability of disagreement under a maximal coupling, together with monotonicity and a union bound, write
\[\pi^{\xi'}(\|\nu_2^{\eta}-\pi^{\xi',\eta}_A\|_\tv)\leq \sum_{e\in E(A)} \pi_Q^{\xi'}\left(\nu_2^{\eta}(e\notin\omega)-\pi_Q^{\xi'}(e\notin\omega)\right)\,.
\]
For any $e\in E(A)$, consider $K_\ell=\{e+\llb -\ell,\ell \rrb^2\}\cap A$. Denote by $\nu_{2,\ell}^\eta$ the distribution obtained by the dynamics in $K_\ell$ with boundary conditions given by $(\xi',\eta)$ on $\partial K_\ell\cap \partial A$ and $0$ elsewhere. Then via a very rough mixing time estimate akin to Theorem~\ref{thm:canonical-paths} or Lemma~\ref{lem:gap-shorter-side}, there exists $c>0$ such that
\[\nu_2^{\eta}(e\notin\omega)-\pi_Q^{\xi'}(e\notin\omega)\leq e^{-te^{-c\ell}} + \left(\pi_{K_\ell}^{\xi',\eta}(e\notin\omega)-\pi_Q^{\xi'}(e\notin\omega)\right)\,.
\]
Absorbing a $2n^2$ for the maximum number of edges in $A$, it suffices to prove there exist constants $c,c'>0$ such that for every $e\in E(A)$,
\[\bE^{\Delta}\left[\pi_Q^{\xi'}(\pi_{K_\ell}^{\xi',\eta}(e\notin\omega)-\pi_A^{\xi',\eta}(e\notin\omega))\right]\lesssim e^{-c'\ell}+e^{-cn^{2\epsilon}}\,.
\]
For any fixed $e\in E(A)$ let $\Gamma^c:=\{e\stackrel{K_\ell}\longleftrightarrow \partial K_\ell\cap A^o\}$. By the FKG inequality,
\[\pi^{\xi',\eta}_A(e\notin\omega\mid \Gamma)\geq \pi_{K_\ell}^{\xi',\eta}(e\notin\omega)\,,\]
so it suffices to check that
\[\bE^{\Delta}\left[\pi_{Q}^{\xi'}(\pi_A^{\xi',\eta}(\Gamma^c))\right]=\bE^{\Delta}\left[\pi_{Q}^{\xi'}(\Gamma^c)\right]\lesssim e^{-c\ell}+e^{-cn^{2\epsilon}}\,.
\]

Proving this is very similar to proving the bound on the probability of $\Gamma_2^c$ in the proof of Lemma~\ref{lem:new-technique} as shown in Figure~\ref{fig:gamma-2}. For some $c>0$, up to an error of $e^{-cn}$, we replace $\bE^{\Delta}[\pi_Q^{\xi'}(\Gamma^c)]$ with $\pi^{0,1}_{E_n(Q)}(\Gamma^c)$ where $E_n(Q)$ is the enlarged rectangle defined in~\eqref{eq:enlargement}. Then by Proposition~\ref{prop:surface-tension}, for some other $c>0$ with probability $1-\exp(-cn^{2\epsilon})$, the bottom-most horizontal crossing stays below $\partial_\south A$ at which point it suffices to consider $\pi^0_{E_n(Q)}(\Gamma^c)$ because in that case, $\partial_\south (E_n(Q))$ would be completely disconnected from $K_\ell$. But Eq.~\eqref{eq:exp-decay-1} and monotonicity imply that there exists $c>0$ such that
\[\pi^{0}_{E_n(Q)}(\Gamma^c)\lesssim e^{-c\ell}\,,
\]
at which point a union bound over the two errors concludes the proof.
\end{proof}
Choosing $\ell =\lceil c^{-1}\log t\rceil$ in Lemma~\ref{lem:boxes} and union bounding over all $e\in E(A)$ yields that there exists a new $c>0$ such that for sufficiently large $n$,
\[\bE^{\Delta} \left[\pi^{\xi'}(\|\nu_2^{\eta}-\pi_A^{\xi',\eta}\|_\tv)\right]\lesssim t^{-c}\,.
\]
Combined with the bounds on the first and second terms of~\eqref{eq:first-free} and Theorem \ref{thm:censoring} we see that there exist $c_1,c_2>0$ such that for large enough $n$,
\[ \bE^{\Delta}\left[\|\tilde P^{2t}(0,\cdot)-\pi^{\xi'}_Q\|_\tv\right]\leq 2\delta+c_1e^{-c_2n^{2\epsilon}}+2n^2t^{-c_2}\,,
\]
which combined with part (i) of the proof, allows us to conclude the proof of~\eqref{eq:recursion-1}.
\end{proof}

We now prove the second implication to complete the proof of Proposition~\ref{prop:recursion}.

\begin{proof}  [\textbf{\emph{Proof of Eq.~\eqref{eq:recursion-2}}}] Fix any $\bP\in \mathcal D(\Lambda_{2n,m})$ and observe that the proof is independent of this choice of $\bP$. Consider the quantity, $\bE [\| P^t(\omega_0,\cdot)-\pi_{\Lambda_{2n,m}}^\xi \|_\tv]$ for $\omega_0=0,1$.

\emph{(i) Wired initial configuration.} Divide $Q:=\Lambda_{2n,m}$ into,
\begin{align*}
A= & \Lambda_{n,m}+(\lfloor n/2\rfloor,0)\,, \qquad
B= \Lambda_{n,m}\cup \{\Lambda_{n,m}+(n,0)\}\,, \qquad
C= \{n\}\times \llb 0,m\rrb\,.
\end{align*}
Let $B_\east,B_\west$ be the two connected components of $B$ so that the dynamics on $B$ does not update any of the edges of $C\subset \partial B$. Let $\Delta=\Delta_\south \cup \Delta_\north \cup \Delta_\east \cup \Delta_\west$ where
\begin{align*}
\Delta_\south=\{(x,y)\in \partial_\south A: |x-n|\leq n^{3\epsilon}\}\,, & \qquad \Delta_\north=\{(x,y)\in \partial_\north A: |x-n|\leq n^{3\epsilon}\}\,, \\
 \Delta_\east=\{(x,y)\in \partial_\north A:x\leq  n/2  +n^{3\epsilon}\}\,, & \qquad \Delta_\west=\{(x,y)\in \partial_\north A:x\geq  3n/2 -n^{3\epsilon} \}\,.
\end{align*}

We first observe that, as before, by Corollary~\ref{cor:bc-perturbation} and the size of $|V(\Delta)|$, it suffices up to new choice of constants in~\eqref{eq:recursion-2}, to prove the implication under $\bP^{\Delta}$ as given by Definition~\ref{def:modification}. By monotonicity, Theorem~\ref{thm:censoring}, and Corollary~\ref{cor:bc-perturbation}, up to another change of constants $c_2,c_3$, it suffices to prove,
\[\max_{\omega_0\in \{0,1\}}\bE^{\Delta} \left[\|\tilde P^{2t'}(\omega_0,\cdot)-\pi_{Q}^{\xi'}\|_\tv\right]\leq c\delta'
\]
with $t'=\exp(cn^{3\epsilon}) t$ and $\delta'=8\delta+\exp(-e^{c'n^{3\epsilon}})$
for $c,c'>0$ given by Corollary~\ref{cor:bc-perturbation} and $\tilde P$ a censored dynamics. We begin with the situation in which $\omega_0=1$ and let $\tilde P$ be the transition kernel of the following censored dynamics: for the first time interval $[0,t')$, only accept updates from $A$ then at time $t'$ change all edges interior to $B$ to $1$ and only updates edges interior to $B$ until time $2t'$. As before, let $\nu_1$ denote the distribution after time $t'$ on $A$ and let $\nu_2^{\eta}$ denote the distribution after time $2t'$ of configurations on $B$ given that at time $t'$ all edges in $B$ are reset to $1$ and the configuration on $C$ was $\eta$.

The triangle inequality and Markov property together imply that,
\begin{align}
\bE^{\Delta}\left[\|\tilde P^{2t'}(1,\cdot)-\pi_Q^{\xi'}\|_\tv\right]\leq ~& \bE^{\Delta}\left[\|\nu_1-\pi_A^{\xi',1}\|_C\right] +\bE^{\Delta}\left[\|\pi_A^{\xi',1}-\pi_{Q}^{\xi'}\|_C\right] \nonumber \\
 + &\bE^{\Delta}\left[\|\pi_{Q}^{\xi'}-\pi_Q^{\xi',0}\|_C\right] +\bE^{\Delta}\left[\pi_{Q}^{\xi',0}(\|\nu_2^{\eta}-\pi_B^{\xi',\eta}\|_\tv)\right]\,. \label{eq:second-wired}
\end{align}

Here, boundary conditions of the form $(\xi',\eta)$ denote a boundary condition that agrees with $\xi'$ on the intersection of the boundary of the domain and $\partial Q$, and takes on boundary conditions $\eta$ elsewhere.
As in part (i) of the proof of~\eqref{eq:recursion-1}, we observe that because of the modification on $\Delta_{\east}\cup \Delta_\west$ with $\bP$-probability $1-\exp(-cn^{3\epsilon})$, the boundary conditions on $\partial_\north A$ are dominated by $\pi_{\mathbb Z^2}^0$ in spite of the wired initial configurations on $Q-A$. In what follows, we assume---paying an error of $\exp(-cn^{3\epsilon})$---that this decreasing event holds. Observe also that the wired initial configuration can only make $\partial_\south A$ more wired, and thus those boundary conditions will continue to dominate the marginal of $\pi^1_{\mathbb Z^2}$.
Along with self-duality and Corollary~\ref{cor:bc-perturbation}, this implies that the first term in~\eqref{eq:second-wired} is bounded above by $\delta'$.

The fourth term can be bounded as follows: observe that the distribution over boundary conditions $(\xi',\eta)$ on $B$ under $\bP^{\Delta}(\xi')\pi_{Q}^{\xi',0}(\eta)$ coincides with the $\Delta_\south$ modification of $\bP^{\Delta_{\north,\east,\west}}(\xi')\pi_{Q}^{\xi',0}(\eta)$. Because the boundary conditions on $\partial_\north A$ are dominated by $\pi_{\mathbb Z^2}^0$, an argument like~\eqref{eq:avg-avg} implies $\bP^{\Delta_{\north,\east,\west}}(\xi')\pi_Q^{\xi',0}(\eta)\preceq \pi_{\mathbb Z^2}^0(\eta)$ on $C$.  Thus, by Corollary~\ref{cor:bc-perturbation}, the  third term is bounded above by $2\delta'$, where the factor of $2$ comes from the fact that $B$ consists of two independent copies of $R_n$. We used the fact that the configuration on $E(C)$ is dominated by the partition of $C$ induced by the FK configuration under $\pi_Q^{\xi',0}$.

It remains to bound the second and third terms of~\eqref{eq:second-wired}, which can be treated similarly so we only go through the bound of the former.
\begin{lemma}There exists  $c=c(q)>0$ such that
\[\bE^{\Delta}\left[\|\pi_A^{\xi',1}-\pi_Q^{\xi'}\|_C\right]\lesssim e^{-cn^{3\epsilon}}\,.
\]
\end{lemma}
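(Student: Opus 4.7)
The plan is to closely parallel Lemma~\ref{lem:new-technique}, now comparing two measures on $A$ that differ only in their boundary conditions on $\partial_{\east,\west}A$ (fully wired under $\pi_A^{\xi',1}$, versus drawn from the FK configuration in $Q-A^o$ under $\pi_Q^{\xi'}\restriction_A$). By monotonicity, $\pi_A^{\xi',1}\succeq\pi_Q^{\xi'}\restriction_A$, so the grand coupling yields samples $\omega^{(1)}\succeq\omega^{(Q)}$, and these can differ on an edge of $C$ only if the extra wiring on $\partial_{\east,\west}A$ propagates across the macroscopic horizontal distance $n/2$ separating $\partial_{\east,\west}A$ from $C$. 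I will decompose $\bE^{\Delta}[\,\cdot\,]$ according to three events $\Gamma_1,\Gamma_2,\Gamma_3$ analogous to those of Lemma~\ref{lem:new-technique}.

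The first event, $\Gamma_1$, says that $\xi'\sim\bP^{\Delta}$ carries no long-range connection bridging across the free gaps (of length $n^{3\epsilon}$) introduced by the modification on $\Delta$. Since $\bP^{\Delta}\preceq\pi_{\mathbb Z^2}^0$ on $\partial_{\north,\east,\west}Q$, Eq.~\eqref{eq:exp-decay-1} and a union bound over pairs of boundary points give $\bP^{\Delta}(\Gamma_1^c)\lesssim e^{-cn^{3\epsilon}}$, in direct analogy with the first step of Lemma~\ref{lem:new-technique}. The second event, $\Gamma_2$, is that under $\pi_A^{\xi',1}$ the wired cluster attached to $\partial_{\east,\west}A$ does not penetrate within horizontal distance $n^{3\epsilon}$ of $C$; this is a cluster-expansion confinement estimate, obtained from an adaptation of Proposition~\ref{prop:D-estimate} (built on Proposition~\ref{prop:fk-416}) applied to each of the lateral strips $\llb n/2,n-n^{3\epsilon}\rrb\times\llb 0,m\rrb$ and $\llb n+n^{3\epsilon},3n/2\rrb\times\llb 0,m\rrb$ with the appropriate mixed boundary, with cigar parameter $\kappa$ chosen so that $n^{2\kappa}\asymp n^{3\epsilon}$, yielding $\pi_A^{\xi',1}(\Gamma_2^c)\lesssim e^{-cn^{3\epsilon}}$. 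The third event, $\Gamma_3$, is the corresponding confinement statement for $\pi_Q^{\xi'}$, controlling the wired cluster attached to $\partial_\south Q$ inside the same lateral strips of $A$; its probability is bounded by the same Wulff-construction input.

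Conditional on $\Gamma_1\cap\Gamma_2\cap\Gamma_3$, I expose the cluster structure in the two lateral strips of $A$ under the grand coupling without revealing the configuration on the central slab $\llb n-n^{3\epsilon},n+n^{3\epsilon}\rrb\times\llb 0,m\rrb$ containing $C$. By the Domain Markov property, the conditional distribution on this slab depends only on the $\xi'$-induced boundary on $\partial_{\north,\south}A$ (common to both measures) together with the cluster structure on its two internal vertical sides; by $\Gamma_2\cap\Gamma_3$ that structure is unaffected by the extra wiring on $\partial_{\east,\west}A$. Hence the two conditional distributions on the slab agree, and in particular agree on $C$; summing $\bP^{\Delta}(\Gamma_1^c)+\bE^{\Delta}[\pi_A^{\xi',1}(\Gamma_2^c)+\pi_Q^{\xi'}(\Gamma_3^c)]$ then yields the desired bound. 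The main obstacle is obtaining the rate $e^{-cn^{3\epsilon}}$ for $\Gamma_2$ and $\Gamma_3$ rather than the standard surface-tension rate $e^{-cn^{2\epsilon}}$; this is achieved by tuning $\kappa$ in the Wulff-construction bounds of Propositions~\ref{prop:fk-415}--\ref{prop:fk-416}, exploiting the fact that the horizontal budget $n/2$ between $\partial_{\east,\west}A$ and $C$ vastly exceeds $n^{3\epsilon}$ and so accommodates the correspondingly larger cigar width, together with the modification on $\Delta$, which eliminates the long-range FK boundary interactions that would otherwise circumvent these cigars, as illustrated in Figure~\ref{fig:long-range-bc}.
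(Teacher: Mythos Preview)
Your three-event decomposition mirrors the paper's structure, but your specific events $\Gamma_2,\Gamma_3$ and your coupling step differ from the paper's and contain a genuine gap.

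Your $\Gamma_2$ asks that the wired cluster of $\partial_{\east,\west}A$ under $\pi_A^{\xi',1}$ stay at horizontal distance at least $n^{3\epsilon}$ from $C$. But $\partial_\south A\setminus\Delta_\south$ carries a ``wired''-like boundary, and in the ordered phase near the southern corners the open cluster of $\partial_{\east,\west}A$ merges (via $\omega$) with that of $\partial_\south A\setminus\Delta_\south$, whose rightmost/leftmost points sit at horizontal distance \emph{exactly} $n^{3\epsilon}$ from $C$. The order--disorder interfaces anchored at the endpoints of $\Delta_\south$ have Gaussian fluctuations and will typically enter your central slab, so $\Gamma_2$ does not hold with probability $1-e^{-cn^{3\epsilon}}$. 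Your appeal to Proposition~\ref{prop:D-estimate} on the ``lateral strips'' with ``tuned $\kappa$'' is not justified either: that proposition is built around a free gap of width $n^{3\epsilon}$ inside an otherwise wired side, which is not the geometry of your lateral strip. Finally, your coupling step---expose the lateral strips, then claim the conditional law on the central slab is unaffected---fails because the two lateral-strip configurations differ under the grand coupling, so the induced partitions on the internal vertical sides need not agree.

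The paper proceeds differently. There $\Gamma_2=\cC_v^\ast(A\cap B_\west)\cap\cC_v^\ast(A\cap B_\east)$ is the existence of full vertical dual crossings in each half of $A$; these are exactly the event that the two interfaces anchored at the endpoints of the free gap $\Delta_\south$ reach the free-like northern boundary while staying in their respective halves. Its complement is bounded by dominating $A$ by an enlargement $\bar A$ and then applying Proposition~\ref{prop:D-estimate} with $\Delta=\Delta_\south$. Conditionally on $\Gamma_2$, the paper's $\Gamma_3$ (no connection from $\partial_\north A\setminus\Delta_\north$ to $C$) is bounded by a second application of Proposition~\ref{prop:D-estimate}, now with $\Delta=\Delta_\north$, after using the dual crossings from $\Gamma_2$ to free the east/west boundary. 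The coupling is then obtained by exposing, in the dominating chain, the open clusters of all of $\partial A\setminus(\Delta_\north\cup\Delta_\south)$; under $\Gamma_1\cap\Gamma_2\cap\Gamma_3$ this leaves a dual circuit around $C$, inside which the two laws agree by domain Markov. The key point you miss is that the relevant interfaces are those anchored at $\Delta_\south$ (not at $\partial_{\east,\west}A$), and that both $\Gamma_2,\Gamma_3$ are events for the single dominating measure, handled by two separate invocations of Proposition~\ref{prop:D-estimate}.
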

\begin{proof} We bound the total variation distance on $C$ by bounding the probability that samples from the two distributions agree under the grand coupling.
\begin{figure}%[htp!]
  \hspace{-0.15in}

  \begin{tikzpicture}
    \node (plot) at (0,0)
    {\hspace{-0.14in}\vspace{-0.3in}\includegraphics[width=0.95\textwidth]{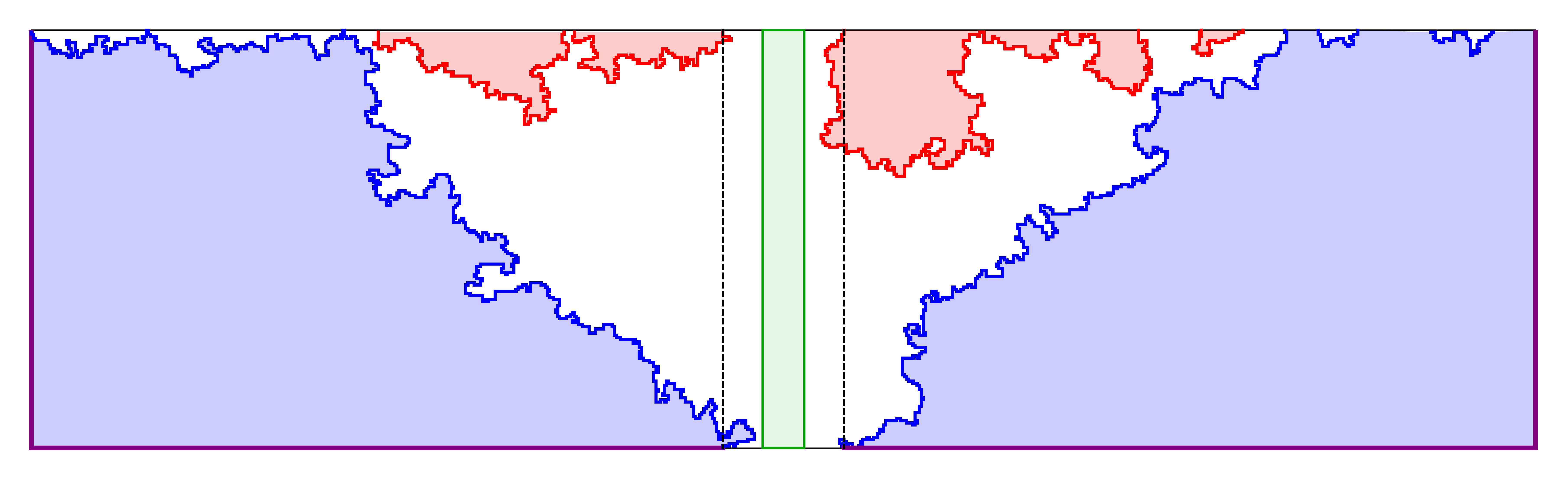}};

    \begin{scope}[shift={(plot.south west)},x={(plot.south
        east)},y={(plot.north west)}, font=\small]

%      \draw[style=dashed] (0.54,0.03) -- (0.54,0.10);
%      \draw[style=dashed] (0.48,0.03) -- (0.48,0.10);
%      \draw[style=dashed] (0.545,0.03) -- (0.545,.98);
%      \draw[style=dashed] (0.485,0.03) -- (0.485,.98);

      \draw[color=purple, thick] (0.277,.9121) arc (55:125:.17) ;
            \draw[color=purple, thick] (0.252,0.9121) arc (55:125:.04) ;
      \draw[color=purple, thick] (0.372,0.9121) arc (45: 135:0.04) ;
      \draw[color=DarkGreen, thick] (0.412,0.9121) arc (40: 140:0.08) ;
      \draw[color=purple, thick] (0.275,0.912) arc (200:340:0.023) ;

      \draw[color=purple, thick] (0.08,0.913) -- (.08,0.8) ;
      \draw[color=purple, thick] (0.205,0.913) -- (.205,0.8) ;
      \draw[color=purple, thick] (0.08,0.8) -- (.205,0.8) ;

%      \draw[color=gray] (0.297,.915) arc (55:125:.19) ;
%      \draw[color=gray] (0.258,0.915) arc (55:125:.04) ;
%      \draw[color=black] (0.08,0.915) -- (.08,0.8) ;
%      \draw[color=black] (0.23,0.915) -- (.23,0.8) ;
%      \draw[color=black] (0.08,0.8) -- (.23,0.8) ;

      \node[color=black] at (0.25,0) {``$\boldsymbol 1$"};
      \node[color=black] at (0.75,0) {``$\boldsymbol 1$"};
      \node[color=black] at (-.03,.5) {``$\boldsymbol 1$"};
      \node[color=black] at (1.0,.5) {``$\boldsymbol 1$"};
      \node[color=black] at (0.49,0.03) {$\Delta_\south$} ;
      \node[color=black] at (0.49,.97) {$\Delta_\north$} ;
      \node[color=DarkGreen][font=\large] at (0.4865,0.5) {$C$} ;
      \node[font=\large] at (0.25,0.5) {$A$} ;
      \node[font=\large] at (0.75,0.5) {$A$} ;
      \node[color=gray] at (0.25,.97) {``$\boldsymbol 0$"} ;
      \node[color=gray] at (0.75,.97) {``$\boldsymbol 0$"} ;

     % \node[color=gray] at (-.03,0.4) {``$0$"};
     % \node[color=gray] at (-0.03,0.90) {$0$};
    %  \node[color=gray] at (-0.03,0.97) {``$0$"};
     % \node[color=black] at (0.03,0.90) {$\Delta$};
     % \node[font=\Large] at (0.5,0.8) {$B$};

    \end{scope}
  \end{tikzpicture}
  \caption{The modification analogous to Figure 5 for the second step of the recursion (Eq.~\eqref{eq:recursion-2}).}
  \label{fig:recursion-2nd-step}
\end{figure}

Following the proof of Lemma~\ref{lem:new-technique}, we first define the event $\Gamma_1$ as the set of $\xi'$ in which there exists no pair $(x,y)\in (\partial_{\north} B_\east-\Delta_\north)\times (\partial_\north B_\west -\Delta_\north)$ such that $x\stackrel{\xi'}\longleftrightarrow y$ where $\stackrel{\xi'}\longleftrightarrow$ denotes that $x,y$ are in the same boundary component. We split up $\bE^{\Delta}[\|\pi_A^{\xi',1}-\pi_Q^{\xi'}\|_C]$ into an average over those $\xi'\in\Gamma_1$ and those in $\Gamma_1^c$.

As in Lemma~\ref{lem:new-technique}, we obtain the bound $\bE^{\Delta}[\Gamma_1^c]\leq \exp(-cn^{3\epsilon})$ for some $c>0$ using the fact that the boundary conditions on $\partial_\north A$ are obtained from a measure that is dominated by $\pi_{\mathbb Z^2}^0$ and Eq.~\eqref{eq:exp-decay-1} implies an exponential decay of connections. For all such $\xi'$, we bound the distance between the two measures by $1$.

Now consider, for any $e\in E(C)$, $\bE^{\Delta}[\|\pi_A^{\xi',1}-\pi_Q^{\xi'}\|_\tv \mid \Gamma_1]$. Define in analogy with the proof of Lemma~\ref{lem:new-technique}, the decreasing events $\Gamma_2$  and $\Gamma_3$:
\[\Gamma_2:=\bigcap_{i\in\{\east,\west\}} \cC_v^\ast (A\cap B_i)\,,
\]
and $\Gamma_3$ is the event that there does not exist any $x\in \partial_\north A\cap B_\east-\Delta_\north$ such that $x\longleftrightarrow \partial_\west B_\east$ and likewise, there does not exist any $x\in \partial_\north A \cap B_\west-\Delta_\north$ such that $x\longleftrightarrow \partial_\east B_\west$. Under $\Gamma_2 \cap \Gamma_3$, we could expose all the wired components of $\partial A-\Delta_\north-\Delta_\south$ to reveal an outermost dual circuit around $C$. By monotonicity and domain Markov, the two distributions would be coupled under the grand coupling past that dual circuit, so that for all $\xi'\in\Gamma_1$,
\[\|\pi^{\xi',1}_A-\pi_Q^{\xi'}\|_C\leq \pi_A^{\xi',1}(\Gamma_2\cap\Gamma_3)\,.
\]

Let $\bar A=\llb 0,n\rrb \times \llb 2m\rrb$, viewed as two copies of $A$ stacked above one another. Let $(0,1,\Delta_\south)$ boundary conditions on $\bar A$ denote those that are free on $\partial_\north \bar A$ and $\Delta_\south$ and wired on the rest of $\partial \bar A$. By monotonicity and the exponential decay of correlations in the free phase given by~\eqref{eq:exp-decay-1}, we see that there exists $c>0$ such that
\[\bE^{\Delta}\left[\pi_A^{\xi',1}(\Gamma_2^c)\given \Gamma_1\right]\leq \bE^{\Delta_\south}\left[\pi_A^{\xi',1}(\Gamma_2^c)\given \Gamma_1\right]\leq e^{-cm} +\pi_{\bar A}^{0,1,\Delta_\south}(\Gamma_2^c)\,.
\]

Consider $\pi_{\bar A}^{0,1,\Delta_\south}(\Gamma_2^c)$ and let $V=\llb 0,n\rrb \times \llb 0,4m\rrb$, as in Proposition~\ref{prop:D-estimate} with $\ell=4m$. Let $(0,1,\Delta_\south)$ boundary conditions on $V$ denote those that are free above $y=2m$ and on $\Delta_\south$ and wired elsewhere. Then it is clear by monotonicity and the fact that $\Gamma_2^c$ is an increasing event, that
\[\pi_{\bar A}^{0,1,\Delta_\south}(\Gamma_2^c)\leq \pi_V^{0,1,\Delta_\south}(\Gamma_2^c)\,.
\]
Because $m\asymp n^{\frac 12+\epsilon}$, we can apply Proposition~\ref{prop:D-estimate} to see that there exists a new $c(q)>0$ such that $\pi_{\bar A}^{0,1,\Delta_\south}(\Gamma_2^c)\lesssim \exp(-cn^{3\epsilon})$.

We now turn to bounding $\pi_A^{\xi',1}(\Gamma_3^c\mid \Gamma_2)$ using the same approach as in the proof of Claim~\ref{lem:new-bound}. Under $\Gamma_2$, there is a pair of vertical dual crossings in $A$ which allow us to replace, by monotonicity, the boundary conditions $(\xi',1)$ by ones that we denote $(0,1,\Delta_\north)$ which are free on $\partial_{\east,\west,\south} A$ and also free on $\Delta_\north$ and wired elsewhere.

To make the comparison to the setting of Proposition~\ref{prop:D-estimate}, perturb the boundary conditions more by extending the wired segments down along $\partial_{\east,\west} A$ a length $n^{1/2+\epsilon}$. Up to a $\pi$-rotation, Proposition~\ref{prop:D-estimate} with the choice of $\ell=m$ implies that the two interfaces are confined to the left and right halves of $A$ with probability $1-\exp(-cn^{3\epsilon})$ for some new $c>0$, and sufficiently large $n$. Monotonicity implies that for any $\xi'\in \Gamma_1$,
\[\pi_A^{\xi',1} (\Gamma_3^c \mid \Gamma_2) \lesssim e^{-cn^{3\epsilon}}\,,
\]
and together with a union bound, there exists $c'>0$ such that for large enough $n$,
\[
\bE^{\Delta}\left[\|\pi^{\xi',1}_A-\pi_Q^{\xi'}\|_C\right]\leq e^{-c'n^{\frac 12+\epsilon}}+2e^{-cn^{3\epsilon}}\,. \qedhere
\]\end{proof}

Combined with the prior bounds on the first and third terms in the right-hand side of~\eqref{eq:second-wired} and Theorem \ref{thm:censoring}, for some $c>0$,
\[ \bE^{\Delta}\left[\|\tilde P^{2t'}(1,\cdot)-\pi_Q^{\xi'}\|_\tv\right]\lesssim \delta'+e^{-cn^{3\epsilon}}\,.
\]

\emph{(ii) Free initial configuration.} The bound for the free initial configuration,
\[\bE^{\Delta}\left[\|P^{2t'}(0,\cdot)-\pi_Q^{\xi'}\|_\tv\right]\lesssim \delta'+e^{-cn^{3\epsilon}}\,,
\]
is nearly identical to the above bound with the following change: the censored dynamics $\tilde P$ only allows updates in block $A$ until time $t'$ then resets all edge values in $B$ to $0$ then only allows updates in $B$ between time $t'$ and $2t'$. In this case, we can again write, using the same notation as before,
\begin{align}
\bE^{\Delta}\left[\|\tilde P^{2t'}(0,\cdot)-\pi_Q^{\xi'}\|_\tv\right]\leq ~& \bE^{\Delta}\left[\|\nu_1-\pi_A^{\xi',0}\|_C\right]+\bE^{\Delta}\left[\|\pi_Q^{\xi'}-\pi_Q^{\xi',0}\|_C\right] \nonumber \\
 + &\bE^{\Delta}\left[\|\pi_A^{\xi',0}-\pi_Q^{\xi'}\|_C\right]+\bE^{\Delta}\left[\pi_Q^{\xi',0}(\|\nu_2^{\eta}-\pi_B^{\xi',\eta}\|_\tv)\right]\,. \label{eq:second-free}
\end{align}
The free initial configuration precludes the long-range interactions of the FK model modifying the boundary conditions on $A$ and thus they are still in $\mathcal D(A)$. The bound on the first term then follows from the assumption, without appealing to self-duality, and the other three bounds hold as for $\omega_0=1$. Combining the above with part (i) holds, we see that \eqref{eq:recursion-2}, which with~\eqref{eq:recursion-1}, concludes the proof of Proposition~\ref{prop:recursion}.
\end{proof}

\subsection{Proof of Proposition \ref{prop:all-free}}\label{sub:proof-large-q-upper-1}

It now remains to extend Corollary~\ref{cor:3-sides} to boundary conditions that are dominated by the free phase or dominate the wired phase on all four sides, to obtain the desired bounds for free and monochromatic boundary conditions.
Suppose without loss of generality that $\bP\preceq \pi_{\mathbb Z^2}^0$ on $\Lambda$; the case $\bP\succeq \pi_{\mathbb Z^2}^1$ follows from self-duality. First consider the dynamics started from $\omega_0=1$. We wish to prove that there exists $c>0$ such that for $t_\star=\exp(cn^{3\epsilon})$,
\[\bE\left[\|P^{t_\star}(1,\cdot)-\pi^{\xi'}_{\Lambda}\|_{\tv}\right]\lesssim e^{-cn^{2\epsilon}}\,.
\] By an application of Corollary~\ref{cor:bc-perturbation}, up to errors that can be absorbed by adjusting the constant $c$ appropriately, we can modify, as in Definition~\ref{def:modification}, the boundary conditions on $\Delta=\Delta_\north \cup \Delta_\south$, where $\Delta_\north=\Delta_\north^{1}\cup \Delta_\north ^{2}$,
\begin{align*}
\Delta_\north^1= & \{0,n\}\times \llb \tfrac{3n}4-n^{3\epsilon} ,\tfrac{3n}4 \rrb\,, \\
\Delta_\north ^2= & \{0,n\}\times \llb \tfrac n2,\tfrac n2+n^{3\epsilon}\rrb\,,
\end{align*}
and $\Delta_\south$ is the reflection of $\Delta_\north$ across the line $y=n/2$, and consider dynamics on $\Lambda$ with the new measure $\bP^{\Delta}$ on boundary conditions. Let $\Lambda^\pm$ denote the top and bottom halves of $\Lambda$, respectively. Then,
\begin{align*}
\bE^{\Delta}\left[\| P^{t_\star}(1,\cdot) -\pi_\Lambda^{\xi'}\|_\tv\right]\leq  \bE^{\Delta}\left[\|P^{t_\star}(1,\cdot)-\pi^{\xi'}_\Lambda\|_{\Lambda^-}\right]
 +  \bE^{\Delta}\left[\|P^{t_\star}(1,\cdot)-\pi_\Lambda^{\xi'}\|_{\Lambda^+}\right]\,.
\end{align*}
We deal only with the first term since the second can be bounded analogously. Let $\tilde P$ be the dynamics that censors all updates not in $\Lambda_{n,3n/4}$.
Then by Markov property and triangle inequality, the first term can be bounded above by
\[\bE^{\Delta}\left[\|\tilde P^{t_\star}(1,\cdot)-\tilde\pi^{\xi',1}\|_{\Lambda^-}\right]+\bE^{\Delta}\left[\|\pi_\Lambda^{\xi'}-\tilde\pi^{\xi',1}\|_{\Lambda^-}\right]\,,
\]
where $\tilde\pi^{\xi',1}$ denotes the stationary distribution on $\Lambda_{n,3n/4}$ with boundary conditions that are wired on $\partial_\north \Lambda_{n,3n/4}$ and $\xi'$ elsewhere.
First observe that because of $\Delta_\north^1$, with probability $1-\exp(-cn^{3\epsilon})$, the wired initial configuration on $\Lambda-\Lambda_{n,3n/4}$ does not affect the boundary conditions on $\Lambda_{n,3n/4}$ and therefore the boundary conditions on it are, up to a $\pi$-rotation, in $\mathcal D(\Lambda_{n,3n/4})$. At this cost, we assume this decreasing event holds.

The second term can then be bounded as in Lemma~\ref{lem:new-technique} by $\exp(-cn^{2\epsilon})$ for some $c>0$. We use $\Delta^2_\north$ to disconnect the wired boundary condition on $\partial_\north \Lambda_{n,3n/4}$ from $\Lambda^-$; the new choice of $\Delta$ and modifications in the sizes of the boxes do not affect the proofs.

The first term can be bounded via Corollary~\ref{cor:3-sides} for $m=3n/4$ since, with probability $1-\exp(-cn^{3\epsilon})$, the boundary condition on $\partial_{\east,\south,\west} \Lambda_{n,3n/4}$ is dominated by the marginal of $\pi_{\mathbb Z^2}^{0}$ while on $\partial_\north \Lambda_{n,3n/4}$, it is all wired. Combining the two bounds and doing the same for $\Lambda^+$, implies there is a $c>0$ such that for $t_\star=\exp(cn^{3\epsilon})$,
\[\bE^{\Delta} \left[\| P^{t_\star}(1,\cdot)-\pi_\Lambda^{\xi'}\|_\tv\right]\lesssim e^{-cn^{2\epsilon}}\,.
\]

For the dynamics started from the free initial configuration, for every $e\in E(\Lambda)$, $\ell\in\mathbb N$, define $K_\ell=\Lambda\cap \{e+\llb -\ell,\ell\rrb ^2\}$. Let $P_{K_\ell}$ denote the transition kernel of the dynamics restricted to $K_\ell$ with $(\xi,0)$ boundary conditions denoting $\xi$ on $\partial K_\ell \cap \partial \Lambda$ and free elsewhere. We claim that, for some $c>0$,
\begin{align*}\bE\left[\| P^{t_\star}(0,\cdot)-\pi_\Lambda^{\xi}\|_\tv\right]\leq & \sum_{e\in E(\Lambda)} \bE \left[ P^{t_\star}(1,e\notin\omega)-\pi_\Lambda^{\xi}(e\notin\omega)\right] \\
\leq & \sum_{e\in E(\Lambda)} \bE\left[\| P^{t_\star}_{K_\ell}(0,\cdot)-\pi^{\xi,0}_{K_\ell}\|_\tv\right]+e^{-c\ell}+e^{-cn}\,,
\end{align*}
for some $c>0$. The first inequality is an immediate consequence of monotonicity. The second follows from an argument similar to that in the proof of Lemma~\ref{lem:boxes} where now we take a new enlargement, $E_n'(\Lambda)$ that enlarges $\Lambda$ by $n$ in the southern direction also. We can replace, by Eq.~\eqref{eq:exp-decay-1}, $\pi_\Lambda^{\xi}$ by $\pi_{E'_n(\Lambda)}^0$ up to an error of $e^{-cn}$ as before. Again using~\eqref{eq:exp-decay-1} in the free phase, up to an error of $e^{-c\ell}$, we can replace $\pi^{0}_{E'_n(\Lambda)}$ with $\pi_{K_\ell}^{\xi,0}$, noting that the distributions at $e$ match if $e$ is disconnected from $K_\ell$ by a dual circuit.

We can then bound the sum in the right-hand side by Proposition~\ref{prop:base-scale}: uniformly in $\xi$,$e$, $\tmix$ for $K_\ell$ is bounded above by $e^{c\ell}$ for some $c>0$. Choosing $\ell=\lceil c^{-1}\log t_\star\rceil\asymp n^{3\epsilon}$ and union bounding over all the errors yields, for some $c>0$,
\[\bE\left[\| P^{t_\star}(0,\cdot)-\pi_\Lambda^{\xi}\|_\tv\right]\lesssim e^{-cn^{2\epsilon}}\,,
\]
as desired. An application of Markov's inequality, the triangle inequality, and~\eqref{eq:init-config-comparison} to
\[\max_{\omega_0\in\{0,1\}}\bE\left[\| P^{t_\star}(\omega_0,\cdot)-\pi_\Lambda^{\xi}\|_\tv\right]\lesssim e^{-cn^{2\epsilon}}\,,
\]
implies that there exists some $c>0$ such that for $t_\star=e^{cn^{3\epsilon}}$ and $n$ sufficiently large, $\bP(\tmix \geq t_\star)\lesssim \exp({-cn^{2\epsilon}})$
as required. \qed

\subsection*{Acknowledgment} The authors wish to thank C.\ Hongler, F.\ Martinelli, Y.\ Peres and S.\ Shlosman for useful discussions, as well as A.\ Sly, whose joint paper with E.L.\ on the critical Ising model was the starting point of this project.
The research of R.G.\ was supported in part by NSF grant DMS-1507019. The research of E.L.\ was supported in part by NSF grant DMS-1513403.

\bibliographystyle{abbrv}
\bibliography{critical_potts}

\end{document}